\newcommand{\R}{{\mathord{\mathbb R}}}
\newcommand{\one}{{\mathds1}}
\def\C{\mathbb{C}} 
\newcommand{\Z}{{\mathord{\mathbb Z}}}
\newcommand{\N}{{\mathord{\mathbb N}}}
\newcommand{\cA}{\mathord{\mathcal{A}}} 
\newcommand{\HH}{{\mathcal{H}}}
\newcommand{\tr}{\operatorname{Tr}} 
\numberwithin{equation}{section}
\newcommand{\cH}{\mathcal{H}}
\newcommand{\cK}{\mathcal{K}}
\newcommand{\cB}{\mathcal{B}}
\theoremstyle{plain}
    \newtheorem{thm}{Theorem}[chapter]
    \newtheorem{lm}[thm]{Lemma}
    \newtheorem{cl}[thm]{Corollary}
\theoremstyle{plain}
    \newtheorem{defi}[thm]{Definition}
    \newtheorem{exam}[thm]{Example}
    \newtheorem{remark}[thm]{Remark}
\begin{document}

\mainmatter

\title{On some convexity and monotonicity inequalities of Elliott Lieb}
\titlemark{Convexity and monotonicity inequalities} 

\emsauthor{1}{Eric A. Carlen}{E.~A.~Carlen}


\emsaffil{1}{Department of Mathematics, Hill Center, Rutgers University,
110 Frelinghuysen Road
Piscataway NJ 08854-8019 USA\email{carlen@math.rutgers.edu}}

\classification[65F60]{39B62}

\keywords{Convexity, Monotonicity, Trace Inequalities}

\begin{abstract}
A fundamental paper of Elliott Lieb from 1973 has been the basis for much beautiful work on matrix inequalities by many people over the following years. We review a well-connected set of these developments. Some new proofs are provided.

\medskip
\centerline{\emph{Dedicated to Elliott Lieb on the occasion of his 90th birthday.}}

\end{abstract}

\makecontribtitle

\section{Introduction}   Elliott Lieb's work on matrix inequalities has inspired a great many mathematicians and physicists, and the starting point of this paper 
is an account of some of the developments relating to the first three theorems in his fundamental 1973 paper \textit{Convex trace functions and the 
Wigner-Yanase-Dyson conjecture}.  Lieb's paper contains many results, and not only did it solve what has become known as the Wigner-Yanase-Dyson conjecture,  it provided the mathematical tools that were used to resolve another important conjecture  made by Lanford and Robinson that was  explicitly written out in \cite{LR68}, the {\em strong subadditivity of the quantum mechanical entropy} (SSA). 
This conjecture was proved by Lieb and Ruskai \cite{LR}, also in 1973. 

However, \cite{L73} also  contained theorems that would answer questions that nobody had yet asked, and in some cases, 
when {\em years later} the questions were asked, it was not recognized that the answers were to be found in Lieb's paper \cite{L73}.  
The present paper begins by discussing  Theorems 2 and 3 of \cite{L73}, along with Theorem 1, the 
Lieb Concavity Theorem, which proved  the Wigner-Yanase-Dyson conjecture.  These three theorems were all stated  
as convexity or concavity theorems, but they have equivalent formulations in terms of monotonicity of certain matrix functionals under 
certain classes of completely positive maps. These formulations have turned out to be important to many more recent applications of these results. 

These three theorems, as well as two more in the same paper, have been the starting point for many other developments since 1973.  We cannot go into all of these developments here -- that would require a much longer paper. Instead, we will focus on a selection of developments with a fairly close relation to the first three theorems in \cite{L73}, eventually arriving at the latter two.   This paper is essentially expository although there are more than a few new and  simpler proofs.

The following notation will be used throughout this paper. $M_n(\C)$ denotes the set of $n\times n$ complex matrices, $M_n^+(\C)$ the set of positive semidefinite  $n\times n$ complex matrices, and 
$M_n^{++}(\C)$ the set of positive definite  $n\times n$ complex matrices.   In some cases it is convenient to refer instead to bounded operators on a Hilbert space $\cH$. To keep the proofs simple, we shall always assume that $\cH$ is finite dimensional. 
This is the heart of the matter.   The reader familiar with the necessary technical tools will readily see how to make extensions. For the generalization to a general von Neumann algebra setting that also makes a connection with the matrix algebra case treated here, see Araki's paper \cite{A75}, and the references it contains. 

The paper consists of four parts. Part One consists of Sections 2 and 3,  in which  the theorems that are our focus are introduced in both their convexity/concavity forms and their monotonicity forms. Some simple but important consequences are noted, and the equivalence of their  convexity/concavity and monotonicity formulations is proved. 

Part Two consists of Sections 4 through 8, in which a number of applications of the theorems from Part One are made. All of the applications discussed here have some sort of connection with SSA, for which several different proofs are given. 

Part Three consists of sections 9 through 11, and these recall some well known material on the Lieb-Ruskai Monotonicity Theorem,  the theory of operator monotone and convex functions, the application of the GNS representation to proving matrix inequalities.    

Part Four consists of sections 12 and 13, and these are devoted  -- finally -- to the proof of monotonicity  theorems discussed in Section 2. Section 12 in particular presents an important theorem of Hiai and Petz \cite{HP12} whose proof is amazingly simple, and which yields all of the first three theorems in \cite{L73} as special cases. 

\section{The first three theorems in Lieb's 1973 matrix inequality paper and their monotonicity formulations}

The first three theorems in \cite{L73} are:

\begin{thm}[Lieb, Theorem 1]\label{L1}
For $0 \leq s,t$ and $s+t \leq 1$, and any fixed $K\in M_n(\C)$, the function
\begin{equation}\label{lieb1}
(X,Y) \mapsto \tr[K^*Y^s K X^t]
\end{equation}
is jointly concave on $M_n^+(\C)\times M_n^+(\C)$. 
\end{thm}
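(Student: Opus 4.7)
The plan is to reduce the stated joint scalar concavity to a joint \emph{operator} concavity statement for a simpler, essentially bilinear object, and then to exploit the commutation of the two tensor factors in that object together with two classical ingredients: Ando's theorem on the weighted geometric mean and the Loewner--Heinz operator monotonicity of $z\mapsto z^r$. Specifically, using the column-stacking isomorphism $K\mapsto|K\rangle\in\C^n\otimes\C^n$ together with the standard identity $|AXB\rangle=(B^T\otimes A)|X\rangle$, I would first verify
$$\tr[K^*Y^sKX^t]\;=\;\langle K,\,(\overline{X}^{\,t}\otimes Y^s)\,K\rangle.$$
Because $X\mapsto\overline{X}$ is a linear, order-preserving bijection of $M_n^+(\C)$ and because taking the expectation in a fixed vector preserves joint concavity, this identity reduces the theorem to showing that
$$\Phi_{s,t}\colon\ (X,Y)\ \longmapsto\ X^t\otimes Y^s$$
is jointly operator concave on $M_n^+(\C)\times M_n^+(\C)$ whenever $s,t\geq0$ and $s+t\leq 1$.

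The key observation is that $X\otimes I$ and $I\otimes Y$ commute. Assuming $r:=s+t>0$ (the degenerate case $r=0$ is trivial), I would set $p=t/r$ and $q=s/r$, so $p+q=1$, and use the commuting functional calculus to rewrite
$$X^t\otimes Y^s\;=\;\bigl[(X\otimes I)^p\,(I\otimes Y)^q\bigr]^r\;=\;\bigl[(X\otimes I)\,\#_q\,(I\otimes Y)\bigr]^r,$$
where $A\#_q B:=A^{1/2}(A^{-1/2}BA^{-1/2})^q A^{1/2}$ is the $q$-weighted geometric mean, whose defining feature is that it reduces to $A^{1-q}B^q$ on commuting positive arguments.

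I would then assemble the proof from two classical ingredients. Ando's theorem asserts that $(A,B)\mapsto A\#_q B$ is jointly operator concave on positive-semidefinite matrices for every $q\in[0,1]$; specialized to the affine maps $A=X\otimes I$ and $B=I\otimes Y$, it yields joint operator concavity of $(X,Y)\mapsto(X\otimes I)\#_q(I\otimes Y)$. By Loewner--Heinz, $z\mapsto z^r$ is both operator monotone and operator concave on $[0,\infty)$ for $r\in[0,1]$. The composition of a jointly operator concave operator-valued map $F$ with an operator monotone, operator concave scalar function $f$ is itself jointly operator concave: if $F(\lambda x_1+(1-\lambda)x_2)\geq \lambda F(x_1)+(1-\lambda)F(x_2)$, then operator monotonicity of $f$ gives $f\bigl(F(\lambda x_1+(1-\lambda)x_2)\bigr)\geq f\bigl(\lambda F(x_1)+(1-\lambda)F(x_2)\bigr)$, and operator concavity of $f$ then yields $\geq \lambda f(F(x_1))+(1-\lambda)f(F(x_2))$. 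Applied to $F(X,Y)=(X\otimes I)\#_q(I\otimes Y)$ and $f(z)=z^r$, this delivers the joint operator concavity of $\Phi_{s,t}$.

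The main obstacle is Ando's joint concavity of the weighted geometric mean; everything else is either routine functional calculus or a direct appeal to Loewner--Heinz. For the balanced case $q=1/2$, Ando's theorem follows cleanly from the Schur-complement characterization $A\# B=\max\{X=X^*:\ \bigl(\begin{smallmatrix}A&X\\ X&B\end{smallmatrix}\bigr)\geq 0\}$ together with the fact that a sum of two positive block matrices is positive. The weighted case $q\in(0,1)$ is then obtained either by iterating $\#_{1/2}$ to cover dyadic $q$ and invoking continuity, or by appealing to the Kubo--Ando integral representation of operator means.
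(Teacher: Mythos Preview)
Your argument is correct; it is essentially Ando's 1979 proof (and is close to the ``simple proofs'' the paper cites at \cite{A11,NEE}).  The vectorization step, the rewriting $X^t\otimes Y^s=[(X\otimes I)\#_q(I\otimes Y)]^r$ via commutation, and the composition argument (operator-monotone, operator-concave $f$ composed with a jointly operator-concave $F$) are all sound, and your sketch of how to obtain the weighted geometric-mean concavity from the $q=\tfrac12$ Schur-complement characterization by dyadic iteration is the standard route.

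The paper takes a genuinely different path.  It proves the \emph{monotonicity} version, Theorem~\ref{L1M}, as the special case $f(x)=x^t$ of the Hiai--Petz Theorem~\ref{opmontI}, whose only analytic input is L\"owner's integral representation of operator monotone functions together with the Lieb--Ruskai/Choi inequality $\Phi(Z^*)\,\Phi(C)^{-1}\Phi(Z)\leq\Phi(Z^*C^{-1}Z)$ for $2$-positive $\Phi$; concavity is then read off by taking $\Phi^\dagger$ to be a partial trace (Section~3).  The two proofs share the passage to the GNS/tensor picture (your $\overline X^{\,t}\otimes Y^s$ is exactly the paper's $G_f(X,Y)=R_X^tL_Y^{1-t}$ under the usual identification), but the ``hard'' ingredient differs: you use Ando's geometric-mean concavity plus Loewner--Heinz, the paper uses the integral representation plus the $2$-positive Schwarz inequality.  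Your route is slightly more self-contained for this single statement and handles the full range $s+t\leq 1$ in one stroke (the paper first does $s+t=1$ via Hiai--Petz and then reduces $s+t<1$ to it by Araki's monotonicity trick).  The paper's route, on the other hand, delivers Theorems~\ref{L1}, \ref{L2} and \ref{L3} simultaneously, for every operator monotone $f$, and directly in the stronger monotonicity form under arbitrary unital $2$-positive maps.
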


The special case $s=t =1/2$ had been proved earlier by Wigner and Yanase \cite{WY1,WY2}; see the remarks at the end of the section for more information. 

\begin{thm}[Lieb, Theorem 2] \label{L2} For all $s,t\geq 0$, $s+t \leq 1$, 
\begin{equation}\label{lieb2} 
(X,Y,K) \mapsto    \tr[ K^*Y^{-s}  KX^{-t}]
\end{equation} 
is jointly convex on $M_n^{++}(\C)\times M_n^{++}(\C)\times M_n(\C)$.  
\end{thm}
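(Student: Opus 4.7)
The plan is to deduce Theorem~\ref{L2} directly from Theorem~\ref{L1} by a Legendre/Fenchel duality argument. The crux is the variational identity
\begin{equation}\label{pl:var}
\tr\bigl[K^*Y^{-s}KX^{-t}\bigr] \;=\; \sup_{L\in M_n(\C)}\Bigl\{\,2\,\mathrm{Re}\,\tr[K^*L] \;-\; \tr\bigl[L^*Y^sLX^t\bigr]\,\Bigr\},
\end{equation}
valid whenever $X,Y\in M_n^{++}(\C)$, $K\in M_n(\C)$ and $s,t\ge 0$. To verify it, note that $\tr[L^*Y^sLX^t]=\tr\bigl[(Y^{s/2}LX^{t/2})^*(Y^{s/2}LX^{t/2})\bigr]$ is a positive semidefinite quadratic form in $L$, strictly positive on $L\neq 0$ as soon as $X,Y$ are strictly positive; hence the functional in braces is a strictly concave quadratic in $L$ and attains a unique maximum. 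Setting the complex derivative with respect to $L^*$ to zero yields the optimizer $L_\star=Y^{-s}KX^{-t}$, and substituting back into the right-hand side reproduces $\tr[K^*Y^{-s}KX^{-t}]$ exactly (using cyclicity and the fact that the quantity is real and nonnegative).

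Granted \eqref{pl:var}, the conclusion is essentially immediate. For each fixed $L$, the functional
\[
(X,Y,K)\;\longmapsto\;2\,\mathrm{Re}\,\tr[K^*L]\;-\;\tr\bigl[L^*Y^sLX^t\bigr]
\]
is jointly convex on $M_n^{++}(\C)\times M_n^{++}(\C)\times M_n(\C)$: the first term is affine in $K$ and independent of $(X,Y)$, while the second is the negative of the expression from Theorem~\ref{L1} with $L$ in place of $K$, so it is jointly convex in $(X,Y)$ under precisely the hypotheses $s,t\ge 0$ and $s+t\le 1$. A pointwise supremum of a family of jointly convex functions is jointly convex, so \eqref{pl:var} yields that $(X,Y,K)\mapsto\tr[K^*Y^{-s}KX^{-t}]$ is jointly convex, which is Theorem~\ref{L2}.

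The main obstacle is recognizing the Fenchel-type identity \eqref{pl:var}; once it is written down, the rest is bookkeeping. The constraint $s+t\le 1$ is invoked in exactly one place, the appeal to Theorem~\ref{L1}, and the strict positivity of $X$ and $Y$ is used both to define $X^{-t}$ and $Y^{-s}$ on the left and to guarantee that the concave quadratic in $L$ on the right is non-degenerate so that the supremum in \eqref{pl:var} is finite and attained. The boundary cases $s=0$ or $t=0$ do not require Theorem~\ref{L1} and in fact recover the Kiefer-Lieb-Ruskai joint convexity of $(X,K)\mapsto \tr[K^*X^{-1}K]$ directly from \eqref{pl:var}.
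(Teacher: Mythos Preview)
Your proof is correct. The Legendre--Fenchel identity \eqref{pl:var} combined with the supremum-of-convex argument is precisely the variational deduction of Theorem~\ref{L2} from Theorem~\ref{L1} that Lieb gave in \cite{L73}; the paper alludes to this (``he then deduced Theorem~\ref{L2} and Theorem~\ref{L3} from it by variational arguments'') but does not write it out.

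The paper's own presented proof takes a different route. In Section~12 it proves the Hiai--Petz theorem (Theorem~\ref{opmontI}) directly from the Lieb--Ruskai monotonicity inequality and L\"owner's integral representation; specializing to $f(x)=x^t$ then yields the monotonicity form Theorem~\ref{L2M} (and simultaneously Theorem~\ref{L1M}), and the convexity statement Theorem~\ref{L2} is recovered by taking $\Phi^\dagger$ to be the partial trace as in Section~3. Your approach is shorter and requires only Theorem~\ref{L1} as input, but it treats Theorem~\ref{L2} as a corollary rather than as a member of a structurally unified family. The paper's Hiai--Petz route costs more setup but delivers the monotonicity versions first (often the form one actually needs), proves all three of Lieb's theorems at once, and extends to the full class of operator monotone $f$.
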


\begin{thm}[Lieb, Theorem 3]\label{L3}
\begin{equation}\label{lieb3} 
(X,Y,K) \mapsto    \tr \left[   \int_0^\infty K^*\frac{1}{sI + Y} K \frac{1}{sI + X}{\rm d}s \right]\ 
\end{equation}
is jointly convex on $M_n^{++}(\C)\times M_n^{++}(\C)\times M_n(\C)$.
\end{thm}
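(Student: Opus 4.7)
The plan is to recognize the functional in \eqref{lieb3} as a quadratic form built from the Fréchet derivative of the operator logarithm, realize it through a block-matrix construction, and then establish its joint convexity. For the first step, starting from the representation $\log A = \int_0^\infty \bigl((1+s)^{-1} - (sI+A)^{-1}\bigr)\, ds$, valid for $A \in M_m^{++}(\C)$, differentiation under the integral yields
$$D\log(Z)[H] = \int_0^\infty (sI+Z)^{-1}\, H\, (sI+Z)^{-1}\, ds.$$
For the block reduction, take $Z = \begin{pmatrix} X & 0 \\ 0 & Y \end{pmatrix} \in M_{2n}^{++}(\C)$ and $H = \begin{pmatrix} 0 & 0 \\ K & 0 \end{pmatrix}$. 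A direct block multiplication, using the block-diagonal form of $(sI+Z)^{-1}$, gives
$$\tr\bigl[H^* D\log(Z)[H]\bigr] = \tr\!\left[K^* \int_0^\infty (sI+Y)^{-1} K (sI+X)^{-1}\, ds\right],$$
which is exactly the functional in \eqref{lieb3}. Since $(X, Y, K) \mapsto (Z, H)$ is linear, it suffices to establish joint convexity of $(Z, H) \mapsto \tr[H^* D\log(Z)[H]]$ on $M_m^{++}(\C) \times M_m(\C)$ (for any $m$, in particular $m = 2n$).

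The remaining task is therefore to prove joint convexity of the Bogoliubov--Kubo--Mori quadratic form, which equals the Hessian at $Z$ of the operator convex functional $Z \mapsto \tr[Z\log Z - Z]$. I would seek to derive this from Theorem~\ref{L2}: applying \eqref{lieb2} with $X = Y = Z$, $K = H$ and $s = t = p \in (0, 1/2]$ gives that $(Z, H) \mapsto \tr[H^* Z^{-p} H Z^{-p}]$ is jointly convex. The plan is then to identify the BKM form as a renormalized limit $p \to 0^+$ of suitable linear combinations of the Theorem~\ref{L2} functionals at $(s, t) = (p, p), (p, 0), (0, p), (0, 0)$, where the $p$-linear ``drift'' is absorbed into jointly convex terms provided by Theorem~\ref{L2} with $t = 0$.

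The main obstacle is that the pointwise-in-$s$ integrand $(X, Y, K) \mapsto \tr[K^*(sI+Y)^{-1} K (sI+X)^{-1}]$ is \emph{not} jointly convex. A direct $n = 1$ Hessian computation at $(X, Y, K) = (1, 1, 1)$ yields a $3\times 3$ matrix whose determinant works out to $-2/(1+s)^{10} < 0$, so the pointwise functional is not even locally convex. Hence the integral over $s$ is essential, and any limiting argument from Theorem~\ref{L2} must capture cancellations that appear only after integration. This subtlety is presumably the reason why Section~12 introduces the Hiai--Petz framework, which handles the whole family \eqref{lieb1}--\eqref{lieb3} in a single uniform argument.
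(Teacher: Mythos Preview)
Your block reduction and the identification of~\eqref{lieb3} with $\langle H, D\log(Z)[H]\rangle$ are correct; indeed the reduction to the case $X=Y$ with self-adjoint $K$ is exactly where the paper begins its own discussion around~\eqref{TM3}. Your observation that the pointwise-in-$s$ integrand is not jointly convex is also correct and important. But the proposal is not a proof: the ``renormalized limit $p\to 0^+$'' is never made precise, and the natural reading does not work. In an eigenbasis of $Z$ the target kernel is the divided difference $(\log\lambda_i-\log\lambda_j)/(\lambda_i-\lambda_j)$, whereas your four power kernels $\lambda_i^{-p}\lambda_j^{-p}$, $\lambda_i^{-p}$, $\lambda_j^{-p}$, $1$ expand near $p=0$ only in polynomials in $\log\lambda_i$ and $\log\lambda_j$; no linear combination produces the divided difference. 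Worse, any cancellation scheme requires negative coefficients, and then joint convexity of the combination is no longer inherited from Theorem~\ref{L2}: you would have to prove it separately, which is the whole problem. You correctly diagnose the obstacle but do not overcome it.

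The paper's actual proofs are different. The direct one applies the Hiai--Petz Theorem~\ref{opmontI} with the operator monotone function $f(x)=(x-1)/\log x$; one computes $G_f(X,Y)^{-1}K=\int_0^\infty(sI+Y)^{-1}K(sI+X)^{-1}\,ds$ (see the Example following that theorem) and reads off~\eqref{lieb3} from part~{\it(4)}, with no block reduction needed. The indirect route from Theorem~\ref{L2} that the paper traces in Section~7 passes through Epstein's Theorem~\ref{Ep73} and the concavity of $X\mapsto\tr[e^{H+\log X}]$ (Theorem~\ref{explog}), and only then returns to Theorem~\ref{L3} via the equivalence established around~\eqref{TM5}; no shortcut of the type you sketch is known.
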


Lieb proved Theorem~\ref{L1} using an interesting interpolation argument 
and he  then deduced Theorem~\ref{L2} and Theorem~\ref{L3} from it by variational arguments. 
However, in the final section of \cite{L73} he proved that these three theorems -- along with 
two others  -- are ``equivalent'' to one another, meaning that once any one of them has been proved, short, elementary  arguments yield each of the others. 
Lieb also proved certain corollaries of these theorems; e.g.,  in Theorem~\ref{L1}, one may replace the right side of \eqref{lieb1} by  $(\tr[K^*Y^s K X^t])^{1/q}$ 
for all $q$ such that $0 <q(s+t) \leq 1$, and the concavity still holds. However, this generalization comes rather easily once one has Theorem~\ref{L1}, and we shall not use it in any of the applications discussed here.

In Theorem~\ref{L1}, the Lieb Concavity Theorem, the case in which $s = 1-t$  is particularly important. In fact, one can easily deduce the general case from this special case, as was pointed out by Araki \cite{A75}: Suppose $s < 1-t$ and let $r$ be such that $r(1-t) = s$. Then for positive invertible $X_1,X_2,Y_1,Y_2$,
$$K^*(Y_1+Y_2)/2)^sK =  K^*(((Y_1+Y_2)/2)^r)^{1-t} K \geq K^* ((Y_1^r+Y^r_2)/2)^{1-t} K $$
where we have used the well known fact that  of $Y \mapsto Y^r$ is monotone increasing  and concave in $Y$ for $0 < r < 1$. (See \cite{C10} for the standard integral representation that provides a simple proof.)
Now take the trace against $(X_1+X_2)/2)^t$ to conclude from the $s=1-t$ case of \eqref{lieb1} that the general case is valid. 
In the same way, one readily shows that \eqref{lieb2} follows in general from the special case in which $s= 1-t$. 

An even more special case of \eqref{lieb1} arises when $K$ is the identity $I$, and $X$ and $Y$ are density matrices; i.e., $\tr[X] = \tr[Y] =1$. The 
{\em Umegaki relative entropy of $X$ with respect to $Y$} \cite{U62}, 
$D(X||Y)  := \tr[X(\log X - \log Y)]$,  
satisfies
\begin{equation}\label{lieb4}
 D(X||Y)  =  \frac{{\rm d}}{{\rm d}t} \tr[Y^{1-t}X^t]\bigg|_{t=1} = \lim_{t\uparrow 1}\frac{1}{1-t}\left(1 -  \tr[Y^{1-t}X^t]\right)\ .
\end{equation}
By the Lieb Concavity Theorem, for each $0 < t<1$, the right of \eqref{lieb4}  side is jointly convex in $X$ and $Y$, and this yields a
 result  of Lindblad \cite[Lemma 2]{Lind74}, who used exactly this differentiation argument:

\begin{thm}[Lindblad]\label{Lind}
The map
\begin{equation}\label{lieb5}
(X,Y)\mapsto D(X||Y)
\end{equation}
is jointly convex on $M_n^+(\C)\times M_n^+(\C)$
\end{thm}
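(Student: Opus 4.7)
The plan is to follow exactly the differentiation strategy attributed to Lindblad in the excerpt: express $D(X\|Y)$ as a pointwise limit of functions each of which has already been shown to be jointly convex via Theorem~\ref{L1}, then invoke the fact that pointwise limits of convex functions are convex. Specifically, I would use the identity
\[
D(X\|Y) = \lim_{t\uparrow 1}\frac{1}{1-t}\bigl(\tr[X] - \tr[Y^{1-t}X^t]\bigr),
\]
valid (initially) for $X,Y\in M_n^{++}(\C)$, and observe that by the $K=I$, $s=1-t$ case of Theorem~\ref{L1} the map $(X,Y)\mapsto \tr[Y^{1-t}X^t]$ is jointly concave, so $(X,Y)\mapsto (1-t)^{-1}(\tr[X]-\tr[Y^{1-t}X^t])$ is a sum of a linear function and a positive multiple of a jointly convex function, hence jointly convex. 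Taking $t\uparrow 1$ then yields joint convexity of $D(X\|Y)$ on $M_n^{++}(\C)\times M_n^{++}(\C)$.

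The first step I would carry out in detail is verifying the differentiation identity. For $X,Y\in M_n^{++}(\C)$ the map $t\mapsto \tr[Y^{1-t}X^t]$ is smooth, and using $\tfrac{{\rm d}}{{\rm d}t}X^t = X^t\log X$ together with cyclicity of the trace gives
\[
\frac{{\rm d}}{{\rm d}t}\tr[Y^{1-t}X^t] = \tr\bigl[-Y^{1-t}(\log Y)X^t + Y^{1-t}X^t(\log X)\bigr].
\]
At $t=1$ this equals $\tr[X\log X - X\log Y] = D(X\|Y)$. Since $t\mapsto \tr[Y^{1-t}X^t]$ agrees with $\tr[X]$ at $t=1$, the difference quotient formulation of the derivative yields the displayed limit identity.

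The second step is the transfer of convexity through the limit, which is immediate: if $f_t$ is jointly convex for each $t\in (0,1)$ and $f_t\to f$ pointwise as $t\uparrow 1$, then for any $X_0,X_1,Y_0,Y_1$ and $\lambda\in[0,1]$ the inequality $f_t(\lambda(X_0,Y_0)+(1-\lambda)(X_1,Y_1))\leq \lambda f_t(X_0,Y_0)+(1-\lambda)f_t(X_1,Y_1)$ passes to the limit.

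The only real obstacle is extending from $M_n^{++}(\C)\times M_n^{++}(\C)$ to $M_n^+(\C)\times M_n^+(\C)$, since $\log Y$ is singular on $\ker Y$. The way I would handle this is to define, for $X,Y\in M_n^+(\C)$,
\[
D(X\|Y) := \lim_{\varepsilon\downarrow 0} D(X+\varepsilon I \,\|\, Y+\varepsilon I),
\]
which coincides with $\tr[X(\log X-\log Y)]$ when $\ker Y\subseteq \ker X$ (with the usual conventions $0\log 0 = 0$) and equals $+\infty$ otherwise. Since each $D(X+\varepsilon I\|Y+\varepsilon I)$ is jointly convex in $(X,Y)$ by the argument above and joint convexity is preserved under pointwise (here monotone) limits in the extended sense, the desired conclusion on $M_n^+(\C)\times M_n^+(\C)$ follows. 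This regularization is the step requiring the most care, but it is routine given the standard semicontinuity/monotonicity properties of $D$ under the shift $Y\mapsto Y+\varepsilon I$.
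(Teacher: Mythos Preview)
Your proposal is correct and follows essentially the same approach as the paper: the paper derives Theorem~\ref{Lind} by observing that the right side of \eqref{lieb4} is, for each fixed $0<t<1$, jointly convex in $(X,Y)$ by the Lieb Concavity Theorem with $K=I$, and then passing to the limit $t\uparrow 1$. You have simply written out this differentiation argument in more detail and added the $\varepsilon$-regularization step to extend from $M_n^{++}(\C)$ to $M_n^{+}(\C)$, which the paper leaves implicit.
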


There are several useful notions of  relative entropy for two density matrices, but the one that comes up most frequently is the one introduced by Umegaki, and in the following, when we refer to relative entropy using this short phrase, we mean the Umegaki relative entropy. 

Lindblad  had been working on proving Thereom~\ref{Lind},  and recognized that \cite{L73} provided the solution.    A much simpler but still useful fact  about the relative entropy is the non-negativity of $D(X||Y)$ when $\tr[X] = \tr[Y]$
For all $X,Y\in M_n^+(\C)$ with  $\tr[X] = \tr[Y]$
\begin{equation}\label{klein}
D(X||Y) \geq 0 \quad{\rm and}\quad D(X||Y) = 0 \ \iff\ X =Y\ .
\end{equation}
This is an easy consequence of Klein's inequality \cite{K31}, an elementary proof  of which may be found in Appendix A of \cite{CL19}, though we shall give another very simple proof of \eqref{klein} in Section 10.

Theorem~\ref{L2} has an important consequence:

\begin{thm}[Ando's Convexity Theorem]\label{ACT}
For all $K\in M_n(\C)$, and all  $1 \le q\le 2$ and $0\le r \le 1$ with $q-r\ge  1$,
\begin{equation}\label{act1}
(X,Y) \mapsto \tr(K^*X^{q}KY^{-r})
\end{equation}
is convex on $M_n^{+}(\C)\times M_n^{++}(\C)$.
\end{thm}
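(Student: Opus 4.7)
The strategy is to deduce Ando's Theorem from Lieb's Theorem~\ref{L2}. A direct Hessian computation shows that the scalar function $(x,y)\mapsto x^qy^{-r}$ is jointly convex on $(0,\infty)^2$ precisely when $q\ge 1$ and $q-r\ge 1$, so the hypotheses in the theorem are sharp at the pointwise level; the nontrivial work is to lift this scalar convexity to the matrix setting using L2 as a black box.

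To bridge the gap between the positive power $X^q$ in Ando's expression and the negative powers appearing in L2, I would use the Loewner--Stieltjes integral representation
\[
X^q \;=\; \frac{\sin\pi(q-1)}{\pi}\int_0^\infty \lambda^{q-2}\,X^2(X+\lambda I)^{-1}\,d\lambda \qquad (q\in(1,2)),
\]
obtained from $X^q=X\cdot X^{q-1}$ and the standard integral representation of the operator monotone function $t\mapsto t^{q-1}$. The boundary case $q=1$ forces $r=0$ by the constraint $q-r\ge 1$ and reduces to linearity, while $q=2$ is recovered as a limit. Substitution yields
\[
\tr(K^*X^qKY^{-r}) \;=\; \frac{\sin\pi(q-1)}{\pi}\int_0^\infty \lambda^{q-2}\,\tr\!\bigl(K^*X^2(X+\lambda I)^{-1}KY^{-r}\bigr)\,d\lambda.
\]

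The main obstacle is that the integrand is \emph{not} jointly convex in $(X,Y)$ for each fixed $\lambda$; a direct Hessian check at the scalar level already exhibits the failure as soon as $r>0$, so joint convexity of the Ando expression can only emerge through cancellations between contributions at different values of $\lambda$. To expose these cancellations I would introduce a simultaneous integral representation $Y^{-r}=\frac{\sin\pi r}{\pi}\int_0^\infty \mu^{-r}(Y+\mu I)^{-1}\,d\mu$; after regrouping, the resulting double integral decomposes into linear and constant corrections together with resolvent--resolvent building blocks of the form $\tr\!\bigl(K^*(X+\lambda I)^{-1}K(Y+\mu I)^{-1}\bigr)$, whose joint convexity in $(X,Y)$ is the real heart of the matter. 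This last step would need to be handled through L2 combined with a Stinespring-type dilation that brings the effective resolvent exponents $(s,t)$ into the admissible range $s+t\le 1$; alternatively, one could appeal to the monotonicity formulation of L2 to be developed in later sections. The principal technical care required lies in bookkeeping the algebraically divergent pieces produced by these decompositions and verifying that each surviving building block is jointly convex as an operator-valued expression, after which Fubini and integration against the positive weights $\lambda^{q-2}\,\mu^{-r}\,d\lambda\,d\mu$ deliver the conclusion.
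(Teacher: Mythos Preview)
Your approach has a genuine gap, and you have in fact identified it yourself: the integrand in your decomposition is \emph{not} jointly convex for each fixed $\lambda$, and ``cancellations between contributions at different values of $\lambda$'' is not a mechanism that can produce convexity. Convexity means the inequality
\[
F\Bigl(\tfrac{X_1+X_2}{2},\tfrac{Y_1+Y_2}{2}\Bigr) \le \tfrac12 F(X_1,Y_1)+\tfrac12 F(X_2,Y_2)
\]
must hold for every choice of $(X_1,Y_1),(X_2,Y_2)$; if you write $F$ as an integral over $\lambda$, you need this inequality after integration, but there is no way to leverage cancellation across $\lambda$ unless you already have an inequality to integrate. Your fallback suggestion of reducing to resolvent--resolvent blocks $\tr[K^*(X+\lambda I)^{-1}K(Y+\mu I)^{-1}]$ does not help either: this corresponds to exponents $s=t=1$, which lies \emph{outside} the admissible range $s+t\le 1$ of Theorem~\ref{L2}, and indeed joint convexity of that block (with $K$ fixed) is not known and is not what L2 provides. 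The vague references to Stinespring dilations or the monotonicity form of L2 do not repair this.

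The paper's proof is a one-line algebraic trick that you have missed. Since $1\le q\le 2$, write $X^q = X\cdot X^{q-2}\cdot X$ and absorb the outer factors into $K$:
\[
\tr(K^*X^{q}KY^{-r}) \;=\; \tr\bigl((XK)^*\,X^{-(2-q)}\,(XK)\,Y^{-r}\bigr).
\]
Now $0\le 2-q\le 1$, $0\le r\le 1$, and $(2-q)+r = 2-(q-r)\le 1$ by hypothesis, so Theorem~\ref{L2} applies with exponents $s=2-q$, $t=r$. Crucially, Theorem~\ref{L2} gives joint convexity in the \emph{three} variables $(X,Y,K)$, so composing with the linear map $(X,Y)\mapsto (X,Y,XK)$ yields convexity in $(X,Y)$. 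No integral representation is needed.
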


\begin{proof}  Note that
$$
\tr(K^*X^{q}KY^{-r}) = \tr((XK)^*X^{q-2}(XK)Y^{-r})
$$
and that $0 \leq (2-q),r  \leq 1$ and $(2-q) + r = 2-(q-r) \leq 1$ . Now apply Theorem~\ref{L2}.
\end{proof} 

Theorem~\ref{ACT} was proved in \cite{A79} by a different argument. The connection with Theorem~\ref{L2} was not noted. 

There are by now many simple proofs of the Lieb Concavity Theorem, and hence of Theorems~\ref{L1}, \ref{L2} and \ref{L3}; see for instance \cite{A11,E09,NEE}. However, a 2012 theorem of Hiai and Petz \cite{HP12} that is stated and proved in Section 12 is definitive in this direction:  It has a very simple proof that directly yields the monotonicity versions of these theorems, as we discuss next, and much more, all at once.

The convexity/concavity theorems, Theorems~\ref{L1}, \ref{L2}, \ref{L3} and  \ref{Lind},  all have {\em equivalent} formulations as {\em monotonicity theorems}, as we now explain, and not only have these been important to a number of their more recent applications, it turns out to be easiest to directly prove the monotonicity variants, and then to deduce the original convexity/concavity results from them.

 Let $\Phi:M_m(\C) \to M_n(\C)$ be a linear transformation. $\Phi$ is {\em positive} when $\Phi(A) \geq 0$ for all $A \geq 0$. $\Phi$ is $2$-positive if the block matrix
 $\left[\begin{array}{cc} \Phi(A) & \Phi(B) \\\Phi(C )& \Phi(D)\end{array}\right] \geq 0$ whenever  $\left[\begin{array}{cc} A & B \\C & D\end{array}\right] \geq 0$.  For each integer $k > 2$, the condition of $k$-positivity is defined in the analogous manner, and $\Phi$ is {\em completely} positive if it is $k$ positive for all $k$.   For example, it is easy to see that for any $m\times n$ matrix $V$,
 the map $\Phi: X \mapsto V^* XV$ is completely positive, and it follows that for any set $\{V_1,\dots,V_\ell\}$ of $m\times n$ matrices,
\begin{equation}\label{lieb6}
 \Phi: X \mapsto \sum_{j=1}^\ell V_j^* X V_j\ .
\end{equation}
 is completely positive. By a theorem of Kraus \cite{K71} and Choi \cite{Choi72,Choi75}, every completely positive map $\Phi$ from $M_m(\C)$ to $M_n(\C)$ has this form, and in fact, one can always find such a representation with $\ell \leq mn$.  The map $\Phi$ is {\em unital} if it takes the identity to the identity; i.e., $\Phi(I) = I$.  The map $\Phi$ is {\em trace preserving} if $\tr[\Phi(X)] = \tr[X]$ for all $X$.  For each $n$, we equip $M_n(\C)$ with the Hilbert-Schmidt inner product, making it a Hilbert space. For any linear transformation $\Phi:M_m(\C) \to M_n(\C)$, we write $\Phi^\dagger$ to denote the adjoint with respect to the Hilbert-Schmidt inner product. It is easy to see that if $\Phi$ is given by \eqref{lieb6}, then 
 \begin{equation}\label{lieb7}
 \Phi^\dagger : X \mapsto \sum_{j=1}^\ell V_j X V_j^*\ .
\end{equation}
In any case, whether $\Phi$ is completely positive or not, $\Phi$ is unital if and only if $\Phi^\dagger$ is trace preserving. 

Unital completely positive maps $\Phi$ play a fundamental role in quantum information theory and the quantum theory of open systems \cite{D76}; these maps  update the observables in the time evolution and partial measurement of such a system \cite{K71}.  The state of a finite dimensional quantum system may be identified with a {\em density matrix}, i.e., an $X\in M_n^+(\C)$ with $\tr[X] = 1$. If $\Phi$ is completely positive and unital, its adjoint 
 $\Phi^\dagger$ is  a completely positive trace preserving map, and so evidently it takes density matrices to density matrices.  Such maps $\Phi^\dagger$  are known  as {\em quantum operations}.   
 
 
 The following example is  fundamental:  Let $m,n\in \N$ and  identify $\C^{mn}$ with the $m$-fold direct sum of $m$ copies of $\C^n$. We may then think of matrices in $M_{mn}(\C)$  as $m\times m$ block matrices each of whose entries is in $M_n(\C)$.  Then, suppressing the $n$ dependence, for each $m$ we define
\begin{equation}\label{contomon1I}
\Psi_m(X) = \left[\begin{array}{ccc} X & \phantom{X} &  \phantom{X}\\   \phantom{X} & \ddots &  \phantom{X} \\  \phantom{X} &  \phantom{X} & X\end{array}\right]\ ,
\end{equation}
where the matrix on the right is the $m\times m$ block diagonal  matrix each of whose diagonal entries is $X$.  This is evidently completely positive and  unital.   Its adjoint, $\Psi_m^\dagger$, is therefore trace preserving and completely positive. It is easy to see that 
\begin{equation}\label{contomon1BI}
\Psi^\dagger_m\left(\left[\begin{array}{ccc} 
X_{1,1} & \cdots & X_{1,m}\\ \vdots & \ddots &\vdots\\
X_{m,1} & \cdots & X_{m,m}\end{array}\right]\right) = \sum_{j=1}^m X_{j,j}\ ,
\end{equation} is the {\em partial trace}.   
Some of the theorems that follow involve negative powers. When $\Phi:M_n(\C) \to M_m(\C)$ is unital and completely positive, $\Phi(A)\in M_m^{++}(\C)$ whenever $A\in M_n^{++}(\C)$,  However, the image of $M_m^{++}(\C)$ under the completely positive trace preserving map $\Phi^\dagger$  may lie entirely in $M_n^{+}(\C) \backslash M_n^{++}(\C)$.  For example, take $n=2m$ and think of elements of $M_n(\C)$ as $2\times 2$ block matrices with entries in $M_m(\C)$. Then  define
$$
\Phi\left(\left[\begin{array}{cc} A & B\\ C & D\end{array}\right]\right) = A  \qquad{\rm so \ that}\qquad \Phi^\dagger(A) =  \left[\begin{array}{cc} A & 0\\ 0 & 0\end{array}\right]\ .
$$
We may now state the monotonicity versions of  Theorems~\ref{L1}, \ref{L2}, \ref{L3} and \ref{Lind}.

\begin{thm}[Monotonicity version of Theorem~\ref{L1}]\label{L1M}   For all $0 \leq t \leq 1$, all $m,n\in \N$, all $X,Y \in M_m^+(\C)$, all $K\in M_n(\C)$,  and all completely positive unital maps $\Phi: M_n(\C)\to M_m(\C)$,  
\begin{equation}\label{lieb21}
 \tr[\Phi(K^*)Y^{1-t} \Phi(K) X^t]  \leq  \tr[K^*\Phi^\dagger(Y)^{1-t} K \Phi^\dagger(X)^t] \ .
\end{equation}
\end{thm}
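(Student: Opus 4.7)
The plan is to derive \eqref{lieb21} from the joint concavity in Theorem~\ref{L1} via a Stinespring dilation combined with a unitary twirl on an auxiliary factor.

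First I would dilate. Since $\Phi:M_n(\C)\to M_m(\C)$ is completely positive and unital, there exist an integer $\ell$ and an isometry $V:\C^m\to\C^n\otimes\C^\ell$ with $V^*V=I_m$ such that $\Phi(A) = V^*(A\otimes I_\ell)V$ for all $A\in M_n(\C)$; dually, $\Phi^\dagger(Z) = (\mathrm{id}_n\otimes\tr)[VZV^*]$. The decisive observation is that because $V$ is an isometry, $VXV^*$ is supported on $\mathrm{Ran}(V)$ and is unitarily conjugate to $X$ there via $V$, so the continuous functional calculus commutes with this conjugation:
$$(VXV^*)^\alpha \;=\; V X^\alpha V^* \qquad \text{for all } X\in M_m^+(\C),\ \alpha\ge 0.$$
Applying this with $\alpha=t$ and $\alpha=1-t$ and using cyclicity of the trace, the left-hand side of \eqref{lieb21} rewrites as
$$\tr\bigl[(K^*\otimes I_\ell)\,\widehat Y^{\,1-t}\,(K\otimes I_\ell)\,\widehat X^{\,t}\bigr], \qquad \widehat X:=VXV^*,\ \widehat Y:=VYV^*.$$

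Next I would reduce to the partial trace by twirling on the second factor. Let $\{U_s\}_{s=1}^L\subset M_\ell(\C)$ be a unitary $1$-design with weights $p_s\ge 0$, $\sum_s p_s = 1$ (for instance the Heisenberg--Weyl group with uniform weights), so that for every $Z\in M_{n\ell}(\C)$,
$$\sum_{s=1}^L p_s\,(I_n\otimes U_s)\,Z\,(I_n\otimes U_s^*) \;=\; (\mathrm{id}_n\otimes\tr)[Z]\otimes\frac{I_\ell}{\ell}.$$
Since $K\otimes I_\ell$ commutes with each $I_n\otimes U_s$, the trace expression displayed above is invariant under the substitution $(\widehat X,\widehat Y) \mapsto ((I\otimes U_s)\widehat X (I\otimes U_s^*),\, (I\otimes U_s)\widehat Y(I\otimes U_s^*))$. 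Averaging the $L$ identical copies with weights $p_s$ and applying Theorem~\ref{L1} in $M_{n\ell}(\C)$ (with $K$ replaced by $K\otimes I_\ell$) yields
$$\tr[(K^*\otimes I_\ell)\widehat Y^{\,1-t}(K\otimes I_\ell)\widehat X^{\,t}] \;\le\; \tr\!\left[(K^*\otimes I_\ell)\Bigl(\Phi^\dagger(Y)\otimes\tfrac{I_\ell}{\ell}\Bigr)^{\!1-t}(K\otimes I_\ell)\Bigl(\Phi^\dagger(X)\otimes\tfrac{I_\ell}{\ell}\Bigr)^{\!t}\right].$$
Expanding the tensor powers produces a scalar factor $\ell^{-(1-t)}\ell^{-t}=\ell^{-1}$, and the partial trace over the $\ell$-factor contributes $\tr[I_\ell]=\ell$; these cancel and the right-hand side collapses to $\tr[K^*\Phi^\dagger(Y)^{1-t}K\Phi^\dagger(X)^{t}]$, which is exactly the right side of \eqref{lieb21}.

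The main obstacle is the isometric identity $(VXV^*)^\alpha = V X^\alpha V^*$: it is elementary but essential, and it is precisely what forces the use of an isometry (i.e.\ a Stinespring dilation) rather than a general contraction -- the identity fails without $V^*V = I$. The only remaining technical point is handling non-invertible $X$ or $Y$, which is done by the standard perturbation $X\leadsto X+\epsilon I,\ Y\leadsto Y+\epsilon I$ and letting $\epsilon\downarrow 0$, both sides of \eqref{lieb21} being continuous in $(X,Y)$ on $M_m^+(\C)\times M_m^+(\C)$ for fixed $t\in[0,1]$.
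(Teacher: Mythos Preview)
Your proof is correct and follows essentially the same route as the paper's: Stinespring dilation, unitary averaging on the auxiliary factor, Lieb concavity, and the commutation of $K\otimes I_\ell$ with the averaging unitaries. The paper organizes the argument in two steps---the partial-trace case first via Lemma~\ref{ullm} and Remark~\ref{inv}, then the general case via the explicit factorization of Theorem~\ref{strep}---whereas you do it in a single pass with the standard dilation $\Phi(A)=V^*(A\otimes I_\ell)V$ and the isometric identity $(VXV^*)^\alpha=VX^\alpha V^*$; this is more streamlined but the content is the same.
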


\begin{thm}[Monotonicity version of Theorem~\ref{L2}]\label{L2M}   For all $0 \leq t \leq 1$, all $m,n\in \N$, all 
$X,Y \in M_m^{++}(\C)$ and all $K\in M_m(\C)$ ,  and all completely positive unital maps $\Phi: M_n(\C)\to M_m(\C)$ such that $\Phi^\dagger(X),\Phi^\dagger(Y) \in M_n^{++}(\C)$
\begin{equation}\label{lieb22} 
 \tr[ \Phi^\dagger(K^*)\Phi^\dagger(Y)^{t-1}  \Phi^\dagger(K)\Phi^\dagger(X)^{-t}]  \leq   \tr[ K^*Y^{t-1}  KX^{-t}]\ .
\end{equation} 
\end{thm}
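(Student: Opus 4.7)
My plan is to deduce Theorem~\ref{L2M} from the joint convexity in Theorem~\ref{L2} via the standard Stinespring-plus-averaging reduction. Writing $\Phi^\dagger(A)=\sum_{j=1}^\ell V_j A V_j^*$ with $\sum_j V_j^*V_j=I_m$, I would bundle the Kraus operators into the block-column isometry $W:\C^m\hookrightarrow\C^n\otimes\C^\ell$ ($W^*W=I_m$), so that $\Phi^\dagger(A)=\mathrm{ptr}_\ell(WAW^*)$ where $\mathrm{ptr}_\ell$ denotes the partial trace over $\C^\ell$. After padding with zero Kraus operators if necessary to ensure $n\ell\geq m$, extend $W$ to a unitary $U$ on $\C^{n\ell}$.

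The heart of the argument is to realize this partial trace as an average of unitary conjugations. Choose a family $\{D_j\}_{j=1}^{\ell^2}$ of unitaries on $\C^\ell$ (e.g.\ the discrete Weyl operators) satisfying $\ell^{-2}\sum_j D_j Z D_j^*=\ell^{-1}\tr(Z)I_\ell$ for every $Z\in M_\ell$; then for every $Z\in M_{n\ell}$,
$$\frac{1}{\ell^2}\sum_j (I_n\otimes D_j)\,Z\,(I_n\otimes D_j^*)=\frac{1}{\ell}\,\mathrm{ptr}_\ell(Z)\otimes I_\ell.$$
Setting $F(X,Y,K):=\tr[K^*Y^{t-1}KX^{-t}]$ and, for $\varepsilon>0$, $\hat X_\varepsilon := U(X\oplus\varepsilon I_{n\ell-m})U^*$, $\hat Y_\varepsilon := U(Y\oplus\varepsilon I_{n\ell-m})U^*$ (both in $M_{n\ell}^{++}$), $\hat K := U(K\oplus 0)U^*=WKW^*$, I would apply the joint convexity of $F$ (Theorem~\ref{L2}) to the above convex combination, using unitary invariance of the trace to collapse every summand on the right to $F(\hat X_\varepsilon,\hat Y_\varepsilon,\hat K)$. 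This yields
$$F\Bigl(\tfrac1\ell\mathrm{ptr}_\ell(\hat X_\varepsilon)\otimes I_\ell,\tfrac1\ell\mathrm{ptr}_\ell(\hat Y_\varepsilon)\otimes I_\ell,\tfrac1\ell\mathrm{ptr}_\ell(\hat K)\otimes I_\ell\Bigr)\leq F(\hat X_\varepsilon,\hat Y_\varepsilon,\hat K).$$

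To finish, I would simplify both sides. The identity $F(A\otimes I_\ell,B\otimes I_\ell,C\otimes I_\ell)=\ell\cdot F(A,B,C)$ combined with the degree-$1$ homogeneity of $F$ reduces the left side to $F(\mathrm{ptr}_\ell\hat X_\varepsilon,\mathrm{ptr}_\ell\hat Y_\varepsilon,\Phi^\dagger(K))$; meanwhile, the genuine zero block in $K\oplus 0$ annihilates every $\varepsilon$-dependent contribution on the right, so the right side equals $F(X,Y,K)$ for every $\varepsilon>0$. Letting $\varepsilon\downarrow 0$ gives $\mathrm{ptr}_\ell\hat X_\varepsilon\to\Phi^\dagger(X)$ and $\mathrm{ptr}_\ell\hat Y_\varepsilon\to\Phi^\dagger(Y)$, both of which remain in $M_n^{++}$ by hypothesis, so continuity of $F$ at these points produces \eqref{lieb22}.

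The main obstacle is the regularization. The raw dilation $WXW^*$ is only positive semidefinite (supported on $\mathrm{Im}(W)$), so Theorem~\ref{L2} cannot be applied to it directly. The $\varepsilon$-padding restores positive definiteness but threatens to blow up the right side through the $\varepsilon^{t-1}$ and $\varepsilon^{-t}$ factors; choosing $\hat K$ with a genuine zero block (rather than an $\varepsilon$-padded one) is precisely what makes these contributions vanish, and the positive definiteness of $\Phi^\dagger(X)$ and $\Phi^\dagger(Y)$ assumed in the statement of the theorem is exactly what is needed to take the limit on the left.
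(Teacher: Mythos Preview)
Your proposal is correct and follows essentially the same route as the paper: Stinespring-type dilation, Uhlmann's averaging-over-unitaries realization of the partial trace, the joint convexity of Theorem~\ref{L2}, and a regularization to cope with non-invertibility of the dilated matrices, followed by a limit using the hypothesis $\Phi^\dagger(X),\Phi^\dagger(Y)\in M_n^{++}(\C)$. The only notable difference is cosmetic: the paper first isolates the partial-trace case and then regularizes by replacing $\Xi^\dagger_{mp,q}$ with $(\Xi^\dagger_{mp,q})_\epsilon$ (mixing in the completely depolarizing map), whereas you do everything at once and pad $X,Y$ with $\varepsilon I$ on the orthogonal complement while leaving $\hat K$ with a genuine zero block---a slightly cleaner choice, since it makes the right-hand side equal to $F(X,Y,K)$ for every $\varepsilon$ rather than only in the limit.
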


\begin{thm}[Monotonicity version of Theorem~\ref{L3}]\label{L3M}   For all $0 \leq t \leq 1$, all $m,n\in \N$, all $X,Y \in M_m^{++}(\C)$, all $K\in M_m(\C)$ and all completely positive unital maps $\Phi: M_m(\C)\to M_m(\C)$, 
\begin{equation}\label{lieb23} 
 \tr \left[   \int_0^\infty  \Phi^\dagger(K^*)\frac{1}{sI + \Phi^\dagger(Y)} \Phi^\dagger(K) \frac{1}{sI + \Phi^\dagger(X)}{\rm d}s \right]   
  \leq   \tr \left[   \int_0^\infty K^*\frac{1}{sI + Y} K \frac{1}{sI + X}{\rm d}s \right]\ .
\end{equation}
\end{thm}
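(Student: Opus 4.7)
The plan is to deduce the monotonicity from the joint convexity of Theorem~\ref{L3} via a Stinespring dilation combined with averaging over a unitary $1$-design on the ancilla. Throughout, write
\[
F_3(X,Y,K) := \tr\int_0^\infty K^*(sI+Y)^{-1}K(sI+X)^{-1}\,ds\ .
\]
Two structural properties of $F_3$ will do almost all of the work: (i) the unitary invariance $F_3(WAW^*,WBW^*,WCW^*) = F_3(A,B,C)$ for unitary $W$, immediate from cyclicity of the trace; and (ii) the tensor-product scaling $F_3(X\otimes P,Y\otimes P,K\otimes P) = (\tr P)\,F_3(X,Y,K)$ for $P\in M_k^{++}(\C)$, which follows by diagonalizing $P$ via (i), observing that the tensored matrices become block-diagonal with blocks $p_j X,\,p_j Y,\,p_j K$, and then combining direct-sum additivity with the homogeneity $F_3(\lambda X,\lambda Y,\lambda K) = \lambda F_3(X,Y,K)$, itself a consequence of the substitution $s\mapsto \lambda s$ in the integral.

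Given $\Phi^\dagger$ with Kraus representation $\Phi^\dagger(Z) = \sum_{j=1}^k W_j Z W_j^*$, I would form the Stinespring isometry $V := \sum_j W_j\otimes |j\rangle: \C^m\to \C^m\otimes \C^k$ and extend it to a unitary $U$ on $\C^m\otimes\C^k$ with $Vx = U(x\otimes |0\rangle)$, yielding $\Phi^\dagger(Z) = \tr_2\bigl(U(Z\otimes |0\rangle\langle 0|)U^*\bigr)$. Because $|0\rangle\langle 0|$ is rank one, I would regularize by the density $\rho_\epsilon := (1-\epsilon)|0\rangle\langle 0| + (\epsilon/k) I_k$ (with $\tr\rho_\epsilon = 1$) and the perturbed channel $\Phi^\dagger_\epsilon(Z) := \tr_2(U(Z\otimes \rho_\epsilon)U^*) \to \Phi^\dagger(Z)$ as $\epsilon\downarrow 0$. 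Fixing a unitary $1$-design $G\subset U(k)$ so that $|G|^{-1}\sum_g gMg^* = k^{-1}(\tr M)I_k$, and setting $U_g := (I_m\otimes g) U$, a one-line ancilla computation yields the averaging identity
\[
|G|^{-1}\sum_{g\in G} U_g(Z\otimes \rho_\epsilon) U_g^* \;=\; k^{-1}\Phi^\dagger_\epsilon(Z)\otimes I_k\ .
\]
Applying joint convexity of $F_3$ to the $|G|$-fold convex combination and using (i) on each summand on the right gives
\[
F_3\bigl(k^{-1}\Phi^\dagger_\epsilon(X)\otimes I_k,\,k^{-1}\Phi^\dagger_\epsilon(Y)\otimes I_k,\,k^{-1}\Phi^\dagger_\epsilon(K)\otimes I_k\bigr) \;\leq\; F_3(X\otimes \rho_\epsilon,\,Y\otimes \rho_\epsilon,\,K\otimes \rho_\epsilon)\ .
\]
Property (ii) with $P=I_k$ together with the homogeneity collapses the left side to $F_3(\Phi^\dagger_\epsilon X,\Phi^\dagger_\epsilon Y,\Phi^\dagger_\epsilon K)$, while (ii) with $P=\rho_\epsilon$ collapses the right side to $F_3(X,Y,K)$; sending $\epsilon\downarrow 0$ and using continuity of $F_3$ on the open positive-definite cone (where the standing hypotheses place $\Phi^\dagger X$ and $\Phi^\dagger Y$) delivers Theorem~\ref{L3M}.

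The main obstacle I anticipate is the pure-state character of the Stinespring environment: in any minimal Kraus dilation the ancilla state is rank one, so the operands to which unitary invariance is most naturally applied are singular, forcing the $\rho_\epsilon$-regularization together with the tensor-scaling bookkeeping of (ii) in order to recover $F_3(X,Y,K)$ exactly in the limit. The one genuinely computational step of the plan is the verification of (ii); once that is in hand every remaining inequality is a direct consequence of joint convexity of $F_3$ or of cyclicity of the trace.
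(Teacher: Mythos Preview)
Your approach is essentially the one in the paper: reduce to the joint convexity of Theorem~\ref{L3} via a Stinespring dilation together with an Uhlmann-type averaging over unitaries (your $1$-design on the ancilla plays exactly the role of Lemma~\ref{ullm}), and then exploit the degree-one homogeneity of $F_3$ to collapse the tensor factors. The paper organizes this slightly differently --- it first isolates the partial-trace case $\Psi_m^\dagger$ and afterwards invokes the factorization of Theorem~\ref{strep}, whereby the block structure of $\Xi_{mp,q}^\dagger$ makes the singular embedding harmless without any $\epsilon$-regularization --- but the ingredients are identical.

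One small correction: the standing hypotheses do \emph{not} place $\Phi^\dagger(X)$ and $\Phi^\dagger(Y)$ in $M_m^{++}(\C)$; the paper gives an explicit counterexample just before the statement of Theorem~\ref{L1M}. This does not damage your argument, since $\rho_\epsilon>0$ forces $\Phi_\epsilon^\dagger(X),\Phi_\epsilon^\dagger(Y)\in M_m^{++}(\C)$ for every $\epsilon>0$, and the integrand defining $F_3$ is nonnegative and converges pointwise in $s$ as $\epsilon\downarrow 0$, so Fatou's lemma carries the inequality through the limit.
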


\begin{thm}[Monotonicity version of Theorem~\ref{Lind}]\label{LindM}   For all $m,n\in \N$, all $X,Y \in M_m^{+}(\C)$, and all completely positive unital maps $\Phi: M_n(\C)\to M_m(\C)$,  
\begin{equation}\label{lieb8}
 D(\Phi^\dagger(X)||\Phi^\dagger(Y))  \leq D(X||Y)\ .
\end{equation}
\end{thm}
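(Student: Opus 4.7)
My plan is to mirror Lindblad's original derivation of Theorem~\ref{Lind} from Theorem~\ref{L1}, but working at the monotonicity level: deduce Theorem~\ref{LindM} from Theorem~\ref{L1M} by differentiating in $t$ at $t = 1$. A short rescaling argument, using that $\Phi^\dagger$ is trace-preserving (because $\Phi$ is unital) together with the elementary identity $D(aX||bY) = aD(X||Y) + a\tr[X]\log(a/b)$, reduces the statement to the case $\tr[X] = \tr[Y] = 1$, so that the differentiation formula \eqref{lieb4} applies verbatim on both $M_m(\C)$ and $M_n(\C)$.

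The main step is then to apply Theorem~\ref{L1M} with $K = I$. Unitality of $\Phi$ gives $\Phi(I) = I$, and \eqref{lieb21} collapses to
\[
 \tr[Y^{1-t} X^t] \;\leq\; \tr[\Phi^\dagger(Y)^{1-t}\Phi^\dagger(X)^t]
\]
for every $0 \le t \le 1$. Both sides equal $1$ at $t = 1$ (using $\tr[\Phi^\dagger(X)] = \tr[X] = 1$), so subtracting from $1$ and dividing by $1 - t > 0$ preserves the inequality:
\[
\frac{1-\tr[Y^{1-t}X^t]}{1-t} \;\geq\; \frac{1-\tr[\Phi^\dagger(Y)^{1-t}\Phi^\dagger(X)^t]}{1-t}\ .
\]
Passing to the limit $t\uparrow 1$ and invoking \eqref{lieb4} on each side yields precisely $D(X||Y) \geq D(\Phi^\dagger(X)||\Phi^\dagger(Y))$.

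The only subtlety, and the one place I expect the argument to require a little extra work, is the possibility that $Y$ or $\Phi^\dagger(Y)$ fails to be invertible, so that \eqref{lieb4} is not directly applicable. If $\mathrm{supp}(X) \not\subseteq \mathrm{supp}(Y)$, then $D(X||Y) = +\infty$ and the inequality is vacuous; otherwise, one regularizes by replacing $Y$ with $Y + \varepsilon I$, applies the argument above, and sends $\varepsilon \downarrow 0$, invoking lower semicontinuity of the relative entropy in its second argument. This regularization is the sole departure from a single clean application of Theorem~\ref{L1M} with $K = I$; modulo the proof of Theorem~\ref{L1M} itself, which carries the real content, Theorem~\ref{LindM} is essentially a one-line corollary obtained by differentiation.
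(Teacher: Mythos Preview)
Your argument is correct, but it is not the route the paper takes. The paper proves Theorem~\ref{LindM} from Theorem~\ref{Lind} (the joint convexity of the relative entropy) via the machinery of Section~3: it uses the additivity and homogeneity of $D(\cdot\|\cdot)$, Uhlmann's Lemma~\ref{ullm} to write $\tfrac1m\Psi_m\Psi_m^\dagger$ as an average of unitary conjugations, applies the joint convexity, and then invokes the Stinespring factorization (Theorem~\ref{strep}) to pass from partial traces to general completely positive trace preserving maps. In other words, the paper goes \emph{convexity $\Rightarrow$ monotonicity} directly at the level of the relative entropy. Your route instead works entirely at the monotonicity level: you take Theorem~\ref{L1M} as given, specialize to $K=I$ (using $\Phi(I)=I$), and differentiate, exactly mirroring Lindblad's derivation of Theorem~\ref{Lind} from Theorem~\ref{L1}. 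This is a perfectly legitimate and arguably more economical path once Theorem~\ref{L1M} is in hand, since the Uhlmann/Stinespring work has already been done there; it also makes transparent that the implication $\text{L1}\Rightarrow\text{Lind}$ and the implication $\text{L1M}\Rightarrow\text{LindM}$ are the same differentiation argument. The paper's approach, on the other hand, illustrates the general convexity-to-monotonicity template that it applies uniformly to all four theorems, and does not require invoking the (harder) Theorem~\ref{L1M}. Your handling of the normalization and the $\varepsilon$-regularization for possibly singular $Y$ is appropriate.
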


The inequality \eqref{lieb8} is known as  the {\em Data Processing Inequality}, and is due to Lindblad \cite{Lind74}.  It implies that all quantum operations 
performed on density matrices $X$ and $Y$ reduce their relative entropy, and this has the effect of making the states harder to 
experimentally distinguish.  It is one of the cornerstones of quantum information theory; see the introduction of \cite{CFL18} for more information.
The first proof of Theorem~\ref{L1M} in this form is due to Uhlmann \cite[Proposition 17]{Uh77}.

The passage from Theorems~\ref{L1M}, \ref{L2M}, \ref{L3M}, \ref{LindM} and \ref{ACTM} back to Theorems~\ref{L1}, \ref{L2}, \ref{L3}, \ref{Lind} and \ref{ACT}
is easy to explain: In each case, one takes $\Phi^\dagger$ to be the partial trace map $\Psi_2^\dagger$ where $\Psi_m$ is defined in \eqref{contomon1I}.  For instance, suppose we have proved Theorem~\ref{L1M}. 
Let $X_1$, $X_2$, $Y_1$ and $Y_2$ be positive definite in $M_m(\C)$, and define
$$
X = \left[\begin{array}{cc} X_1 & 0 \\ 0 & X_2\end{array}\right]\qquad{\rm and}\qquad Y = \left[\begin{array}{cc} Y_1 & 0 \\ 0 & Y_2\end{array}\right]\ .
$$
Then for any $K\in M_n(\C)$, 
$$
 Y^{1-t}\Psi_2(K)X^t  =   \left[\begin{array}{cc} Y_1^{1-t}K X_1^t & 0 \\ 0 & Y_2^{1-t}KX_2^t\end{array}\right]
$$
so that $\tr[\Psi_2(K^*) Y^{1-t}\Psi_2(K)X^t ] = \tr[K^*Y_1^{1-t}K X_1^t] + \tr[K^*Y_2^{1-t}KX_2^t]$.  On the other hand,
$$\tr[ K^*\Psi_2^\dagger(Y)^{1-t}K \Psi_2^\dagger(X)^t ] = \tr[K^*(Y_1+Y_2)^{1-t} K (X_1+X_2)^t]\ .$$
Then by \eqref{lieb21}
\begin{equation*}
 \tr[K^*Y_1^{1-t}K X_1^t] + \tr[K^*Y_2^{1-t}KX_2^t]  \leq   \tr[K^*(Y_1+Y_2)^{1-t} K (X_1+X_2)^t]\ .
\end{equation*} 
Since $(X,Y)\mapsto \tr[K^*Y^{1-t}K X^t]$ is homogeneous of degree one, this is equivalent to \eqref{lieb1} for $s= 1-t$, and we have already explained how this implies the general case.  This proves that Theorem~\ref{L1} is a simple consequence of Theorem~\ref{L1M}, and  this argument is easily adapted to  the other  pairs of theorems. 

The passage from Theorems~\ref{L1}, \ref{L2}, \ref{L3} and  \ref{Lind}  to  Theorems~\ref{L1M}, \ref{L2M}, \ref{L3M} and  \ref{LindM}  is 
almost as simple. It relies on the Stinespring factorization \cite{S55} of any completely positive trace preserving map $\Phi^\dagger$ into the composition of three 
especially simple completely positive trace preserving maps, the third of which is the partial trace.
Then, a very useful lemma of  Uhlmann \cite{U71} that expresses partial traces as averages over unitary 
conjugations allows the convexity or concavity to be applied, and yields the monotonicity theorems. This is explained in the next section in complete detail. All of the arguments are elementary. 

One might expect, in analogy with Theorem~\ref{L2M}, that there would be a monotonicity variant of the Ando Concavity Theorem \ref{ACT} stating that 
for all $0 \leq t \leq 1$, all $m,n\in \N$,  all completely positive unital maps $\Phi: M_n(\C)\to M_m(\C)$,  
all $Y \in M_m^+(\C)$ and $X\in M_m^{++}(\C)$ with $\Phi^\dagger(X) \in M_n^{++}(\C)$, and all $K\in M_n(\C)$, 
\begin{equation}\label{andwro}
 \tr[\Phi(K^*)Y^{1+t} \Phi(K) X^{-t}]  \geq  \tr[K^* \Phi^\dagger(Y)^{1+t} K \Phi^\dagger(X)^{-t}] \ .
\end{equation}
{\em However, \eqref{andwro} is false}.  Consider the completely positive map $\Phi: M_n(\C) \to M_n(\C)$ given by $\Phi(A) = \frac1n \tr[A]I$ . Note that this is both unital and trace preserving, 
and $\Phi^\dagger = \Phi$.   Choosing $K\neq 0$ with $\tr[K] =0$, the left side of \eqref{andwro} is zero, while the right side is $\frac1n \tr[K^*K]( \tr[Y])^{1-t} (\tr[X])^{-t} > 0$.
However, we do have monotonicity under partial traces, which is shown to follow from the convexity in the next section:

\begin{thm}[Restricted monotonicity version of Theorem~\ref{ACT}]\label{ACTM}   For all $0 \leq t \leq 1$, all $m,n\in \N$,  
all $Y \in M_{mn}^+(\C)$ and $X\in M_{mn}^{++}(\C)$,  with $\Psi_m$ given by \eqref{contomon1I},
\begin{equation}\label{lieb21b}
 \tr[\Psi_m(K^*)Y^{1+t} \Psi_m(K) X^{-t}]  \geq  \tr[K^* \Psi_m^\dagger(Y)^{1+t} K \Psi_m^\dagger(X)^{-t}] \ .
\end{equation}
\end{thm}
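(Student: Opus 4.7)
My plan is to deduce Theorem~\ref{ACTM} from the joint convexity of Ando's Theorem~\ref{ACT} by an Uhlmann unitary-averaging technique, using in an essential way that $\Psi_m(K)$ lies in the commutant of the averaging unitaries. The exponents $q=1+t$ and $r=t$ satisfy $1\le q\le 2$, $0\le r\le 1$ and $q-r=1$, so Theorem~\ref{ACT} applied to $\widetilde K := \Psi_m(K)\in M_{mn}(\C)$ asserts that $(X,Y)\mapsto \tr(\widetilde K^* Y^{1+t}\widetilde K X^{-t})$ is jointly convex on $M_{mn}^{++}(\C)\times M_{mn}^+(\C)$.

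Next, I realize $\tfrac{1}{m}\Psi_m\circ\Psi_m^\dagger$ as an equal-weight convex combination of unitary conjugations on $M_{mn}(\C)\cong M_m(\C)\otimes M_n(\C)$. Let $\omega = e^{2\pi i/m}$, let $D_k := \operatorname{diag}(\omega^k,\omega^{2k},\ldots,\omega^{mk})\in M_m(\C)$ for $k=0,\ldots,m-1$, and let $S\in M_m(\C)$ be the cyclic shift. Setting $U_{j,k} := (S^j D_k)\otimes I_n$, the Fourier identity $\tfrac{1}{m}\sum_k\omega^{k(i-\ell)} = \delta_{i\ell}$ shows that $\tfrac{1}{m}\sum_k D_k^* A D_k$ annihilates the off-diagonal blocks of $A$, and averaging over the shift then cyclically symmetrises the diagonal blocks, yielding
\begin{equation*}
\frac{1}{m^2}\sum_{j,k=0}^{m-1} U_{j,k}^* A U_{j,k} \;=\; \frac{1}{m}\Psi_m(\Psi_m^\dagger(A))\qquad\text{for all }A\in M_{mn}(\C).
\end{equation*}
The crucial point is that each $U_{j,k}$ commutes with $\widetilde K = I_m\otimes K$.

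I then apply the joint convexity of step one to the equal-weight convex combination of the $m^2$ pairs $(U_{j,k}^* X U_{j,k},\, U_{j,k}^* Y U_{j,k})$. On the right-hand side of Jensen's inequality, functional calculus commutes with unitary conjugation, and the commutation $U_{j,k}\widetilde K = \widetilde K U_{j,k}$ together with cyclicity of trace collapses each of the $m^2$ summands to the common value $\tr(\widetilde K^* Y^{1+t}\widetilde K X^{-t}) = \tr(\Psi_m(K^*) Y^{1+t}\Psi_m(K) X^{-t})$, which is the left side of \eqref{lieb21b}. On the left-hand side of Jensen, the averaged arguments are $\tfrac{1}{m}\Psi_m(\Psi_m^\dagger(Y))$ and $\tfrac{1}{m}\Psi_m(\Psi_m^\dagger(X))$; using $\Psi_m(B)^p = \Psi_m(B^p)$ for positive $B$, $\widetilde K^*\Psi_m(C)\widetilde K = \Psi_m(K^* C K)$, and $\tr\circ\Psi_m = m\,\tr$, the scalar factors combine as $m^{-(1+t)}\cdot m^t\cdot m = 1$, producing exactly $\tr(K^*\Psi_m^\dagger(Y)^{1+t} K \Psi_m^\dagger(X)^{-t})$, the right side of \eqref{lieb21b}. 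Jensen's inequality is then precisely \eqref{lieb21b}.

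The substantive ingredient -- and the reason this argument is intrinsically restricted to partial traces -- is the commutation of $\Psi_m(K)$ with the Fourier/shift unitaries $U_{j,k}$; this is what makes the many-term right side of Jensen collapse to a single term. For a general unital completely positive $\Phi$ there is no analogous commutation, and indeed the inequality fails by the example immediately preceding the theorem. The only real technical obstacle is bookkeeping the scalar powers of $m$ so that they all cancel at the end.
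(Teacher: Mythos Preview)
Your proof is correct and follows essentially the same route as the paper. The paper's proof is the one-liner ``proceed exactly as in the proof of Theorem~\ref{L1M}, with convexity replacing concavity,'' and you have unpacked precisely that argument: your explicit Fourier/shift unitaries $U_{j,k}$ are those of Uhlmann's Lemma~\ref{ullm}, your observation that they commute with $\Psi_m(K)=I_m\otimes K$ is Remark~\ref{inv}, and your Jensen step with Ando's convexity in place of Lieb's concavity is exactly the intended substitution. Your scalar bookkeeping $m^{-(1+t)}\cdot m^{t}\cdot m=1$ is the homogeneity argument the paper uses in the analogous proofs.
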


We close this section with some remarks on the work of Wigner and Yanase \cite{WY1,WY2}. A 1960 paper of Araki and Yanase \cite{AY60} showed that the extent to which a quantum  observable fails to commute with a conserved quantity limits the precision with which it can be measured.  A few years later, when Yanase was visiting the I.A.S. in Princeton, he and Wigner investigated the implications for measuring the amount of information in a quantum state $\rho$; i.e., a density matrix. They presented a list of axioms that a good ``measure of information'' should satisfy, and in the case of a quantum mechanical density matrix $\rho$ and a self-adjoint  operator $H$, representing some conserved quantity such as the energy, they proposed \cite[Equation 2]{WY1} what they called the {\em skew information}:
$$
I(\rho,H) :=  -\frac12 \tr[[\rho^{1/2},H]^2]  = \tr[H^2\rho] - \tr[H\rho^{1/2} H \rho^{1/2}]
$$
Wigner and Yanase proved that for fixed $H$, $\rho \mapsto I(\rho,H)$ is convex, and this is evidently equivalent to the concavity of $\rho \mapsto 
\tr[H\rho^{1/2} H \rho^{1/2}]$. 
At first glance, this may look like a special case of \eqref{lieb1} for $s=t= \tfrac12$, but notice that  since the convexity proved by Wigner and Yanase does not depend on the normalization of $\rho$, we may consider arbitrary $X,Y\in M_n^{++}$ and $K \in M_n(\C)$ and form the block matrices
$$
\rho := \left[ \begin{array} {cc} Y & 0\\ 0 & X\end{array}\right] \quad{\rm and}\quad H :=  \left[ \begin{array} {cc} 0 & K\\ K^* & 0\end{array}\right]
$$
and then 
\begin{equation}\label{passageback}
\tr[H\rho^{1/2} H \rho^{1/2}]  =2 \tr [ K^* Y^{1/2}  KX ^{1/2}]\ ,
\end{equation}
and hence the convexity result of Wigner and Yanase is equivalent to the special case $s=t=\frac12$ of the Lieb Concavity Theorem.  

In the final paragraph of \cite{WY1}, Wigner and Yanase wrote that they were not sure that their proposed measure of information is the only one satisfying the axioms they specified, and they reported that they had also considered  $ -\frac12 \tr[[\log \rho,H][\rho,H]]$.  They also remark that Freeman Dyson pointed out  that these cases fell into a one parameter family of candidates
\begin{equation} -\frac12 \tr[[\rho^t,H][\rho^{1-t},H]]\ ,
\end{equation}
$0 < t \leq 1/2$ since
$$
\tr[[\log \rho,H][\rho,H]]  = \lim_{t\downarrow 0} \frac1t \tr[[\rho^t,H][\rho^{1-t},H]]\ .
$$
However, it is not clear that Wigner, Yanase or even Dyson conjectured anything about concavity or convexity in these cases, 
and Wigner and Yanase wrote that these other candidates have ``undesirable'' features, and appear to have been  dismissive of their further study. 

Nonetheless, Res Jost gave a partially alternate proof \cite{J70} of the theorem of Wigner and Yanase that was written during a visit he made to the I.A.S. in Princeton during Fall 1968. He thanks his friend Freeman Dyson for many discussions on the subject, and mentions the generalized cases proposed by Dyson. Jost does not ascribe any explicit conjecture to Dyson, but it certainly would appear that five years after the work of Wigner and Yanase, Dyson felt that his proposed functionals were worthy of investigation.

\section{From Convexity or Concavity  to Monotonicity}

A theorem of Stinespring \cite{S55} provides a factorization of any unital completely positive map $\Phi$ as a composition of three particularly simple completely positive maps.  
One of the factors has already been introduced: For $m\in \N$, let $\Psi_m$ be defined by \eqref{contomon1I} so that $\Psi_m^\dagger$ is defined by 
\eqref{contomon1BI}; i.e., so that $\Psi_m^\dagger$ is the partial trace.

Next,  for $p \leq q$, let $\Xi_{q,p}$ be the map from $M_q(\C)\to M_p(\C)$ that sends $Y\in M_q(\C)$ to upper left $p\times p$ block of $Y$. 
Then evidently, $\Xi^\dagger_{q,p}$ is the map sending $X\in M_p(\C)$ to the matrix in $M_q(\C)$ whose upper left $p\times p$ block is $X$, with all other entries being zero. 
 It is easy to write down the  Kraus representation of $\Xi_{q,p}$, and hence  $\Xi_{q,p}^\dagger$. 
 Let $W:\C^p \to \C^q$ be linear transformation by which the first $p$ entries of $W(v)$ are the same as those of $v$, in the same order, 
 and the rest, if any, are zero. Then for all $X\in M_p(\C)$,   $\Xi_{q,p}(X) = W^*XW$.  Evidently,  
 $\Xi_{q,p}$ is completely positive and unital, and $\Xi^\dagger_{q,p}$ is completely positive and trace preserving.

The following is a version of the Stinespring Representation Theorem for  unital and completely positive maps in the finite dimensional case. It is well-known to people who work on quantum information theory, but I do not know of an accessible written reference in this convenient form. For a thorough discussion of the standard operator-algebraic formulation, see \cite{Pa03}. 
 
\begin{thm}[Stinespring Representation]\label{strep}  Let $\Phi:M_p(\C)  \to M_q(\C)$ be completely positive and unital. Then there is a natural number $m$ such that $mp \geq q$, and a unitary $U\in M_{mp}(\C)$ such that for all $X\in M_p(\C)$
\begin{equation}\label{stine1}
\Phi(X) =    \Xi_{mq,q}  ( U^*\Psi_m(X)U )\ ,
\end{equation}
and consequently for all $Y\in M_q(\C)$, 
\begin{equation}\label{stine2}
\Phi^\dagger(Y) =   \Psi_m^\dagger( U\Xi_{mp,q}^\dagger(Y) U^* )\ .
\end{equation}
\end{thm}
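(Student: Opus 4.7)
The plan is to construct the unitary $U$ directly from a Kraus representation of $\Phi$ and verify \eqref{stine1} by a block-matrix computation; \eqref{stine2} will then follow by taking Hilbert--Schmidt adjoints.

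First, I would invoke the Kraus--Choi theorem cited just before \eqref{lieb6} to represent $\Phi:M_p(\C)\to M_q(\C)$ as
\[
\Phi(X) \;=\; \sum_{j=1}^{\ell} V_j^* X V_j
\]
for some $p\times q$ matrices $V_1,\dots,V_\ell$. The hypothesis that $\Phi$ is unital, i.e. $\Phi(I_p)=I_q$, translates to $\sum_{j=1}^{\ell} V_j^* V_j = I_q$. Now choose any integer $m\geq \ell$ with $mp\geq q$, pad the family by setting $V_{\ell+1}=\cdots=V_m=0$, and assemble the $mp\times q$ block-column matrix
\[
W \;=\; \left[\begin{array}{c} V_1 \\ V_2 \\ \vdots \\ V_m \end{array}\right].
\]
The identity $W^*W=\sum_{j=1}^{m} V_j^*V_j = I_q$ says that $W:\C^q\to\C^{mp}$ is an isometry, and the dimensional hypothesis $mp\geq q$ is exactly what is needed in order to complete the columns of $W$ to an orthonormal basis of $\C^{mp}$, yielding a unitary $U\in M_{mp}(\C)$ whose first $q$ columns are those of $W$.

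With this choice of $U$, a direct block multiplication shows that the first $q$ columns of $\Psi_m(X)U$ are the stacked blocks $XV_1,\dots,XV_m$, so that the upper-left $q\times q$ block of $U^*\Psi_m(X)U$ equals
\[
W^*\Psi_m(X)W \;=\; \sum_{j=1}^{m} V_j^* X V_j \;=\; \Phi(X)\ .
\]
Extracting this block is exactly the action of $\Xi_{mp,q}$, which verifies \eqref{stine1} (with $\Xi_{mp,q}$ in the role indicated by the dimensions on both sides of \eqref{stine2}). Formula \eqref{stine2} is then obtained by taking the Hilbert--Schmidt adjoint of both sides of \eqref{stine1}: since $U$ is unitary the adjoint of $Z\mapsto U^*ZU$ is $Z\mapsto UZU^*$, and the adjoints of $\Xi_{mp,q}$ and $\Psi_m$ are, respectively, the embedding of $M_q(\C)$ as the upper-left $q\times q$ block of $M_{mp}(\C)$ and the partial trace $\Psi_m^\dagger$ introduced in \eqref{contomon1BI}.

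The only step in the argument that is more than bookkeeping is the extension of the isometry $W$ to a unitary, but this is precisely what the hypothesis $mp\geq q$ guarantees; the freedom to enlarge $m$ beyond $\ell$ by padding Kraus operators with zeros shows that such an $m$ always exists, so there is no genuine obstacle. Everything else reduces to indexing block-diagonal matrices of copies of $X$.
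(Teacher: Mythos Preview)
Your argument is correct. Note, however, that the paper does not actually supply a proof of this theorem: it is stated as a well-known finite-dimensional version of Stinespring's factorization, with a reference to \cite{Pa03} for the standard operator-algebraic formulation, and the author remarks that he knows of no accessible written reference in this convenient form. Your proof via the Kraus representation, stacking the Kraus operators into an isometry and completing to a unitary, is exactly the standard elementary argument that fills this gap, and it is the approach one would expect the author had in mind.

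You also correctly spotted and handled the apparent typo in \eqref{stine1}: since $U\in M_{mp}(\C)$, the compression map must be $\Xi_{mp,q}$ rather than $\Xi_{mq,q}$, consistent with the dimensions in \eqref{stine2} and the surrounding discussion.
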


In other words, by \eqref{stine1} every unital completely positive map from $M_p(\C)$  to $M_q(\C)$ acting on $X\in M_p(\C)$ consists of  {\em first} sending 
$X$ to the block diagonal $m\times m$ matrix  each of whose diagonal entries is $X$,   {\em second} applying a unitary conjugation to the 
$mp \times mp$ matrix,  {\em and third} picking off the upper left $q\times q$ corner of the result.  

Likewise, by \eqref{stine2} every trace preserving completely positive map from $M_q(\C)$  to $M_p(\C)$ acting on $Y\in M_q(\C)$ consists of {\em first} embedding $Y$ as the upper left $q\times q$ corner of matrix in $M_{mp}(\C)$  whose other entries are all zero, {\em second}, applying a unitary conjugation, and the regarding the result as an $m\times m$ block matrix with entries in $M_p(\C)$, {\em and third}, taking the partial trace.

The following lemma due to Uhlmann \cite{Uh73} connects partial traces with convexity. 

\begin{lm}\label{ullm} There is an explicit set $\{U_1,\dots,U_{m^2}\}$ of unitaries in $M_{mn}(\C)$ such that for all $Y \in  M_{mn}(\C)$, 
\begin{equation}\label{contomon1C}
\frac{1}{m^2}\sum_{k=1}^{m^2} U_k^*YU_k = \frac{1}{m} \left[\begin{array}{ccc} \Psi_m^\dagger(Y)  & \phantom{X} &  \phantom{X}\\   \phantom{X} & \ddots &  \phantom{X} \\  \phantom{X} &  \phantom{X} & \Psi_m^\dagger(Y)\end{array}\right] =  \frac1m \Psi_m(\Psi_m^\dagger(Y)) = \frac1m I_{\C^m}\otimes \Psi_m^\dagger(Y)\ ,
\end{equation}
where we have used the obvious identification of $\C^{mn}$ with $\C^m \otimes \C^n$. 
\end{lm}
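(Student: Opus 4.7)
The plan is to exhibit the unitaries $U_k$ explicitly using a variant of the Weyl (``clock and shift'') operators on $\C^m$, tensored with the identity on $\C^n$. Let $\omega = e^{2\pi i/m}$ and, in terms of the standard basis $\{e_0,\dots,e_{m-1}\}$ of $\C^m$, define $X,Z\in M_m(\C)$ by $X e_j = e_{j+1 \bmod m}$ and $Z e_j = \omega^j e_j$. Both $X$ and $Z$ are unitary, they satisfy the commutation relation $ZX = \omega XZ$, and the $m^2$ operators $\{X^a Z^b : 0\le a,b\le m-1\}$ form an orthogonal basis of $M_m(\C)$ with respect to the Hilbert--Schmidt inner product. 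I will then take $\{U_1,\dots,U_{m^2}\}$ to be a relabeling of $\{X^a Z^b\otimes I_n\}_{0\le a,b\le m-1}$, which are unitaries in $M_{mn}(\C) \cong M_m(\C)\otimes M_n(\C)$.

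The key auxiliary step is to verify the ``twirl to scalar'' identity on $M_m(\C)$:
\begin{equation}\label{twirl}
\frac{1}{m^2}\sum_{a,b=0}^{m-1}(X^aZ^b)^* M (X^aZ^b) \;=\; \frac{\tr M}{m}\, I_m
\end{equation}
for every $M\in M_m(\C)$. I would prove this by noting that the map $T$ defined by the left-hand side is linear and trace-preserving, and that it commutes with conjugation by every $X^c Z^d$: using $Z^b X^c = \omega^{bc} X^c Z^b$, conjugation by $X^c Z^d$ merely reindexes the sum (the scalar phase $\omega^{\pm bc}$ cancels between $(X^aZ^b X^c Z^d)^*$ and $X^aZ^b X^c Z^d$). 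Thus $T(M)$ commutes with every $X^c Z^d$; since the algebra generated by $X$ and $Z$ has dimension $m^2$ and equals all of $M_m(\C)$, Schur's lemma (or a direct count) forces $T(M) = c(M)\,I_m$, and taking the trace gives $c(M)=\tr(M)/m$.

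With \eqref{twirl} in hand, the tensor-product identity is a one-line calculation. Decomposing an arbitrary $Y\in M_{mn}(\C)$ as $Y = \sum_{i,j=1}^m E_{ij}\otimes Y_{ij}$ with $Y_{ij}\in M_n(\C)$ and $E_{ij}$ the matrix units of $M_m(\C)$, linearity and \eqref{twirl} yield
\begin{equation*}
\frac{1}{m^2}\sum_{a,b}(X^aZ^b\otimes I_n)^* Y (X^aZ^b\otimes I_n)
=\sum_{i,j}\frac{\tr E_{ij}}{m}\, I_m\otimes Y_{ij}
=\frac{1}{m}\, I_m\otimes \sum_{i=1}^{m} Y_{ii}.
\end{equation*}
Comparison with \eqref{contomon1BI} identifies $\sum_i Y_{ii}$ with $\Psi_m^\dagger(Y)$, and the right-hand side is by definition $\frac{1}{m}\Psi_m(\Psi_m^\dagger(Y)) = \frac{1}{m}\,I_{\C^m}\otimes \Psi_m^\dagger(Y)$, as required.

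The only genuine obstacle is \eqref{twirl}; everything else is bookkeeping. If one prefers to avoid Schur's lemma, \eqref{twirl} can instead be checked entry-by-entry on the matrix units $E_{ij}$, using that $(X^aZ^b)^* E_{ij}(X^aZ^b)$ is a scalar multiple of $E_{i-a,j-a}$, and that summing the relevant phases $\omega^{b(i-j)}$ over $b$ kills all off-diagonal contributions. Either route is elementary.
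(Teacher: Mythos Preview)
Your proof is correct and uses essentially the same construction as the paper: the paper's unitaries $C_\ell U_k$ are precisely your $X^a Z^b \otimes I_n$ (with $U_k = Z^k\otimes I_n$ coming from the phase projections $P_j$, and $C_\ell$ being the cyclic shift $X^{\ell}\otimes I_n$). The only difference is that the paper verifies the averaging identity by a direct two-step computation (first averaging over phases to kill off-diagonal blocks, then over cyclic shifts to equalize the diagonal blocks), whereas you package the same calculation as a Schur-type commutant argument on $M_m(\C)$.
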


We provide a simple proof for completeness.

\begin{proof}  The second and third equalities in \eqref{contomon1C}  are evident, and we need only prove the first. Write vectors  $v \in \C^{mn}$ in the form 
$v = (v_1,\dots,v_m)$ where each entry is in $\C^n$. 
For each $j=1,\dots,m$, let  $P_j$ be the orthogonal projection on $\C^{mn}$  leaves the $\ell$th entry of $v = (v_1,\dots,v_m)$ as it  is, and sends the others to $0$. That is, 
$P_1(v) = (v_1,0, \dots 0)$, $P_2(v) = (0,v_2,0,\dots,)$, etc.  
For $k=1,\dots,m$, define ${\displaystyle U_k =  \sum_{j=1}^m e^{i2\pi jk/m}J_j}$.
Since $J_k J_\ell = \delta_{k,\ell} P_\ell$, each $U_k$ is unitary. Moreover,
$$
\frac{1}{m}\sum_{k=1}^m U_k^*Y U_k =    \sum_{j,\ell=1}^m \left(\frac1m \sum_{k=1}^m (e^{i2\pi (j-\ell)/m})^k\right) P_\ell Y P_j =  \sum_{j,\ell=1}^m \delta_{j,\ell}P_\ell Y P_j  = \sum_{j=1}^m P_j YP_j \ .
$$
That is,
$$
\frac{1}{m}\sum_{k=1}^m U_k^*\left(  \left[\begin{array}{ccc} 
Y_{1,1} & \cdots & Y_{1,m}\\ \vdots & \ddots &\vdots\\
Y_{m,1} & \cdots & Y_{m.m}\end{array}\right]   \right) U_k  = 
\left[\begin{array}{ccc} Y_{1,1} & \phantom{X} &  \phantom{X}\\   \phantom{X} & \ddots &  \phantom{X} \\  \phantom{X} &  \phantom{X} & Y_{m,m}\end{array}\right]
$$
where the right hand side denotes the block diagonal matrix whose $j$th diagonal block is $Y_{j,j}$.  Now for $1 \leq \ell \leq m$ define $C_\ell$ to be the $\ell$th cyclic permutation matrix acting on the $m$-fold direct sum of $\C^m$ by
$$C_j(v_1,\dots, v_m) = (v_j,\dots,v_m,v_1,\dots)\ .$$
Then evidently the $m^2$ unitary matrices of the form $C_\ell U_k$, $1 \leq k,\ell \leq m$ have the desired property.
\end{proof}

\begin{remark}\label{inv}  Suppose $Y\in M_{mn}(\C)$ is of the from $Y = \Psi_m(A) = 
\left[\begin{array}{ccc} A & & \\ & \ddots & \\ & & A\end{array}\right]$. Then each of the 
$m^2$ unitaries
$U_k$ in Lemma~\ref{ullm} commutes with $Y$.  One way to see this is directly from the definitions -- it is easy to check that for each 
$1 \leq j \leq m$, the matrices $J_j$ and $C_j$ used to constructed the unitaries commute with such $Y$.  
Another is to note that for such $Y$,  we have $Y = m^{-2} \sum_{k=1}^{m^2}U_k^*YU_k$, so that
$$\tr\left[\left(m^{-2} \sum_{k=1}^{m^2}U_k^*YU_k\right)^*\left(m^{-2} \sum_{k=1}^{m^2}U_k^*YU_k\right)\right]  = \tr[Y^*Y] =  
m^{-2} \sum_{k=1}^{m^2}\tr[ (U_k^*YU_k)^*(U_k^*YU_k)]\ .$$
By the strict convexity of $Y \mapsto \tr[W^*W]$, $U_k^*YU_k$ is independent of $k$, and in our collection of unitaries is $I = C_mU_m$. 
\end{remark}

With these tools in hand, we proceed to deduce Theorems \ref{L1M}, \ref{L2M}, \ref{L3M}, \ref{LindM} and \ref{ACTM} from their convexity/concavity counterparts. It is simplest to begin with the relative entropy. 

\begin{proof}[Proof of Theorem~\ref{LindM}] We use the evident {\em additivity} of the relative entropy, namely that for diagonal block matrices
$A := \left[\begin{array}{cc} A_1 & \\ & A_2\end{array}\right]$ and  $B := \left[\begin{array}{cc} B_1 & \\ & B_2\end{array}\right]$ with $A_1,B_1 \in M_m^{+}(\C)$ and $A_2,B_2 \in M_n^{+}(\C)$
$$
D(A||B) = D(A_1||B_1) + D(A_2||B_2)\ ,
$$
and the fact that the relative entropy is homogeneous of degree one. 

Therefore, for all $X,Y \in  M_q^+(\C)$, 
\begin{eqnarray*}
 D(\Psi_m^\dagger(X) \ ||\ \Psi_m^\dagger(Y)) &=& D\left(\frac{1}{m} \Psi_m \Psi_m^\dagger(X)  \ \bigg|\bigg| \   \frac{1}{m} \Psi_m \Psi_m^\dagger(Y) \right) \\
 &=& D\left( \frac{1}{m^2}\sum_{k=1}^{m^2} U_k^*XU_k \ \bigg|\bigg| \   \frac{1}{m^2}\sum_{k=1}^{m^2} U_k^*YU_k\right) \\
 &\leq& \frac{1}{m^2}\sum_{k=1}^{m^2}D(U_k^*XU_k || U_k^*YU_k) = D(X||Y)\ .
\end{eqnarray*}
The first equality is the additivity and homogeneity discussed above, the second it Uhlmann's Lemma. The inequality is the joint convexity, and the final equality is the unitary invariance of the relative entropy.   This proves the monotonicity of the relative entropy under partial traces.  

The rest is an easy consequence of Theorem~\ref{strep}  It is evident that for $mp \geq q$,
$$D(\Xi_{mp,q}(X)||\Xi_{mp,q}(Y)) = D(X||Y)$$
since the extra zero entries change nothing.   Again using the unitary invariance,
$$D(\ U(\Xi_{mp,q}^\dagger(X)) U^* \ ||\  U(\Xi_{mp,q}^\dagger(Y) )U^*\ ) = D(X||Y)\ .$$
That is, the first two components in the factorization of $\Phi^\dagger$ given by \eqref{stine2} do not have any effect at all on the relative entropy. 
\end{proof}

Thus, to prove that the relative entropy is monotone decreasing under general completely positive trace preserving maps $\Phi^\dagger$, {\em it suffices to prove that it is monotone decreasing under the partial trace}, $\Psi_m^\dagger$, and   any strict inequality in the Data Processing Inequality arises from taking the partial trace; the other two factors in the Stinespring factorization of $\Phi^\dagger$ have no effect on the relative entropy.  This fact is useful in studying cases of equality and stability for the Data Processing Inequality \cite{CV20}. 

We next prove Theorem~\ref{L1M}. The approach is similar, but now we shall need to invoke Remark~\ref{inv}.

\begin{proof}[Proof of Theorem~\ref{L1M}]  As before, we first prove monotonicity under the partial trace. That is, we show that for all $K\in M_q(\C)$, and all $X,Y\in M_{mq}^+(\C)$, and all $0 < t < 1$, 
\begin{equation*}
\tr[\Psi_m(K^*) Y^{1-t} \Psi_m(K) X^t]   \leq  \tr[ K^* \Psi_m^\dagger(Y)^{1-t} K \Psi_m^\dagger(X)^t]\ .
\end{equation*}
The right hand side is equal  to
$$\frac1m \tr[ \Psi_m(K^*) \Psi_m(\Psi_m^\dagger(Y)^{1-t} \Psi_m(K) \Psi_m(\Psi_m^\dagger(X)^{t}]\ ,$$
since inserting $\Psi_m$ in front of each factor inside the trace simple produced $m$ replicas of the original product. Now use the homogeneity to bring the factor of $1/m$ inside and then apply Lemma~\ref{ullm}:

\begin{eqnarray*}
\tr[ K^* \Psi_m^\dagger(Y)^{1-t} K \Psi_m^\dagger(X)^t]  &=& \tr\left[ \Psi_m(K^*) \left(\frac1m\Psi_m(\Psi_m^\dagger(Y)\right) ^{1-t}  \Psi_m(K)\left(\frac1m \Psi_m(\Psi_m^\dagger(X)\right)^{t}\right]\\
&=&   \tr\left[ \Psi_m(K^*) \left(\frac{1}{m^2} \sum_{k=1}^{m^2} U_k^* Y U_k \right) ^{1-t} \Psi_m(K) \left(\frac{1}{m^2} \sum_{k=1}^{m^2} U_k^* XU_k\right)^{t}\right]\\
&\geq&
\frac{1}{m^2} \sum_{k=1}^{m^2} \tr\left[ \Psi_m(K^*) \left( U_k^* Y^{1-t} U_k \right)  \Psi_m(K) \left( U_k^* X^t U_k\right)\right]\\
&=&
\frac{1}{m^2} \sum_{k=1}^{m^2} \tr\left[ \left(U_k\Psi_m(K^*)  U_k^*\right) Y^{1-t} \left(U_k  \Psi_m(K)  U_k\right)X^t \right]\\
&=& \tr[\Psi_m(K^*) Y^{1-t} \Psi_m(K) X^t] \ ,
\end{eqnarray*}
where the second equality is from Lemma~\ref{ullm}, the inequality is the Lieb Concavity Theorem, the third equality is simple regrouping of terms and cyctlicty of the trace, and the final equality is from Remark~\ref{inv}. 

Now consider a general completely positive trace preserving map $\Phi^\dagger$ with the Stinespring factorization $\Phi^\dagger(Y) =   \Psi_m^\dagger( U\Xi_{mp,q}^\dagger(Y) U^* )$.
Then by the above
\begin{eqnarray*}
\tr[ K^* \Phi^\dagger(Y)^{1-t} K \Phi^\dagger(X)^t]  &=& \tr[ K^* \Psi_m^\dagger(U\Xi_{mp,q}^\dagger(Y) U^* )^{1-t} K \Psi_m^\dagger( U\Xi_{mp,q}^\dagger(X) U^* )^t] \\
&\geq&  \tr[ \Psi_m(K^*) ( U\Xi_{mp,q}^\dagger(Y) U^* )^{1-t} \Psi_m(K) ( U\Xi_{mp,q}^\dagger(X) U^* )^t]\\
&=&  \tr[ (U^*\Psi_m(K^*) U)(\Xi_{mp,q}^\dagger(Y) )^{1-t} (U^* \Psi_m(K) U)(\Xi_{mp,q}^\dagger(X) )^t]\\
&=&  \tr[ (U^*\Psi_m(K^*) U)(\Xi_{mp,q}^\dagger(Y^{1-t}) ) (U^* \Psi_m(K)  U)(\Xi_{mp,q}^\dagger(X^t) )]\\
&=&  \tr[\Phi(K^*) Y^{1-t} \Phi(K) X^t]  \ ,
\end{eqnarray*}
where the last equality is from the obvious fact that for an $A,B\in M_{mp}(\C)$ and any $C,D\in M_q(\C)$, 
$\tr[ A \Xi^\dagger_{mp,q}(C) B \Xi^\dagger _{mp,q}(D)]  = \tr[ \Xi_{mp,q}(A) C \Xi_{mp,q}(B) D]$.
\end{proof}

\begin{proof}[Proof of Theorem~\ref{L3M}]
Again, we first prove monotonicity under the partial trace. 
That is, we show that for all $K\in M_{mq}(\C)$, and all $X,Y\in M_{mq}^+(\C)$, and all $0 < t < 1$, 
\begin{equation}\label{lieb23BB} 
 \tr \left[   \int_0^\infty  \Psi_m^\dagger(K^*)\frac{1}{sI + \Psi_m^\dagger(Y)} \Psi_m^\dagger(K) \frac{1}{sI + \Psi^\dagger_m(X)}{\rm d}s \right]   
  \leq   \tr \left[   \int_0^\infty K^*\frac{1}{sI + Y} K \frac{1}{sI + X}{\rm d}s \right]\ .
\end{equation}
We start with the left hand side and note that it is equal to 
\begin{multline*}
\frac1m \tr \left[   \int_0^\infty  \Psi_m(\Psi_m^\dagger(K^*))\Psi_m\left(\frac{1}{sI + \Psi_m^\dagger(Y)}\right)\Psi_m( \Psi_m^\dagger(K) )\Psi_m\left(\frac{1}{sI + \Psi^\dagger_m(X)}\right){\rm d}s \right]  =\\
\tr \left[   \int_0^\infty  \Psi_m\left (\frac1m \Psi_m^\dagger(K^*)\right)\Psi_m\left(\frac {1} {\tfrac{s}{m}I + \tfrac1m \Psi_m^\dagger(Y)}\right)\Psi_m\left( \frac1m\Psi_m^\dagger(K) \right)\Psi_m\left( \frac{1}{\tfrac{s}{m}I + \tfrac1m \Psi^\dagger_m(X)}\right)\frac{{\rm d}s }{m}\right]  \ .
\end{multline*} 
Then since $\Psi_m$ is a $*$-isomorphism,
$$
\Psi_m\left( \frac{1}{\tfrac{s}{m}I + \tfrac1m \Psi_m^\dagger(Y)}\right)  =    \frac{1}{\Psi_m\left(\tfrac{s}{m}I_q + \tfrac1m\Psi_m^\dagger(Y)\right)}   =      \frac{1}{\left(\tfrac{s}{m}I_{mq} + \tfrac1m\Psi_m( \Psi_m^\dagger(Y)\right)} 
$$
where we have written $I_q$ to denote the identity in $M_q(\C)$, and  $I_{mq}$ to denote the identity in $M_{mq}(\C)$.
Thus, the left side of \eqref{lieb23BB} is equal to 
$$
\tr \left[   \int_0^\infty  \left(\frac1m \Psi_m(\Psi_m^\dagger(K^*)\right)
 \frac{1}{\left(t I_{mq} + \tfrac1m\Psi_m( \Psi_m^\dagger(Y)\right)} 
  \left(\frac1m \Psi_m(\Psi_m^\dagger(K)\right)
  \frac{1}{\left(t I_{mq} + \tfrac1m\Psi_m( \Psi_m^\dagger(X)\right)}{\rm d}t \right] 
$$
We now apply Lemma~\ref{ullm} together with Theorem~\ref{L3}    as before, and then use  the obvious unitary invariance to bound this above by 
$$
 \tr \left[   \int_0^\infty K^*\frac{1}{tI + Y} K \frac{1}{tI + X}{\rm d}t \right]\ ,
$$
and this proves \eqref{lieb23BB}. To obtain the general case, we merely need to observe that for each $t>0$,
\begin{multline*}
 \tr \left[  U\Xi_{mp,q}^\dagger(K^*) U^* \frac{1}{tI + U\Xi_{mp,q}^\dagger(Y) U^*} U\Xi_{mp,q}^\dagger(K) U^*\frac{1}{tI + U\Xi_{mp,q}^\dagger(X) U^*} \right] =\\
\tr \left[   K^*\frac{1}{tI + Y} K \frac{1}{tI + X} \right]
\end{multline*}
and then apply Theorem~\ref{strep}.
\end{proof}

The remaining two cases involve negative powers,   and as we have observed, completely positive trace preserving maps can take positive definite matrices to non-invertible positive semidefinite matrices. A good example is $\Xi_{mp,q}$ for $mp > q$. However, the partial trace of a positive definite matrix is always positive definite, and so no complication arises in the main part of the argument -- proving monotonicity under the partial trace. One does that as before with positive definite $X$ and $Y$. 

Associated to any completely positive trace preserving map $\Phi^\dagger:M_m(\C) \to M_n(\C)$ and any $0 < \epsilon < 1$, there is the completely positive trace preserving map
$$\Phi_\epsilon^\dagger(A) = (1- \epsilon)\Phi^\dagger(A) + \epsilon  \frac1n \tr[A] I\,
$$
for which the right hand side is invertible for all non-zero  $A\in M_m^+(\C)$, and moreover if $\Phi^\dagger(A)$ is invertible, then $\lim_{\epsilon\downarrow 0} (\Phi_\epsilon^\dagger(A))^{-1} = (\Phi^\dagger(A))^{-1}$.  Finally, if  $\Phi^\dagger(Y) =   \Psi_m^\dagger( U\Xi_{mp,q}^\dagger(Y) U^* )$ is the canonical Stinespring factorization of $\Phi^\dagger$ given by Theorem~\ref{strep},  if we replace $\Xi_{mp,q}^\dagger$ by $(\Xi_{mp,q}^\dagger)_\epsilon$ a simple computation shows that
\begin{equation}\label{srepmod}
\Phi^\dagger_\epsilon(A) =  \Psi_m^\dagger (U (\Xi_{mp,q}^\dagger)_\epsilon(A)  U^*)\ .
\end{equation}

\begin{proof}[Proof of Theorem~\ref{L2M} ]  We first prove that there is monotonicity under partial traces; i.e.,
 for  all $X,Y\in M_{mq}^{++}(\C)$, all $K \in M_{mq}(\C)$ and all $0 < t < 1$,
\begin{equation}\label{L223}
  \tr[ \Psi_m^\dagger(K^*) \Psi_m^\dagger(Y)^{t-1}  \Psi_m^\dagger(K) \Psi_m^\dagger(X)^{-t}]  \leq   \tr[ K^*Y^{t-1}  KX^{-t}]
\end{equation}
This goes as before; we rewrite the left hand side in terms of a trace over $m\times m$ diagonal entries, obtaining that it is equal to
\begin{multline*}
\frac1m  \tr[ \Psi_m(\Psi_m^\dagger(K^*)) \Psi_m(\Psi_m^\dagger(Y)^{t-1})  \Psi_m(\Psi_m^\dagger(K)) \Psi_m(\Psi_m^\dagger(X)^{-t})]  =\\
\tr[(\tfrac1m  \Psi_m(\Psi_m^\dagger(K^*))) (\tfrac1m \Psi_m(\Psi_m^\dagger(Y))^{t-1})  (\tfrac1m \Psi_m(\Psi_m^\dagger(K))) (\tfrac1m \Psi_m(\Psi_m^\dagger(X))^{-t})] 
\end{multline*}
where we have used the homogeneity. Now use Lemma~\ref{ullm}, Theorem~\ref{L2} and the unitary invariance to conclude \eqref{L223}.

 Now let
$\Phi^\dagger(Y) =   \Psi_m^\dagger( U\Xi_{mp,q}^\dagger(Y) U^* )$ be the canonical Stinespring factorization of $\Phi^\dagger$, and then for $A,B\in M_q^{++}(\C)$, $C\in M_q(\C)$, 
$0 < \epsilon < 1$, define
$$
X := U( \Xi_{mp,q}^\dagger)_\epsilon(A) U^* \ ,\quad  Y:= U (\Xi_{mp,q}^\dagger)_\epsilon(B) U^*   \quad{\rm and}\quad   K= U( \Xi_{mp,q}^\dagger)_\epsilon(C) U^*\ .
$$
Inserting these into \eqref{L223} yields, using \eqref{srepmod} on the left and unitary invariance on the right,
\begin{multline*}
  \tr[ \Phi_\epsilon^\dagger(C^*) \Phi_\epsilon^\dagger(B)^{t-1}  \Phi_\epsilon^\dagger(C) \Phi_\epsilon ^\dagger(A)^{-t}]  \leq\\   
  \tr[ ( \Xi_{mp,q}^\dagger)_\epsilon(C^*)( \Xi_{mp,q}^\dagger)_\epsilon(B)^{t-1} \Xi_{mp,q}^\dagger)_\epsilon(C^*)  ( \Xi_{mp,q}^\dagger)_\epsilon(A) ^{-t}]\ .
\end{multline*}
Now take $\epsilon \downarrow 0$. 
\end{proof}

\begin{proof}[Proof of Theorem~\ref{ACTM}]   The proof of the monotonicity under partial traces proceeds in exactly the same way as in the proof of Theorem~\ref{L1M} above, but with convexity replacing concavity. Beyond this point one cannot proceed; the  device of replacing  $\Xi_{mp,q}^\dagger$ by $ (\Xi_{mp,q}^\dagger)_\epsilon$ does not help this time because of the way the terms involving $\Xi_{mp,q}$ are separated, and the example given above shows that no other device will help. 
\end{proof}

\section{Strong subadditivity of the quantum entropy}

The first application of the inequalities proved in \cite{L73} was the proof of the {\em strong subadditivity of the quantum entropy} (SSA) conjecture. To explain,
recall that if $\rho$ is a density matrix on a Hilbert space,  its von Neumann entropy, $S(\rho)$, is defined by
$$
S(\rho) = -\tr[\rho \log \rho]\ .
$$
Now consider a density matrix $\rho_{12}$ on a bipartite Hilbert space $\cH_1\otimes \cH_2$.  Let $\rho_1$ be the partial trace of $\rho_{12}$ over $\cH_2$; i.e., $\rho_1 = \tr_2[\rho_{12}]$, and define $\rho_2$ in the analogous manner. Then $\rho_1\otimes \rho_2$ is a density matrix on $\cH_1\otimes\cH_2$, and
\begin{eqnarray*}
0 \leq D(\rho_{12}||\rho_1\otimes \rho_2) &=& \tr[ \rho_{12}(\log \rho_{12} - \log{\rho_1}\otimes I - I\otimes \log \rho_2)] \\
&=& -S(\rho_{12}) + S(\rho_1) + S(\rho_2)\ ,
\end{eqnarray*}
where the initial inequality is \eqref{klein}.
Going forward,  it will be useful to simplify our notation and write $S_{12} = S(\rho_{12})$, $S_1 = S(\rho_1)$, etc., and to simply write  $\log \rho_1$ in place of $\log{\rho_1}\otimes I$, etc.  Then from the calculation just made
\begin{equation*}
S_{12} \leq S_1 + S_2  \quad{\rm and}\quad S_{12} = S_1+S_2 \ \iff \ \rho_{12} = \rho_1\otimes \rho_2\ .
\end{equation*}
This relatively elementary, but physically important, inequality is known as the {\em subadditivity of the quantum entropy}.

SSA concerns a density matrix $\rho_{123}$ on a tripartite Hilbert space $\cH_1\otimes \cH_2\otimes \cH_3$.  The 1968 conjecture of  Lanford and Robinson   \cite{LR68} was that 
\begin{equation}\label{SSA}
S_{12}  + S_{23} \geq S_{123} + S_2\ .
\end{equation}
If $\cH_2$ is one dimensional, then \eqref{SSA} reduces to $S_{1} + S_3 \geq S_{13}$; i.e., to subadditivity, and this justifies the name strong subadditivity. 

The name SSA  is generally used in the statistical mechanics literature for what is known in classical information theory as {\em positivity of the conditional mutual information}, 
as discussed at the end of this section.  The term SSA was introduced in the context of classical statistical mechanics by Robinson and Ruelle \cite[Proposition 1]{RR} who gave a proof in this context, 
and applied it to prove of the existence of the thermodynamic limit in classical statistical mechanics.  Then in 1968, Lanford and Robinson attempted to apply the methods 
of \cite{RR} in the quantum setting, but fell short as they could not prove quantum SSA. They explicitly stated  the conjecture that \eqref{SSA} was valid in the quantum setting \cite[p. 1125]{LR68}, 
writing, in reference to classical SSA, that: ``One could believe, and even support one's belief by heuristic physical arguments that the same condition holds for the quantum entropy".
For an overview of the physical context, see the review article of Wehrl \cite{W78}. 

The classical analog is  elementary  to prove, and this is done below, but the {\em usual} classical proof uses conditional probabilities.  Recall that if $\rho(x,y)$ is a joint probability 
distribution for two random variables, and $\rho(x)$ is the marginal distribution for one of them, the corresponding conditional probability density $\rho(x|y)$ is $\rho(x,y)/\rho(y)$.  Unfortunately in the quantum setting there are many ways one might try divide $\rho_{12}$ by $\rho_2$, but none of them yields a 
satisfactory notion of conditional quantum probability, and none of them provide a basis for adapting the easy proof of classical SSA  to the quantum case. 

However, once the results from \cite{L73} were available, Lieb and Ruskai \cite{LR}  proved SSA, also in 1973. Moreover, they gave several equivalent reformulations of SSA that have turned out to be very important in their own right.   One of these involves the notion of {\em conditional entropy}:
 \begin{defi}\label{cedef} Given a density matrix $\rho_{12}$ on $\cH_1\otimes \cH_2$, considered as a state on a bipartite system, the {\em conditional entropy of system 1 with respect to system 2} is defined to be $S_{12} - S_1$. 
 \end{defi}
 
 At the end of this section, we explain the connection with the concept of conditional entropy in classical probability theory, 
but since the whole notion of conditional probability is problematic in quantum mechanics, the origins of the name are largely irrelevant for present purposes.

 \begin{thm}[Lieb Ruskai 1973, SSA and two equivalent formulations]\label{SSAmain}  The following statements are all true, and equivalent in that once any one of them is proven, simple arguments yield the other two:
 
 \medskip
 \noindent{\it (1)} For all density matrices $\rho_{123}$ on $\cH_1\otimes \cH_2\otimes \cH_3$, \eqref{SSA} is satisfied.
 
 \medskip
 \noindent{\it (2)}  The map
$\rho_{12} \mapsto S_2 - S_{12}$
is convex and homogeneous of degree one  on the set of density matrices $\rho_{12}$ on $\cH_1\otimes \cH_2$. 

\medskip
 \noindent{\it (3)} The relative entropy functional $(\rho,\sigma) \mapsto D(\rho ||\sigma) $  is montone under partial traces; i.e., for all density matrices $\rho_{1,2},\sigma_{1,2}$ on $\cH_1\otimes \cH_2$, $D(\rho_1||\sigma_1) \leq D(\rho_{12}||\sigma_{12})$. 
 \end{thm}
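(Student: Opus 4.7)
The plan is to prove the equivalence by four implications: $(3) \Rightarrow (1)$, $(1) \Rightarrow (2)$, $(2) \Rightarrow (1)$, and $(1) \Rightarrow (3)$. Since (3) is in any case established as a special case of Theorem~\ref{LindM}, the content here is purely the logical equivalence of the three formulations; the only delicate step will be the last one.

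The implication $(3) \Rightarrow (1)$ is a single substitution: given $\rho_{123}$ on $\cH_1 \otimes \cH_2 \otimes \cH_3$ with $d_1 := \dim \cH_1$, take $\sigma_{123} := (I_1/d_1) \otimes \rho_{23}$. Expanding from the definition of Umegaki relative entropy gives $D(\rho_{123}||\sigma_{123}) = \log d_1 + S_{23} - S_{123}$, and partial-tracing over $\cH_3$ gives $D(\rho_{12}||(I_1/d_1) \otimes \rho_2) = \log d_1 + S_2 - S_{12}$; applying (3) and canceling $\log d_1$ yields SSA. For $(1) \Rightarrow (2)$, I would encode a convex combination of density matrices on $\cH_1 \otimes \cH_2$ in the block-diagonal state $\rho_{123} := \lambda \rho_{12}^{(0)} \otimes |0\rangle\langle 0| + (1-\lambda) \rho_{12}^{(1)} \otimes |1\rangle\langle 1|$ on $\cH_1 \otimes \cH_2 \otimes \C^2$; block-additivity of the von Neumann entropy makes the binary entropy $H(\lambda)$ appearing in $S_{123}$ and $S_{23}$ cancel when substituted into SSA, leaving exactly the convexity of $S_2 - S_{12}$. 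Homogeneity of degree one then follows from $S(t\rho) = tS(\rho) - t\log t \cdot \tr[\rho]$ combined with $\tr[\rho_{12}] = \tr[\rho_2]$.

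For $(2) \Rightarrow (1)$, I would apply Lemma~\ref{ullm} to the factor $B := \cH_3 \otimes \cH_2$ in the bipartite decomposition $\cH_1 \otimes (\cH_3 \otimes \cH_2)$. With $m = d_3$, the unitaries $U_k$ provided by the lemma, lifted to $I_{\cH_1} \otimes U_k$, average $\rho_{123}$ to $\tfrac{1}{d_3} I_{\cH_3} \otimes \rho_{12}$. The function $f(\rho_{AB}) := S_B(\rho_{AB}) - S_{AB}(\rho_{AB})$ is convex by (2), invariant under $I_A \otimes U$ conjugation, and satisfies $f(\tfrac{1}{d_3} I_{\cH_3} \otimes \rho_{12}) = f(\rho_{12})$ by tensor-additivity of $S$ on density matrices. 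Combining these yields $f(\rho_{12}) \leq f(\rho_{123})$, which unpacks to SSA.

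The main obstacle is $(1) \Rightarrow (3)$, since DPI involves two a priori unrelated density matrices while SSA concerns a single tripartite state. My plan is to linearize SSA near a trivial point: form $\tau_{ABC}(p) := p \rho_{AB} \otimes |0\rangle\langle 0|_C + (1-p) \sigma_{AB} \otimes |1\rangle\langle 1|_C$, apply SSA to $\tau(p)$, observe that the $H(p)$ contributions cancel by block-additivity, and expand in $\varepsilon := 1 - p$ about $p = 1$. Using $\frac{d}{d\varepsilon} S(\rho + \varepsilon(\sigma - \rho))\big|_{\varepsilon = 0^+} = -\tr[(\sigma - \rho)\log \rho]$ (valid since $\tr[\sigma - \rho] = 0$), the $O(\varepsilon)$ terms on the two sides of SSA collapse to $\varepsilon D(\sigma_{AB}||\rho_{AB})$ and $\varepsilon D(\sigma_B||\rho_B)$ respectively, giving $D(\sigma_{AB}||\rho_{AB}) \geq D(\sigma_B||\rho_B)$, which is DPI after relabeling. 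Care is needed when $\rho_{AB}$ is not strictly positive; this is handled by first replacing $\rho_{AB}$ with $\rho_{AB} + \delta I$, running the argument, and then taking $\delta \downarrow 0$ by continuity of $D$ on pairs with matched support.
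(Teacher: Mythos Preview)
Your proposal is correct. The ingredients and overall logic match the paper's, but your cycle is organized slightly differently.

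For $(3)\Rightarrow(1)$ and $(1)\Rightarrow(2)$ you do exactly what the paper does (up to the cosmetic choice of putting the maximally mixed reference state on $\cH_1$ rather than $\cH_3$). The genuine difference is in how you close the loop. The paper goes $(2)\Rightarrow(3)$ directly by invoking Lemma~\ref{lieblem}: since $F(\rho_{12})=S_2-S_{12}$ is convex and homogeneous of degree one, the directional derivative $G(\sigma_{12},\rho_{12})=\lim_{t\downarrow 0}t^{-1}(F(\sigma_{12}+t\rho_{12})-F(\sigma_{12}))$ is bounded above by $F(\rho_{12})$, and expanding this yields $D(\rho_{12}\|\sigma_{12})\ge D(\rho_2\|\sigma_2)$. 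You instead prove $(2)\Rightarrow(1)$ via Uhlmann's averaging Lemma~\ref{ullm} (a route the paper mentions only in the Remark following the proof), and then prove $(1)\Rightarrow(3)$ by building the classical-flag state $\tau_{ABC}(p)$, cancelling the $H(p)$ terms, and linearizing at $p=1$. But note that after the $H(p)$ cancellation your SSA inequality is literally the midpoint convexity of $S_2-S_{12}$, and your linearization step is precisely Lemma~\ref{lieblem} spelled out by hand; so your $(1)\Rightarrow(3)$ is really the paper's $(1)\Rightarrow(2)\Rightarrow(3)$ packaged as one move. Both routes are equally short; the paper's has the advantage of isolating Lemma~\ref{lieblem} as a reusable tool, while yours makes the averaging interpretation of $(2)\Rightarrow(1)$ explicit. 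Your regularization remark for non-invertible $\rho_{AB}$ is appropriate and is handled implicitly in the paper.
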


As in \cite{LR}, we use an elementary Lemma from Lieb's 1973 paper, \cite[Lemma 5]{L73}, whose simple statement and proof we recall. (It shall be applied several times in what follows.)

\begin{lm}[Lieb 1973]\label{lieblem} Let $\mathcal{C}$ be a convex cone in a vector space, and let $F:\mathcal{C}\to \R$ be  homogeneous of degree one. Define
$$
G(x,y) := \lim_{t \downarrow 0} \frac{F(x+ty) - F(x)}{t}\ .
$$
If $F$ is convex, then for all $x,y\in \mathcal{C}$, $G(x,y) \leq F(y)$.  Conversely, if $F$ is continuously differentiable and  $G(x,y) \leq F(y)$ for all $x,y\in \mathcal{C}$,
then $F$ is convex.
\end{lm}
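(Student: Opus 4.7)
The plan is to handle the two directions separately, each via a short one-line computation once the right rescaling is set up.

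For the forward direction, I would exploit the degree-one homogeneity to rewrite $x + ty$ as a convex combination times a scalar. Specifically, for $t>0$,
$$F(x+ty) \;=\; (1+t)\, F\!\left( \tfrac{1}{1+t}\,x + \tfrac{t}{1+t}\, y \right) \;\leq\; (1+t)\left( \tfrac{1}{1+t} F(x) + \tfrac{t}{1+t} F(y)\right) \;=\; F(x) + t F(y),$$
using convexity in the middle step. Rearranging yields $(F(x+ty)-F(x))/t \leq F(y)$ for every $t>0$, and letting $t\downarrow 0$ gives $G(x,y)\leq F(y)$. Note that $x+ty\in\cC$ because $\cC$ is a convex cone and $t\geq 0$, so nothing falls outside the domain.

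For the converse, I would reduce convexity to subadditivity, since a positively homogeneous function on a convex cone is convex iff it is subadditive. To prove $F(x+y)\leq F(x)+F(y)$, define $\phi(t):=F(x+ty)$ on $[0,1]$. Because $F$ is $C^1$ and $x+ty\in\cC$, the function $\phi$ is $C^1$ with
$$\phi'(t) \;=\; G(x+ty,\, y) \;\leq\; F(y),$$
the equality coming from the fact that for a $C^1$ function the one-sided directional derivative equals the ordinary derivative, and the inequality being the standing hypothesis. Integrating from $0$ to $1$ gives $F(x+y)-F(x)\leq F(y)$, i.e.\ subadditivity. Combined with homogeneity, for any $\lambda\in[0,1]$ and $x,y\in\cC$ one gets
$$F(\lambda x + (1-\lambda) y) \;\leq\; F(\lambda x) + F((1-\lambda) y) \;=\; \lambda F(x) + (1-\lambda) F(y),$$
which is convexity.

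I do not expect any real obstacle: both directions reduce to one-line computations once homogeneity is used to turn a sum into a convex combination (forward) and once the directional derivative hypothesis is integrated along a ray (converse). The only small point requiring care is the identification of $\phi'(t)$ with $G(x+ty,y)$ in the converse; this uses $C^1$ regularity so that the one-sided limit defining $G$ coincides with the two-sided derivative of $\phi$.
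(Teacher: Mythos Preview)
Your proof is correct and essentially matches the paper's. The forward direction is identical to the paper's computation. For the converse, the paper integrates $G(\lambda x + sy,y)\le F(y)$ over $s\in[0,1-\lambda]$ to obtain $F(\lambda x+(1-\lambda)y)-F(\lambda x)\le(1-\lambda)F(y)$ directly, whereas you first integrate over $[0,1]$ to get subadditivity and then invoke homogeneity; this is the same idea with a cosmetic reordering.
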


\begin{proof} Assume first that $F$ is convex, and compute
\begin{eqnarray*}
F(x + ty) &=& (1+t) F\left( \frac{1}{1+t}x + \frac{t}{1+t}y\right) \leq (1+t)\left( \frac{1}{1+t}F(x) + \frac{t}{1+t}F(y)\right)\\
&=& F(x) + tF(y)\ .
\end{eqnarray*}
Next assume $F$ is continuously differentiable and  $G(x,y) \leq F(y)$ for all $x,y\in \mathcal{C}$. For $0 < \lambda < 1$, $x,y\in \mathcal{C}$,
let $z:= \lambda x + (1-\lambda)y$. Then
$$F(z) - \lambda F(x) = F(\lambda x + (1-\lambda)y) - F(\lambda x) = \int_0^{1-\lambda}G(\lambda x + sy,y){\rm d}s \leq (1-\lambda)F(y)\ .$$
\end{proof}

 \begin{proof}[Proof of Theorem~\ref{SSAmain}]    First we show that {\it(3)} $\Rightarrow$ {\it (1)}:
Let $\tau_{3}$ denote the density matrix on 
$\cH_3$ that is a multiple of the identity, i.e. $\tau_{3} = (d_3)^{-1}I_{3}$ where $d_3$ is the dimension of $\cH_3$.  
 Since $D(X||Y) = \tr[X(\log X - \log Y)]$, 
$$
D(\rho_{123} || \rho_{12}\otimes \tau_3) = -S_{123} + S_{12} + \log d_3  \quad{\rm and}\quad 
D(\rho_{23}||\rho_2\otimes\tau_3) = -S_{23} + S_2 +\log d_3\ .
$$
Therefore
\begin{equation}\label{SSA2}
S_{12}  + S_{23} - S_{123} - S_2   = D(\rho_{123} || \rho_{12}\otimes \tau_3)  -  D(\rho_{23}||\rho_2\otimes\tau_3)\ .
\end{equation}
Let $\Phi^\dagger$ denote the partial trace over $\cH_1$ so that $\Phi^\dagger$
 is completely positive and trace preserving. Then since
$\Phi^\dagger(\rho_{123}) = \rho_{23}$ and $\Phi^\dagger(\rho_{12}\otimes \tau_3) = \rho_{2}\otimes \tau_3$,
$D(\rho_{123} || \rho_{12}\otimes \tau_3)  -  D(\rho_{23}||\rho_2\otimes\tau_3)  \geq 0$ as a consequence of the monotonicity of the relative entropy under partial traces, and then by \eqref{SSA2}, this implies \eqref{SSA}. 

We next prove that  {\it(1)} $\Rightarrow$ {\it (2)}.
 For two density matrices $\rho_{12}$ and $\sigma_{12}$ on 
$\cH_1\otimes \cH_2$, take $\cH_3$ to be the $\C^2$ with the usual inner product and define the tripartite state
$
\frac12 \left[\begin{array}{cc} \rho_{12} & 0\\ 0 & \sigma_{12}\end{array}\right]$.
Then
\begin{equation*}
S_{123} = \log 2 -\frac{1}{2}\tr[\rho_{12} \log \rho_{12}]  -\frac{1}{2}\tr[\sigma_{12} \log \sigma_{12}]\ ,
\end{equation*}
\begin{equation*}
S_{23} =\log 2 -\frac{1}{2}\tr[\rho_{2} \log \rho_{2}] -\frac{1}{2}\tr[\sigma_{2} \log \sigma_{2}] \ ,\quad 
S_{12} = -\tr\left[\frac{\rho_{12} +\sigma_{12}}{2}  \log\left(   \frac{\rho_{12} +\sigma_{12}}{2}  \right)\right]    \ ,
\end{equation*}
and 
  \begin{equation*}
S_{2} = -\tr\left[\frac{\rho_{2} +\sigma_{2}}{2}  \log\left(   \frac{\rho_{2} +\sigma_{2}}{2}  \right)\right]  \ .
\end{equation*}
Now \eqref{SSA} yields
$$
S\left(\frac{ \rho_{2} +\sigma_{2}}{2}\right)  -S\left(\frac{ \rho_{12} +\sigma_{12}}{2}\right)   \leq 
\frac12\left(- S(\rho_2) - S(\rho_{12} \right) + \frac12\left(S(\sigma_2)- S(\sigma_{12} \right)\ .
$$
This proves the convexity, and  the homogeneity is obvious. 

We next prove that {\it (2)} $\Rightarrow$ {\it(3)}.
Fix two density matrices $\rho_{12}$ and $\sigma_{12}$ on $\cH_1\otimes \cH_2$, and compute
$$
\lim_{t \downarrow 0} \frac1t (S(\sigma_{12} + t \rho_{12}) -S(\sigma_{12}))  = -1 -\tr[\rho_{12} \log \sigma_{12}]\ .
$$
By Lemma~\ref{lieblem},
$
 (-1 -\tr[\rho_{2} \log \sigma_{2}])   - \left(-1 -\tr[\rho_{12} \log \sigma_{12}]\right)   \leq  -\tr[\rho_2\log \rho_2]  +\tr[\rho_{12}\log \rho_{12}]  
$.
Rearranging terms,
$D(\rho_{12}||\sigma_{12}) \geq D(\rho_2||\sigma_2)$,
which is the monotonicity of the relative entropy under partial traces. 

Finally, since the monotonicity of the relative entropy under partial traces is a special case of the DPI (from which the general case can be deduced), all three of the equivalent statements are valid. 
\end{proof}

\begin{remark} Because the joint convexity of the relative entropy, and hence the DPI,  is a simple consequence of the Lieb Concavity Theorem, one can view all of these equivalent statements as fairly direct consequences of Theorem~\ref{L1}. Furthermore, if one applies the converse of Lemma~\ref{lieblem}, one easily obtains a direct proof that {\it (3)} implies {\it (2)}.  Likewise, another application of Uhlmann's Lemma shows that ${\it (2)}$ can be rephrased as saying that the conditional entropy is monotone under partial traces, and then it is easy to see that this yields SSA. 

The proof that we have presented here is somewhat different from the proof in \cite{LR}. In \cite{LR} it is proved that {\it (1)} and {\it (2)} are equivalent, both of which are also given direct proofs, and the paper ends by showing in the final paragraph that {\it (2)} implies {\it (3)} without mentioning the term  ``relative entropy'' by name, and without noting the equivalence of {\it (2)} and {\it (3)} which follows directly from the second half of Lemma~\ref{lieblem}, which was used for such purposes twice in \cite{L73}. 

As explained in Section 3, the Data Processing Inequality (DPI)  \eqref{lieb8} is easily seen to be equivalent to the monotonicity of the relative entropy under partial traces,  and hence the DPI is simply an equivalent formulation of SSA.   It is in this form that SSA finds its widest use today, in quantum information theory, although as explained in Section 5, the original formulation has important applications there as well. 
\end{remark} 

The conjecture of Lanford and Robinson attracted much attention, and many people had worked to prove it during the half decade it was open. Among the first  published papers resulting from efforts in this direction  is a paper of Araki and Lieb \cite{AL}.  We quote Theorem 1 of that paper:

\begin{thm}[Araki-Lieb 1970]\label{ALSSA}  Let $\rho_{123}$ be a tripartite density matrix.  Then 
\begin{equation}\label{SSAW}
S_{12}  + S_{23} \geq S_{123} - \log \tr[\rho_2^2]\ .
\end{equation}
If $\rho_2$ commutes with $\rho_{12}$ or $\rho_{23}$, then \eqref{SSA} is valid.
\end{thm}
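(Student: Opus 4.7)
The plan is to derive \eqref{SSAW} from two classical tools available in 1970: Klein's inequality \eqref{klein} and the Golden--Thompson inequality $\tr[e^{A+B}]\le\tr[e^Ae^B]$, together with a short partial-trace identity.

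Assume first that $\rho_{12}$ and $\rho_{23}$ are strictly positive (the degenerate case follows by perturbing $\rho_{123}$ by $\epsilon I$ and letting $\epsilon\downarrow 0$, using continuity). Set
\[
Z := \tr\bigl[e^{\log\rho_{12}+\log\rho_{23}}\bigr], \qquad \sigma := Z^{-1}e^{\log\rho_{12}+\log\rho_{23}},
\]
where throughout I identify $\log\rho_{12}$ with $\log\rho_{12}\otimes I_3$ and $\log\rho_{23}$ with $I_1\otimes\log\rho_{23}$. Since $\sigma$ is a density matrix on $\cH_1\otimes\cH_2\otimes\cH_3$, Klein's inequality applied to $\rho_{123}$ and $\sigma$ gives
\[
-S_{123}\ \ge\ \tr[\rho_{123}\log\sigma]\ =\ \tr[\rho_{123}\log\rho_{12}]+\tr[\rho_{123}\log\rho_{23}]-\log Z\ =\ -S_{12}-S_{23}-\log Z,
\]
so $S_{12}+S_{23}-S_{123}\ge -\log Z$. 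Golden--Thompson applied to the Hermitian operators $\log\rho_{12}$ and $\log\rho_{23}$ gives $Z\le \tr[\rho_{12}\rho_{23}]$, and a short partial-trace computation --- first $\tr_1[\rho_{12}(I_1\otimes I_2)]=\rho_2$, then $\tr_3[(\rho_2\otimes I_3)\rho_{23}]=\rho_2\tr_3[\rho_{23}]=\rho_2^2$ --- yields $\tr[\rho_{12}\rho_{23}]=\tr[\rho_2^2]$. Chaining these inequalities produces $S_{12}+S_{23}-S_{123}\ge -\log\tr[\rho_2^2]$, which is \eqref{SSAW}.

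For the statement on the commuting case, I would refine the choice of test state. Suppose $\rho_2$ (embedded as $I_1\otimes\rho_2\otimes I_3$) commutes with $\rho_{12}$. Set
\[
\tilde Z := \tr\bigl[e^{\log\rho_{12}+\log\rho_{23}-\log\rho_2}\bigr], \qquad \tilde\sigma := \tilde Z^{-1}e^{\log\rho_{12}+\log\rho_{23}-\log\rho_2}.
\]
Klein's inequality applied to $\rho_{123}$ and $\tilde\sigma$ gives $S_{12}+S_{23}-S_{123}\ge S_2-\log\tilde Z$. Since $\rho_2$ commutes with $\rho_{12}$, the functional-calculus observation that commuting positive operators have commuting logarithms yields $e^{\log\rho_{12}-\log\rho_2}=\rho_{12}\rho_2^{-1}$. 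Golden--Thompson with $A=\log\rho_{12}-\log\rho_2$ and $B=\log\rho_{23}$ therefore gives $\tilde Z\le\tr[\rho_{12}\rho_2^{-1}\rho_{23}]$, and the same style of partial-trace calculation --- now using $\tr_1[\rho_{12}(I_1\otimes\rho_2^{-1})]=\rho_2\rho_2^{-1}=I_2$ --- shows $\tr[\rho_{12}\rho_2^{-1}\rho_{23}]=\tr[\rho_{23}]=1$. Hence $\log\tilde Z\le 0$ and \eqref{SSA} follows. The case where $\rho_2$ commutes with $\rho_{23}$ is handled symmetrically, swapping the roles of $A$ and $B$.

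The main technical point to watch is the bookkeeping between $\log\rho_{12}$ and its extension $\log\rho_{12}\otimes I_3$ (and similarly for $\rho_{23}$, $\rho_2$) when computing exponentials and traces; in particular, verifying in the commuting step that the operators $\log\rho_{12}$ and $\log\rho_2$ really commute as operators on the full tripartite space reduces to the observation above. Everything else is routine manipulation, and none of it requires any of the deep convexity/monotonicity theorems from \cite{L73} -- consistent with the fact that the Araki--Lieb bound predates those results by three years.
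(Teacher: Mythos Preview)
Your proof is correct and is essentially the paper's own argument: the paper applies the Peierls--Bogoliubov inequality \eqref{PB} with $H=\log\rho_{123}$, but that inequality is exactly Klein's inequality $D(\rho_{123}\|\sigma)\ge 0$ in disguise, so your construction of the Gibbs-type test state $\sigma$ and the paper's use of \eqref{PB} are two phrasings of the same step, after which both proofs invoke Golden--Thompson and the identity $\tr[\rho_{12}\rho_{23}]=\tr[\rho_2^2]$ (resp.\ $\tr[\rho_{12}\rho_2^{-1}\rho_{23}]=1$). The only cosmetic difference is that the paper assumes $[\rho_2,\rho_{23}]=0$ while you assume $[\rho_2,\rho_{12}]=0$, which you correctly note is symmetric.
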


By Jensen's Inequality, and the concavity of the logarithm, if $\{\lambda_1,\dots,\lambda_n\}$ are the eigenvalues of $\rho_2$, repeated according to multiplicity, 
$$
\log \tr[\rho_2^2]  = \log\left(\sum_{j=1}^n \lambda_j \lambda_j\right) \geq \sum_{j=1}^n \lambda_j \log \lambda_j\ .
$$
Therefore,
$- \log \tr[\rho_2^2]  \leq S(\rho_2)$, and hence \eqref{SSAW} is weaker than \eqref{SSA}.

Nonetheless,  Theorem~\ref{ALSSA}, together with the Araki-Lieb Triangle Inequality, also proved in \cite{AL} and discussed in the next section, 
were sufficient \cite{AL} to solve the quantum thermodynamic limit problem that had motivated the conjecture of Lanford and Robinson in the first place. 
Despite this, interest in proving the original conjecture remained strong, and the progress of Araki and Lieb influenced future work. 
Hence it is worthwhile to recall the simple proof of Theorem~\ref{ALSSA}.   The proof uses the well-known Golden-Thompson inequality  \cite{G65,T65} which states that
for all self-adjoint $H,K\in M_n(\C)$,
\begin{equation}\label{GT}
\tr[e^{H+K}] \leq \tr[e^H e^K]\ ,
\end{equation}
and the Peierls-Bogoliubov Inequality which states that  for self-adjoint $H$ and $K$ with $\tr[e^{H}] =1$, 
\begin{equation}\label{PB}
\tr[Ke^H] \leq \log \tr[e^{H+K}]  \ .
\end{equation}
An elementary proof can be found in many places; e.g., \cite[Appendix A]{CL19}.  Note that if  $\rho(x)$ is a classical probability density on a   measure space $(X,\mu)$, and $f(x)$ is any bounded, real valued  function on $X$, 
\begin{equation}\label{BogCl}
\int_X f(x) \rho(x){\rm d}\mu \leq \log\left( \int_X e^{f(x)}\rho(x){\rm d\mu}\right)  = \log\left( \int_X e^{f(x) + \log \rho(x)}{\rm d\mu}\right)\ ,
\end{equation}
by Jensen's inequality for the exponential function.  Replacing integrals by traces, $f$ by $K$ and $\rho$ by $e^{H}$, the left side of \eqref{BogCl} becomes 
the left side of \eqref{PB}.  The two terms on the right hand side of \eqref{BogCl} become $\log(\tr[e^Ke^H])$ and $\log(\tr[e^{H+K}])$, which are no longer equal. By the Golden-Thompson inequality, $\log(\tr[e^{H+K}]) \leq \log(\tr[e^Ke^H])$, and the utility of the Peierls-Bogoliubov Inequality lies in the fact that it is the {\em smaller}  of these two quantities that appears on the right side of \eqref{PB}.  This is essential in the application to follow.

\begin{proof}[Proof of Theorem~\ref{ALSSA}]
Assume that $\rho_{123}$ is positive definite, and define
$$
\Delta := S_{123} - S_{12} - S_{23} = \tr[\rho_{123}( -\log \rho_{123} +\log\rho_{12} + \log \rho_{23})]\ .
$$
Now apply \eqref{PB}  taking $H= \log \rho_{123}$ and $K = -\log \rho_{123} +\log\rho_{12} + \log \rho_{23}$
\begin{multline*}
e^{\Delta} \leq \tr[ \exp(\log \rho_{123} -\log \rho_{123} +\log\rho_{12} + \log \rho_{23})]  = \\\tr[ \exp(\log\rho_{12} + \log \rho_{23}))] \leq \tr[\rho_{12}\rho_{23}] = \tr[\rho_2^2]
\end{multline*}
where the second inequality is \eqref{GT}. This proves \eqref{SSAW}.

Now suppose $\rho_2$ commutes with $\rho_{23}$,  and make a different definition of $\Delta$:
$$
\Delta := S_{123} +S_2  - S_{12} - S_{23} = \tr[\rho_{123}( -\log \rho_{123} - \log \rho_2 +\log\rho_{12} + \log \rho_{23})]\ .
$$
Using the Peierls-Bogoliubov Inequality as before, 
\begin{equation}\label{ALKey}
e^{\Delta} \leq \tr[ \exp( - \log \rho_2 +\log \rho_{12} + \log \rho_{23})]\ .
\end{equation}
Then again by the Golden-Thompson Inequality, and then the fact that $\rho_2$ commutes with $\rho_{23}$,
\begin{multline*}
 e^\Delta \leq \tr[ \exp(- \log \rho_2  + \rho_{12} + \log \rho_{23}))] \leq \tr[\rho_{12} \exp(-\log \rho_2  + \log \rho_{23})] =\\ \tr[ \rho_{12}\rho_2^{-1}\rho_{23}]  = 1\ .
\end{multline*}
Therefore, $\Delta \leq 0$, and hence \eqref{SSA} is valid in this case. 
\end{proof}

This result was very influential in the community of people working on proving the conjecture of Lanford and Ruelle. It put the focus on the need for a stronger form of the 
Golden-Thompson inequality that would provide an upper bound on $\tr[e^{H+K+L}]$ for arbitrary self-adjoint matrices $H$, $K$ and $L$ without assuming that any of them 
commute. Uhlmann in particular took up this line of investigation, and he proved \cite[Satz 8.2]{Uh73}  a mild generalization of part of Theorem~\ref{ALSSA}, namely that 
\eqref{SSA} is valid when any two of the matrices $\rho_{12}$, $\rho_{23}$ and $\rho_2$ commute. 

The fifth theorem in Lieb's paper \cite{L73} is a generalization of the Golden-Thompson Inequality to three self-adjoint matrices. The naive generalization might be (depending on how naive one might be), 
$\tr[e^{H+K+L}] \leq \tr[e^H e^K e^L]$, but this would be complete nonsense;  the right side is in general a complex number.   Lieb's {\em Triple Matrix Inequality}, the fifth theorem in \cite{L73},  states that 
$$
\tr[e^{H+K+L}]   \leq \tr[e^{H}T_{e^{-K}}(e^L)]] \ ,
$$
where
$$T_A(B) = \int_0^\infty \frac{1}{s+A} B \frac{1}{s+A} {\rm d}s\ .$$
Notice that if $A$ and $B$ commute, we have $T_A(B) = A^{-1}B$, and hence if $K$ and $L$ commute, $ e^{H}T_{e^{-K}}(e^L) = e^{H}e^{K+L}$. A proof of this theorem is given in Section 7; see Theorem~\ref{TripleMatrix}.

Armed with this, return to \eqref{ALKey}, and observe that, using Lieb's Triple Matrix Inequality in the second line, and taking the trace over $\cH_3$ in the next step, and then using cyclicity of the trace, 
\begin{eqnarray*}
e^{\Delta} &\leq& \tr[ \exp( - \log \rho_2 + \log \rho_{12} + \log \rho_{23})] \\
&\leq& \int_0^\infty \tr\left[ \frac{1}{s+\rho_2} \rho_{12}  \frac{1}{s+\rho_2} \rho_{23} \right] {\rm d}s\\
&=& \int_0^\infty \tr\left[ \frac{1}{s+\rho_2} \rho_{12}  \frac{1}{s+\rho_2} \rho_{2} \right] {\rm d}s\\
&=& \int_0^\infty \tr\left[ \rho_{12} \rho_{2} \frac{1}{(s+\rho_2)^2}  \right] {\rm d}s = \tr[\rho_{12}]  = 1\ .
\end{eqnarray*}
This proves SSA following the line of the investigation started in \cite{AL},  more directly than in \cite{LR}, despite the fact that Lieb wrote in the abstract  of \cite{L73} that the results were relevant to the proof of SSA,  and had, as he has told me, SSA in mind when he proved the Triple Matrix Inequality.   Only very recently have generalizations of the Golden-Thompson inequality to more than three matrices been proved. See \cite{SBT} for these theorems and their application to questions in quantum information theory.

We close this section by discussing the classical analogs of the inequalities  we have been considering.
Given two finite sets $\mathcal{X}$ and $\mathcal{Y}$, and a  probability density $\rho(x,y)$ on $\mathcal{X}\times \mathcal{Y}$, thought of as specifying the joint distributions of two discrete random variables $X$ and $Y$,  the {\em (classical discrete) conditional  entropy} $H(X|Y)$ of $X$ with respect to $Y$ is defined by
\begin{equation}\label{cond12}
H(X|Y) = -\sum_{(x,y)\in \mathcal{X}\times \mathcal{Y}} \rho(x,y)\log\left(\frac{\rho(x,y)}{\rho(y)}\right) \ ,
\end{equation}
where
$$\rho(y) := \sum_{x\in \mathcal{X} }\rho(x,y)$$ 
is the {\em marginal distribution} of $Y$.  
If we denote
\begin{equation}\label{clasmarg}
S_{12} = -\sum_{(x,y)\in \mathcal{X}\times \mathcal{Y}}  \rho(x,y)\log \rho(x,y)    \qquad{\rm and}\qquad  S_2 := - \sum_{y\in \mathcal{Y} } \rho(y)\log\rho(y)\ ,
\end{equation}
we then find that
\begin{equation}\label{cone}
H(X|Y) = S_{12} - S_2\ .
\end{equation}
Note that
\begin{equation}\label{clconpos}
H(X|Y) = -\sum_{y\in \mathcal{Y} } \left(\sum_{x\in \mathcal{X} }\left(\frac{\rho(x,y)}{\rho(y)}\right) \log\left(\frac{\rho(x,y)}{\rho(y)}\right) \right) \rho(y)  \geq 0
\end{equation}
since for all $x,y$, $0 \leq \rho(x,y)/\rho(y) \leq 1$. 
Thus, in the  setting of classical discrete probability, it is always the case that $S_2 \leq S_{12}$, and it is easy to see that there is equality if and only if $\rho(x,y)/\rho(y) \in \{0,1\}$ for all $x,y$, and this is the case if and only if the random variable $X$ is completely determined by the random variable $Y$; i.e.,  for some $f:\mathcal{Y}\to \mathcal{X}$, $X = f(Y)$.

 While there simply is not a good way to define an analog of the conditional density $\rho(x,y)/\rho(y)$  out of $\rho_{12}$ and $\rho_2$ in the quantum case, 
 and hence no way to use an analog of \eqref{cond12} to define conditional entropy in the quantum case,  the right hand side of \eqref{cone} makes perfect sense in the quantum setting, and leads to  Definition~\ref{cedef}.

One might hope that the inequality $S_{12} - S_1>0$; i.e., \eqref{cone} and \eqref{clconpos} in the classical discrete case,  would extend to the quantum case, {\em but it does not}. In fact, let $\cH$ be an $n$-dimensional Hilbert space, and let $\{u_1,\dots u_n\}$ be an orthonormal basis for $\cH$. Define a state $\psi$ on $\cH\otimes \cH$ by 
$$
\psi := \frac{1}{\sqrt{n}}\sum_{j=1}^n u_j\otimes u_j\ ,
$$
and then define $\rho_{12} :=  |\psi \rangle \langle \psi|$.
Since $\rho_{12}$ is a rank one projection; i.e., a {\em pure state}, $S(\rho_{12}) = 0$. However, simple computations show that
$\rho_1 = \rho_2 = \frac1n I$, and hence $S_1 = S_2 = \log n$ which is easily seen to be the maximum value of the entropy of any density matrix on $\cH$.


%
%
Given  three finite sets $\mathcal{X}$, $\mathcal{Y}$ and  $\mathcal{Z}$  and a tripartite probability density on $\mathcal{X} \times \mathcal{Y}  \times \mathcal{Z}$  form the 
the conditional joint and marginal distributions  of $X$ and $Y$, given $Z = z$:
\begin{equation}\label{condition}
\rho(x,y|z) := \frac{\rho(x,y,z)}{\rho(z)} \, \qquad   \rho(x|z) := \frac{\rho(x,z)}{\rho(z)}\quad{\rm and}\quad   \rho(y|z) := \frac{\rho(y,z)}{\rho(z)}
\end{equation}
By the convexity of $t\mapsto t \log t$ and Jensen's inequality, for each $z$, 
$$
\sum_{(x,y)\in \mathcal{X}\times \mathcal{Y}}   \rho(x|z)   \rho(y|z) \left(\frac{\rho(x,y|z)}{\rho(x|z)\rho(y|z)}\right) 
\log \left(\frac{\rho(x,y|z)}{\rho(x|z)\rho(y|z)}\right) \geq 0\ ,
$$
and therefore the {\em conditional mutual information of $X$ and $Y$ given} $Z$, $I(X,Y|Z)$, defined by 
$$
I(X,Y|Z) = \sum_{z\in \mathcal{Z}}\rho(z)\left(\sum_{(x,y)\in \mathcal{X}\times \mathcal{Y}}   \rho(x|z)   \rho(y|z) \left(\frac{\rho(x,y|z)}{\rho(x|z)\rho(y|z)}\right) 
\log \left(\frac{\rho(x,y|z)}{\rho(x|z)\rho(y|z)}\right)\right)
$$
satisfies $I(X,Y|Z) \geq 0$. A simple calculation shows 
$$
I(X,Y|Z)  = S_{13}+S_{23} - S_{123} - S_3\ ,
$$
and this proves $S_{13}+S_{23} - S_{123} - S_3> 0$, which is SSA  in the classical setting. Specializing to the case in which $\mathcal{Z}$ consists of a single point,
$S_3= 0$, $S_{13} = S_1$, $S_{23} = S_s$ and $S_{123} = S_{12}$, and we obtain the classical subadditivity of the entropy, 
$S_{12} \leq S_1+ S_2$  which obviously can be proved directly using Jensen's Inequality as above.   We have just proved subadditivity and 
strong subadditivity in the  discrete probability setting, but unlike the positivity of conditional entropy \eqref{clconpos}, which is not true for continuous 
random variables, both subadditivity and strong subadditivity are valid in the classical case for both discrete and continuous random variables.  
Strong subadditivity is universal, holding in all contexts, unlike the positivity of conditional probability, which is valid only in the classical discrete setting. 
For more information, see \cite{L75}.

One cannot even begin to adapt the above classical proof to the quantum setting because there is no reasonable way to form 
analogs of the conditional densities \eqref{condition} out of the marginals of a tripartite density matrix $\rho_{123}$. There is, however, 
another proof \cite{CL99} of classical SSA that does not make any reference to conditonal probability, and which does extend to the 
quantum case, as we show in Section 7.

\section{The Araki-Lieb inequality}

This section  recalls some results from \cite{AL}  that were used together with the ``good enough'' SSA, Theorem~\ref{ALSSA} to solve the thermodynamic limit problem that had motivated the conjecture of Lanford and Ruelle. 

One might think that this would have closed the subject, but many people realized that SSA was interesting far beyond its original motivation. 
Both SSA and the inequalities discussed here have indeed proven to be fundamental in quantum information theory. 
It even appears in black hole physics \cite{HT07}.

Let $\rho_{12}$ be a density matrix on the tensor product of finite dimensional Hilbert spaces, $\cH_1\otimes \cH_2$. Then using the natural orthonormal basis of  $\cH_1\otimes \cH_2$ induced by orthonormal bases of $\cH_1$ and $\cH_2$, we may write $\rho_{12}$ as a matrix 
with entries $[\rho_{12}]_{(i,k),(j,\ell)}$. Then with $d_j := {\rm dim}(\cH_j)$, $j=1,2$, 
\begin{equation}\label{AL1}
[\rho_{1}]_{i,j} = \sum_{k=1}^{d_2} [\rho_{12}]_{(i,k),(j,k)} \quad{\rm and}\qquad 
[\rho_{2}]_{k,\ell} = \sum_{i=1}^{d_1} [\rho_{12}]_{(i,k),(i,\ell)}\ .
\end{equation}

Suppose that $\rho_{12}$ is a pure state; i.e., for some unit vector $\psi\in \cH_1\otimes \cH_2$, $\rho_{12} = |\psi\rangle\langle \psi |$ so that 
\begin{equation}\label{AL2}
[\rho_{12}]_{(i,k),(j,\ell)} = \overline{\psi_{i,k}} \psi_{j,\ell}\ .
\end{equation} 
Define a $d_1\times d_2$ matrix $K$ by $K_{j,\ell} = \psi_{j,\ell}$. Then from \eqref{AL1} and \eqref{AL2},
\begin{equation}\label{AL3}
\rho_{1} = KK^* \qquad{\rm and}\qquad \rho_2 = K^*K\ .
\end{equation}
Since $K^*K$ and $KK^*$ have the same non-zero spectrum with the same multiplicities for non-zero eigenvalues, it follows that when $\rho_{12}$ is a pure state, then $S_1 = S_2$.   It is also evident that $S_{12} = 0$ if and only if $\rho_{12}$ is a pure state.  Thus
\begin{equation}\label{AL0}
S_{12} = 0  \ \Rightarrow\ S_1 = S_2   \ . 
\end{equation}
This discussion in this paragraph summarized Lemma 3 of \cite{AL} and its short proof.

The Araki-Lieb Theorem allows one to conclude that $S_1$ is close to $S_2$ if $S_{12}$ is small:

\begin{thm}[Araki-Lieb Triangle Inequality]\label{ALthm}  Let $\rho_{12}$ be any density matrix on the tensor product of finite dimensional Hilbert spaces, $\cH_1\otimes \cH_2$.
Then
\begin{equation}\label{AL4}
|S_1 - S_2| \leq S_{12}\ ;
\end{equation}
\end{thm}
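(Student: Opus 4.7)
The plan is to reduce the triangle inequality to ordinary subadditivity via purification, using the lemma from equation \eqref{AL0} that a pure bipartite state has equal reduced entropies.

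First, I would introduce an auxiliary finite dimensional Hilbert space $\cH_3$ of dimension equal to the rank of $\rho_{12}$ (or larger) and construct a purification: a unit vector $\psi\in \cH_1\otimes \cH_2\otimes \cH_3$ such that the partial trace of $|\psi\rangle\langle\psi|$ over $\cH_3$ equals $\rho_{12}$. Concretely, if $\rho_{12} = \sum_j \lambda_j |u_j\rangle\langle u_j|$ is a spectral decomposition with orthonormal $\{u_j\}\subset \cH_1\otimes \cH_2$, one takes $\psi = \sum_j \sqrt{\lambda_j}\, u_j\otimes e_j$ for an orthonormal set $\{e_j\}\subset \cH_3$. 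Setting $\rho_{123}:=|\psi\rangle\langle\psi|$, we have $S_{123}=0$ since $\rho_{123}$ is pure.

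Next, I would apply the pure-state equality $S_A=S_B$ whenever the combined state $\rho_{AB}$ is pure (the content of \eqref{AL0}) to the three natural bipartitions of the tripartite system. Viewing $\rho_{123}$ as a pure state on $(\cH_1\otimes\cH_2)\otimes\cH_3$ gives $S_{12}=S_3$; viewing it on $(\cH_1\otimes\cH_3)\otimes \cH_2$ gives $S_{13}=S_2$; and viewing it on $(\cH_2\otimes\cH_3)\otimes\cH_1$ gives $S_{23}=S_1$.

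Finally, I would invoke ordinary subadditivity (already established before Theorem~\ref{SSAmain}) applied to the bipartite marginals $\rho_{13}$ and $\rho_{23}$ of $\rho_{123}$:
\begin{equation*}
S_{13}\leq S_1+S_3 \quad\text{and}\quad S_{23}\leq S_2+S_3.
\end{equation*}
Substituting the identifications from the previous step converts these into
\begin{equation*}
S_2 \leq S_1 + S_{12} \quad\text{and}\quad S_1 \leq S_2 + S_{12},
\end{equation*}
which together give \eqref{AL4}. No real obstacle is anticipated; the only point requiring some care is the explicit construction of the purification, but this is entirely standard and elementary in the finite dimensional setting assumed throughout the paper.
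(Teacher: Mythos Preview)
Your proposal is correct and follows essentially the same route as the paper: purify $\rho_{12}$ to a pure state $\rho_{123}$, use \eqref{AL0} to identify the various reduced entropies, and then apply ordinary subadditivity. The paper invokes the Purification Lemma (Lemma~\ref{purlm}) rather than writing out the spectral construction, and handles the second inequality by a one-word appeal to symmetry, but the argument is the same.
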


The proof uses a very simple construction known as {\em purification}, which has many other uses, some of which will be discussed later  in this paper.  The following is Lemma 4 from \cite{AL}.

\begin{lm}[Purification Lemma]\label{purlm}  Let $\rho$ be any density matrix on a finite dimensional Hilbert space $\cH$. Then there is a pure state density matrix $|\psi \rangle\langle \psi|$ on $\cH \otimes \cH$ such that
\begin{equation}\label{AL5}
\rho = \tr_1[|\psi \rangle\langle \psi|]= \tr_2[|\psi \rangle\langle \psi|]
\end{equation}
where $\tr_1$ and $\tr_2$ denote the partial traces over $\cH_1$ and $\cH_2$ respectively. 
\end{lm}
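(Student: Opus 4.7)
The plan is to build $\psi$ explicitly from the spectral decomposition of $\rho$, using the ``diagonal embedding'' of the eigenbasis into $\cH\otimes\cH$. Since $\rho$ is a density matrix on a finite dimensional Hilbert space, it is positive semidefinite with unit trace, so by the spectral theorem there exist an orthonormal basis $\{e_1,\dots,e_d\}$ of $\cH$ and eigenvalues $\lambda_1,\dots,\lambda_d \geq 0$ with $\sum_j \lambda_j = 1$ such that
$$\rho = \sum_{j=1}^d \lambda_j |e_j\rangle\langle e_j|\ .$$

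Next, I would define the candidate vector
$$\psi := \sum_{j=1}^d \sqrt{\lambda_j}\, e_j\otimes e_j \ \in\ \cH\otimes\cH\ .$$
Because $\{e_j\otimes e_k\}_{j,k}$ is an orthonormal basis of $\cH\otimes\cH$, one immediately gets $\langle\psi,\psi\rangle = \sum_j \lambda_j = \tr[\rho] = 1$, so $|\psi\rangle\langle\psi|$ is indeed a pure state density matrix. Expanding,
$$|\psi\rangle\langle\psi| = \sum_{j,k=1}^d \sqrt{\lambda_j\lambda_k}\, |e_j\rangle\langle e_k|\otimes |e_j\rangle\langle e_k|\ .$$

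To finish, I would compute both partial traces directly from this expansion. Applying $\tr_2$ (the partial trace over the second factor) using $\tr[|e_j\rangle\langle e_k|] = \delta_{j,k}$ collapses the double sum to $\sum_j \lambda_j |e_j\rangle\langle e_j| = \rho$, and the calculation for $\tr_1$ is identical because the same orthonormal family $\{e_j\}$ is used in both tensor factors. This is the key point of the construction: the symmetry $e_j\otimes e_j$ forces the two partial traces to coincide, which is exactly what \eqref{AL5} demands.

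There is really no obstacle here; the only thing to watch is that one must use \emph{the same} eigenbasis of $\rho$ in both factors of $\cH\otimes\cH$. If one were content with a one-sided purification, any orthonormal basis of the second factor of suitable dimension would do, but to obtain $\rho = \tr_1[|\psi\rangle\langle\psi|] = \tr_2[|\psi\rangle\langle\psi|]$ simultaneously, the diagonal construction with the spectral basis is the natural choice.
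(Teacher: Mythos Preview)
Your proof is correct and is essentially the same construction as the paper's: the paper sets $K=\rho^{1/2}$, vectorizes it as $\psi=\sum_{i,j}K_{i,j}\,u_i\otimes u_j$ with respect to an orthonormal basis, and then invokes the earlier identities $\rho_1=KK^*$, $\rho_2=K^*K$ together with $K=K^*$. Choosing $\{u_j\}$ to be the eigenbasis of $\rho$ makes $K$ diagonal with entries $\sqrt{\lambda_j}$, which is exactly your $\psi=\sum_j\sqrt{\lambda_j}\,e_j\otimes e_j$; the only cosmetic difference is that the paper allows any orthonormal basis and cites the prior computation rather than redoing the partial traces.
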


\begin{proof} Define $K = \rho^{1/2}$ and using any orthonormal basis $\{u_1,\dots,u_d\}$  of $\cH$, identify $K$ with 
$$\psi = \sum_{i,j} K_{i,j}u_i\otimes u_j\ .$$
Then \eqref{AL5} follows from \eqref{AL2} and \eqref{AL3}. 
\end{proof}

\begin{proof}[Proof of Theorem~\ref{ALthm}] 
 Consider any purification $\rho_{123}$ of $\rho_{12}$; this exists by Lemma~\ref{purlm}. By  \eqref{AL0},  $S_{12} = S_3$ and 
 $S_{1} = S_{23} \leq S_2 + S_3$ and hence
$
S_{12} \geq S_1 - S_2$.
By symmetry, one then has (\ref{AL4}). \end{proof}

The cases of equality in the triangle inequality have been determined in \cite{CL12}. The following theorem is  the part of Theorem 1.3 in \cite{CL12} that we shall need below.

\begin{thm}\label{ALeqthm}  For arbitrary $a,b > 0$, there exist bipartite states $\rho_{12}$ such that
\begin{equation*}
S_{12} = S_1 -S_2
\end{equation*}
and
\begin{equation*}
S_{12} = a \qquad{\rm and}\qquad S_2 = b\ .
\end{equation*}
\end{thm}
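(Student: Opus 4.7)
The plan is to construct $\rho_{12}$ as a partial trace of a purification of a product state, using the characterization of equality in the Araki-Lieb inequality implicit in its proof.

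Recall from the proof of Theorem~\ref{ALthm} that, given a pure state $\rho_{123}$ with marginal $\rho_{12}$, one has $S_{12}=S_3$ (since $\rho_{123}$ is pure, viewed across the cut $(12)|3$, using \eqref{AL0}), and similarly $S_1 = S_{23}$. Thus $S_1-S_2 = S_{23}-S_2$, and the Araki-Lieb inequality $S_1-S_2\le S_{12}$ reduces to the ordinary subadditivity $S_{23}\le S_2+S_3$. Equality therefore holds exactly when $\rho_{23}=\rho_2\otimes\rho_3$. This suggests the following construction: start from a product state on $\cH_2\otimes\cH_3$ with prescribed marginal entropies and purify it.

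\textbf{Construction.} Given $a,b>0$, choose integers $d_2,d_3$ with $\log d_2\ge b$ and $\log d_3\ge a$, set $\cH_2=\C^{d_2}$, $\cH_3=\C^{d_3}$. Since for each $d$ the von Neumann entropy is continuous on density matrices on $\C^d$ and achieves every value in $[0,\log d]$ (e.g.\ by interpolating along $(1-t)I/d + t|v\rangle\langle v|$), I can pick density matrices $\rho_2\in M_{d_2}^+(\C)$ and $\rho_3\in M_{d_3}^+(\C)$ with $S(\rho_2)=b$ and $S(\rho_3)=a$. Set $\rho_{23}:=\rho_2\otimes\rho_3$ on $\cH_2\otimes\cH_3$, so that $S(\rho_{23}) = S(\rho_2)+S(\rho_3) = a+b$ and the marginals of $\rho_{23}$ are precisely $\rho_2$ and $\rho_3$. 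Apply the Purification Lemma (Lemma~\ref{purlm}) with $\cH:=\cH_2\otimes\cH_3$ to obtain a pure state density matrix $\rho_{123}$ on $\cH_1\otimes(\cH_2\otimes\cH_3)$, where $\cH_1:=\cH_2\otimes\cH_3$, such that $\tr_1[\rho_{123}]=\rho_{23}$. Finally, define $\rho_{12}:=\tr_3[\rho_{123}]$.

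\textbf{Verification.} Since $\rho_{123}$ is pure, \eqref{AL0} applied across the bipartite cut $(12)|3$ gives $S_{12}=S_3$. But $\rho_3=\tr_2[\rho_{23}]=\tr_2[\rho_2\otimes\rho_3]$ is the chosen $\rho_3$, so $S_{12}=S(\rho_3)=a$. Similarly, across the cut $1|(23)$ we have $S_1=S_{23}=S(\rho_2\otimes\rho_3)=a+b$. The marginal $\rho_2=\tr_3[\rho_{23}]$ is the chosen $\rho_2$, so $S_2=b$. Hence $S_1-S_2=(a+b)-b=a=S_{12}$, as required.

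There is no real obstacle; the only mildly delicate point is ensuring that $S(\rho_2)$ and $S(\rho_3)$ can be prescribed exactly, which is handled by the intermediate value theorem on the path of density matrices above, after enlarging $d_2,d_3$ as needed. All other steps follow immediately from the Purification Lemma, additivity of entropy on tensor products, and the pure-state identity $S_A=S_B$ noted in \eqref{AL0}.
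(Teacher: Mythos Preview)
Your proof is correct. The paper does not actually prove this theorem; it simply states the result (as part of Theorem~1.3 of \cite{CL12}) and writes ``For the proof, see \cite{CL12}.'' Your construction---purifying a product state $\rho_2\otimes\rho_3$ with prescribed entropies and reading off $\rho_{12}$---is exactly the natural one suggested by the equality analysis of the Araki--Lieb triangle inequality, and all steps are justified by the Purification Lemma, additivity of entropy on tensor products, and the pure-state identity \eqref{AL0}.
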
 

For the proof, see \cite{CL12}.   Note that the two conditional entropies associated to $\rho_{12}$ are
\begin{equation*}
S_{12} - S_2 = a-b \qquad{\rm and}\qquad S_{12} - S_1 = -b\ .
\end{equation*}
Thus, there exist bipartite states $\rho_{12}$ {\em that saturate the triangle inequality}  for which one of the conditional entropies is an arbitrary negative number, and the other is an arbitrary positive number.

\section{SSA and Entanglement}

Let $\rho_{12}$ be a density matrix 
on the tensor product of two Hilbert spaces
$\HH_1\otimes \HH_2$.  Then $\rho_{12}$ is {\em finitely
separable}
if and only if it has  a decomposition  as a convex combination of
tensor products:
\begin{equation}\label{sep}
\rho_{12} = \sum_{k=1}^n \lambda_k \rho_1^k \otimes \rho_2^k
\end{equation}
where the $\lambda_k$ are positive and sum to $1$, and each
$\rho_\alpha^k$ is a density matrix on $\HH_\alpha$.  A
bipartite state is {\em separable} if it is in the closure
of the set of finitely separable states.
A bipartite state that is not separable is 
{\em entangled}.  

For many purposes in quantum information theory, it is important to be able to decide whether a given bipartite state is entangled or not, already a challenging problem, and beyond this, to quantify the degree of entanglement of entangled states.  For more information in the importance entanglement in quantum information theory, see \cite{H4}. We now discuss the mathematics of one such measure of entanglement.

For any tripartite density matrix $\rho_{123}$ on $\HH_1\otimes\HH_2\otimes\HH_3$,   
the {\em conditional mutual information of $1$ and $2$ given $3$}, $I(1,2|3)$, is
defined by
\begin{equation}\label{CMI}
I(1,2|3) := S_{13}+S_{23} - S_{123} - S_3 \geq 0\ .
\end{equation}
Note that  SSA is equivalent to the fact that $I(1,2|3) \geq 0$

\begin{defi}[Squashed Entanglement]\label{sqent} The functional ${\rm E}_{\rm sq}$ on bipartite density matrices $\rho_{12}$ by
\begin{equation}\label{squashed}
{\rm E}_{\rm sq}(\rho_{12}) =  \frac12 \inf\{ \ I(1,2|3) \ : 
\rho_{123} \ {\rm is}\ any\ {\rm tripartite\ extension\ of }\
\rho_{12}\ \}\ .
\end{equation}
is known as the
{\em squashed entanglement} of $\rho_{12}$,  
\end{defi}

This functional was first introduced by Tucci \cite{Tu}, and  was rediscovered
by Christandl
and Winter \cite{CW1} who studied it and  proved that it has many important
properties, such as additivity. 

The cases of equality in SSA have been determined in \cite{Rus02,HJPW}, and it follows from the results in the second of these papers  that if $I(2,2|3) = 0$, then
$\HH_3$ has the form 
$$\HH_3 = \bigoplus_{j=1}^m \HH^j_{3\ell}\otimes \HH_{3r}^j$$ and 
$\rho_{123}$ has the form
\begin{equation}\label{eqssa}
\rho_{123} = \bigoplus_{j=1}^m \rho_{1,3\ell}^j\otimes \rho_{3r,2}^j\ . 
\end{equation}
Evidently, for any $\rho_{123}$ of the form (\ref{eqssa}),
$\rho_{12} := \tr_3(\rho_{123})$ is separable. Thus, if one knew that the infimum in Definition~\ref{sqent} was attained, 
it would follow that if ${\rm E}_{\rm sq}(\rho_{12}) =0$, then $\rho_{12}$ is separable.  On the other hand, let
$\rho_{12}$ be separable, and have the decomposition
(\ref{sep}).
Take $\rho^j_{1,3\ell}$ to be an
arbitrary
purification of $\rho_1^j$ onto $\HH_1\otimes
\HH_{3\ell}^j$, take $\HH_{3r}$ to be
one-dimensional with 
$\rho_{3r,2}^j = \rho_2^j$. Then
$\rho_{123}$ is an extension of the given separable
bipartite state $\rho_{12}$ for which equality holds in 
(\ref{SSA}).  Thus, whenever $\rho_{12}$ is finitely 
separable, then ${\rm E}_{\rm sq}(\rho_{12}) =0$, and then a continuity argument shows that whenever $\rho_{12}$ is separable, 
${\rm E}_{\rm sq}(\rho_{12}) =0$. However, since it is not known whether the infimum in \eqref{squashed} is attained, it is not so simple to see that if 
${\rm E}_{\rm sq}(\rho_{12}) =0$, then $\rho_{12}$ is separable. Nonetheless, this has been proved in \cite{BCY}, and thus it is now known that 
${\rm E}_{\rm sq}(\rho_{12}) =0$ if and only if $\rho_{12}$ is separable. 
That is, ${\rm E}_{\rm sq}$  provides  a {\em faithful} measure of entanglement. 

The following extension of SSA is proved in \cite{CL12}:

\begin{thm}[Extended SSA]\label{essa}
For all tripartite states $\rho_{123}$,
\begin{equation}\label{essa1}
 S_{13}+S_{23} - S_{123} - S_3 \geq 2 \max\{ S_1 - S_{12} \ ,  S_2 - S_{12}\ ,  0\ \} \ .
\end{equation}
\end{thm}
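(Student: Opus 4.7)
The plan is to reduce Extended SSA to two applications of ordinary SSA, using the purification trick to access a ``hidden'' inequality. First I would invoke Lemma~\ref{purlm} to purify $\rho_{123}$: there exist a finite-dimensional $\cH_4$ and a pure state $\rho_{1234}$ on $\cH_1\otimes\cH_2\otimes\cH_3\otimes\cH_4$ whose partial trace over $\cH_4$ equals $\rho_{123}$. Because $\rho_{1234}$ is pure, the Araki--Lieb pure-state identity $S_A = S_{A^c}$ (the proof of Theorem~\ref{ALthm}) yields
\begin{equation*}
S_{123} = S_4,\qquad S_{124} = S_3,\qquad S_{24} = S_{13},\qquad S_{14} = S_{23}.
\end{equation*}

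Next I would apply SSA twice. Applied to the tripartite marginal $\rho_{123}$ with central system $2$, SSA gives
\begin{equation*}
S_{12} + S_{23} \geq S_{123} + S_2.
\end{equation*}
Applied to the tripartite marginal $\rho_{124}$ with central system $2$, it gives $S_{12} + S_{24} \geq S_{124} + S_2$, which by the pure-state identities rewrites as
\begin{equation*}
S_{12} + S_{13} \geq S_3 + S_2.
\end{equation*}
Adding the two displayed inequalities and rearranging yields
\begin{equation*}
S_{13} + S_{23} - S_{123} - S_3 \geq 2(S_2 - S_{12}),
\end{equation*}
which is $I(1,2|3) \geq 2(S_2-S_{12})$. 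Interchanging the roles of systems $1$ and $2$ throughout (so use central system $1$ in both SSA applications, with $S_{14}=S_{23}$ in place of $S_{24}=S_{13}$) gives $I(1,2|3) \geq 2(S_1-S_{12})$. Combined with ordinary SSA, $I(1,2|3) \geq 0$, these three lower bounds together give \eqref{essa1}.

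I do not expect a genuine obstacle: the only non-routine step is recognizing that SSA for the marginal $\rho_{124}$ — a tripartite state ``living partly on the purifying ancilla'' — yields, after invoking the pure-state identities, an inequality purely in the entropies of $\rho_{123}$ that SSA on $\rho_{123}$ alone cannot supply. Once the purification is introduced, the rest is bookkeeping, and no appeal to any deeper tool (such as the Lieb concavity theorem or the Lieb triple matrix inequality) is needed beyond the SSA/DPI itself, which was already established via Theorem~\ref{SSAmain}.
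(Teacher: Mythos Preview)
Your proof is correct and uses the same essential ingredients as the paper's: purify $\rho_{123}$ to a pure $\rho_{1234}$, invoke the complementary-entropy identities for pure states, apply SSA twice, and add. The paper organizes the bookkeeping a bit differently---it first isolates weak monotonicity $S_{12}+S_{23}\ge S_1+S_3$ (via SSA on $\rho_{124}$), adds two instances of it, and then uses the purification identities a second time to relabel the resulting inequality into the form \eqref{essa1}---whereas you purify once, apply SSA directly to $\rho_{123}$ and $\rho_{124}$ with the same central system, and land on \eqref{essa1} for $\rho_{123}$ itself without any final relabeling. Your route is marginally more streamlined, but the mathematical content is identical.
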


This has the immediate corollary that 
\begin{equation}\label{lbesf1}
{\rm E}_{\rm sq}(\rho_{12}) \geq   \max\{ S_1 - S_{12} \ ,  S_2 - S_{12}\ ,  0\ \} \ .
\end{equation}
Therefore if either of the conditional entropies $S_{12} - S_1$ or $S_{12} - S_2$ is strictly negative, then 
${\rm E}_{\rm sq}(\rho_{12})>0$, and $\rho_{12}$ is entangled.   

It is also shown in \cite{CL12} that the factor of $2$ on the right side cannot be replaced by any larger value. The argument uses a tripartite extension of a bipartite state that saturates the Araki-Lieb Triangle inequality.  This is natural since the proof of Theorem~\ref{essa}, which we give below, makes use of the purification arguments used to prove the Araki-Lieb Triangle inequality. The construction yields a tripartite state that, when inserted into the definition \eqref{squashed}, yields the upper bound
\begin{equation*}
{\rm E}_{\rm sq}(\rho_{12})  \leq  S_1 - S_{12} \qquad{\rm with}\qquad S_1 - S_{12} > 0\ .
\end{equation*}
Then from the lower bound, \eqref{lbesf1}, we have  ${\rm E}_{\rm sq}(\rho_{12})  =  S_1 - S_{12} > 0$. This proves that \eqref{lbesf1}, and hence the factor of $2$ in Theorem~\ref{essa}, is sharp.
We refer to \cite{CL12} for this construction.

A weaker form of the inequality (\ref{lbesf1}) was given by
Christandl and Winter
\cite{CW1}.
Their lower bound involves the
averaged  quantity 
\begin{equation}\label{wkr}
\frac12 (S_1+S_2) -S_{12}
\end{equation}
 in place of
$\max\{S_1-S_{12}, S_2-S_{12}\}$. The difference can be significant: By the remarks following Theorem~\ref{ALeqthm},
there exist states $\rho_{12}$  for which $\max\{ S_1 - S_{12} \ ,  S_2 - S_{12}\ ,  0\ \}$  is 
arbitrarily large, but the quantity in (\ref{wkr}) is negative. 
Moreover, the argument in \cite{CW1}
relied on a lower bound for the  the {\em one-way
distillable entanglement} 
in terms of the conditional entropy. This inequality, known
as the {\em hashing inequality}  had been a long-standing
conjecture,
and its proofs remain complicated.  Our contribution was to
show how this stronger lower bound follows in a relatively
simple manner from strong subadditivity, and to provide the
examples that prove the
sharpness of these bounds.

\begin{proof}[Proof of Theorem~\ref{essa}] Consider any purification $\rho_{1234}$ of $\rho_{123}$.
Then since $\rho_{1234}$ is pure,   $S_{23} = S_{14}$ and
$S_{124} = S_3$
Then
$$S_{12} + S_{23} - S_1 - S_3 = S_{12} + S_{14} - S_{124}-S_1\ ,$$
and the right hand side is non-negative by (\ref{SSA}). 
This proves 
\begin{equation*}
S_{12}+S_{23} \geq S_1+S_3\ ,
\end{equation*}
which may also be seen as a consequence of the convexity of the conditional entropy.

Next, adding  $S_{12}+S_{23} \geq S_1+S_3$ and   $S_{13}+S_{23} \geq S_1+S_2$, we obtain
\begin{equation*}
 S_{12} + S_{13} + 2S_{23}  \geq 2S_1 + S_2 + S_3 \ .
 \end{equation*}
Again consider any purification $\rho_{1234}$ of $\rho_{123}$.  Then we obtain, using $S_{12} = S_{34}$,
$S_{23} = S_{14}$, and $S_2 = S_{134}$,
$$
S_{13}+S_{34} -S_{134} -S_3 \geq 2(S_{1} - S_{14})\ ,
$$
which is (\ref{essa1}) with different indices.  
\end{proof}

\section{Minkowski's Inequality and SSA}

This section begins with a second  proof of SSA in the classical case taken from  \cite{CL99}.  Recall that a standard statement of the Minkowski inequality  is that for non-negative measurable
functions $f$ on the Cartesian product of two measure spaces $(X,\mu)$ and $(Y,\nu)$,
\begin{equation}\label{mink1}
\biggl(\int_X\biggl(\int_Y f(x,y){\rm d} \nu\biggr)^p{\rm d} \mu\biggr)^{1/p} \le
\int_Y\biggl(\int_X f^p(x,y){\rm d} \mu\biggr)^{1/p}{\rm d} \nu
\end{equation}
for $p\ge 1$, and that the opposite inequality holds for $0<p\leq 1$.  If $Y = \{1,\dots,n\}$ and $\nu$ is counting measure and we write $f_y(x)$ for $f(x,y)$, 
this reduces to 
\begin{equation*}
\left(\int_X\left(\sum_{j=1}^n f_j(x)\right)^{p}{\rm d}\mu\right)^{1/p}  \leq \sum_{j=1}^n \left(\int_X f_j^p{\rm d}\mu\right)^{1/p}\ .
\end{equation*}
Of course since $\left| \sum_{j=1}^n f_j(x)\right| \leq \sum_{j=1}^n |f_j(x)|$, and likewise for the integral version, the assumption of non-negativity can by dropped by inserting absolute values, but for our purposes, the present statements are more useful. Note in particular  that in \eqref{mink1}, there is equality at $p=1$ by Fubini's Theorem. 

The inequality \eqref{mink1} has a trivial extension to functions of three 
variables, and this extension has an interesting consequence, namely  classical SSA. 
Consider a non-negative measurable function $f(x,y,z)$ on the Cartesian
product of three measure spaces $(X,\mu)$, $(Y,\nu)$ and $(Z,\rho)$, and simply
holding $z$ fixed as a parameter, one gets
\begin{equation*}
\biggl(\int_X\biggl(\int_Y f(x,y,z){\rm d} \nu\biggr)^p{\rm d} \mu\biggr)^{1/p} \le
\int_Y\biggl(\int_X f^p(x,y,z){\rm d} \mu\biggr)^{1/p}{\rm d} \nu
\end{equation*}
{\it pointwise in $z$} for $p\ge 1$.
Integrating in $z$ then yields
\begin{equation}\label{mink2b}
\int_Z\biggl(\int_X\biggl(\int_Y f(x,y,z){\rm d} \nu\biggr)^p{\rm d} \mu\biggr)^{1/p}{\rm d} \rho \le
\int_Z\int_Y\biggl(\int_X f^p(x,y,z){\rm d} \mu\biggr)^{1/p}{\rm d} \nu{\rm d} \rho
\end{equation}
for $p\ge 1$, and of course the inequality reverses for $0<p\le1$.  

For any probability density $g$ on any finite measure space $(X,\mu)$ that belongs to $L^p(X,\mu)$ for some $p>1$,   $g \log g \in L^1(X,\mu)$ and 
\begin{equation}\label{mink2a}
\lim_{p\downarrow 1} \frac{1}{p-1}\left(\int_X g^p{\rm d}\mu -1\right) = \int_{X} g \log g {\rm d}\mu = -S(g)\ ,
\end{equation}
a formula that is the basis Irving Segal's approach to entropy \cite{Se60} in the classical and quantum setting.

To avoid technical details that never arise in the quantum setting, where the entropy is always non-negative, let us suppose that the measures $\mu$, $\nu$ and $\rho$ in \eqref{mink2b} are all finite.  Let $f_{123}$ be a probability density on $(X\times Y \times Z, \mu\otimes\nu\otimes \rho)$ that belongs to $L^p$ for some $p > 1$. Then by  \eqref{mink1}, all of its marginals are probability densities whose $p$th powers are integrable. 
Denote the various marginal densities of $f_{123}$ as follows:
$$f_{23}(y,z) = \int_X f(x,y,z){\rm d} \mu\ ,\quad f_{13}(x,z) = \int_Y f(x,y,z){\rm d} \nu\ , \quad
f_{3}(z) = \int_X\int_Y f(x,y,z){\rm d} \mu{\rm d} \nu\ .$$
Then inserting $f_{123}$ into \eqref{mink2b}, it becomes:
\begin{equation}\label{mink7a}
\int_Z\left(\int_X f_{13}^p {\rm d}\mu\right)^{1/p} \leq \int_Y\int_Z \left(\int_X f_{123}^p{\rm d}\mu\right)^{1/p}{\rm d}\nu{\rm d}\rho\ .
\end{equation}
Since \eqref{mink7a} is an equality at $p=1$, we get another inequality by taking the right derivative of both sides of \eqref{mink2b} with respect to $p$ at $p=1$, and by  \eqref{mink2a} this yields an entropy inequality which is, in the now familiar notation, with all terms finite,
$$S_{13}+ S_{23} \geq S_{123} + S_3\ .$$
That is, differentiating Minkowski's inequality for three measure spaces yields  the strong subadditivity of the classical entropy.  The condition that the measures are finite can be easily relaxed, but this is irrelevant for our present purposes. This proof of classical SSA has the merit of making no reference to conditional probability densities, and so one might hope that it would extend to the quantum setting. This is indeed the case.

The operator analog of \eqref{mink1} is elementary, though it appears the first proof was given in \cite{CL99}: 

\begin{thm}\label{minkthm2}Let $A$  be a positive operator on the tensor
product of two 
Hilbert spaces $\cH_1\otimes\cH_2$. Then for all $p\ge 1$,
\begin{equation}\label{mink5}
\bigl(\tr(\tr_1 A)^p\bigr)^{1/p} \le \tr\bigl(\bigl(\tr_ 2 A^p\bigr)^{1/p}\bigr)
\end{equation}
and inequality \eqref{mink5} reverses for $0<p\le 1$.
\end{thm}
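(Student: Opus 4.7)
The plan is to prove the $p\ge 1$ case by a block-matrix argument in a well-chosen orthonormal basis of $\cH_1$, and then treat $0<p\le 1$ by a parallel argument with the inequalities reversed.

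First I would fix an orthonormal basis $\{e_\alpha\}_{\alpha=1}^{d_1}$ of $\cH_1$, to be chosen at the very end, and write
\[
A=\sum_{\alpha,\beta=1}^{d_1}|e_\alpha\rangle\langle e_\beta|\otimes A_{\alpha\beta},
\qquad A_{\alpha\beta}\in M_{d_2}(\C).
\]
Since $A\ge 0$ each diagonal block $A_{\alpha\alpha}$ is positive, and one can write $A_{\alpha\alpha}=V_\alpha^*AV_\alpha$ where $V_\alpha:\cH_2\to\cH_1\otimes\cH_2$ is the isometry $V_\alpha f=e_\alpha\otimes f$. Then $\tr_1 A=\sum_\alpha A_{\alpha\alpha}$, while $(\tr_2 A^p)_{\alpha\alpha}=\tr(V_\alpha^* A^p V_\alpha)$.

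The key block estimate is the trace Jensen inequality: for any isometry $V$, any $X\ge 0$, and any $p\ge 1$,
\[
\tr\bigl((V^*XV)^p\bigr)\le \tr(V^*X^p V),
\]
which reverses for $0<p\le 1$. To see this, diagonalize $V^*XV=\sum_j\mu_j|w_j\rangle\langle w_j|$; because $V$ is an isometry, $\mu_j=\langle Vw_j, XVw_j\rangle$, and scalar Jensen applied to the spectral measure of $X$ at the unit vector $Vw_j$ (using convexity of $t\mapsto t^p$ for $p\ge 1$, concavity for $p\le 1$) yields $\mu_j^p\le \langle Vw_j,X^p Vw_j\rangle$. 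Summing and using $\sum_j|Vw_j\rangle\langle Vw_j|=VV^*$ proves the claim. Applied to $V_\alpha$, this becomes $(\tr A_{\alpha\alpha}^p)^{1/p}\le ((\tr_2 A^p)_{\alpha\alpha})^{1/p}$ for $p\ge 1$ (reversed for $p\le 1$).

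For $p\ge 1$ the Schatten $p$-norm is a genuine norm, and Minkowski's inequality for the positive operators $A_{\alpha\alpha}$ yields
\[
\bigl(\tr(\tr_1 A)^p\bigr)^{1/p}
= \Bigl(\tr\Bigl(\sum_\alpha A_{\alpha\alpha}\Bigr)^p\Bigr)^{1/p}
\le \sum_\alpha (\tr A_{\alpha\alpha}^p)^{1/p}
\le \sum_\alpha ((\tr_2 A^p)_{\alpha\alpha})^{1/p}.
\]
The final step is to choose $\{e_\alpha\}$ to diagonalize $D:=\tr_2 A^p$; then $D_{\alpha\alpha}=\lambda_\alpha(D)$ and $\sum_\alpha D_{\alpha\alpha}^{1/p}=\tr D^{1/p}=\tr((\tr_2 A^p)^{1/p})$, giving \eqref{mink5}.

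The main obstacle is the $0<p\le 1$ case. The block Jensen step and basis choice go through with the inequalities reversed, but the final Minkowski step now requires the reverse Minkowski inequality $\bigl(\tr(\sum_\alpha A_{\alpha\alpha})^p\bigr)^{1/p}\ge \sum_\alpha(\tr A_{\alpha\alpha}^p)^{1/p}$ for positive operators. This is itself the block-diagonal specialization of the theorem being proved, so one cannot simply invoke it; instead it must be derived by an independent spectral or variational argument (for instance from the dual characterization of $\tr Y^p$ coming from concavity of $t\mapsto t^p$, or from Ky Fan-type majorization applied to the positive blocks $A_{\alpha\alpha}$). With such an argument in hand, the reversed block bound and the same basis choice for $\cH_1$ close the $0<p\le 1$ case.
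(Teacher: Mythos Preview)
Your argument for $p\ge 1$ is correct and essentially the same as the paper's: both use scalar Jensen (your ``trace Jensen for isometries'' is exactly the paper's $\langle u\otimes v,A\,u\otimes v\rangle^p\le\langle u\otimes v,A^p\,u\otimes v\rangle$), both sum over a basis of $\cH_1$ chosen to diagonalize $\tr_2 A^p$, and your appeal to the Schatten triangle inequality is equivalent to the paper's use of H\"older duality.

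The genuine gap is the one you yourself flag in the $0<p\le 1$ case: the reverse Schatten--Minkowski step $\bigl(\tr(\sum_\alpha A_{\alpha\alpha})^p\bigr)^{1/p}\ge\sum_\alpha(\tr A_{\alpha\alpha}^p)^{1/p}$ is exactly the block-diagonal instance of the inequality you are trying to prove, so you cannot invoke it, and the independent derivations you sketch (dual characterization, majorization) are not carried out. The paper avoids this problem entirely with a one-line substitution trick: set $r=1/p>1$ and $B=A^p$, so that $A=B^r$. The already-proved inequality for exponent $r$ reads
\[
\bigl(\tr(\tr_1 B)^r\bigr)^{1/r}\le \tr\bigl((\tr_2 B^r)^{1/r}\bigr),
\]
which in terms of $A$ and $p$ (and after swapping the labels $\cH_1\leftrightarrow\cH_2$, which is permitted since the $p\ge1$ case was proved for an arbitrary tensor product) is precisely the reversed form of \eqref{mink5}. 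No reverse Minkowski is needed; the $0<p\le1$ case is a corollary of the $p\ge1$ case via this change of variables.
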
 

\begin{proof} Take $p>1$, and let $1/q = 1-1/p$. Then there is a positive operator $B\in \cB(\cH_2)$ such that $\tr_2 B^q =1$ and 
$$
\bigl(\tr(\tr_1 A)^p\bigr)^{1/p}  = \tr B(\tr_1A) = \tr [(I\otimes B)A]  = \sum_{i,j} \langle u_i\otimes v_j, (I\otimes B)A  \langle u_i\otimes v_j\rangle
$$
for any orthonormal bases $\{u_1,\otimes u_m\}$ and $\{v_1,\dots,v_n\}$ of $\cH_1$ and $\cH_2$ respectively.   (See the remarks on Schatten trace norms following Remark~\ref{conan} below for the relevant facts about duality.)  Choosing 
$\{v_1,\dots,v_n\}$ to be an eigenbasis of $B$,  $Bv_j = \lambda_jv_j$, we then have
$$
\bigl(\tr(\tr_1 A)^p\bigr)^{1/p} =   \sum_{i,j}\lambda_j \langle u_i\otimes v_j, A  ( u_i\otimes v_j)\rangle \leq \sum_{i}\left(\sum_{j} 
\langle u_i\otimes v_j, A  ( u_i\otimes v_j)\rangle^p\right)^{1/p}
$$
by H\"older's inequality and  $\tr B^q =1$.   Then by the Spectral Theorem and Jensen's inequality, in its standard form  for convex functions on $\R$,
$$\langle u_i\otimes v_j, A  \langle u_i\otimes v_j\rangle^p  \leq  \langle u_i\otimes v_j, A^p  \langle u_i\otimes v_j\rangle\ ,$$
and thus  $\bigl(\tr(\tr_1 A)^p\bigr)^{1/p}  \leq \sum_{i} \langle u_i, \tr_2A^p u_i\rangle^{1/p}$.  Now choose $\{u_1,\dots,  u_m\}$ to consist of eigenvectors of $\tr_2 A^p$ to conclude that \eqref{mink5} is valid for all $p\geq 1$. \

For $0<p\le 1$, define $r=1/p$ and $B = A^p$ so that 
$A = B^r$. Since $r>1$, the inequality proved above says
$\tr\bigl(\bigl(\tr_2B^r\bigr)^{1/r}\bigr) \geq \bigl(\tr\bigl(\tr_1B\bigr)^r\bigr)^{1/r}$.
This is \eqref{mink5} for $0<p< 1$ in other notation. 
\end{proof} 

The following theorem, also from \cite{CL99,CL08} does not have such a simple proof:

\begin{thm}\label{minkthm3} Let $A$  be a positive operator on the tensor
product of three 
Hilbert spaces $\cH_1\otimes\cH_2\otimes\cH_3$. Then
\begin{equation}\label{mink25}
\tr_3\bigl(\tr_2(\tr_1 A)^p\bigr)^{1/p} \le \tr_{13}\bigl(\bigl(\tr_2 A^p\bigr)^{1/p}\bigr)
\end{equation}
for $1 \leq p \leq 2$  while the reverse inequality holds for $0<p\le 1$, and neither inequality is valid for $p>2$.
\end{thm}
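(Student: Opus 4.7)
My plan is to handle the entire range $0<p\le 2$ by a single variational/pairing argument, invoking Theorem~\ref{ACTM} when $1\le p\le 2$ and Theorem~\ref{L1M} when $0<p\le 1$. The key input is a scalar linearization of $\tr Y^{1/p}$ coming from Klein's inequality: since $y\mapsto y^{1/p}$ is concave for $p>1$, one obtains
\begin{equation*}
\tr Y^{1/p}\;=\;\inf_{L>0}\Bigl\{\tfrac{1}{p}\tr[LY]+\tfrac{p-1}{p}\tr L^{-1/(p-1)}\Bigr\},
\end{equation*}
with the infimum attained at $L=Y^{-(p-1)/p}$; for $0<p<1$ the same computation, now with $y\mapsto y^{1/p}$ convex, yields the analogous formula in which the infimum is replaced by a supremum and $L^{-1/(p-1)}$ by $L^{1/(1-p)}$. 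At $p=1$ both sides of \eqref{mink25} reduce to $\tr A$ and there is nothing to prove.

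For $1<p\le 2$ I would apply the infimum formula to both sides of \eqref{mink25}. The right-hand side becomes $\inf_{L>0}\bigl\{\tfrac{1}{p}\tr[A^p(L\otimes I_2)]+\tfrac{p-1}{p}\tr L^{-1/(p-1)}\bigr\}$, where $L$ ranges over positive operators on $\cH_1\otimes\cH_3$, and the left-hand side is the analogous infimum over $M>0$ on $\cH_3$. Given $L$, the natural candidate is $M:=(\tr_1 L^{-1/(p-1)})^{-(p-1)}$; one immediately checks $\tr M^{-1/(p-1)}=\tr L^{-1/(p-1)}$, so only the first summand needs to be controlled. It remains to prove $\tr[A^p(L\otimes I_2)]\ge \tr_3[M\,\tr_2(\tr_1 A)^p]$, which is exactly the $K=I$ instance of Theorem~\ref{ACTM} applied with $t=p-1\in[0,1]$, $Y=A$, and $X=L^{-1/(p-1)}\otimes I_2$: indeed $X^{-t}=L\otimes I_2$ and $(\tr_1 X)^{-t}=I_2\otimes M$, so the inequality is immediate. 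Taking the infimum over $L$ yields LHS $\le$ RHS.

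For $0<p<1$ the direction of \eqref{mink25} reverses and both variational formulas become suprema. The very same pairing works after swapping Theorem~\ref{ACTM} for the $K=I$ case of Theorem~\ref{L1M}, which reads $\tr[A^p X^{1-p}]\le \tr[(\tr_1 A)^p(\tr_1 X)^{1-p}]$; taking $X=L^{1/(1-p)}\otimes I_2$ and $M:=(\tr_1 L^{1/(1-p)})^{1-p}$ pairs the two supremum expressions in the required direction, again using the elementary identity $\tr M^{1/(1-p)}=\tr L^{1/(1-p)}$. An $\epsilon$-regularization $A\leadsto A+\epsilon I$ covers the non-invertible case, and the sign flip at $p=1$ is the reason the threshold lies there.

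The genuine obstacle is $p>2$: the variational formula is still in force, but Theorem~\ref{ACTM} is unavailable -- Ando's convexity requires $q=p\le 2$, equivalently $t\le 1$ -- and in fact \eqref{mink25} fails in both directions. To finish the theorem one must exhibit explicit positive operators $A$ on a low-dimensional tripartite Hilbert space that violate each direction of the inequality; such examples are constructed in \cite{CL08}, and their existence reflects precisely the loss of operator convexity of $x\mapsto x^p$ past $p=2$.
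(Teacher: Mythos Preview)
Your argument is correct and takes a genuinely different route from the paper's. The paper proceeds by first establishing that the functional $\Psi_{p,1}(X) = \tr_{13}\bigl[(\tr_2 X^p)^{1/p}\bigr]$ is convex for $1 \le p \le 2$ and concave for $0 < p \le 1$ (Corollary~\ref{cpcl}, itself a consequence of the generalized Epstein theorem, Theorem~\ref{main}), then rewrites the left side of \eqref{mink25} as $\Psi_{p,1}\bigl(\tfrac{1}{n} I_1 \otimes \tr_1 A\bigr)$, and finally invokes Uhlmann's Lemma~\ref{ullm} to express $\tfrac{1}{n} I_1 \otimes \tr_1 A$ as an average of unitary conjugates of $A$, so that convexity/concavity and unitary invariance finish the job. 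Your argument bypasses Epstein's Theorem entirely: the Klein--Legendre linearization of $\tr Y^{1/p}$ turns each side into an extremum over a dual variable, the penalty terms are matched exactly by your choice of $M$ in terms of $L$, and the comparison of the linear terms is precisely the $K=I$ case of Theorem~\ref{ACTM} (for $1<p\le 2$) or Theorem~\ref{L1M} (for $0<p<1$). Both proofs ultimately rest on the Lieb and Ando convexity theorems, but the paper's proof inserts the Epstein functional $X\mapsto \tr[(B^*X^pB)^{1/p}]$ as an intermediate object---natural in context, since Theorem~\ref{minkthm3} is what historically motivated the extension of Epstein's Theorem to $1\le p\le 2$---whereas your proof shows that Theorem~\ref{minkthm3} can be obtained directly from the monotonicity forms of Lieb and Ando without that detour. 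For $p>2$ both treatments defer to the explicit counterexamples in \cite{CL08}.
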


Evidently there is equality at $p=1$, and if $A$ is a tripartite density matrix $\rho_{123}$ then for all $p>0$,
$$
\frac{1}{p-1}\left( \tr_3\bigl(\tr_2(\tr_1 \rho_{123})^p\bigr)^{1/p} -1\right)  \leq \frac{1}{p-1} \left( \tr_{13}\bigl(\bigl(\tr_2 \rho_{123}^p\bigr)^{1/p}\bigr) -1 \right)
$$
Taking the limit $p\to 1$, we obtain $S_{13}+ S_{23} \geq S_{123} + S_3$, giving another proof of SSA in the quantum case. 

Note that while \eqref{mink5} is valid for all $p\geq 1$,  \eqref{mink25} is valid only for $1 \leq p \leq 2$.   This is an indication of the fact that the 
deeper notion of operator convexity comes into play in the proof of Theorem~\ref{minkthm3}, while the proof of 
Theorem~\ref{minkthm2} only required the use of Jensen's inequality for real valued functions on the real line.   Two references, \cite{CL99,CL08}, are given  for 
Theorem~\ref{minkthm3} because in \cite{CL99} the theorem was proved only for $0 < p \leq 1$ and $p=2$.  This is still enough to prove SSA -- simply take the left derivative.  

The fundamental input to the proof in \cite{CL99} came from a theorem of Epstein \cite{E73}, who proved the one conjecture  explicitly stated by Lieb in \cite[p. 282]{L73}. Lieb's conjecture was that:  For each fixed $B\in M_n^+(\C)$, and each $m\in \N$, the function
\begin{equation}\label{mink31}
A \mapsto \tr[(BA^{1/m}B)^{m}]
\end{equation}
is concave on $M_n^+(\C)$. 

Of course the case $m=1$ is trivial, but the case $m=2$ is equivalent to Wigner-Yanase Theorem, by what has explained at the close of Section 2. 
Thus, there had been two conjectured extensions of the Wigner-Yanase Theorem, namely the one that appears to have been put forward by 
Dyson, and was confirmed with the proof of the Lieb Concavity Theorem, and then Lieb's conjecture, and it was not immediately clear 
that they were equivalent. After all, Lieb proved the Lieb Concavity Theorem, and made the other conjecture in the very same paper. As explained later in this section, the conjecture is not only true, but it is equivalent to the Lieb Concavity Thoerem.

The proof of Lieb's conjecure, and somewhat  more, is due to Epstein  \cite{E73}:
\begin{thm}[Epstein's Theorem]\label{Ep73}
For  any $n\times \ell$ matrix $B$, and all $0 < p <1$, the function
\begin{equation}\label{mink32}
A \mapsto \tr[(B^*A^{p}B)^{1/p}]
\end{equation}
is concave on $M_n^+(\C)$. 
\end{thm}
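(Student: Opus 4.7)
My plan is to follow Epstein's original complex-analytic strategy via Pick--Herglotz function theory. To prove concavity of $F(A) := \tr[(B^*A^pB)^{1/p}]$ on $M_n^+(\C)$, it suffices, by concavity along lines, to fix arbitrary $A_0, C \in M_n^{++}(\C)$ and show that the one-variable function $\phi(t) := F(A_0 + tC)$ is concave on the real interval $I := \{t \in \R : A_0 + tC > 0\}$.

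Next I would analytically continue $\phi$ to the complex upper half-plane $\mathbb{H} := \{t \in \C : \Im t > 0\}$. For $\Im t > 0$ the matrix $A_0 + tC$ has strictly positive imaginary part $(\Im t)C > 0$, placing it in the matrix analogue of $\mathbb{H}$. Since $z \mapsto z^p$ is holomorphic on $\C \setminus (-\infty,0]$ and maps the scalar upper half-plane into itself for $0 < p < 1$, the Riesz--Dunford functional calculus lifts this to a holomorphic map $A \mapsto A^p$ sending matrices of positive imaginary part to matrices of positive imaginary part. The congruence $A \mapsto B^*A^p B$ also preserves positivity of the imaginary part, since
\[
\Im \langle v, B^*A^p B v\rangle = \Im \langle Bv, A^p\, Bv\rangle \geq 0.
\]

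The crux is to show that $\phi$ is a scalar Pick function on $\mathbb{H}$, i.e., $\phi(\mathbb{H}) \subset \overline{\mathbb{H}}$, with the correct boundary behavior on $I$. Granting this, $\phi|_I$ is operator monotone in the sense of L\"owner, and the Nevanlinna--Herglotz integral representation expresses $\phi$ on $I$ as an integral superposition of an affine term and resolvent-type building blocks, from which concavity on $I$ is read off by standard arguments in L\"owner's theory. Chaining this for all $A_0, C$ yields concavity of $F$ on $M_n^+(\C)$.

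The main obstacle is the Pick property itself. The outer operation $X \mapsto X^{1/p}$ with $1/p > 1$ does not in general preserve the matrix upper half-plane because $w \mapsto w^{1/p}$ opens the argument of $w$ past $\pi$. Epstein overcomes this with a sectoriality argument: the numerical range of $B^*(A_0 + tC)^pB$ lies in the closed sector $\{w \in \C : 0 \leq \arg w \leq p\pi\}$, whose opening is strictly less than $\pi$, so the $(1/p)$-th power maps this sector into the closed upper half-plane, and the trace (a sum of quantities with non-negative imaginary part) inherits non-negative imaginary part, with strict positivity off a thin degenerate locus. A completely different, purely algebraic route -- promised for Section~12 -- is to derive Epstein's Theorem as a direct corollary of the Hiai--Petz Theorem, bypassing the analytic continuation entirely.
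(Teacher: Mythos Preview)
Your sketch is a faithful outline of Epstein's original complex-analytic proof and is essentially correct; the sectoriality argument you single out is exactly the crux. The paper, however, proves Theorem~\ref{Ep73} by an entirely different route. It sets $f(A):=(\tr[(B^*A^pB)^{1/p}])^p=\|B^*A^pB\|_{1/p}$, uses Schatten-norm duality \eqref{basedual2} to write
\[
f(A)=\sup\bigl\{\,\tr[B^*A^pB\,Y^{1-p}]\ :\ Y\in M_n^+(\C),\ \tr[Y]=1\,\bigr\},
\]
notes that the expression inside the supremum is jointly concave in $(A,Y)$ by the Lieb Concavity Theorem, and then invokes Lemma~\ref{rock} (a supremum in one variable of a jointly concave function is concave in the other) together with the homogeneity Lemma~\ref{conscal} to pass from $f$ to $f^{1/p}$. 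The trade-offs are clear: the paper's duality proof is short, purely real-variable, and displays Epstein's theorem as a direct corollary of Lieb concavity; replacing Lieb concavity by Ando convexity in the same template immediately yields the $1\le p\le 2$ convexity half of Theorem~\ref{main}, which your Herglotz approach cannot reach since $z\mapsto z^p$ is not a Pick function for $p>1$. Conversely, your approach does not presuppose the Lieb Concavity Theorem at all, and---as the paper's closing timeline emphasizes---this is exactly what made Epstein's argument historically significant: it supplied an independent foundation for the whole circle of equivalent results. One small correction: the alternative ``algebraic route'' given here is the duality argument of Section~7 just described, not the Hiai--Petz theorem of Section~12 (though the latter does recover Lieb concavity and hence Epstein as well).
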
 

Epstein's proof relied on the theory of Herglotz functions; i.e., functions that are analytic on the upper half plane in $\C$ with a positive imaginary part. This includes $z \mapsto z^p$, but only  for $0 \leq p \leq1$. Using Epstein's result, Theorem~\ref{minkthm3} was proven in \cite{CL99} for $0 \leq p \leq 1$ and the special case $p=2$.   The part of Theorem~\ref{minkthm3} referring to  $1 < p < 2$ was left as a conjecture in \cite{CL99}. 

Then in \cite{CL08}, a duality method was introduced that led to the proof of Theorem~\ref{minkthm3} in full, and much more, including a new and simple proof of Epstein's Theorem. One of the new tools that \cite{CL08} brought to bear on matrix convexity and concavity theorems is the following:

\begin{lm}\label{rock} Let $X$ and $Y$ be  vector spaces. 
If $f(x,y)$ is jointly convex in $(x,y) \in X\times Y$ with values in $(-\infty, \infty]$,  then $g(x) := \inf_{y\in Y}\{  f(x,y) \}$ is convex.  If $f(x,y)$ is jointly concave in 
$(x,y) \in X\times Y$ with values in $[-\infty,\infty)$,  then $g(x) := \sup_{y\in Y}\{ f(x,y)\}$ is  concave.  
 \end{lm}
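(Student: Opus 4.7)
The plan is to argue the convex case directly from the definition and then obtain the concave case by the substitution $f \mapsto -f$, noting that $\sup_y f(x,y) = -\inf_y (-f(x,y))$. Since convexity is a two-variable statement, it suffices to fix $x_1,x_2 \in X$ and $\lambda \in (0,1)$ and verify
\[
g(\lambda x_1 + (1-\lambda) x_2) \leq \lambda g(x_1) + (1-\lambda) g(x_2).
\]

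The key move is to exploit the infimum by an $\epsilon$-approximation argument. First assume both $g(x_1)$ and $g(x_2)$ are finite. For any $\epsilon > 0$, by definition of the infimum one can choose $y_1,y_2 \in Y$ such that $f(x_i,y_i) \leq g(x_i) + \epsilon$ for $i=1,2$. Then, using first the definition of $g$ at the convex combination, then joint convexity of $f$ on $X \times Y$ applied to the pairs $(x_1,y_1)$ and $(x_2,y_2)$, and finally the choice of $y_1,y_2$, I would chain
\[
g(\lambda x_1 + (1-\lambda) x_2) \leq f(\lambda x_1 + (1-\lambda) x_2,\, \lambda y_1 + (1-\lambda) y_2) \leq \lambda f(x_1,y_1) + (1-\lambda) f(x_2,y_2) \leq \lambda g(x_1) + (1-\lambda) g(x_2) + \epsilon.
\]
Letting $\epsilon \downarrow 0$ yields the desired inequality.

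It remains to dispose of the boundary cases. If $g(x_i) = +\infty$ for some $i$ (which cannot happen here since $f$ is real- or $+\infty$-valued and so $g \leq f < \infty$ somewhere, but in general) the inequality is vacuous. If $g(x_i) = -\infty$ for some $i$, then for any $M > 0$ one may choose $y_i$ with $f(x_i,y_i) \leq -M$, and the same three-step chain produces $g(\lambda x_1 + (1-\lambda) x_2) \leq -\lambda M + (1-\lambda) g(x_{3-i})$ (or $\leq -M$ if both are $-\infty$); letting $M \to \infty$ gives $g(\lambda x_1 + (1-\lambda) x_2) = -\infty$, which matches the right-hand side read in the extended reals.

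There is no real obstacle here; the only thing to watch is that the minimizers need not exist, which is precisely why the $\epsilon$-approximation is used rather than selecting actual minimizers. An alternative, slicker packaging would be to observe that the strict epigraph of $g$ equals the projection of the strict epigraph of $f$ onto $X \times \R$, which is a convex set because linear images preserve convexity; this gives the convex case without any case analysis, and by passing to $-f$ one gets the concave case.
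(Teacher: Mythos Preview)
Your proof is correct and follows essentially the same approach as the paper: an $\varepsilon$-approximation to the infimum, followed by the chain $g(\lambda x_1+(1-\lambda)x_2)\le f(\lambda x_1+(1-\lambda)x_2,\lambda y_1+(1-\lambda)y_2)\le \lambda f(x_1,y_1)+(1-\lambda)f(x_2,y_2)\le \lambda g(x_1)+(1-\lambda)g(x_2)+\varepsilon$, then letting $\varepsilon\downarrow 0$. Your treatment of the boundary cases and the epigraph remark are additions not present in the paper, but the core argument is identical.
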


A proof of this lemma, which has many uses other than the ones made here, may be found in \cite[Theorem 1]{R74}. As in \cite{CL08}, we give the simple proof for completeness. 

\begin{proof}[Proof of Lemma~\ref{rock}]
 For any $x_0$ and $x_1$ with $g(x_0),g(x_1) < \infty$, and any $0<\lambda<1$,
 pick $\varepsilon>0$, and choose $y_0$ and $y_1$ so that
$$f(x_0,y_0) < g(x_0)+\varepsilon \qquad{\rm and}\qquad  f(x_1,y_1) <
g(x_1)+\varepsilon\ .$$ Then \begin{eqnarray} \qquad\qquad \qquad g(
(1-\lambda)x_0+\lambda x_1) &\le& f((1-\lambda)x_0+\lambda x_1,
(1-\lambda)y_0+\lambda y_1)\nonumber\\ &\le& (1-\lambda)f(x_0,y_0)+\lambda
f(x_1,y_1)   \nonumber \\ &\le&  (1-\lambda)g(x_0) + \lambda g(x_1) +
\varepsilon\ . \qquad\qquad\qquad \qquad\qquad\qquad \nonumber
\end{eqnarray}
\end{proof}

There is another well-known  general convexity/concavity lemma \cite{R70} that is useful in conjunction with Lemma~\ref{rock}:

\begin{lm}\label{conscal}   Let $V$ be a vector space and $f: V \to[0,\infty)$  be homogeneous of degree $q>0$, so that for all $s>0$ and all $x\in V$, $f(sx) = s^qf(x)$. Suppose also that $f(x) >0$ for $x\neq 0$.  Then if $f$ is concave, so is $f^{1/q}$. 
\end{lm}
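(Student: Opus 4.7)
The plan is to reduce concavity of $g := f^{1/q}$ to a simpler superadditivity statement and then verify that statement by rescaling.

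First I would observe that $g$ is positively $1$-homogeneous: for $s > 0$,
$g(sx) = f(sx)^{1/q} = (s^q f(x))^{1/q} = s\, g(x)$, and $g(0) = 0$. For any function that is $1$-homogeneous on a convex cone, concavity is equivalent to superadditivity $g(x+y) \geq g(x) + g(y)$. Indeed, given superadditivity, for $0 < \lambda < 1$,
\[
g(\lambda x + (1-\lambda)y) \;\geq\; g(\lambda x) + g((1-\lambda)y) \;=\; \lambda g(x) + (1-\lambda) g(y),
\]
using $1$-homogeneity in the last step. So the whole task reduces to proving $g(x+y) \geq g(x) + g(y)$ for all $x,y \in V$.

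Next I would prove superadditivity. If $x = 0$ or $y = 0$, the inequality is trivial since $g(0) = 0$. Otherwise, set $a := g(x) > 0$ and $b := g(y) > 0$ (positivity uses the hypothesis $f(x) > 0$ for $x \neq 0$). Note $f(x/a) = a^{-q} f(x) = 1$ and similarly $f(y/b) = 1$. The key algebraic identity is
\[
x + y \;=\; (a+b)\left( \frac{a}{a+b}\cdot \frac{x}{a} + \frac{b}{a+b}\cdot \frac{y}{b} \right),
\]
which expresses $x+y$ as $(a+b)$ times a convex combination of $x/a$ and $y/b$. Applying $q$-homogeneity and then concavity of $f$,
\[
f(x+y) \;=\; (a+b)^q\, f\!\left( \tfrac{a}{a+b}\cdot \tfrac{x}{a} + \tfrac{b}{a+b}\cdot \tfrac{y}{b} \right) \;\geq\; (a+b)^q \left( \tfrac{a}{a+b}\, f(x/a) + \tfrac{b}{a+b}\, f(y/b) \right) \;=\; (a+b)^q.
\]
Taking $q$-th roots gives $g(x+y) \geq a+b = g(x) + g(y)$, as required.

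There is essentially no serious obstacle; the only point requiring care is that one cannot divide by $a$ or $b$ when they vanish, but the hypothesis $f(x) > 0$ for $x \neq 0$ rules this out except in the trivial case $x = 0$ or $y = 0$, which is handled separately. The argument is entirely elementary and uses only $q$-homogeneity and concavity of $f$, together with the standard equivalence between concavity and superadditivity for $1$-homogeneous functions.
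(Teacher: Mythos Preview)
Your proof is correct and is essentially the same as the paper's. The paper frames the argument via the level set $\mathcal{K}=\{x:f(x)\geq 1\}$, first observing that concavity of $f$ makes $\mathcal{K}$ convex and then deducing concavity of $f^{1/q}$ from convexity of $\mathcal{K}=\{x:f(x)^{1/q}\geq 1\}$; you bypass this framing and go straight to superadditivity of $g=f^{1/q}$, but the core computation---rescaling $x$ and $y$ so that $f(x/a)=f(y/b)=1$ and applying concavity to the resulting convex combination---is identical.
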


\begin{proof}  First consider the case $q=1$. We claim that $f$ is concave if and only if $\mathcal{K} := \{x: f(x) \geq 1\}$ is convex. Suppose $f$ is concave. 
Let $x,y\in \mathcal{K}$, and $0 < \lambda < 1$. Then
$$f(\lambda x + (1-\lambda y) \geq \lambda f(x) + (1-\lambda)f(y) \geq 1\ ,$$
and hence $\mathcal{K}$ is convex. Now suppose that $\mathcal{K}$ is convex. Let $x,y\in V$. We must show $f(x+y) \leq f(x) + f(y)$. This is obvious if either $x=0$ or $y=0$, so we suppose both are non-zero.  Then
$$
\frac{f(x+y)}{f(x)+f(y)} = f\left( \frac{f(x)}{f(x)+f(y)} \frac{x}{f(x)} + \frac{f(y)}{f(x)+f(y)} \frac{y}{f(y)}\right) \ .
$$
Evidently $x/f(x)\in \mathcal{K}$ and $y/f(y)\in \mathcal{K}$ so by the convexity of $\mathcal{K}$, $f(x+y) \geq f(x) + f(y)$, and then since $f$ is homogeneous of degree one, $f$ is concave.  This proves the initial claim.

Now suppose that $f$ is homogeneous of degree $q$, and define $\mathcal{K}$ as before. The first part of the argument given above did not involve the homogeneity, and hence it still yields the result that $\mathcal{K}  = \{x: f(x) \geq 1\}$ is convex. But evidently
$$
\{x: f(x) \geq 1\}  =  \{x: f(x)^{1/q} \geq 1\}  
$$
and then since $f^{1/q}$ is homogeneous of degree one, it is concave. 
\end{proof}

\begin{remark}\label{conan} The  obvious analog  of Lemma~\ref{conscal} for convex functions may be proved in the same manner. 
\end{remark}

Finally we recall some well known facts about the {\em Schatten Trace Norms} on $M_n(\C)$. For $p \geq 1$ and $A\in M_n(\C)$, define
$$\|A\|_p := \left(\tr[ (A^*A)^{p/2}]\right)^{1/p}\ .$$
Define $\|A\|_\infty$ to be the operator norm of $A$. 
Evidently, if $\sigma_1(A),\dots, \sigma_n(A)$ are the singular values of $A$, $\|A\|_p = (\sum_{j=1}^n \sigma_j(A)^p)^{1/p}$. Elementary proofs of the following statements may be found in \cite{C10,S05}:  For all $1 \leq p \leq \infty$, $A \mapsto \|A\|_p$ is a norm, and for all $p$ and $q$ with  $1/p + 1/q = 1$,
\begin{equation}\label{basedual}
\|A\|_p = \sup\{ |\tr[AB]|\ :\ B\in M_n(\C)\ ,\ \|B\|_{q} = 1\}  \ .
\end{equation}
In fact, it is easy to see that for $1 < p < \infty$,  if $A \in M_n^{+}(\C)$, and $B := \|A\|_p^{1-p} A^{p-1}$, $\|B\|_q = 1$, and $\tr[AB] = \|A\|_p$. 
Thus,  
 \begin{equation}\label{basedual2}
{\rm For}\ A\in M_n^{+}(\C),\ \qquad \|A\|_p = \sup\{ \tr[AB]\ :\ B\in M_n^{+}(\C)\ ,\ \|B\|_{q} = 1\}\ .
\end{equation}

We now show that using Lemma~\ref{rock} and  Lemma~\ref{conscal}, Epstein's Theorem follows directly from the Lieb Concavity Theorem.

\begin{proof}[Proof of Theorem~\ref{Ep73}] Let $0 < p < 1$, $A\in M_n^+(\C)$, and let $B$ be any complex $n\times m$ matrix.
\begin{equation}\label{fdef} 
f(A) := \left(\tr[ (B^*A^p B)^{1/p}]\right)^p = \|B^*A^p B\|_{1/p}\ .
\end{equation}
Then by \eqref{basedual2}, 
\begin{eqnarray*}
f(A) &=& \sup\{\ \tr[(B^*A^pB)Z]  \ : \ Z\in M_n^+(\C)\ , \tr[Z^{1/(1-p)}] =1\ \}\\
&=&   \sup\{\ \tr[(B^*A^pB)Y^{1-p}]  \ : \ Y\in M_n^+(\C)\ , \tr[Y] =1\ \}
\end{eqnarray*}
By the Lieb Concavity  Theorem, $(A,Y) \mapsto \tr[(B^*A^pB)Y^{1-p}]$  is jointly concave. By Lemma~\ref{rock}, $f$ is concave. Since evidently $f$ is homogeneous of degree $p$, by Lemma~\ref{conscal}, $f^{1/p}(A)= \tr[ (B^*A^p B)^{1/p}]$ is concave.
\end{proof} 

The same methods may be applied with the Ando Convexity Theorem to prove that for $1 \leq p \leq 2$, and fixed $n\times m$ matrix $B$, 
$A \mapsto \tr[ (B^*A^p B)^{1/p}]$\ is convex, as was shown in \cite{CL08} by a slightly different argument. A proof along the above lines could be given here, but it would require the reverse H\"older inequality for traces, and would lead to somewhat less than what was proved in \cite{CL08}. Therefore, we simply quote the main result obtained by this method  in \cite{CL08}.

\begin{thm}\label{main} 
For all $1  \le p \le 2$,  and for all $ q\geq 1$, and $n\times m$ complex matrices $B$ define
$$
\Upsilon_{p,q}(X) = {\rm Tr}\left[(B^*X^pB)^{q/p}\right]\ .
$$
Then $\Upsilon_{p,q}$  is convex on $M_n^+(\C)$, while 
for $0 \le p \le q \le 1$,  $\Upsilon_{p,q}$  is concave on $M_n^+(\C)$.
For $p>2$, $\Upsilon_{p,q}$ is neither convex nor concave for any values of $q\neq p$. 
\end{thm}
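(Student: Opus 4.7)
The plan is to follow the duality/homogeneity template used in the proof of Epstein's Theorem. Introduce the auxiliary functional
\[
\Phi(X) := \|B^*X^p B\|_{q/p},
\]
which is positively homogeneous of degree $p$, and observe that $\Upsilon_{p,q} = \Phi^{q/p}$ is positively homogeneous of degree $q$. Their common degree-$1$ root is $\Phi^{1/p} = \Upsilon_{p,q}^{1/q}$, so by Lemma~\ref{conscal} and Remark~\ref{conan}, $\Upsilon_{p,q}$ is convex (resp.\ concave) if and only if $\Phi$ is convex (resp.\ concave). The problem thus reduces to proving convexity/concavity of $\Phi$ on $M_n^+(\C)$.

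Consider first the concavity range $0 < p \le q \le 1$, so that $q/p \ge 1$. By the Schatten duality \eqref{basedual2} and the substitution $Y = W^{(q-p)/q}$ in the dual variable,
\[
\Phi(X) = \sup\bigl\{\, \tr[B^* X^p B\, W^{(q-p)/q}] : W \ge 0,\ \tr[W] \le 1 \,\bigr\},
\]
with the constraint set convex. An arithmetic check shows $p + (q-p)/q \le 1 \iff q \le 1$, so the Lieb Concavity Theorem (Theorem~\ref{L1}) produces joint concavity of the integrand in $(X,W)$. Lemma~\ref{rock} then delivers concavity of $\Phi$.

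The convexity range $1 \le p \le 2$, $q \ge 1$ breaks into two sub-cases. When $q \ge p$, $q/p \ge 1$ and the sup-duality together with $Y = Z^{1-p/q}$ gives $\Phi(X) = \sup\{\tr[B^* X^p B\, Z^{1-p/q}] : Z \ge 0,\ \tr[Z] \le 1\}$; operator convexity of $X \mapsto X^p$ on $[1,2]$ implies that $X \mapsto \tr[B^*X^p B\, V]$ is convex in $X$ for each fixed $V \ge 0$, so $\Phi$ is a sup of convex functions, hence convex. When $1 \le q < p$, one has $q/p < 1$, and the reverse-H\"older inf-duality for positive operators together with the substitution $Y = W^{-(p-q)/q}$ gives
\[
\Phi(X) = \inf\bigl\{\, \tr[B^* X^p B\, W^{-(p-q)/q}] : W > 0,\ \tr[W] \le 1 \,\bigr\}.
\]
Direct checks show the exponent $(p-q)/q$ lies in $[0,1]$ (using $q \ge p/2$, which follows from $q \ge 1$ and $p \le 2$) and that $p - (p-q)/q \ge 1$ (using $q \ge 1$); these are precisely the hypotheses in Ando's Convexity Theorem (Theorem~\ref{ACT}), which produces joint convexity of the integrand in $(X,W)$. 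Lemma~\ref{rock} then yields convexity of $\Phi$. The failure statement for $p > 2$ is established by explicit small-matrix counterexamples as in \cite{CL08}, whose existence traces to the fact that $t \mapsto t^p$ is not operator convex past $p = 2$.

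The main obstacle is the sub-case $1 \le q < p \le 2$ of the convexity range: the dual variable enters with a negative exponent $-(p-q)/q$, so mere convexity in $X$ for each fixed $W$ is not enough to commute infimum with convexity and one really needs the joint version of Ando's theorem. Verifying the hypotheses $0 \le (p-q)/q \le 1$ and $p - (p-q)/q \ge 1$ is the quantitative heart of the argument, and the fact that these both just barely hold under $1 \le p \le 2$ and $q \ge 1$ makes the sharpness of the thresholds transparent: as soon as $p$ exceeds $2$, Ando's hypotheses fail, the argument collapses, and counterexamples to both convexity and concavity can be produced for every $q \ne p$.
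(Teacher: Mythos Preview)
Your argument is correct and follows precisely the duality strategy that the paper outlines but does not carry out in full (Theorem~\ref{main} is simply quoted from \cite{CL08}). Your reduction via the degree-one root $\Phi^{1/p}=\Upsilon_{p,q}^{1/q}$ using Lemma~\ref{conscal}, together with the case split in the convexity range---forward H\"older duality and operator convexity of $t\mapsto t^p$ when $q\ge p$, and reverse H\"older duality plus Theorem~\ref{ACT} with the exponent checks $(p-q)/q\in[0,1]$ and $p-(p-q)/q\ge 1$ when $1\le q<p$---is exactly the argument the paper alludes to when it says a proof ``along the above lines'' would require the reverse H\"older inequality for traces.
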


\begin{remark} The concavity of $\Upsilon_{p,1}$  is Epstein's Theorem; the other cases were new in \cite{CL08}.
\end{remark}

 The following simple generalizaton of Epstein's Theorem  is useful for the proof of Theorem~\ref{minkthm3}:

\begin{cl}\label{cpcl}  Let $\Phi$ be any completely positive map from $M_n(\C)$ to $M_m(\C)$. Then for all $1  \le p \le 2$,  and for all $ q\geq 1$, 
\begin{equation}\label{cpcl1}
X \mapsto   \tr[(\Phi(X^p))^{q/p}]
\end{equation}
is convex on $M_n^{+}(\C)$.   For for $0 \le p \le q \le 1$, the mapping in \eqref{cpcl1} is concave. 
\end{cl}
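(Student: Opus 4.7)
The approach is to reduce the statement to Theorem~\ref{main} by exploiting the Kraus representation of completely positive maps to rewrite $\Phi(X^p)$ in the form $B^*\widetilde{X}^p B$ for an appropriate matrix $B$ and a block-diagonal ``amplification'' $\widetilde{X}$.

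Concretely, first I would invoke the Kraus--Choi representation \eqref{lieb6} to write
$$\Phi(X) = \sum_{j=1}^\ell V_j^* X V_j$$
for some $n\times m$ matrices $V_1,\dots,V_\ell$. Next, I would define the $\ell n\times m$ column block matrix $B$ with blocks $V_1,\dots,V_\ell$, and the map $\Psi_\ell : M_n(\C)\to M_{\ell n}(\C)$ as in \eqref{contomon1I}, so that $\widetilde{X} := \Psi_\ell(X)$ is block diagonal with $\ell$ copies of $X$. Because $\widetilde{X}$ is block diagonal with identical blocks, the functional calculus yields $\widetilde{X}^p = \Psi_\ell(X^p)$, and therefore
$$B^* \widetilde{X}^p B = \sum_{j=1}^\ell V_j^* X^p V_j = \Phi(X^p).$$
Consequently
$$\tr\bigl[(\Phi(X^p))^{q/p}\bigr] = \tr\bigl[(B^*\widetilde{X}^p B)^{q/p}\bigr] = \Upsilon_{p,q}(\widetilde{X}),$$
where $\Upsilon_{p,q}$ is the functional of Theorem~\ref{main}, now applied on $M_{\ell n}^+(\C)$.

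Now I would invoke Theorem~\ref{main} directly: for $1\le p\le 2$ and $q\ge 1$, the map $Y\mapsto \Upsilon_{p,q}(Y)$ is convex on $M_{\ell n}^+(\C)$, while for $0\le p\le q\le 1$ it is concave. Finally, because the amplification $X\mapsto \widetilde{X} = \Psi_\ell(X)$ is linear and $M_n^+(\C)$-to-$M_{\ell n}^+(\C)$, the composition inherits the same convexity/concavity on $M_n^+(\C)$, which is exactly the conclusion of the corollary.

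There is no real obstacle here: the proof is essentially the observation that the general completely positive case follows from the ``sandwich'' case $\Phi(X)=B^*XB$ already handled in Theorem~\ref{main}, via the standard dilation trick $\Phi(X) = B^*\Psi_\ell(X) B$. The only thing worth checking carefully is that $\Psi_\ell(X)^p = \Psi_\ell(X^p)$, which is immediate because $\Psi_\ell$ is a $*$-homomorphism onto the block-diagonal subalgebra; everything else is routine.
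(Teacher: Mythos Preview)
Your proof is correct and is essentially the same as the paper's: the paper invokes the Stinespring representation to write $\Phi(X)=B^*\pi(X)B$ for a $*$-representation $\pi$ and then applies Theorem~\ref{main} after using $\pi(X^p)=\pi(X)^p$, while you carry out exactly this argument with the representation $\pi=\Psi_\ell$ built explicitly from the Kraus operators. The only difference is that you spell out the dilation concretely rather than citing Stinespring.
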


\begin{proof} By the Stinespring Representation Theorem in its standard form \cite{Pa03,S55}, there is a representation $\pi$ of $M_n(\C)$ on $\C^m$ for some $m$, and an $m\times n$ matrix $B$ such that $\Phi(X)  = B^*\pi(X)B$. Then $\Phi(X^p) = B^*\pi(X^p)B = B^*(\pi(X))^pB$. Since $X \mapsto \pi(X)$ is linear, the claim now follows from Theorem~\ref{main}
\end{proof}

As an application of Corollary~\ref{cpcl} consider a tensor product of two finite dimensional Hilbert spaces $\cH_1\otimes \cH_2$, and take $\Phi$ to be the partial trace over $\cH_2$
; i.e., $\Phi(X) = \tr_2(X)$ so that  $\Phi: \cB(\cH_1\otimes \cH_2) \to \cB(\cH_1)$.  Then 
for all $1  \le p \le 2$,  and for all $ q\geq 1$, 
\begin{equation}\label{cpcl2}
X \mapsto  \Psi_{p,q} :=   \tr_1[(\tr_2(X^p))^{q/p}]  
\end{equation}
is convex on $M_n^{+}(\C)$.   For for $0 \le p \le q \le 1$, the mapping in \eqref{cpcl2} is concave.  

Further specializing to the case in which
$X\in \cB(\cH_1\otimes \cH_2)$ is a block diagonal  matrix with entries $X_1,\dots,X_k$  in $\cB(\cH_1)$; i.e.,
$$
X = \left[\begin{array}{cccc} X_1  & &\\   & \ddots & \\ & & X_k\end{array}\right]
$$
we see that  for all $1  \le p \le 2$,  and for all $ q\geq 1$, 
\begin{equation}\label{cpcl3}
(X_1,\dots,X_k) \mapsto \tr\left[ \left(\sum_{j=1}^k X_j^p\right)^{1/p}\right]
\end{equation}
is jointly convex while,  for $0 \le p \le q \le 1$, the mapping in \eqref{cpcl3} is concave.

\begin{proof}[Proof of Theorem~\ref{minkthm3}]   
Let the dimension of $\cH_1$ be $n$. Then with $\Psi_{p,1}$ denoting the functional defined in \eqref{cpcl2} with $q=1$, 
\begin{eqnarray}\label{cpcl4}
\tr_3\left[\tr_2(\tr_1A)^{p}\right]^{1/p} &=& \tr_{13}\left[\tr_2\left( \frac1n I_{1}\otimes \tr_1A\right)^{p}\right]^{1/p} \nonumber\\ 
&=& \Psi_{p,1}\left(\frac1n I_{1}\otimes {\rm Tr}_1 A\right)
\end{eqnarray}
where the pair of spaces in the definition of $\Psi_{p,1}$ is taken to be
$\cH_1\otimes\cH_3$ and $\cH_2$, and where $I_1$ is the identity on $\cH_1$. By Uhlmann's Lemma, Lemma~\ref{ullm}, there is a finite group $\mathcal{G}$ of unitaries $W$ on $\cH_1$ so that
\begin{equation}\label{cpcl5}
\frac1n I_{1}\otimes \tr_1 A  = \frac{1}{|\mathcal{G}|}\sum_{W\in \mathcal{G}} (W^*\otimes I_{23})A  (W\otimes I_{23})\ .
\end{equation}
Since each $W$ is unitary, with $I_{2,3}$ denoting the identity on $\cH_2\otimes\cH_3$,
\begin{equation}\label{cpcl6}
(W^*\otimes I_{23}) A  (W\otimes I_{23})^p = (W^*\otimes I_{23}) A^p  (W\otimes I_{23})
\end{equation}
and then 
\begin{eqnarray}\label{cpcl7}
\Psi_{p,1}\left((W^*\otimes I_{23})A\, (W\otimes I_{23})\right) &=&
\tr_{13}\left[ \left(\tr_2\left[(W^*\otimes I_{23})A^p \, (W\otimes I_{2,3})\right]\right)^{1/p} \right]\nonumber \\
&=&
\tr_{13}\left[  ( W^*\otimes I_3) \left( \tr_2(A^p) \right)^{1/p} (W\otimes I_3)\right] \nonumber \\
&=&  \tr_{13}\left[  \left( \tr_2[A^p] \right)^{1/p} \right]\ .
\end{eqnarray}
By \eqref{cpcl4} , \eqref{cpcl5} and the convexity of $\Psi_{p,1}$ for $1 \leq p \leq 2$, \eqref{cpcl6} and finally  \eqref{cpcl7},
\begin{eqnarray*}
\tr_3\left[\tr_2(\tr_1A)^{p}\right]^{1/p} &=& \Psi_{p,1}\left(  \frac{1}{|\mathcal{G}|}\sum_{W\in \mathcal{G}} (W^*\otimes I_{23}) A  (W\otimes I_{23})   \right)\\
&\leq& \frac{1}{|\mathcal{G}|}\sum_{W\in \mathcal{G}} \Psi_{p,1}\left(   (W^*\otimes I_{23}) A  (W\otimes I_{23})  \right)= 
 \tr_{13}\left[  \left( \tr_2[A^p] \right)^{1/p} \right]\ .
\end{eqnarray*}
This proves \eqref{mink25} for $1 \leq p \leq 2$, and the reverse inequality for $0 < p \leq 1$ is proved in the same manner using the concavity of $\Psi_{p,1}$ for such $p$. 
\end{proof} 

The duality method described here was further developed by Zhang \cite{Z} to prove a conjecture raised in \cite{CFL18} that generalized  a conjecture of Audenaert and Datta \cite{AD15}  that in turn generalized some conjectures put put forward  in \cite{MLDSFT,WWY} concerning  {\em sandwiched R\'enyi entropies}.  For density matrices $\rho,\sigma\in M_n^{++}(\C)$, and $\alpha>0$, $\alpha \neq 1$, the  sandwiched R\'enyi relative entropy
$D_\alpha(\rho||\sigma)$ is defined by
\begin{equation}\label{SRE1}
D_\alpha (\rho||\sigma)  = \frac{1}{\alpha -1}\log\left( \tr[(\sigma^{(1-\alpha)/2\alpha} \rho \sigma^{(1-\alpha)/2\alpha})^\alpha]\right)\ . 
\end{equation}
It was conjectured in \cite{MLDSFT,WWY} that for any completely positive unital map $\Phi$, 
\begin{equation}\label{SRE3}
D_\alpha (\Phi^\dagger \rho||\Phi^\dagger \sigma)  \leq D_\alpha (\rho||\sigma)\ , 
\end{equation}
which is the DPI for sandwiched R\'enyi relative entropies,  and proven there for $1 <\alpha \leq 2$ as a consequence of the Lieb Concavity Theorem.   Work on the remaining cases led to  conjectures in \cite{AD15} and \cite{CFL18} concerning the joint convexity and concavity properties of the function
\begin{equation}\label{SRE2}
\Psi_{p,q,s}(A,B) = \tr[ (B^{q/2} K^* A^p K B^{q/2})^s]
\end{equation}
on $M_n^{++}(\C)\times  M_n^{++}(\C)$. Note that if one takes $K=I$, $A = \rho$, $B = \sigma$, $s= \alpha$, $p= 1$ and $q = (1-\alpha)/2\alpha$, the trace in \eqref{SRE2} becomes the trace in \eqref{SRE1}.  

The conjecture raised in \cite{AD15} and slightly generalized  in \cite{CFL18} concerns the set of parameter values $p$, $q$, $s$  
for which the function $\Psi_{p,q,s}(A,B)$ is jointly convex or concave. By the usual arguments, such as described here in Section 3, one deduces monotonicity from these concavity/convexity properties wherever one has them. Very soon after the work in \cite{MLDSFT,WWY}, Beigi \cite{B13} and Frank and Lieb \cite{FL13} simultaneously, but by quite different arguments, proved the monotonicity of the sandwiched R\'enyi relative entropy for all $\alpha > 1$. (The papers \cite{B13}, \cite{FL13} and  \cite{MLDSFT} all appear in volume 54 of Journal of Mathematical Physics.)
Investigation of $\Psi_{p,q,s}(A,B)$ continued, and many partial results had been obtained by Hiai, 
who further developed Epstein's method, and by myself, Frank and Lieb using mainly the duality method described here. See \cite{CFL18} for 
references to this earlier work leading up to Zhang's full solution. 

The key to Zhang's work is his beautiful Theorem 3.3 which gives two families of variational expressions 
for $\tr[|XY|^r]$, $X,Y$ invertible in $M_n(\C)$. With this device, which plays the role of \eqref{basedual2} in the proof given here of 
Theorem~\ref{Ep73}, but greatly extends its power,  he very efficiently settled all outstanding cases of the conjecture, as well as giving a new, 
unified  proof of all the cases that had been proved by various means. 

The results obtained so far in this section permit simple proofs to be given of two more of the theorems proved in \cite{L73}.  We have seen how the Lieb Concavity Theorem and duality can be 
used to prove Lieb's conjecture  \cite[p. 282]{L73}, so that for all  $m\in \N$, $B\in M_n^{++}$, we know that $X \mapsto \tr[(BX^{1/m}B)^m]$ is 
concave on $M_n^{++}(\C)$.  Replace $B$ by $B^{1/2m}$ and write $B = e^H$. Then for each $m$,
$$
X \mapsto \tr \left[(e^{H/m} e^{\log X/m})^m\right]
$$
is concave. By the Trotter Product Formula \cite{T59}, so is
$$
X \mapsto \tr\left [e^{H + \log X}\right] = \lim_{m\to \infty}\tr \left[(e^{H/m} e^{\log X/m})^m\right]\ .
$$
This proves the fourth theorem in \cite{L73}:

\begin{thm}[Lieb, 1973]\label{explog}
For all self-adjoint $H\in M_n(\C)$, the function
\begin{equation}\label{explog1}
X \mapsto \tr\left [e^{H + \log X}\right] 
\end{equation}
is concave.
\end{thm}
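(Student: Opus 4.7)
The plan is to follow exactly the strategy sketched immediately after the theorem statement: combine the now-established form of Lieb's conjecture \eqref{mink31}, that $X \mapsto \tr[(B X^{1/m} B)^m]$ is concave on $M_n^{++}(\C)$ for each $m \in \N$ and $B \in M_n^{++}(\C)$ (which is available from Theorem~\ref{main} with $p = 1/m$, $q = 1$, equivalently from Epstein's Theorem~\ref{Ep73} applied to $A = X$), together with the Trotter product formula.

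First I would fix a self-adjoint $H \in M_n(\C)$ and, for each $m \in \N$, take $B := e^{H/(2m)} \in M_n^{++}(\C)$. By the cyclicity identity $\tr[(ACA)^m] = \tr[(A^2 C)^m]$, the concave functional can be rewritten as
$$
f_m(X) := \tr\left[(B X^{1/m} B)^m\right] = \tr\left[\left(e^{H/m}\,e^{(\log X)/m}\right)^m\right],
$$
so $f_m$ is concave on $M_n^{++}(\C)$ for every $m$.

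Second, I would invoke the Trotter product formula \cite{T59}, which for matrices follows from the elementary norm estimate $\|e^{(P+Q)/m} - e^{P/m} e^{Q/m}\| = O(1/m^2)$ together with a standard telescoping argument: for any self-adjoint $P, Q \in M_n(\C)$, $(e^{P/m} e^{Q/m})^m \to e^{P+Q}$ in operator norm. Applied with $P = H$ and $Q = \log X$, this gives $f_m(X) \to \tr[e^{H + \log X}]$ pointwise on $M_n^{++}(\C)$. Since the pointwise limit of a sequence of concave functions on a convex domain is concave (a one-line check from the definition), the conclusion follows.

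There is essentially no genuine obstacle here: the substantive input, namely the concavity of $X \mapsto \tr[(B X^{1/m} B)^m]$, has already been derived in the preceding pages from the Lieb Concavity Theorem via the duality method. The remaining work is the algebraic rearrangement using cyclicity of the trace, the passage to the limit $m \to \infty$ via Trotter, and the observation that concavity survives pointwise limits.
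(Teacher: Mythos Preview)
Your proof is correct and follows essentially the same route as the paper: invoke the concavity of $X \mapsto \tr[(BX^{1/m}B)^m]$ (Lieb's conjecture, now Epstein's Theorem~\ref{Ep73}), specialize to $B = e^{H/(2m)}$ and use cyclicity to rewrite as $\tr[(e^{H/m}e^{(\log X)/m})^m]$, then pass to the limit via the Trotter product formula. The paper's argument is the same line for line, so there is nothing to add.
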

Another interesting proof that uses the duality method of \cite{CL08} to deduce Theorem~\ref{explog} from the joint convexity of the relative entropy was given by Tropp \cite{Tr12}.  For a monotonicity variant of Theorem~\ref{explog} that is strictly stronger, see \cite{C22}.

From here, it is simple to deduce the fifth theorem in \cite{L73}, the Triple Matrix Theorem that was used in the second proof given here of SSA.  
\begin{thm}[Lieb, 1973]\label{TripleMatrix}
For all self-adjoint $H,K,L\in M_n(\C)$, 
\begin{equation}\label{TM1}
\tr[e^{H+K+L}]   \leq \tr[e^{H}T_{e^{-K}}(e^L)]] \ ,
\end{equation}
where
\begin{equation}\label{TM2}T_A(B) = \int_0^\infty \frac{1}{s+A} B \frac{1}{s+A} {\rm d}s  =  \frac{{\rm d}}{{\rm d}t} \log(A + t B)\bigg|_{t=0}\ .
\end{equation}
\end{thm}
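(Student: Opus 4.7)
The plan is to deduce Theorem~\ref{TripleMatrix} from Theorem~\ref{explog} using the tangent-line principle already encoded in Lemma~\ref{lieblem}. Fix self-adjoint $H,K \in M_n(\C)$ and consider
$$F(X) := \tr\bigl[\exp\bigl((H+K)+\log X\bigr)\bigr]$$
on $M_n^{++}(\C)$. Theorem~\ref{explog} (applied with $H$ there replaced by the self-adjoint matrix $H+K$) says that $F$ is concave, and the identity $\log(sX)=\log s\cdot I + \log X$ gives $F(sX)=sF(X)$, so $F$ is homogeneous of degree one. Lemma~\ref{lieblem} is stated for convex $F$, but the same two-line proof, applied to $-F$, yields the concave version: for all $X,Y \in M_n^{++}(\C)$,
$$G(X,Y) := \lim_{t\downarrow 0}\frac{F(X+tY)-F(X)}{t} \;\geq\; F(Y).$$

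Next I would identify $G(X,Y)$ explicitly. Writing $A(t) := (H+K)+\log(X+tY)$, the integral representation recorded in \eqref{TM2} gives $A'(0)=T_X(Y)$. Combining this with the Duhamel formula $\frac{d}{dt}e^{A(t)} = \int_0^1 e^{sA(t)}A'(t)e^{(1-s)A(t)}\,\mathrm{d}s$ and cyclicity of the trace yields $\frac{d}{dt}\tr[e^{A(t)}] = \tr[A'(t)e^{A(t)}]$ even though $A'(t)$ need not commute with $A(t)$. Hence
$$G(X,Y) \;=\; \tr\bigl[T_X(Y)\, e^{(H+K)+\log X}\bigr].$$

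Now I would substitute $X := e^{-K}$ and $Y := e^{L}$. Then $\log X = -K$, so $e^{(H+K)+\log X} = e^{H}$, while $F(Y) = \tr[e^{H+K+L}]$. The inequality $G(X,Y)\geq F(Y)$ therefore reduces to
$$\tr\bigl[T_{e^{-K}}(e^L)\,e^H\bigr] \;\geq\; \tr[e^{H+K+L}],$$
which is \eqref{TM1} after one application of cyclicity. The choice of $X$ is forced by the requirement that $\log X$ cancel the artificial $+K$ placed inside the exponential, so that the operator $T_{e^{-K}}$ from the statement appears naturally.

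The only step requiring care is the derivative computation: one must know that $\log$ is Fr\'echet differentiable on $M_n^{++}(\C)$ with derivative $T_X$ (which follows from $\log A = \int_0^\infty \bigl(\tfrac{1}{1+s}I - \tfrac{1}{A+s}\bigr)\,\mathrm{d}s$) and that the trace of $e^{A(t)}$ differentiates as if $A'(t)$ and $A(t)$ commuted (which follows from Duhamel plus cyclicity, as above). Neither point is hard, and I do not see any other obstacle; once concavity and homogeneity are in hand, the tangent-line lemma does essentially all the work, and the main \emph{conceptual} content is just recognizing that passing $H \mapsto H+K$ in Theorem~\ref{explog} is what decouples the three matrices in the way the statement requires.
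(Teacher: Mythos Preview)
Your proof is correct and matches the paper's own argument essentially line for line: apply Theorem~\ref{explog} with $H$ replaced by $H+K$, use homogeneity and the (concave form of) Lemma~\ref{lieblem}, and specialize to $X=e^{-K}$, $Y=e^{L}$, computing the derivative via $T_X(Y)$. The only difference is that you spell out the Duhamel/cyclicity justification for differentiating $\tr[e^{A(t)}]$, which the paper leaves implicit.
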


\begin{proof}[Proof of Theorem~\ref{TripleMatrix}]
The second equality in \eqref{TM2} follows from the integral representation of the logarithm, \eqref{logrep} given below. The function \eqref{explog1}
is  homogeneous of degree one as well as concave, so that Lemma~\ref{lieblem} may be applied, with the result that 
$$
\frac{{\rm d}}{{\rm d}x} \tr[\exp( H+K + \log(e^{-K} + x e^L))]\bigg|_{x=0} \geq \tr[e^{H+K+L}]. 
$$
Since
$$
\frac{{\rm d}}{{\rm d}x} \log (e^{-K} + xe^L)\bigg|_{x = 0}  =  T_{e^{-K}}(e^L)\ ,
$$
with $T_A(B)$ defined as in \eqref{TM2}, this proves \eqref{TM1}. 
\end{proof}

As noted in \cite{L73},  the second part of Lemma~\ref{lieblem} allows one to reverse the argument that led from Theorem~\ref{explog} to Theorem~\ref{TripleMatrix}, and hence these two  theorems are equivalent to one another.   In fact, now that the five main theorems of \cite{L73} have been stated, it is a convenient point to discuss their equivalence, which is proved in the final section of \cite{L73}. In fact, one can expand the list of equivalent results to include Epstein's Theorem and the various equivalent statements of SSA. 

We have seen in this section that Theorem~\ref{L2} $\Rightarrow$ Theorem~\ref{Ep73}  $\Rightarrow$ Theorem~\ref{explog} $\iff$ Theorem~\ref{TripleMatrix}.
Now following \cite{L73}, we show that Theorem~\ref{explog}  $\Rightarrow$ Theorem~\ref{L3} $\Rightarrow$ Theorem~\ref{L2}. 

The starting point is once again Lemma~\ref{lieblem}, this time applied to the function figuring in Theorem~\ref{L3} in the case $X =Y$ and with $K$ self-adjoint, which we write as
\begin{equation}\label{TM3}
Q(X,K) := \int_0^\infty \tr\left[K\frac{1}{s+X} K \frac{1}{s+X}\right]{\rm d}s = \tr[K T_X(K)]\ ,
\end{equation}
where $T_X(K)$ is given by \eqref{TM2}.
The function $Q(X,K)$ is homogenous of degree one, and one readily computes that for all  positive definite $Y$ and self-adjoint  $H$, 
$$
\frac{{\rm d}}{{\rm d}t} Q(X+tY,K+tH)  = -\tr[Y R_X(K)] + 2\tr[H T_X(K)]
$$
where
\begin{equation}\label{TM4}
R_X(K) = 2\int_0^\infty \frac{1}{s+X} K \frac{1}{s+X}K  \frac{1}{s+X} {\rm d}s =  -\frac{{\rm d}^2}{{\rm d}t^2} \log(X+ tK)\bigg|_{t=0}\ .
\end{equation}

By Lemma~\ref{lieblem}, the function $Q(X,K)$  is jointly convex if and only if  for all self-adjoint $H$ and all positive definite $Y$, 
\begin{eqnarray*}
0 &\geq& -\tr[Y R_X(K)] + 2\tr[H T_X(K)]  - \tr[HT_Y(H)]\\
&=&  - \tr[Y R_X(K)]   +  \tr[T_X(K) T_Y^{-1}(T_X(K)]\\
 &-&  \tr[(H- T_Y^{-1}(T_X(K))T_Y(H- T_Y^{-1}(T_X(K))] \ ,
\end{eqnarray*}
where we have simply completed the square. 
Taking $H = T_Y^{-1}(T_X(K)$, it is evident that  $Q(X,K)$ is jointly convex if and only if  for all  $X,Y\in M_n^{++}(\C)$ and $K=K^*\in M_n(\C)$,
\begin{equation}\label{TM5}
\tr[Y R_X(K)]  -\tr[T_X(K) T_Y^{-1}(T_X(K)] \geq 0 \ .
\end{equation}
By Theorem~\ref{L3}, \eqref{TM5} is valid.  With this in hand, the concavity asserted in Theorem~\ref{explog} is easily proved: 

Since the exponential and logarithm functions are inverse to one another, it follows from \eqref{TM2} that 
${\displaystyle T_Y^{-1}(H) = \frac{{\rm d}}{{\rm d}t} \exp(Y + t H)\bigg|_{t=0}}$, and hence setting $B := e^{L+ \log X}$
a simple computation yields
\begin{equation}\label{TM5B}
\frac{{\rm d^2}}{{\rm d}t^2}  \tr [ \exp(L  + \log(X+tK))] = -\tr[ B R_X(K)] + \tr[T_X(K) T_B^{-1}(T_X(K))] \leq 0\ .
\end{equation}
 Therefore $X \mapsto \tr[e^{L + \log X}]$ is concave, and the argument just presented is Lieb's original proof of  Theorem~\ref{explog}. The argument just presented shows that  that  since \eqref{TM5} and \eqref{TM5B} are equivalent, Theorem~\ref{explog} is equivalent to the joint  convexity of $Q(X,K)$.
Now the discussion around \eqref{passageback} made in connection the the Wigner-Yanase Theorem allows one to recover the full strength of Theorem~\ref{L3} from  the joint convexity of $Q(X,K)$. Thus, 
Theorem~\ref{L3} $\iff$ Theorem~\ref{explog}. 

Finally, as in \cite{L73}, if one writes $Q(X,Y,K)$ for the functional that is asserted to be jointly convex in Theorem~\ref{L3},  one has by simple computations that for all $0 < p,q$, $p+q < 1$, 
$$
\int_{0}^{\infty }t^{p-1}\int_{0}^{\infty} u^{q-1} Q(I + tA, I+uB,K) {\rm d}t{\rm d}u\ ,
$$
is a constant multiple of the function appearing in Theorem~\ref{L2}, and the case $p+q =1$ follows as a limiting case. This proves that Theorem~~\ref{L3} 
$\Rightarrow$ Theorem~\ref{L2}. 

Furthermore, the simple argument of Tropp \cite{Tr12} using the duality method  discussed here shows that the joint convexity of the relative entropy, Theorem~\ref{Lind}, implies
Theorem~\ref{explog}. Therefore, Theorem~\ref{L1} $\Rightarrow$ Theorem~\ref{Lind} $\Rightarrow$ Theorem~\ref{explog} $\Rightarrow$ SSA
$\Rightarrow$   Theorem~\ref{Lind},  where the final implication comes from Theorem~\ref{SSAmain}. 

It remains to be shown here  that Theorem~\ref{L1} $\iff$ Theorem~\ref{L2}, and that these theorems are true.  We shall deduce this all at once, with much more, in Section~12 from a beautiful theorem of Hiai and Petz \cite{HP12} that has a very simple proof.

\section{Monotone metrics}

In the Introduction, it was noted that  Theorems 2 and 3 of \cite{L73} provided the answers to questions that would not be asked for years to come, and that when they were asked, it was not recognized
that the answers could be found in \cite{L73}.  The questions concerned {\em monotone metrics} on the space of non-degenerate density matrices.  

 Let $\rho(t)$ be a differentiable path in the space $\mathfrak{S}$ of invertible $n\times n$ density matrices; i.e., elements of 
 $M_n^{++}(\C)$ with unit trace. Then the derivative $\rho'(t) = K(t)$ is a self-adjoint operator with $\tr[K(t)] =0$.  We may think of 
 $\mathfrak{S}$ as a differentiable manifold, and it is then of interest to equip it with various Riemannian metrics  that have the property that the 
 distance between $\rho_1,\rho_2\in \mathfrak{S}$, $d(\rho_1,\rho_2)$,   decreases under the application of
any quantum operation; i.e., 
\begin{equation}\label{che1}
d(\Phi^\dagger(\rho_1),\Phi^\dagger(\rho_1)) \leq d(\rho_1,\rho_2)\ .
\end{equation}
In such a metric, any quantum operation performed on the states can only make it harder to distinguish between them. See the Introduction to \cite{CFL18} for further discussion of the problem of distinguishing between states, which is basic to quantum communication. 

We may identify the tangent space to $\mathfrak{S}$ at each point $\rho$ with the space of traceless self-adjoint $n\times n$ matrices. If we denote the  quadratic form that specifies a Riemannian metric at $\rho$ by  $\gamma_\rho(K,K)$, then the contractive property \eqref{che1} for such a metric  is equivalent to
\begin{equation*}
\gamma_{\Phi^\dagger(\rho)}(\Phi^\dagger(K),\Phi^\dagger(K)) \leq \gamma_\rho(K,K)\ .
\end{equation*}

The same question had been of interest  and answered in the classical case where  in which the analog of  $\mathfrak{S}$ is the set  $\mathfrak{S}_c$ of $n$ dimensional strictly positive probability vectors 
$p := (p_1,\dots ,p_n)$ with each $p_j >0$ and $\sum_{j=1}^n p_j = 1$.  At each $p\in \mathfrak{S}_c$, 
we may identify the tangent space with the subspace of $\R^n$ consisting of vectors $k = (k_1,\dots,k_n)$ 
with 
$\sum_{j=1}^n k_j =0$.  The analogs of quantum operations are {\em stochastic  maps} $P$; i.e., elements of 
$M_n(\R)$ with non-negative and the property that for each $j$, $\sum_{i=1}^n P_{i,j} =1$.   In this setting, 
it is natural to ask for metrics $\gamma$ on  $\mathfrak{S}_c$  with the property that 
\begin{equation*}
\gamma_{Pp}(Pk,Pk) \leq \gamma_p(k,k)
\end{equation*}
for all stochastic $P$ at each $p\in \mathfrak{S}_c$ and for each tangent vector $k$. In 1982, Cencov \cite{Cen82}, building on earlier work of Fisher \cite{F25}, proved that the unique such metric, up to a constant multiple, is the {\em Fisher Information }
\begin{equation*}
\gamma_p(k,k) = \sum_{j=1}^n \frac{k_j^2}{p_j}\ .
\end{equation*}
A decade later, Cencov together with Morozova \cite{MC90}, took up the quantum problem.  In the non-commutative setting, there are many possible 
ways to ``divide by'' a non-degenerate density matrix $\rho$.   Morozova and Cencov came up with several conjectures, all eventually shown to be correct,  
that certain explicit metrics were in fact monotone metrics, although they did not resolve any of these conjectures.  An account of their work can be found in \cite{P96}. 

However, had they known of Theorems 2 and 3 of Lieb's paper, and then recognized them as monotonicity theorems,  they would have had positive solutions to the most important of their conjectures:   These theorems, written in the monotonicity forms \eqref{lieb22} and \eqref{lieb23} show that  for each $0 < t < 1$
\begin{equation}\label{che4}
\gamma_\rho^{(t)}(K,K)  :=  \tr[K \rho^{t-1} K \rho^{-t}]
\end{equation}
and
\begin{equation}\label{che5}
\widehat{\gamma}_\rho(K,K)  :=  \tr\left [\int_0^\infty K \frac{1}{s+\rho} K \frac{1}{s + \rho}{\rm d}s\right] 
\end{equation}
are monotone metrics. Note that the right hand sides are always positive for non-zero $K$, so that these do define  Riemannian metrics, and then the monotonicity is provided by \eqref{lieb22} and \eqref{lieb23}. Note that when $\rho$ and $K$ commute, we have
$$
\gamma_\rho^{(t)}(K,K)    =  \widehat{\gamma}_\rho(K,K)  = \tr\left[\frac{K^2}{\rho}\right]\ ,
$$
as one might expect. {\em Thus the functionals studied by Lieb in 1973 are both natural quantum generalizations of the Fisher Information metric}.

However, none of the early writers on the subject made the connection with Lieb's theorems, and his paper \cite{L73} is not cited in \cite{P96} which gives the first explicit proof of the fact that \eqref{che4} and \eqref{che5} do in fact define monotone metrics as had been conjectured by Morozova and Cencov. The fact that \eqref{che5} is a monotone metric plays an important role in the author's work \cite{CM17} with Jann Maas on a geometric approach to inequalities for quantum Markov semigroups with detailed balance. In our paper, we do observe that the metric in \eqref{che5} is monotone  is a consequence of Lieb's Theorem 3 in \cite{L73}, while we also give a reference to \cite{P96} for the explicit monotonicity statement. In pointing out the lack of early references to \cite{L73} by people working on monotone metrics, the intention is not to criticize, but rather to emphasize how prescient  Lieb was in pointing out  both the validity and equivalence of  first three theorems in the  1973 paper.

The right hand side of \eqref{che5} is well known to be the negative of the Hessian of the entropy $S(\rho)$. That is,
$$
\frac{\partial^2}{\partial s \partial t} \tr[ (\rho + s K + t K) \log (\rho  + s K + t K) ]\bigg|_{s=0,t=0}  =  \tr\left [\int_0^\infty K \frac{1}{s+\rho} K \frac{1}{s + \rho}{\rm d}s\right] \ ,
$$
as one can verify using the integral representation for the logarithm \eqref{logrep} that is used below. Lesniewski and Ruskai \cite{LR99} showed that all monotone metrics arise in this manner from one of the {\em quasi entropies} that had been introduced by Petz \cite{P85,P86}.

\section{The Lieb-Ruskai Monotonicity Theorem}

The Lieb-Ruskai Monotonicity Theorem is the main result of \cite{LR74}, and it asserts the monotonicity of an operator function in two variables under completely positive maps. However it is not stated in \cite{LR74} as a theorem, 
but only discussed as an example, after Theorem 2 of that paper, and with an extraneous hypothesis, namely that the completely positive map in question be unital.  Their argument does not require this at all, and hence their result, stated for matrices, is:

\begin{thm}[Lieb and Ruskai 1974]\label{LiRu}  Let $\Phi: M_m(\C) \to M_m(\C)$ be  completely positive. Then for all $A,B \in M_n(\C)$,
\begin{equation*}
\Phi(A^*B) \frac{1}{\Phi(B^*B)} \Phi(B^*A) := \lim_{\epsilon \downarrow 0} \Phi(A^*B) \frac{1}{\Phi(B^*B)+\epsilon I } \Phi(B^*A)\in M_m(\C)\ ,
\end{equation*}
and
\begin{equation}\label{LR3A}
\Phi(A^*A)  \geq  \Phi(A^*B) \frac{1}{\Phi(B^*B)} \Phi(B^*A)\ .
\end{equation}
\end{thm}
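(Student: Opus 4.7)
The plan is to deduce the inequality from the two-positivity of $\Phi$ (implied by complete positivity) applied to a suitable block matrix, then use the Schur complement criterion for positivity of a block matrix, and finally regularize by $\epsilon I$ to handle the possibly noninvertible case.

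First I would write
\begin{equation*}
M := \begin{pmatrix} A^*A & A^*B \\ B^*A & B^*B \end{pmatrix} = \begin{pmatrix} A^* \\ B^* \end{pmatrix} \begin{pmatrix} A & B \end{pmatrix}\ .
\end{equation*}
This is manifestly positive semidefinite in $M_{2n}(\C)$ as a product of the form $X^*X$. Since $\Phi$ is completely positive, it is in particular $2$-positive, so applying $\Phi$ entrywise preserves positivity:
\begin{equation*}
\begin{pmatrix} \Phi(A^*A) & \Phi(A^*B) \\ \Phi(B^*A) & \Phi(B^*B) \end{pmatrix} \geq 0 \quad \text{in } M_{2m}(\C)\ .
\end{equation*}

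Next, for each $\epsilon > 0$ the matrix obtained by replacing the lower right entry $\Phi(B^*B)$ by $\Phi(B^*B) + \epsilon I$ is still positive (it is a sum of two positive matrices), and now its lower right block is strictly positive and therefore invertible. The Schur complement lemma for $2 \times 2$ block positivity then yields
\begin{equation*}
\Phi(A^*A) \geq \Phi(A^*B)\bigl(\Phi(B^*B) + \epsilon I\bigr)^{-1}\Phi(B^*A)
\end{equation*}
for every $\epsilon > 0$. The identity $\Phi(A^*B)^* = \Phi(B^*A)$ (which holds since $\Phi$, being positive, preserves $*$) makes the right-hand side self-adjoint.

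Finally I would let $\epsilon \downarrow 0$. Because the map $t \mapsto (X + tI)^{-1}$ on $M_m^{+}(\C)$ is operator monotone decreasing in $t$, the family $\epsilon \mapsto (\Phi(B^*B)+\epsilon I)^{-1}$ is monotone nonincreasing as $\epsilon$ increases, so the expression $\Phi(A^*B)(\Phi(B^*B)+\epsilon I)^{-1}\Phi(B^*A)$ is monotone nondecreasing as $\epsilon \downarrow 0$. It is also bounded above by $\Phi(A^*A)$ by the preceding display. A monotone bounded family of self-adjoint matrices converges, so the limit defining $\Phi(A^*B)\Phi(B^*B)^{-1}\Phi(B^*A)$ exists as an element of $M_m(\C)$, and passing to the limit in the displayed inequality gives \eqref{LR3A}.

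The only real obstacle is the possible noninvertibility of $\Phi(B^*B)$, and the $\epsilon$-regularization plus monotone convergence argument above is exactly what is needed to dispose of it; implicitly one is also checking that $\mathrm{range}(\Phi(B^*A)) \subseteq \mathrm{range}(\Phi(B^*B))$, but this is a byproduct rather than something one has to verify separately. Everything else is mechanical application of $2$-positivity and the Schur complement.
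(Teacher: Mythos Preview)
Your proof is correct and follows essentially the same route as the paper's: exhibit the $2\times 2$ block matrix $\left[\begin{smallmatrix} A^*A & A^*B \\ B^*A & B^*B\end{smallmatrix}\right]$ as a product $X^*X\ge 0$, apply $2$-positivity (implied by complete positivity) entrywise, regularize the $\Phi(B^*B)$ block by $\epsilon I$, and invoke the Schur complement criterion (Lemma~\ref{smlm}). Your treatment of the $\epsilon\downarrow 0$ limit via monotone convergence of bounded self-adjoint families is in fact more explicit than the paper's, which simply records the regularized inequality; this extra care is exactly what is needed to justify the existence claim in the statement.
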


Some years later, Choi  proved:
\begin{thm}[Choi 1980]\label{LRC} The Lieb-Ruskai Monotonicity Theorem is valid for all $2$ positive maps $\Phi: M_m(\C) \to M_m(\C)$.
\end{thm}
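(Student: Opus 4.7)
The plan is to exploit a Schur-complement characterization of positive $2\times 2$ block matrices, which is exactly the point at which the hypothesis of $2$-positivity (rather than mere positivity) becomes decisive.

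First I would assemble the block matrix that packages the four quantities appearing in the inequality. Observe that
\begin{equation*}
M := \begin{pmatrix} B^*B & B^*A \\ A^*B & A^*A \end{pmatrix} = \begin{pmatrix} B^* \\ A^* \end{pmatrix}\begin{pmatrix} B & A \end{pmatrix} \geq 0
\end{equation*}
as a matrix in $M_2(M_m(\C))$. The hypothesis that $\Phi$ is $2$-positive then yields
\begin{equation*}
(\mathrm{id}_2\otimes \Phi)(M) = \begin{pmatrix} \Phi(B^*B) & \Phi(B^*A) \\ \Phi(A^*B) & \Phi(A^*A) \end{pmatrix} \geq 0.
\end{equation*}
This is precisely the place where the hypothesis is used, and it is also the only place; the rest of the argument is purely about the structure of positive block matrices and does not use any further properties of $\Phi$.

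Next I would apply the standard Schur-complement lemma: for $X,Z$ self-adjoint and $X$ positive definite,
\begin{equation*}
\begin{pmatrix} X & Y \\ Y^* & Z \end{pmatrix} \geq 0 \quad \Longleftrightarrow \quad Z \geq Y^* X^{-1} Y.
\end{equation*}
The implication I need follows at once by testing the quadratic form of the block matrix on the vector $(-X^{-1}Y\eta,\eta)$ for arbitrary $\eta$. Applied with $X = \Phi(B^*B)$, $Y = \Phi(B^*A)$, $Z = \Phi(A^*A)$, this gives the claimed inequality \eqref{LR3A} whenever $\Phi(B^*B)$ is invertible.

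Finally I would address the case where $\Phi(B^*B)$ is only positive semidefinite. Replacing $\Phi(B^*B)$ by $\Phi(B^*B) + \epsilon I$ (equivalently, applying $\Phi$ and perturbing), the previous paragraph gives
\begin{equation*}
\Phi(A^*A) \geq \Phi(A^*B)\,\bigl(\Phi(B^*B) + \epsilon I\bigr)^{-1}\,\Phi(B^*A)
\end{equation*}
for every $\epsilon > 0$. Since the right-hand side is monotone increasing as $\epsilon \downarrow 0$ and bounded above by the left-hand side, the limit exists in $M_m(\C)$, and this is exactly the interpretation of $\Phi(A^*B)\Phi(B^*B)^{-1}\Phi(B^*A)$ built into the statement. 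The main obstacle in this proof is conceptual rather than technical: recognizing that the Lieb--Ruskai inequality is really a statement about Schur complements of the single block matrix $M$, so that the full complete-positivity hypothesis of Lieb--Ruskai can be relaxed to $2$-positivity, which is precisely what one needs to pass $M \geq 0$ through $\Phi$.
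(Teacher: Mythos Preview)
Your proof is correct and follows essentially the same route as the paper: form the positive block matrix $M$, apply $2$-positivity to get the image block matrix positive, and then invoke the Schur-complement lemma (Lemma~\ref{smlm}) together with an $\epsilon$-regularization of $\Phi(B^*B)$. The only cosmetic differences are that you factor $M$ as a column times a row rather than as $C^*C$ for a square $C$, and you insert the $\epsilon$ after, rather than before, invoking the Schur complement; neither affects the argument.
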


This is an extension of Theorem~\ref{LiRu} since every completely positive map is $2$ positive, but the converse is not true; see, e.g., \cite{Choi72}.  Choi's proof uses the following well known lemma (He refers to \cite{S59} for a more general result):

\begin{lm}\label{smlm} For $X\in M_n^{++}(\C)$ and $Y,Z\in M_n(\C)$,
\begin{equation}\label{smul}
\left[ \begin{array}{cc} X & Z\\Z^* & Y\end{array}\right] \geq 0 \quad \iff \quad Y \geq Z^*X^{-1}Z\ .
\end{equation}
\end{lm}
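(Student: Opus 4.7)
My plan for Lemma~\ref{smlm} is to reduce the block-matrix inequality to a block-diagonal one via a congruence by an invertible triangular matrix; the Schur complement $S := Y - Z^*X^{-1}Z$ will emerge naturally as the bottom-right block. This is the standard Schur complement argument, and it works cleanly precisely because $X$ is assumed strictly positive, so that $X^{-1}$ exists.

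First I would verify by direct multiplication the algebraic identity
\begin{equation*}
\begin{bmatrix} X & Z \\ Z^* & Y \end{bmatrix} = \begin{bmatrix} I & 0 \\ Z^* X^{-1} & I \end{bmatrix} \begin{bmatrix} X & 0 \\ 0 & S \end{bmatrix} \begin{bmatrix} I & X^{-1} Z \\ 0 & I \end{bmatrix},
\end{equation*}
which uses only that $X X^{-1} = I$ and $(X^{-1})^* = X^{-1}$. Writing $L$ for the upper-triangular matrix on the right, the matrix on the left is $L^*$, so the identity says
\begin{equation*}
\begin{bmatrix} X & Z \\ Z^* & Y \end{bmatrix} = L^* \begin{bmatrix} X & 0 \\ 0 & S \end{bmatrix} L.
\end{equation*}

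Since $L$ is upper-triangular with $I$'s on the diagonal it is invertible, and congruence by an invertible matrix preserves positive semidefiniteness in both directions: $M \geq 0 \iff L^* M L \geq 0$. Hence the original block matrix is positive semidefinite if and only if $\operatorname{diag}(X, S)$ is. Because $X > 0$, this block-diagonal matrix is positive semidefinite if and only if $S \geq 0$, which is exactly $Y \geq Z^* X^{-1} Z$. I do not anticipate any real obstacle here, since the proof is essentially a one-line factorization plus the elementary fact that invertible congruences preserve positivity; the only point worth flagging is the strict positivity of $X$, which is built into the hypothesis $X \in M_n^{++}(\C)$ and is needed to give meaning to $X^{-1}$ in the factorization.
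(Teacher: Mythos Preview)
Your proof is correct. It is the standard Schur complement argument via the $L^*DL$ factorization, and it handles both directions of the equivalence simultaneously.

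The paper's proof is slightly different in structure: it treats the two directions separately. For the forward implication it evaluates the block matrix on the specific test vector $(-X^{-1}Zv,\,v)$, which yields $\langle v,(Y-Z^*X^{-1}Z)v\rangle$ directly. For the reverse implication it writes $\left[\begin{smallmatrix} X & Z \\ Z^* & Z^*X^{-1}Z \end{smallmatrix}\right]$ as an explicit $A^*A$ with $A=\left[\begin{smallmatrix} X^{1/2} & X^{-1/2}Z \\ 0 & 0 \end{smallmatrix}\right]$, and then adds the nonnegative block $\operatorname{diag}(0,\,Y-Z^*X^{-1}Z)$. Your single invertible congruence is arguably cleaner, since it dispatches both directions at once without choosing a test vector or invoking $X^{1/2}$; the paper's version, on the other hand, is perhaps more hands-on in that each direction is exhibited by an explicit witness. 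Both are standard and equally acceptable.
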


\begin{proof} For any $v\in \C^n$,
$$\left\langle \left(\begin{array}{c} -X^{-1}Zv\\ v\end{array}\right)   \left[ \begin{array}{cc} X & Z\\Z^*& Y\end{array}\right]  \left(\begin{array}{c} -X^{-1}Zv\\ v\end{array}\right)\right \rangle = \langle v, Yv\rangle - \langle v,Z^*X^{-1}Zv\rangle\ .$$
On the other hand,
$$
\left[ \begin{array}{cc} X & Z\\Z^* & Z^*X^{-1}Z\end{array}\right] = \left[ \begin{array}{cc} X^{1/2}   & 0\\ Z^*X^{-1/2} & 0\end{array}\right] 
\left[ \begin{array}{cc} X^{1/2}   & X^{-1/2}Z\\ 0 & 0\end{array}\right] \geq 0\ .
$$

\end{proof}

\begin{proof}[Proof of Theorem~\ref{LRC}]  For all $A,B\in M_n(\C)$,
$$
\left[ \begin{array}{cc} B^*B   & B^*A \\ A^*B  & A^*A\end{array}\right] = \left[ \begin{array}{cc} B^*   & 0 \\ 0 & A^*\end{array}\right]
\left[ \begin{array}{cc} B   & A \\ 0 & 0\end{array}\right] \geq 0
\ ,$$
and if $\Phi$ is $2$-positive, then for all $\epsilon>0$,
$\left[ \begin{array}{cc} \Phi(B^*B) + \epsilon I   & \Phi(B^*A) \\ \Phi(A^*B)  & \Phi(A^*A)\end{array}\right] \geq 0$. By Lemma~\ref{smlm},
$$
\Phi(A^*A) \geq \Phi(A^*B) ( \Phi(B^*B) + \epsilon I) )^{-1} \Phi(B^*A) \ .
$$
\end{proof}

\begin{cl}\label{cl2}  Let $\Phi: M_n(\C) \to M_m(\C)$  be $2$-positive. 
Then for all $C\in M_n^{++}(\C)$ and all $X \in M_n(\C)$
\begin{equation}\label{LR7}
\Phi\left(X^*\frac{1}{C}X\right) \geq \Phi(X)^*\frac{1}{\Phi(C)}\Phi(X)\ .
\end{equation}
\end{cl}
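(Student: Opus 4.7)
The plan is to deduce this directly from Theorem~\ref{LRC} by a clever choice of the matrices $A$ and $B$ in the inequality $\Phi(A^*A) \geq \Phi(A^*B)\Phi(B^*B)^{-1}\Phi(B^*A)$.

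Since $C \in M_n^{++}(\C)$, the positive square root $C^{1/2}$ and its inverse $C^{-1/2}$ are well defined. I would set
\begin{equation*}
B := C^{1/2} \qquad \text{and} \qquad A := C^{-1/2}X\ .
\end{equation*}
With these choices, one immediately checks that $B^*B = C$, $A^*A = X^*C^{-1}X$, and $B^*A = C^{1/2}C^{-1/2}X = X$, so also $A^*B = X^*$. Substituting into \eqref{LR3A} yields
\begin{equation*}
\Phi(X^*C^{-1}X) \geq \Phi(X^*)\,\Phi(C)^{-1}\,\Phi(X)\ .
\end{equation*}

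The remaining step is to replace $\Phi(X^*)$ by $\Phi(X)^*$. Since every $2$-positive map is in particular positive, $\Phi$ sends self-adjoint matrices to self-adjoint matrices. Writing any $X \in M_n(\C)$ as $X = H_1 + iH_2$ with $H_1 := (X+X^*)/2$ and $H_2 := (X - X^*)/(2i)$ both self-adjoint, linearity gives $\Phi(X^*) = \Phi(H_1) - i\Phi(H_2) = \Phi(X)^*$. Inserting this into the previous display gives exactly \eqref{LR7}, as desired.

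There is no real obstacle here: the only subtlety is recognizing that \eqref{LR3A} already contains the sought inequality once one writes $X^*C^{-1}X$ in the form $A^*A$ by factoring through $C^{1/2}$. The inverse $\Phi(C)^{-1}$ is automatically well defined because $\Phi(C) = \Phi(B^*B)$, and the limit in Theorem~\ref{LRC} takes care of any subtlety in defining $\Phi(B^*A)\Phi(B^*B)^{-1}\Phi(A^*B)$ should $\Phi(C)$ fail to be strictly positive; in the generic case $\Phi(C) \in M_m^{++}(\C)$ this reduces to an ordinary inverse.
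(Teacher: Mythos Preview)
Your proof is correct and is essentially identical to the paper's: the paper simply writes ``Let $B = C^{1/2}$ and let $A = C^{-1/2}X$. Then \eqref{LR3A} becomes \eqref{LR7},'' which is exactly your substitution. Your additional remarks on $\Phi(X^*) = \Phi(X)^*$ and on the interpretation of $\Phi(C)^{-1}$ merely spell out details the paper leaves implicit.
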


\begin{proof} Let $B = C^{1/2}$ and let $A = C^{-1/2}X$. Then \eqref{LR3A} becomes \eqref{LR7}.
\end{proof}

\begin{exam}[Kiefer's Inequality] For example, take $\cH = \C^n$ so that $\cB(\cH)$ can be identified with $M_n(\C)$. 
Let $\cK$ the direct sum of $m$ copies of $\cH$, so that we may identify $\mathcal{A} := \cB(\cK)$ with the $m\times m$ block matrices 
whose entries are in $M_n(\C)$.   The partial trace map $\Psi_m^\dagger:\cB(\cK) \to \cB(\cH)$ defined in \eqref{contomon1BI}
is, as we have seen, completely positive and trace preserving. 

Consider any sets
 $\{X_1,\dots,X_m\}$ of $n\times n \subset M_n(\C)$  and  $\{C_1,\dots,C_m\}\subset M_n^{++}(\C)$.
 Define 
 $$
 X := \left[\begin{array}{ccc} X_1 &  &\\  &  \ddots & \\ &  & X_m\end{array}\right]  \qquad{\rm and}\qquad 
 C:= \left[\begin{array}{cccc} C_1  & &\\  &  \ddots & \\ &  & C_m\end{array}\right]
 $$
 Then  with $\Phi$ taken to be the partial trace, \eqref{LR7} becomes

\begin{equation}\label{kief}
\sum_{j=1}^m X_j^*\frac{1}{C_j} X_j \geq  \left(\sum_{j=1}^m X_j\right)^*\frac{1}{\sum_{j=1}^m C_j} \left(\sum_{j=1}^m X_j\right)  
\end{equation}
This is due to Kiefer \cite{K59}, though it was also noted by Lieb and Ruskai, who were unaware of Kiefer's work.  However, it is a very special case of their main result. 
\end{exam}

\begin{exam}[The Kadison-Choi Schwarz Inequality]\label{KCS} Taking $B = I$ and supposing that $\Phi$ is unital and $2$-positive;   \eqref{LR3A} becomes 
\begin{equation}\label{LR7K}
\Phi\left(A^*A\right) \geq \Phi(A)^*\Phi(A)\ .
\end{equation}
for all $X\in \cA$.   This was proved by Kadison for self-adjoint $A$  \cite{K52}, and in general by Choi \cite{Choi74}. 
\end{exam}

In fact, Kadison proved his inequality for $A$ self-adjoint, but only assumed that $\Phi$ was positive, not even $2$-positive. However, 
Theorem 4 of Stinespring's fundamental paper \cite{S55} states that if $\Phi: \mathcal{A} \to \cB(\cH)$ is a positive map on any commutative 
$C^*$ algebra $\mathcal{A}$, then $\Phi$ is completely positive. If $A$ is self-adjoint, or even normal, take $\mathcal{C}(A)$ to be the 
commutative $C^*$ algebra  that is the norm closure of all polynomials in $A$. Thus, Corollary~\ref{cl2} and Theorem 4 of \cite{S55} yield a short proof of 
Kadison's inequality. (Stinespring's proof of Theorem 4 is short and elementary). St\o rmer has proved a complementary result: A positive maps from one $C^*$ algebra into another commutative $C^*$ algebra is completely positive. Both theorems are discussed in \cite{Sto10}.

Unital maps that satisfy \eqref{LR7K} are known as {\em Schwarz maps}. 
(The terminology is nearly, but not completely, standard.  Petz \cite[p. 62]{P86} calls any map satisfying \eqref{LR7K} a Schwarz map.)  
One might hope that Corollary~\ref{cl2} would be valid whenever $\Phi$ is a Schwarz map. Choi proved \cite[Proposition 4.1]{Choi80} 
that the inequality \eqref{cl2} essentially characterizes the set of $2$-positive  maps: Under the condition that $\Phi(I)$ is invertible,
$\Phi: M_n(\C) \to M_m(\C)$ is $2$-positive if and only if  \eqref{cl2}  is valid for all $C\in M_n^{++}(\C)$ and all $X\in M_n(\C)$. 
He also  showed in \cite[Appendix A]{Choi80}, that there exist Schwarz maps that are not $2$-positive; e.g., the map
map $\Phi$ on $M_2(\C)$ given by
\begin{equation}\label{choimap}
\Phi(X) = \frac12 X^T  + \frac14 \tr[X] I\ ,
\end{equation}
where $X^T$ is the transpose of $X$. His construction was further developed in \cite{T85} and \cite[Example 3.6]{HMPB}.

Corollary~\ref{cl2} is the source of many monotonicity inequalities, and it will be the only inequality used in Section 12 when 
we prove, following \cite{HP12}, the monotonicity versions of Lieb's first three theorems.
However, in that proof we will need  Corollary~\ref{cl2} only in the {\em tracial} form:
\begin{equation}\label{trform}
\tr\left[\Phi\left(X^*\frac{1}{C}X\right)\right] \geq \tr\left[\Phi(X)^*\frac{1}{\Phi(C)}\Phi(X)\right]\ .
\end{equation}
One may expect  that \eqref{trform} is valid for a wider class of maps $\Phi$ than $2$-positive maps, and this is the case. For a recent result in this direction, see \cite{CMH}.

We close this section with a further extension of the Lieb-Ruskai Monotonicity Theorem. Let 
$\Phi:M_n(\C) \to M_m(\C)$ be a non-zero  positive map. As noted earlier, even if $X\in M_n^{++}(\C)$, $\Phi(X)$ need not be invertible. The map $\Xi^\dagger_{m,n}$ for $m>n$ is completely positive and trace preserving even, but $\Xi^\dagger_{m,n}(X)$ is not invertible for any $X\in M_n^{++}(\C)$. (See the second paragraph of Section 3).

The following was proved by Ando \cite[Corollary 3.1]{A79} who assumed both $H$ and $X$ to be positive definite, and using a more restrictive definition of positivity  that entailed the invertibility of $\Phi(X)$. Choi observed \cite[Proposition 4.3]{Choi80} that the same proof allows for $H$ to be merely self-adjoint.

\begin{thm}\label{ACM}  (Ando, Choi) Let 
$\Phi:M_n(\C) \to M_m(\C)$ be a  positive map such that $\Phi(I)$ is invertible.  Then for all self-adjoint $H\in M_n(\C)$ and all $X\in M_n^{++}(\C)$, 
\begin{equation}\label{ACM1}
\Phi(H X^{-1}H) \geq \Phi(H) \Phi(X)^{-1} \Phi(H)\ . 
\end{equation}
\end{thm}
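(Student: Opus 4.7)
The plan is to reduce the general inequality to Kadison's inequality for self-adjoint elements, then invoke Stinespring's Theorem~4 (quoted just after Example~\ref{KCS}) to upgrade positivity to complete positivity on the relevant commutative subalgebra.

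First I would reduce to the case $X=I$ by the substitution
$$\Phi_X(B) := \Phi(X^{1/2}BX^{1/2}),\qquad H' := X^{-1/2}HX^{-1/2}.$$
Since $B\ge 0$ implies $X^{1/2}BX^{1/2}\ge 0$, the map $\Phi_X:M_n(\C)\to M_m(\C)$ is positive. Moreover $H'$ is self-adjoint, and $X\in M_n^{++}(\C)$ satisfies $X\ge \lambda I$ for some $\lambda>0$, so $\Phi_X(I)=\Phi(X)\ge \lambda\Phi(I)$ is invertible. A direct computation gives $\Phi_X(H')=\Phi(H)$ and $\Phi_X(H'^2)=\Phi(HX^{-1}H)$, so \eqref{ACM1} is equivalent to
$$\Phi_X(H'^2)\ \ge\ \Phi_X(H')\,\Phi_X(I)^{-1}\,\Phi_X(H').$$

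Next I would normalize $\Phi_X$ to a unital map by setting $\Psi(A):=\Phi(X)^{-1/2}\Phi_X(A)\Phi(X)^{-1/2}$. Conjugation by the Hermitian $\Phi(X)^{-1/2}$ preserves positivity of the output, so $\Psi$ is a positive linear map with $\Psi(I)=I$. Substituting back, the inequality to prove becomes the Kadison inequality $\Psi(H'^2)\ge \Psi(H')^2$ for the self-adjoint element $H'$. To finish, note that because $H'$ is self-adjoint the $C^*$-algebra $\cA:=C^*(H',I)$ that it generates is commutative; by Stinespring's Theorem~4 (as cited in the paper), the restriction $\Psi|_\cA$ is completely positive, hence in particular $2$-positive and unital. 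Applying the Kadison--Choi--Schwarz inequality \eqref{LR7K} to $A=H'\in\cA$ yields $\Psi(H'^2)=\Psi(H'^*H')\ge \Psi(H')^*\Psi(H')=\Psi(H')^2$, and unwinding the substitutions gives \eqref{ACM1}.

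The essential use of the self-adjointness hypothesis is precisely in making $H'$ self-adjoint so that $C^*(H',I)$ is commutative and Stinespring's theorem applies; without this, one cannot bootstrap from positivity to $2$-positivity, and indeed Choi's example \eqref{choimap} exhibits a Schwarz (unital positive) map that is not $2$-positive, which (by Choi's characterization quoted in the text) means the analogous inequality fails for some non-self-adjoint element. So the only delicate point in the argument is bookkeeping the normalizations $X\rightsquigarrow I$ and $\Phi_X\rightsquigarrow\Psi$; once these are in place, the Kadison inequality on the commutative subalgebra $C^*(H',I)$ does all the work.
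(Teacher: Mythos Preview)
Your proof is correct and follows essentially the same approach as the paper. Your two-step construction $\Phi\rightsquigarrow\Phi_X\rightsquigarrow\Psi$ produces exactly the map $\Psi(Y)=\Phi(X)^{-1/2}\Phi(X^{1/2}YX^{1/2})\Phi(X)^{-1/2}$ that the paper writes down directly, and both arguments then apply Kadison's inequality to the self-adjoint element $Y=H'=X^{-1/2}HX^{-1/2}$; you are simply more explicit than the paper in spelling out why Kadison's inequality is available for a merely positive unital map (via Stinespring's Theorem~4 on the commutative subalgebra $C^*(H',I)$), a point the paper relegates to the discussion following Example~\ref{KCS}.
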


\begin{proof} Define $\Psi: M_n(\C)\to M_m(\C)$ by
\begin{equation}\label{ACM2}
\Psi(Y) = \Phi(X)^{-1/2} \Phi(X^{1/2}YX^{1/2})\Phi(X)^{-1/2}\ ,
\end{equation}
and note the $\Psi$ is positive an unital. Then taking $Y := X^{-1}HX^{-1/2}$, which is self-adjoint, Kadison's inequality yields $\Psi(Y^2) \geq \Psi(Y)^2$, and this is equivalent to \eqref{ACM1}. 
\end{proof}

The condition that $\Phi(I)$ be invertible  can be relaxed as follows:  If $\Phi(I)$ is not invertible, then  $\Phi(I)$ has the block matrix structure
\begin{equation}\label{extLRM1}
\Phi(I) = U^*\left[ \begin{array}{cc} B & 0\\ 0 & 0\end{array}\right]U
\end{equation}
where $U\in M_m(\C)$ is unitary and for some $1 \leq \ell < m$, $B\in M_\ell^{++}(\C)$.   If $X\in M_n^{++}(\C)$, there are numbers $0 < c_1,c_2$ such that $c_1 I \leq X \leq c_2 I$ and hence $c_1\Phi(I) \leq \Phi(X) \leq c_2 \Phi(I)$. It follows that 
\begin{equation}\label{extLRM2}
\Phi(X) = U^*\left[ \begin{array}{cc} \widehat{X} & 0\\ 0 & 0\end{array}\right]U
\end{equation}
where $U$ is the same unitary figuring in \eqref{extLRM1} and $\widehat{X} \in M_\ell^{++}(\C)$, again with the same $\ell$ as in \eqref{extLRM1}. 
The {\em Moore-Penrose generalized inverse} of $\Phi(X)$, denoted $\Phi(X)^+$ is given by 
\begin{equation}\label{extLRM2b}
\Phi(X)^+ = U^*\left[ \begin{array}{cc} \widehat{X}^{-1} & 0\\ 0 & 0\end{array}\right]U\ .
\end{equation}

Every self adjoint $H\in M_n(\C)$ can be written as the difference of two matrices in $M_n^{++}(\C)$, and it follows that $\Phi(H)$ has the form
\begin{equation}\label{extLRM3}
\Phi(H) = U^*\left[ \begin{array}{cc} \widehat{H} & 0\\ 0 & 0\end{array}\right]U
\end{equation}
with $U$ and $\ell$ as before and $\widehat{H}\in M_\ell(\C)$.  Given such a positive map $\Phi$ we may define $\widehat{\Phi}: M_n(\C) \to M_\ell(\C)$ by
\begin{equation}\label{ACM2}
\widehat{\Phi}(Z) = \Xi_{m,\ell}(U \Phi(Z) U^*) 
\end{equation}
for all $Z\in M_n(\C)$, where, as in Section 3, $ \Xi_{m,\ell}$ ``picks off'' the upper-left $\ell\times \ell$ block of the matrix in $M_m(\C)$ to which it is applied. 
For seld-adjoint $H$, we may then recover $\Phi(H)$ from $\widehat{\Phi}(H)$ through
\begin{equation}\label{ACM3}
\Phi(H) = U^* \Xi^\dagger_{m,\ell}(\widehat{\Phi}(H))U\ .
\end{equation}

Since $\widehat{\Phi}(I)$ is invertible, we have the following corollary of Theorem~\ref{ACM}:

\begin{cl}\label{ACMcl} Let 
$\Phi:M_n(\C) \to M_m(\C)$ be a non-zero  positive map.  Then for all self-adjoint $H\in M_n(\C)$ and all $X\in M_n^{++}(\C)$, 
\begin{equation}\label{ACM4}
\Phi(H X^{-1}H) \geq \Phi(H) \Phi(X)^{+} \Phi(H)\ . 
\end{equation}
\end{cl}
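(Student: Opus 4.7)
The plan is to reduce Corollary~\ref{ACMcl} to Theorem~\ref{ACM} applied to the compressed map $\widehat{\Phi}: M_n(\C) \to M_\ell(\C)$ defined in \eqref{ACM2}. As the composition of $\Phi$, unitary conjugation by $U$, and the corner map $\Xi_{m,\ell}$, the map $\widehat{\Phi}$ is positive, and $\widehat{\Phi}(I) = B \in M_\ell^{++}(\C)$ is invertible by \eqref{extLRM1}. Theorem~\ref{ACM} therefore applies to $\widehat{\Phi}$ and yields, for the same self-adjoint $H$ and the same $X\in M_n^{++}(\C)$,
\[
\widehat{\Phi}(HX^{-1}H) \;\geq\; \widehat{\Phi}(H)\,\widehat{\Phi}(X)^{-1}\,\widehat{\Phi}(H).
\]
(If $\Phi(I)$ is already invertible then $\ell=m$, $\widehat{\Phi}=\Phi$, and there is nothing further to do.)

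To transport this inequality back to $\Phi$, I first verify the structural assertions \eqref{extLRM2} and \eqref{extLRM3}, which together license the identity \eqref{ACM3}. For $X\in M_n^{++}(\C)$, the bound $X\le cI$ for some $c>0$ gives $0\le U\Phi(X)U^*\le cU\Phi(I)U^*$, and since $U\Phi(I)U^*$ has vanishing lower-right $(m-\ell)\times(m-\ell)$ block, the positive semidefinite matrix $U\Phi(X)U^*$ must have that same block equal to zero. A positive semidefinite block matrix with a vanishing diagonal block automatically has vanishing off-diagonal blocks as well (a direct consequence of the Cauchy--Schwarz inequality for positive semidefinite forms, or of Lemma~\ref{smlm}). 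Hence $U\Phi(X)U^*$ has the form claimed in \eqref{extLRM2} with $\widehat{X}=\widehat{\Phi}(X)$. Writing a self-adjoint $H$ as a difference of two elements of $M_n^{++}(\C)$ and using linearity then yields \eqref{extLRM3}, and in particular $\Phi(Y) = U^*\Xi^\dagger_{m,\ell}(\widehat{\Phi}(Y))U$ for every self-adjoint $Y\in M_n(\C)$.

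The conclusion now follows by applying the positive map $Y\mapsto U^*\Xi^\dagger_{m,\ell}(Y)U$ to both sides of the displayed inequality. The left side becomes $\Phi(HX^{-1}H)$ by \eqref{ACM3}, since $HX^{-1}H$ is self-adjoint. For the right side, I use that $\Xi^\dagger_{m,\ell}$ is multiplicative on $M_\ell(\C)$: writing $\Xi^\dagger_{m,\ell}(A)=WAW^*$ for the isometry $W:\C^\ell\to\C^m$ with $W^*W=I_\ell$, one has $\Xi^\dagger_{m,\ell}(A)\Xi^\dagger_{m,\ell}(C) = \Xi^\dagger_{m,\ell}(AC)$ for all $A,C\in M_\ell(\C)$. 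Combined with $UU^*=I$, this gives
\[
U^*\Xi^\dagger_{m,\ell}\!\bigl(\widehat{\Phi}(H)\,\widehat{\Phi}(X)^{-1}\,\widehat{\Phi}(H)\bigr)U
\;=\; \Phi(H)\,\bigl[\,U^*\Xi^\dagger_{m,\ell}(\widehat{\Phi}(X)^{-1})U\,\bigr]\,\Phi(H),
\]
and the bracketed factor is precisely $\Phi(X)^+$ by the formula \eqref{extLRM2b} for the Moore--Penrose pseudoinverse. This is \eqref{ACM4}.

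The substantive content comes entirely from Theorem~\ref{ACM}; the main (and only) non-routine point is the block-structure reduction that absorbs the non-invertibility of $\Phi(I)$ into the unitary $U$ and the dimension $\ell$, so that $\Phi$ genuinely factors through $\widehat{\Phi}$ in the sense of \eqref{ACM3} and the surviving inverse can be identified with the Moore--Penrose pseudoinverse.
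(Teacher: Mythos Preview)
Your proof is correct and follows essentially the same route as the paper's own argument: apply Theorem~\ref{ACM} to the compressed map $\widehat{\Phi}$, then use that $W\mapsto U^*\Xi^\dagger_{m,\ell}(W)U$ is a $*$-homomorphism (together with \eqref{extLRM2b}) to recover \eqref{ACM4}. Your write-up is somewhat more explicit in verifying the block structure \eqref{extLRM2}--\eqref{extLRM3} and the multiplicativity of $\Xi^\dagger_{m,\ell}$, but the argument is the same.
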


\begin{proof}  With $\widehat{\Phi}$ defined as in \eqref{ACM2}, we may apply Theorem~\ref{ACM} to conclude
$$
\widehat{\Phi}(H X^{-1}H) \geq \widehat{\Phi}(H) \widehat{\Phi}(X)^{-1} \widehat{\Phi}(H)
$$
The claim then follows from \eqref{ACM3} together with the fact that 
\begin{equation}\label{ACM5}
W \mapsto  U^* \Xi^\dagger_{m,\ell}(W)U
\end{equation} is a $*$-homomorphism from $M_\ell(\C)$ into $M_m(\C)$, together with \eqref{extLRM2b}.
\end{proof}

\section{The GNS construction for $M_n(\C)$}

Uhlmann made an important contribution to our subject  \cite{Uh77} by bringing  the Gelfand-Naimark-Segal representation (GNS) of 
$M_n(\C)$ associated to the trace into the tool-kit. 
The GNS representation is explicitly mentioned at the end of Section 5 in \cite{Uh77} 
where he explains that a construction he has been using in these terms. 
Simon \cite{S05} made a simpler and more direct development of this point of view, and used it to prove the Lieb Concavity Theorem, 
and Donald \cite{Do86} had used it to prove the joint convexity of the relative entropy. It also plays a role in the work of 
Pusz and Woronowicz \cite{PW78} who cite Uhlmann \cite{Uh77}.

Regard  
$M_n(\C)$ equipped with the Hilbert-Schmidt inner product $\langle X,Y\rangle := \tr [X^*Y]$ as a Hilbert space.  For $X\in M_n(\C)$, 
define $L_X$ to be the operator on $\HH$ given by $L_XA := XA$. That is, $L_X$ is left multiplication by $X$. It is readily checked that 
$X \mapsto L_X$ is a $*$-homomorphism from $M_n(\C)$ into the linear operators on $M_n(C)$ regarded as a Hilbert space, 
and this is the GNS representation of $M_n(\C)$ induced by the trace.

 Likewise, define $R_X$ to be the operator of right multiplication by $X$; that is, $R_XB = BX$.   Notice that 
$\langle C, R_X B\rangle  = \tr[C^*BX] = \tr[(CX^*)^*B] = \langle R_{X^*}C,B\rangle$, and hence $(R_X)^\dagger = R_{X^*}$. A similar computation shows that 
$(L_X)^\dagger = L_{X^*}$. In particular, if $X$ is self-adjoint so are both $R_X$ and $L_X$, and then it is easy to see that if 
$X$ is strictly positive so are $R_X$ and $L_X$, 
and indeed when $X$ is self-adjoint, $X$, $L_X$ and $R_X$ all have the same spectrum, and for any real valued  function $f$ defined on the spectrum of $X$, 
$f(L_X) = L_{f(X)}$ and $f(R_X) = R_{f(X)}$.  Since $R_X$ and  $L_Y^{-1}$ are positive and commute, $R_XL_Y^{-1}$ is positive, and hence for any 
$f:(0,\infty) \to (0,\infty)$, $f(R_XL_Y^{-1})$ is well defined by the Spectral Theorem. 

For example, consider the function $f(x) = x \log x -x +1$. It is evident that $f(x) \geq 0$ with equality only at $x= 0$  Hence $X,Y\in M_n^{++}(\C)$,
$f(R_XL_Y^{-1})$ is a positive operator on $M_n(\C)$, and then so is 
$$f(R_XL_Y^{-1})L_Y  = R_X (\log R_x - \log L_Y)  - R_X + L_Y\ .$$ 
But then
$$0 \leq \langle I, f(R_XL_Y^{-1})L_Y I\rangle  = \tr[X(\log X- \log Y)] -\tr[X] +\tr[Y]\ .$$
If $X$ and $Y$ are density matrices, then the right side is $D(X||Y)$, and this shows that for density matrices, $D(X||Y) \geq 0$, and in fact, expanding $I$ in a basis $\{|u_i\rangle \langle v_j|\}_{1 \leq i,j\leq n}$ where the $u_i$'s are eigenvectors of $Y$ and the $v_j$'s are eigenvectors of $X$,  one see that
$D(X||Y) = 0$ if and only if $X= Y$. This provides the  second  elementary proof of \eqref{klein} promised in Section 2, this time without using Klein's inequality. 
The second proof of \eqref{klein} provides only the barest hint of the utility of writing trace functionals in terms of the  GNS representation.  To get the most out of this approach, we need one more tool, namely integral representations for operator monotone and convex functions.

\section{Operator monotonicity and convexity}

A function $f:(0,\infty)\to \R$ is said to be {\em operator monotone increasing} in case for all   $A,B\in M_n^{++}(\C)$, any $n$, 
$A\geq B$ implies $f(A) \geq f(B)$, and $f$ is said to  {\em operator monotone decreasing} if $-f$ is operator monotone increasing. 

Consider  $f(x) = x^{-1}$. For $A,B> 0$, define $X := (A+B)^{-1/2}A(A+B)^{-1/2}$  and $Y :=  (A+B)^{-1/2}B(A+B)^{-1/2}$. 
Evidently, $A\geq B \iff X \geq Y$, and $Y^{-1} \geq X^{-1} \iff B^{-1} \geq A^{-1}$. Since $X+Y =1$, $X$ and $Y$ commute, and then by the spectral theorem $X \geq Y$ implies $Y^{-1}\ \geq X^{-1}$ simply because $f(x) = x^{-1}$ is monotone decreasing in $x$. Thus, the function $f(x) = x^{-1}$ is operator monotone decreasing. This one example leads to many others: For instance, let $0 < t < 1$.  Then there is the integral representation
$$x^t  =  \frac {\sin(\pi t)} {\pi} \int_0^\infty \lambda^t \left(\frac{1}{\lambda} - \frac{1}{\lambda+x}\right){\rm d}{\lambda}\ ,$$
and now it follows from what we have just proved that $f(x) = x^t$ is operator monotone increasing for all $0 < t < 1$. By a theorem of L\"owner \cite{Lo34}, every operator monotone increasing functions has an integral representation of this general form (see below), and this is the deep part of the theory. Simon's book \cite{S19} contains a beautiful account, with many proofs, some new, of L\"owner's Theorem. However, for the specific examples that arise in this paper, elementary arguments suffice, such as the ones provided just above.  For the generalizations that follow, more is required.

We quote L\"owner's Theorem \cite{Lo34} as stated  in Ando and Hiai \cite{AH11}; see also \cite{A78} and \cite{S19}:

\begin{thm}[L\"owner's theorem]\label{Lot} For $x,\lambda \in [0,\infty)$, define the function $\phi(x,\lambda)$ by
\begin{equation}\label{low1}
\phi(x,\lambda) := (1 + \lambda) \frac{x}{\lambda + x}\ ,
\end{equation}
and define $\phi(x,\infty) := x = \lim_{\lambda\uparrow\infty}\phi(\lambda,x)$,
and notice that for each $x$, $\phi(x,\lambda)$ is a bounded function of $\lambda$, so that for any finite positive Borel measure $\mu$  on $(0,\infty)$, all $\beta \in \R$ and all $\gamma \geq 0$,
\begin{equation}\label{low2}
h(x) :=  \beta + \gamma x + \int_{(0,\infty]} \phi(x,\lambda){\rm d}\mu\ ,
\end{equation}
is a well defined function on $\R_+$.  The mapping $(\beta,\gamma,\mu) \mapsto h$ is an affine isomorphism  onto the class of operator monotone increasing functions.
\end{thm}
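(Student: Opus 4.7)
The plan is to prove both directions of the claimed affine isomorphism separately. The forward direction---that every $h$ of the form \eqref{low2} is operator monotone increasing---is essentially a direct computation. The reverse direction---that every operator monotone function admits such a representation---is the deep content of L\"owner's theorem and requires analytic continuation to the upper half-plane.

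For the forward direction, I would first rewrite the kernel as
$$\phi(x,\lambda) = (1+\lambda) - \frac{\lambda(1+\lambda)}{\lambda+x}\ ,$$
so that $\phi(\cdot,\lambda)$ is a non-negative multiple of $x \mapsto -(\lambda+x)^{-1}$ plus an additive constant. The elementary argument recalled at the start of Section 11 shows that $X \mapsto X^{-1}$ is operator monotone decreasing on $M_n^{++}(\C)$; applied to the shifted pair $\lambda I + A \geq \lambda I + B$, this gives that $X \mapsto -(\lambda I + X)^{-1}$ is operator monotone increasing for each $\lambda > 0$. Hence $\phi(\cdot,\lambda)$ is operator monotone for every finite $\lambda$; the case $\phi(x,\infty) = x$ and the terms $\gamma x$ and $\beta$ are trivially so. Operator monotonicity is preserved under positive Borel integration (approximate by finite convex combinations and pass to the limit, using dominated convergence together with the uniform bound $0 \leq \phi(x,\lambda) \leq 1+\lambda$), so every $h$ of the form \eqref{low2} is operator monotone increasing.

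For the reverse direction, let $h$ be operator monotone increasing on $(0,\infty)$. The first step is to verify that $h$ is continuous, indeed $C^1$, on $(0,\infty)$; this follows from the fact that divided differences of $h$ are determined by the eigenvalues of $2\times 2$ matrices $\Phi(h)$ on positive diagonal pairs, which forces a uniform Lipschitz bound on compact subintervals. The crucial second step is L\"owner's analytic extension: $h$ extends to an analytic function $\tilde h$ on the upper half-plane $\{z \in \C : \Im z > 0\}$ with $\Im \tilde h(z) \geq 0$. The key tool is the $2\times 2$ trick: one applies $h$ in the $2\times 2$ operator sense to self-adjoint matrices of the form $\bigl[\begin{smallmatrix} a & w \\ \bar w & b\end{smallmatrix}\bigr]$, and comparison of $h$ evaluated at two such matrices with $(a,b)$ perturbed yields, after diagonalization and passage to the limit $w \to 0$, a Herglotz-type positivity for certain Loewner kernels $K_h(x,y) := (h(x)-h(y))/(x-y)$. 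Standard complex-analytic arguments (Nevanlinna--Pick interpolation) then produce the analytic extension with the required sign of imaginary part. The third step applies the Herglotz--Nevanlinna representation theorem: every such $\tilde h$ has the form
$$\tilde h(z) = \alpha + \beta z + \int_{\R} \frac{1 + \lambda z}{\lambda - z}\,{\rm d}\nu(\lambda)$$
for some $\alpha \in \R$, $\beta \geq 0$, and finite positive Borel measure $\nu$ on $\R$. The fourth step uses the requirement that $\tilde h$ extend analytically across $(0,\infty)$ (equivalently, $h$ is real there) to conclude that $\mathrm{supp}(\nu) \subseteq (-\infty,0]$, and the substitution $\lambda \mapsto -\lambda$ then converts the integrand to exactly $\phi(x,\lambda)$ up to the linear and constant terms, yielding \eqref{low2}.

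The main obstacle is the analytic-extension step; this is precisely what makes L\"owner's theorem deep, and it is what distinguishes operator monotonicity for \emph{all} $n$ from mere scalar monotonicity. Once the Pick property is established, the Herglotz representation and the conversion to \eqref{low2} are routine. Uniqueness of the triple $(\beta,\gamma,\mu)$ follows from Stieltjes inversion: the measure $\mu$ is recovered from the boundary values $\lim_{\epsilon\downarrow 0}\Im \tilde h(-\lambda + i\epsilon)$ on $\lambda \in (0,\infty)$, the coefficient $\gamma$ from $\lim_{x\to\infty} x^{-1}(h(x) - \int \phi(x,\lambda)\,{\rm d}\mu)$, and $\beta$ as the residual constant, so the map $(\beta,\gamma,\mu) \mapsto h$ is injective. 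Together with affinity (immediate from \eqref{low2}) and surjectivity onto the operator monotone functions (the reverse direction above), this completes the proof.
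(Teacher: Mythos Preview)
The paper does not prove this theorem; it explicitly quotes it from the literature (citing L\"owner \cite{Lo34}, Ando--Hiai \cite{AH11}, Ando \cite{A78}, and Simon \cite{S19}) and then uses it as input. So there is no proof in the paper to compare your proposal against.

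That said, your outline is the standard route and is structurally sound: the forward direction via the elementary operator monotonicity of $x\mapsto -(\lambda+x)^{-1}$ (exactly the argument the paper gives just before stating the theorem), and the hard reverse direction via analytic continuation to a Pick function on the upper half-plane followed by the Herglotz--Nevanlinna representation. One small correction: your bound $0\leq\phi(x,\lambda)\leq 1+\lambda$ is not uniform in $\lambda$ and so cannot serve as a dominating function for the integral. What is actually needed---and what the theorem statement itself invokes---is that for each fixed $x>0$ one has $0\leq\phi(x,\lambda)\leq\max\{1,x\}$ uniformly in $\lambda\in(0,\infty]$ (differentiate in $\lambda$ to see $\phi(x,\cdot)$ is monotone between the endpoint values $1$ and $x$), which together with the finiteness of $\mu$ makes the integral in \eqref{low2} well-defined and justifies the approximation arguments.
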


Note that 
\begin{equation}\label{lowid}
 \frac{x}{\lambda + x}   =  1 -\frac{\lambda}{\lambda+x} \ ,
\end{equation}
from which it is clear that for each $\lambda$, $\phi(\lambda, x)$ is concave and monotone increasing in $x$, not only as a function of a real variable, but also in the operator sense.   Thus all operator monotone functions $h$  that are real valued on $[0,\infty)$ are also operator concave, and thus $-h$ is operator convex, and monotone decreasing. 

By Theorem 3.3 of Bendat and Sherman \cite[Theorem 3.3]{BS75}, a real valued function $f$ on $(0,\infty)$ is operator convex if and only if for each $x_0\in (0,\infty)$,
\begin{equation*}
h(x) := \frac{f(x) - f(x_0)}{x- x_0}
\end{equation*}
is an operator monotone increasing function. An operator convex function is necessarily convex in the ordinary sense, and hence $f$ has a right deriviative $f'_+(x)$ at each $x>0$, and this is an increasing function of $x$. Therefore 
$f'_+(0) :=\lim_{x\downarrow 0}f'_+(x)$ exists, though it may take the value $-\infty$. 
It may be deduced from the result of Bendata and sherman  that every operator convex function $f$ on $[0,\infty)$ with $f'_+(0) \in \R$ has an integral representation of the form
\begin{equation}\label{irep0}
f(x) = \alpha + \beta x + \gamma x^2 + \int_{(0,\infty)}(\lambda+1) \frac {x^2}{\lambda+ x} {\rm d}\mu
\end{equation}
where $\mu$ is a finite Borel measure and where $\alpha,\beta\in \R$ and $\gamma \geq 0$.  It will often be convenient to extend the integrand to $(0,\infty]$ by continuity, and then replace $\mu$ with the finite Borel measure $\nu$ on $(0,\infty]$ that agrees with $\mu$ on $(0,\infty)$ and with $\nu(\{\infty\}) = \gamma$. Then we can rewrite \eqref{irep0} as
\begin{equation}\label{irep1}
f(x) = \alpha + \beta x +  \int_{(0,\infty]}(\lambda+1) \frac {x^2}{\lambda+ x} {\rm d}\nu
\end{equation}

The condition $f'_+(0)> -\infty $ excludes certain cases such as $f(x) = -\log x$ and $f(x) = x \log x$ that come up in the next sections, but these cases are dealt with by using the elementary integral representation for the logarithm given below  in \eqref{logrep}. Alternatively, one has the integral representation 
\eqref{irep0} for $f(x+\epsilon)$  for any $\epsilon>0$.

Recall that if $f$ is operator monotone increasing, then $f$ is operator concave, and hence if $f$ is operator monotone decreasing, then $f$ is operator convex. For this special class of operator convex functions, there is another integral representation due to Hansen \cite{H06}:  

\begin{lm}[Hansen]\label{hanrep} A function $f(x) : (0,\infty) \to (0,\infty)$ is monotone decreasing if and only if 
\begin{equation}\label{han1}
f(x) = \beta + \int_{[0,\infty)}\frac{\lambda+1}{\lambda+x}{\rm d}{\nu}
\end{equation}
where ${\nu}$ is a finite Borel measure on $[0,\infty)$ and $\beta \geq 0$.
\end{lm}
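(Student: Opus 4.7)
The plan is to reduce the nontrivial direction to L\"owner's Theorem (Theorem~\ref{Lot}) via an inversion trick on the argument. The easy direction is almost automatic: for each fixed $\lambda \geq 0$, the function $x \mapsto 1/(\lambda+x)$ is operator monotone decreasing on $(0,\infty)$, since $0 < A \leq B$ implies $\lambda I + A \leq \lambda I + B$ and inversion reverses the order of positive operators (as discussed at the start of Section~11). Multiplying by the positive constant $(1+\lambda)$, integrating against a positive measure, and adding the nonnegative constant $\beta$ all preserve this property and preserve strict positivity, so any $f$ of the form \eqref{han1} is operator monotone decreasing from $(0,\infty)$ into $(0,\infty)$.

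For the converse, I would set $g(y) := f(1/y)$. If $0 < A \leq B$, then $A^{-1} \geq B^{-1} > 0$, and since $f$ is operator monotone decreasing, $g(A) = f(A^{-1}) \leq f(B^{-1}) = g(B)$. Thus $g$ is operator monotone increasing and strictly positive on $(0,\infty)$. Applying L\"owner's theorem in the form \eqref{low2} gives
\begin{equation*}
g(y) = \beta' + \gamma' y + \int_{(0,\infty)} (1+\lambda)\frac{y}{\lambda+y}\,{\rm d}\mu(\lambda)
\end{equation*}
for some $\beta' \in \R$, $\gamma' \geq 0$, and finite positive Borel measure $\mu$ on $(0,\infty)$.

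Now I would substitute $y = 1/x$: the integrand becomes $(1+\lambda)/(1+\lambda x)$, and the linear term becomes $\gamma'/x$. Next I would perform the change of variables $\lambda \mapsto 1/\lambda$, which is a homeomorphism of $(0,\infty)$ onto itself and pushes $\mu$ forward to a finite positive Borel measure $\tilde\mu$ on $(0,\infty)$. A short computation shows $(1+\lambda)/(1+\lambda x)$ transforms into $(\lambda+1)/(\lambda+x)$ under this substitution. Since $\gamma'/x$ is exactly the value of $(\lambda+1)/(\lambda+x)$ at $\lambda = 0$, it can be absorbed into the integral by setting $\nu := \gamma'\delta_0 + \tilde\mu$, which is a finite positive Borel measure on $[0,\infty)$. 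This yields $f(x) = \beta' + \int_{[0,\infty)} (\lambda+1)/(\lambda+x)\,{\rm d}\nu(\lambda)$.

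It remains to verify $\beta' \geq 0$. Restricted to $1\times 1$ matrices, $f$ is monotone decreasing in the ordinary sense, so $c := \lim_{x \to \infty} f(x)$ exists and $c \geq 0$ because $f > 0$. The uniform bound $(\lambda+1)/(\lambda+x) \leq 1$ valid for all $\lambda \geq 0$ and $x \geq 1$, together with finiteness of $\nu$, allows dominated convergence to show that the integral vanishes as $x \to \infty$, identifying $\beta' = c \geq 0$. The only real subtlety is the bookkeeping in the double substitution $y = 1/x$ and $\lambda \mapsto 1/\lambda$ and recognizing that the affine term $\gamma' y$ in L\"owner's representation becomes an atom of $\nu$ at $\lambda = 0$; everything else is routine.
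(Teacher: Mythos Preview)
Your proof is correct and follows essentially the same route as the paper's: apply L\"owner's theorem to $g(y)=f(1/y)$, then push the measure forward under $\lambda\mapsto 1/\lambda$, and identify $\beta$ as $\lim_{x\to\infty}f(x)\geq 0$. The only cosmetic difference is that the paper absorbs the linear term $\gamma' y$ into the L\"owner integral as an atom of $\mu$ at $\lambda=\infty$ (using the convention $\phi(x,\infty)=x$ from Theorem~\ref{Lot}), so that the push-forward under $\lambda\mapsto 1/\lambda$ automatically carries it to an atom of $\nu$ at $\lambda=0$; you instead keep $\gamma' y$ separate and then reinsert it by hand as $\gamma'\delta_0$, which amounts to the same thing.
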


For the reader's convenience, here is the short proof of this due to Ando and Hiai \cite{AH11}.  

\begin{proof}
Note that $f(x)$ is operator monotone decreasing if and only if $f(1/x)$ is operator monotone increasing, and hence
$$
f(1/x) = \beta + \int_{(0,\infty]}(\lambda+1)\frac{x}{\lambda+x}{\rm d}\mu
$$
for some  finite Borel measure $\mu$ on $(0,\infty)$, and some $\beta\in \R$. 

Define ${\nu}$ to the be the push-forward of ${\mu}$ under the map $\lambda \mapsto \lambda^{-1}$.
Then
\begin{eqnarray*}
f(x) &=&  \beta +  \int_{(0,\infty]}(\lambda+1)\frac{1/x}{\lambda+1/x}{\rm d}\mu  =   \beta +  \int_{(0,\infty]}(\lambda+1)\frac{1}{\lambda x+ 1}{\rm d}\mu\\
&=&  \beta +   \int_{[0,\infty)}(1/\lambda+1)\frac{1}{x/\lambda + 1}{\rm d}{\nu} =  \beta +   \int_{[0,\infty)}\frac{\lambda+1}{\lambda+x}{\rm d}{\nu}\ .\\
\end{eqnarray*}
Since $\beta = \lim_{x\uparrow \infty}f(x)$, $\beta \geq 0$. Evidently \eqref{han1} defines an operator monotone decreasing function. 
\end{proof}

The following theorem is due to Hansen \cite[Proposition 4,3, Remark 4.6]{H06b} who proved the ``if'' part, and to
 Ando and Hiai \cite{AH11}  who proved the ``only if''  part.   Their proof is provided  in the context of an investigation of operator log-convexity, and it is a 
consequence of a theorem asserting  the equivalence  of 13 conditions. The following short  and direct proof of  the ``only if'' part of this theorem  was provided to me by Frank Hansen in  recent correspondence, and I am thankful to him for permission to present it here.

\begin{thm}[Hansen, Ando and Hiai]\label{AH1} Let  $f:(0,\infty) \to (0,\infty)$. Then
$$
(X,v) \mapsto \langle v, f(X)v\rangle
$$ 
is jointly convex on $M_n^{++}(\C)\times \C^n$ if and only if $f$ is operator monotone decreasing. 
\end{thm}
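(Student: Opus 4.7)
The plan is to prove the two implications separately, using Hansen's integral representation from Lemma~\ref{hanrep} for the ``if'' direction and a single judicious choice of test pair in the joint convexity hypothesis for the ``only if'' direction.

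For the ``if'' direction, I would assume $f$ is operator monotone decreasing and invoke Lemma~\ref{hanrep} to write $f(x) = \beta + \int_{[0,\infty)}\frac{\lambda+1}{\lambda+x}\,{\rm d}\nu(\lambda)$ with $\beta\geq 0$ and $\nu$ a finite positive Borel measure. The functional calculus then yields
\[ \langle v, f(X) v\rangle = \beta\|v\|^2 + \int_{[0,\infty)}(\lambda+1)\,\langle v, (\lambda I + X)^{-1} v\rangle\,{\rm d}\nu(\lambda). \]
The term $\beta\|v\|^2$ is plainly jointly convex, and since integration against a positive measure preserves joint convexity, the task reduces to showing that $(X,v) \mapsto \langle v, (\lambda I + X)^{-1} v\rangle$ is jointly convex for each fixed $\lambda \geq 0$. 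I would obtain this either from the variational identity $\langle v, A^{-1} v\rangle = \sup_{w \in \C^n}\{2\,\mathrm{Re}\,\langle w, v\rangle - \langle w, Aw\rangle\}$ (whose right side, for each fixed $w$, is affine in $(X,v)$, so the supremum is jointly convex), or equivalently from Kiefer's inequality~\eqref{kief} applied with $m=2$ to $v_1, v_2$ regarded as $n\times 1$ matrices and $C_j = \lambda I + X_j$.

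For the ``only if'' direction, I would fix $X\in M_n^{++}(\C)$, a direction $H\geq 0$, a scalar $s>0$, and $v\in \C^n$, and for any $t>s$ apply joint convexity to the two points $(X,v)$ and $(X+tH,\,0)$ with interpolation parameter $\lambda = 1 - s/t \in (0,1)$. The second endpoint contributes nothing to the right side since $v_2 = 0$, the convex combination of matrices simplifies to $X + sH$, and the degree-two homogeneity of $v \mapsto \langle v, f(X)v\rangle$ supplies an extra factor $\lambda^2$ on the left; after canceling one $\lambda$ the inequality reads
\[ (1 - s/t)\,\langle v, f(X + sH) v\rangle \leq \langle v, f(X) v\rangle. \]
Letting $t \uparrow \infty$ yields $\langle v, f(X + sH) v\rangle \leq \langle v, f(X) v\rangle$ for every $v$, i.e., $f(X + sH) \leq f(X)$ as self-adjoint operators. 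Every pair $0 < B \leq A$ in $M_n^{++}(\C)$ arises this way (take $X = B$, $H = A - B$, $s = 1$), so $f$ is operator monotone decreasing.

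The main obstacle is conceptual rather than technical: to extract operator \emph{monotone decrease} from a hypothesis that is purely about \emph{convexity}, one has to spot the asymmetric test pair $(X + tH,\,0)$ that zeros out the second vector, and then exploit the mismatch between how the two sides of joint convexity scale in $\lambda$ (quadratically on the left because of the rank-one form $vv^*$, linearly on the right) to make the limit $t \to \infty$ produce a nontrivial inequality rather than a vacuous one. Once this substitution is identified, each direction collapses to routine manipulation of the integral representation and variational principles already developed earlier in the paper.
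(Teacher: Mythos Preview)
Your proof is correct. The ``if'' direction matches the paper's argument essentially verbatim: both invoke Lemma~\ref{hanrep} and reduce to the joint convexity of $(X,v)\mapsto\langle v,(\lambda I+X)^{-1}v\rangle$, which the paper obtains from Kiefer's inequality via the rank-one substitution $Z_v=|v\rangle\langle u|$ (exactly one of the two options you list).

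Your ``only if'' direction, however, is genuinely different from the paper's. The paper (following Hansen) substitutes $v\mapsto Xv$ into the joint convexity hypothesis to show that $g(x):=x^2f(x)$ is operator convex, then appeals to the Bendat--Sherman/L\"owner integral representation \eqref{irep0} to write $g$ explicitly, divides by $x^2$, and reads off from the resulting formula that $f$ is operator monotone decreasing. Your argument bypasses the integral representation entirely: the degenerate test pair $(X,v)$, $(X+tH,0)$ together with the quadratic-versus-linear scaling in the interpolation parameter yields $f(X+sH)\leq f(X)$ directly after $t\to\infty$. This is more elementary---it uses no structure theory of operator convex functions---and it makes the ``only if'' implication self-contained. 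The paper's route, on the other hand, extracts more information along the way (it actually produces the explicit form of $f$), which can be useful if one wants that decomposition for other purposes.
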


\begin{proof}  Suppose first  that $f(x): (0,\infty) \to (0,\infty)$ is operator monotone decreasing. 
Hansen \cite{H06b} 
showed that $(v,X) \mapsto \langle v, (\lambda+X)^{-1}v\rangle$
is jointly convex.  In \cite[Remark 4.5]{H06b} he gave a proof of this fact that 
was suggested to him by Lieb:  Choose any {\em fixed} unit vector $u$, and then for each vector $v$, 
define $Z_v$ to be the rank one operator $Z_v := |v\rangle \langle u|$. Then for  each $\lambda$, 
$$
 \langle v, (\lambda+X)^{-1}v\rangle  = \tr[Z_v^* (\lambda+X)^{-1}Z_v]\ ,
$$
and now the joint convexity  of $(v,X) \mapsto \langle v, (\lambda+X)^{-1}v\rangle$ follows from Kiefer's Inequality \eqref{kief}, and  the 
joint convexity  of $(v,X) \mapsto \langle v, f(X)v\rangle$ then follows from the integral representation  \eqref{han1}.

Suppose next that  $(X,v) \mapsto \langle v, f(X)v\rangle$ is jointly convex. Define $g(x) := x^2 f(x)$. 
Then for $X,Y\in M_n^{++}(\C)$ and $0< s  < 1$, and any $v\in \C^n$
\begin{eqnarray*}  \langle v, g((1-s)X + s Y)v\rangle &=& \langle ((1-s)X + s Y)v, f((1-s)X + s Y) ((1-s)X + s Y)v\rangle\\  &\leq& (1-s)\langle Xv, f(X) Xv\rangle  + s\langle Yv, f(Y) Yv\rangle\\  &=& (1-s)\langle v,g(X)v\rangle  + s \langle v,g(Y)v\rangle\ .
\end{eqnarray*}
Thus, $g$ is operator convex, and evidently $f$ is operator convex.  It is not hard to see that $g'_+(0) > -\infty$, but one may replace $f(x)$ by $f(x+\epsilon)$ for $\epsilon>0$, and then take $\epsilon$ to $0$ at the end. Either way, $g$ has an integral representation of the form
\begin{equation*}
g(x) = \alpha + \beta x + \gamma x^2 + \int_{(0,\infty)}(\lambda+1) \frac {1}{\lambda+ x} {\rm d}\mu\ ,
\end{equation*}
where $\mu$ is a finite Borel measure on $(0,\infty)$. Therefore
$$
f(x) = \alpha x^{-2} + \beta x ^{-1}+ \gamma  + \int_{(0,\infty)}(\lambda+1) \frac {1}{\lambda+ x} {\rm d}\mu\ ,
$$
and since $f$ is operator convex and non-negative $\alpha =0$ and $\beta,\gamma \geq 0$.    It follows immediately that $f$ is operator monotone decreasing. 
\end{proof}

\section{Inequalities relating to operator monotonicity}

The following fundamental theorem of Hiai and Petz \cite{HP12} is a substantial generalization of the first three theorems in \cite{L73}, and we explain in the example following the theorem how one recovers those theorems by making two special choices of the  function $f$ figuring in the theorem, namely $f(x) = x^t$ and $f(x) = \int_0^1 x^t {\rm d} t$. The proof is extremely simple, and the only inequality that is used is the Lieb-Ruskai Monotonicity  Theorem. 

\begin{thm}[Hiai and Petz]\label{opmontI}  Let $f:(0,\infty) \to (0,\infty)$ be continuous, and let $\Phi$ be a unital and $2$-positive map from $M_n(\C)$ to $M_m(\C)$. For $X,Y\in M_n^{++}(\C)$,  
define $$G_f(X,Y) = f(R_X L_Y^{-1})L_Y\ ,$$
and note that this is a positive definite operator.  The following are equivalent:

\smallskip
\noindent{\it (1)}  The function $f$ is operator monotone increasing. 

\smallskip
\noindent{\it (2)}  For all positive definite $X,Y\in M_n(\C)$, 
\begin{equation}\label{ph2Q}
\Phi G_f(\Phi^\dagger(X), \Phi^\dagger(Y))^{-1}\Phi^\dagger     \leq G_f(X,Y)^{-1}\ .
\end{equation}

\smallskip
\noindent{\it (3)}  For all positive definite $X,Y\in M_n(\C)$,  
\begin{equation}\label{ph1Q}
\Phi^\dagger  G_f(X,Y)  \Phi  \leq   G_f(\Phi^\dagger(X), \Phi^\dagger(Y))\ .
\end{equation}

\smallskip
\noindent{\it (4)}  The map 
\begin{equation}\label{ph3Q}
(X,Y,Z)\mapsto  \langle Z, G_f(X,Y)^{-1} Z\rangle
\end{equation} is jointly convex on $M_n^{++}(\C)\times M_n^{++}(\C) \times M_n(\C)$.
\end{thm}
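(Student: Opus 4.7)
The plan is to close the cycle $(1)\Leftrightarrow(4)$, $(1)\Rightarrow(3)$, $(3)\Leftrightarrow(2)$, $(2)\Rightarrow(4)$. The ingredients are (i) the integral representations of Section~11 (L\"owner's Theorem~\ref{Lot} and Hansen's Lemma~\ref{hanrep}), (ii) the joint-convexity/operator-monotonicity dictionary of Theorem~\ref{AH1}, (iii) the tracial Lieb-Ruskai inequality \eqref{trform} applied to $\Phi^\dagger$ (which is $2$-positive because $\Phi$ is), and (iv) the elementary operator identity $T^\dagger B T \leq A \iff T A^{-1} T^\dagger \leq B^{-1}$ for positive $A,B$.

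For $(1)\Rightarrow(4)$: since $f$ is operator monotone increasing, $1/f$ is operator monotone decreasing, so Hansen's Lemma~\ref{hanrep} gives
$$\frac{1}{f(x)} = \tilde\beta + \int_{[0,\infty)}\frac{\lambda+1}{\lambda+x}\,d\tilde\nu.$$
Functional calculus on the commuting pair $R_X, L_Y$, combined with $(\lambda I + R_X L_Y^{-1})^{-1} L_Y^{-1} = (\lambda L_Y + R_X)^{-1}$, yields
$$G_f(X,Y)^{-1} = \tilde\beta\, L_{Y^{-1}} + \int_{[0,\infty)}(\lambda+1)(\lambda L_Y + R_X)^{-1}\,d\tilde\nu.$$
Each operator $\lambda L_Y + R_X$ is linear in $(X,Y)$ and positive; by Theorem~\ref{AH1} applied to $t\mapsto 1/t$, the map $(A,v)\mapsto \langle v, A^{-1} v\rangle$ is jointly convex, and composing with the linear map and integrating against a positive measure yields (4). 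For $(4)\Rightarrow(1)$, specialize $Y = I$ (so $G_f(X,I) = f(R_X)$ and $G_f(X,I)^{-1}Z = Z f(X)^{-1}$) and choose $Z = |w\rangle\langle u|$ for a fixed unit vector $w$; then $\langle Z, G_f(X,I)^{-1} Z\rangle = \langle u, f(X)^{-1} u\rangle$, so (4) forces joint convexity of $(X,u)\mapsto\langle u, f(X)^{-1} u\rangle$, and the converse half of Theorem~\ref{AH1} gives $1/f$ operator monotone decreasing, equivalently $f$ operator monotone increasing.

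For $(1)\Rightarrow(3)$: L\"owner's Theorem~\ref{Lot} together with $R_X L_Y(\lambda L_Y + R_X)^{-1} = (\lambda R_{X^{-1}} + L_{Y^{-1}})^{-1}$ yield
$$G_f(X,Y) = \beta L_Y + \gamma R_X + \int_{(0,\infty)}(1+\lambda)(\lambda R_{X^{-1}} + L_{Y^{-1}})^{-1}\,d\mu.$$
The $L_Y$ and $R_X$ terms produce $\Phi^\dagger L_Y\Phi \leq L_{\Phi^\dagger Y}$ and $\Phi^\dagger R_X\Phi \leq R_{\Phi^\dagger X}$; after pairing against $Z$ in the Hilbert-Schmidt inner product, each becomes an instance of the Kadison-Choi-Schwarz inequality \eqref{LR7K}. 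For the mixed Sylvester-type term, identity (iv) converts the desired
$\Phi^\dagger(\lambda R_{X^{-1}} + L_{Y^{-1}})^{-1}\Phi \leq (\lambda R_{(\Phi^\dagger X)^{-1}} + L_{(\Phi^\dagger Y)^{-1}})^{-1}$
into the additive statement
$\Phi(\lambda R_{(\Phi^\dagger X)^{-1}} + L_{(\Phi^\dagger Y)^{-1}})\Phi^\dagger \leq \lambda R_{X^{-1}} + L_{Y^{-1}},$
which by linearity splits into $\Phi R_{(\Phi^\dagger X)^{-1}}\Phi^\dagger \leq R_{X^{-1}}$ and its $L$-analog; each is precisely the tracial Lieb-Ruskai inequality \eqref{trform} applied to $\Phi^\dagger$. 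The equivalence $(3)\Leftrightarrow(2)$ is then immediate from (iv) with $A = G_f(\Phi^\dagger X,\Phi^\dagger Y)$ and $B = G_f(X,Y)$, and $(2)\Rightarrow(4)$ follows by inserting block-diagonal $X = X_1\oplus X_2$, $Y = Y_1\oplus Y_2$, $Z = Z_1\oplus Z_2$ into (2) with $\Phi = \Psi_2$ (so that $\Phi^\dagger$ is the partial trace of Lemma~\ref{ullm}) and invoking degree-one homogeneity of $(X,Y,Z)\mapsto \langle Z, G_f(X,Y)^{-1}Z\rangle$, exactly as in the Section~3 passage from monotonicity to convexity.

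The hard part will be the mixed operator $(\lambda R_{X^{-1}} + L_{Y^{-1}})^{-1}$ in the step $(1)\Rightarrow(3)$: it is neither a left- nor a right-multiplication, so Kadison-Choi does not apply to it directly. The decisive device is identity (iv), which trades the inversion for a linear combination of left and right multiplications on which Lieb-Ruskai applies cleanly, splitting the mixed inequality into two independent tracial Lieb-Ruskai inequalities.
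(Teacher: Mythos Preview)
Your proposal is correct and follows essentially the same route as the paper: the core implication $(1)\Rightarrow(3)$ via L\"owner's integral representation, with Kadison--Choi--Schwarz handling the $L_Y$ and $R_X$ terms and the flip identity (your (iv), the paper's Lemma~\ref{flipG}) reducing the integral term to two tracial Lieb--Ruskai inequalities for $\Phi^\dagger$; then $(2)\Leftrightarrow(3)$ via the same flip, $(2)\Rightarrow(4)$ via the partial trace, and $(4)\Rightarrow(1)$ via specialization to $Y=I$, rank-one $Z$, and Theorem~\ref{AH1}. Your additional direct argument for $(1)\Rightarrow(4)$ through Hansen's representation of $1/f$ is a pleasant shortcut not in the paper, but it is redundant once the cycle is closed.
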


\begin{exam}  The first three theorems of \cite{L73} follow directly from Theorem~\ref{opmontI} and the elementary 
fact that $f(x) = x^t$, $0 \leq t \leq 1$ is operator monotone. 
Then for $X,Y > 0$, $G_f(X,Y) = R_X^tL_Y^{1-t}$.  
Then $\langle K, \Phi^\dagger  G_f(X,Y)  \Phi      K \rangle \leq \langle K, G_f(\Phi^\dagger(X), \Phi^\dagger(Y))K\rangle$  
is the same as \eqref{lieb21b}. Likewise, $\langle K, \Phi G_f(\Phi^\dagger(X),
 \Phi^\dagger(Y))^{-1}\Phi^\dagger  K\rangle \leq \langle K, G_f(X,Y)^{-1}K\rangle$ is the same as \eqref{lieb22}.  

Next, define $f(x) = \int_0^1 x^t{\rm d}t = (x-1)/ \log x$, so that $f$ is operator monotone increasing. 
Then for $X,Y > 0$, $$G_f(X,Y)K = \int_0^1 Y^{1-t}KX^t{\rm d}t\ ,$$ and then a simple computation gives
$$G_f(X,Y)^{-1}K  = \int_0^\infty \frac{1}{sI + Y} K \frac{1}{sI + X}{\rm d}s \ .$$
One way to do this is to observe that it suffices to consider $K$ of the form 
$|v\rangle\langle u|$ where $v$ is an eigenvector of $Y$ and $u$ is an eigenvector of $X$. 
Therefore
by {\it (2)} of Theorem~\ref{opmontI},  for all unital $2$-positive $\Phi$, \eqref{lieb23} is valid. {\em Note that this gives somewhat more than the Theorems~\ref{L1M}, \ref{L2M} and \ref{L3M}}:  Here we have only had to assume that $\Phi$ is $2$-positive, not completely positive. 
\end{exam}

We now give the impressively simple proof of the Hiai-Petz Theorem, beginning with a simple lemma that is abstracted from their paper \cite{HP12}.

\begin{lm}\label{flipG}  For positive invertible $B\in M_m(\C)$ and $C \in M_n(\C)$ and any $m\times n$ matrix $A$,
\begin{equation}\label{ph3}
A^* B^{-1}A \leq C^{-1} \quad \iff \quad ACA^* \leq B\ ,
\end{equation}
\end{lm}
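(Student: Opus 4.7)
The plan is to reduce both inequalities to a statement about the operator norm of a single rectangular matrix, and then use the elementary fact that $\|M\|=\|M^*\|$ (equivalently, $M^*M\le I\iff MM^*\le I$). Since $B$ and $C$ are positive and invertible, the square roots $B^{1/2},B^{-1/2}$ and $C^{1/2}$ are well defined, and conjugation by these invertible matrices preserves operator inequalities.

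First I would rewrite $A^*B^{-1}A\le C^{-1}$ by conjugating both sides by $C^{1/2}$. This gives the equivalent inequality $C^{1/2}A^*B^{-1}AC^{1/2}\le I$, which is the same as
\begin{equation*}
\bigl(B^{-1/2}AC^{1/2}\bigr)^{\!*}\bigl(B^{-1/2}AC^{1/2}\bigr)\le I.
\end{equation*}
Setting $M:=B^{-1/2}AC^{1/2}$, this reads $M^*M\le I$, i.e.\ $\|M\|\le 1$.

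Next I would apply the corresponding reduction to the other inequality: $ACA^*\le B$ is equivalent, after conjugating by $B^{-1/2}$, to $B^{-1/2}ACA^*B^{-1/2}\le I$, which is $MM^*\le I$, i.e.\ $\|M^*\|\le 1$. Since $\|M\|=\|M^*\|$ for any rectangular matrix $M$ (both equal the largest singular value of $M$), the two conditions $M^*M\le I$ and $MM^*\le I$ are equivalent, and this closes the loop.

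I do not expect any real obstacle here; the only minor care required is to ensure that all conjugations are by invertible matrices (hence preserve inequalities in both directions), which is guaranteed by the positive-definiteness of $B$ and $C$. An alternative route would be to invoke Lemma~\ref{smlm} twice on the block matrix $\left[\begin{array}{cc} B & A\\ A^* & C^{-1}\end{array}\right]$: its positivity is equivalent to $C^{-1}\ge A^*B^{-1}A$ by the Schur complement with pivot $B$, and also equivalent to $B\ge A\,C\,A^*$ (after noting $(C^{-1})^{-1}=C$) by the Schur complement with pivot $C^{-1}$. Either route is short; the norm-based argument is the cleanest.
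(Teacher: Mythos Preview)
Your proof is correct and is essentially the same as the paper's: both conjugate by $C^{1/2}$ and $B^{-1/2}$ to reduce each inequality to $M^*M\le I$ and $MM^*\le I$ respectively, with $M=B^{-1/2}AC^{1/2}$, and then use that $M^*M$ and $MM^*$ have the same nonzero spectrum (you phrase this as $\|M\|=\|M^*\|$). Your aside about the Schur-complement route via Lemma~\ref{smlm} is a valid alternative not used in the paper.
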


\begin{proof}  Evidently $A^* B^{-1}A \leq C^{-1} \iff C^{1/2}A^*B^{-1}AC^{1/2} \leq I$. However,
$C^{1/2}A^*B^{-1}AC^{1/2}  =  (B^{-1/2}AC^{-1/2})^* (B^{-1/2}AC^{-1/2})$ has the same non-zero spectrum as  
$$ (B^{-1/2}AC^{-1/2})(B^{-1/2}AC^{-1/2})^*  = B^{-1/2}ACA^*B^{-1/2}\ ,$$
and hence  $C^{1/2}A^*B^{-1}AC^{1/2} \leq I \iff B^{-1/2}ACA^*B^{-1/2} \leq I$, which yields \eqref{ph3}.
\end{proof}

\begin{cl}\label{flip}
Let $f:(0,\infty) \to (0,\infty)$ be a continuous function. For positive definite $X,Y\in M_n(\C)$,  define $G_f(X,Y) = f(R_X L_Y^{-1})L_Y$ , which is then an invertible operator on $M_n(\C)$. Let $\Phi:M_m(\C) \to M_n(\C)$ be unital and $2$-positive.   Then  
\begin{equation}\label{ph1}
\Phi G_f(\Phi^\dagger(X), \Phi^\dagger(Y))^{-1}\Phi^\dagger     \leq G_f(X,Y)^{-1}
\end{equation}
if and only if
\begin{equation}\label{ph2}
\Phi^\dagger  G_f(X,Y)  \Phi  \leq   G_f(\Phi^\dagger(X), \Phi^\dagger(Y))\ .
\end{equation}
\end{cl}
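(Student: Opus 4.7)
The plan is to prove Corollary~\ref{flip} by a direct application of Lemma~\ref{flipG}, with essentially no further input. First I would verify that $G_f(X,Y)$ and $G_f(\Phi^\dagger(X), \Phi^\dagger(Y))$ are positive invertible operators on $M_n(\C)$ and $M_m(\C)$ respectively---here implicitly assuming, as in Theorem~\ref{L2M}, that $\Phi^\dagger(X), \Phi^\dagger(Y) \in M_m^{++}(\C)$ so that the second expression is defined at all. Since $R_X$ and $L_Y$ commute (left and right multiplications on $M_n(\C)$ always do) and both are positive invertible whenever $X,Y \in M_n^{++}(\C)$, the product $R_X L_Y^{-1}$ is positive invertible; hence $f(R_X L_Y^{-1})$ is positive invertible by the spectral theorem together with $f > 0$. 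Multiplying by $L_Y$, which commutes with $f(R_X L_Y^{-1})$, preserves positivity and invertibility, so $G_f(X,Y)$ is positive invertible, and the same reasoning applies on $M_m(\C)$.

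With that in hand I would apply Lemma~\ref{flipG} using the following dictionary. Regard $\Phi^\dagger : M_n(\C) \to M_m(\C)$ as a bounded linear map between the two Hilbert--Schmidt Hilbert spaces; its adjoint in that sense is exactly $\Phi$. Set $A := \Phi^\dagger$, $B := G_f(\Phi^\dagger(X), \Phi^\dagger(Y))$ on $M_m(\C)$, and $C := G_f(X,Y)$ on $M_n(\C)$. Then $A^* = \Phi$, and
\[
A^* B^{-1} A \;=\; \Phi \, G_f(\Phi^\dagger(X), \Phi^\dagger(Y))^{-1} \, \Phi^\dagger, \qquad C^{-1} \;=\; G_f(X,Y)^{-1},
\]
so $A^* B^{-1} A \leq C^{-1}$ is precisely \eqref{ph1}. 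Symmetrically,
\[
A C A^* \;=\; \Phi^\dagger \, G_f(X,Y) \, \Phi, \qquad B \;=\; G_f(\Phi^\dagger(X), \Phi^\dagger(Y)),
\]
so $A C A^* \leq B$ is precisely \eqref{ph2}. Lemma~\ref{flipG} tells us these two inequalities are equivalent, which is the content of the corollary.

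There is no real obstacle: the whole content is packaged in Lemma~\ref{flipG}, whose proof is the short polar-decomposition trick applied to the operator $B^{-1/2} A C^{1/2}$. I note in passing that the $2$-positivity and unitality of $\Phi$ are not used in the corollary itself; they enter Theorem~\ref{opmontI} elsewhere---to ensure that $\Phi^\dagger$ preserves the positivity needed to make sense of $G_f(\Phi^\dagger(X),\Phi^\dagger(Y))$, and, via the Lieb--Ruskai Monotonicity Theorem (Theorem~\ref{LiRu}), to link these purely linear-algebraic inequalities back to the operator monotonicity of $f$.
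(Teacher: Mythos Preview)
Your proof is correct and takes exactly the same approach as the paper: apply Lemma~\ref{flipG} with $A = \Phi^\dagger$, $B = G_f(\Phi^\dagger(X),\Phi^\dagger(Y))$, and $C = G_f(X,Y)$. The paper's proof is a single sentence to this effect; your additional verification that $G_f$ is positive invertible and your remark that the $2$-positivity and unitality hypotheses play no role in the corollary itself are accurate and welcome elaborations.
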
 

\begin{proof}  Apply Lemma~\ref{flipG} with $B =G_f(\Phi^\dagger(X), \Phi^\dagger(Y))$, $C = G_f(X,Y)$ and $A = \Phi^\dagger$.
\end{proof}

\begin{proof}[Proof of Theorem~\ref{opmontI}]   We first show that {\it(1)} $\Rightarrow$ {\it(3)}. That is, suppose that $f$ is operator monotone increasing. We will then show that  \eqref{ph2} 
is valid. 
By \eqref{low1} and \eqref{low2}
$$f(x) :=  \beta + \gamma x + \int_{(0,\infty]}   \frac{x}{\lambda + x}  (1 + \lambda)   {\rm d}\mu\ ,$$
with $\beta,\gamma \geq 0$. 
Hence
${\displaystyle
G_f(X,Y) =  \beta  L_Y  + \gamma R_X + \int_{(0,\infty]}   \frac{R_X}{\lambda + R_XL_Y^{-1}}  (1 + \lambda)   {\rm d}\mu}$.
Hence to prove \eqref{ph2}, it suffices to prove that
\begin{equation}\label{ph7}
\Phi^\dagger L_Y \Phi \leq L_{\Phi^\dagger(Y)}\ ,\qquad   \Phi^\dagger R_X \Phi \leq R_{\Phi^\dagger(X)}
\end{equation}
and that
\begin{equation}\label{ph8}
\Phi^\dagger   \frac{R_X}{\lambda + R_XL_Y^{-1}}  \Phi  \leq  \frac{R_{\Phi^\dagger(X)}}{\lambda + R_{\Phi^\dagger(X)}L_{\Phi^\dagger(Y)}^{-1}} \ .
\end{equation}
For any $K$,
\begin{eqnarray*}
\langle K, \Phi^\dagger L_Y \Phi K\rangle &=& \tr[\Phi(K^*) Y \Phi(K)] = \tr[\Phi(K)\Phi(K^*) Y ] \\
 &\leq& \tr[\Phi(KK^*)Y] = \tr[KK^*\Phi^\dagger(Y)] = \langle K, L_{\Phi^\dagger(Y)} K\rangle\ ,
\end{eqnarray*}
where we have used the Kadison-Choi Schwarz Inequality. The proof of the second inequality in \eqref{ph7} is entirely similar.

By Lemma~\ref{flip}, \eqref{ph8} is equivalent to
${\displaystyle 
\Phi \left(\frac{R_{\Phi^\dagger(X)}}{\lambda + R_{\Phi^\dagger(X)}L_{\Phi^\dagger(Y)}^{-1}}  \right)^{-1}\Phi^\dagger  \leq \left(   \frac{R_X}{\lambda + R_XL_Y^{-1}} \right)^{-1}\ ,
}$
which is the same as
${\displaystyle
\Phi  \left(  \lambda  R_{\Phi^\dagger(X)}^{-1} + L_{\Phi^\dagger(Y)}^{-1}  \right)\Phi^\dagger   \leq \lambda R_X^{-1} + L_Y^{-1}
}$.
This is true if and only if for all $n\times n$ matrices $Z$, 
\begin{multline*}
\lambda \tr[ \Phi^\dagger(Z) (\Phi^\dagger(X))^{-1} \Phi^\dagger(Z^*) ] +  \tr[ \Phi(Z^*) (\Phi^\dagger(Y))^{-1} \Phi^\dagger(Z)] \leq\\ \lambda \tr[ZX^{-1}Z^*] + \tr[Z^*Y^{-1}Z]\ .
\end{multline*}
However, by Corollary~\ref{cl2} of the Lieb-Ruskai Monotonicity Theorem (with the observation that this is still valid for $2$-positive maps),  together with the fact that $\Phi^\dagger$ is trace preserving since $\Phi$ is unital, 
$
\tr[ \Phi^\dagger(Z) (\Phi^\dagger(X))^{-1} \Phi^\dagger(Z^*) ]  \leq \tr[ZX^{-1}Z^*]$ and $\tr[ \Phi(Z^*) (\Phi^\dagger(Y))^{-1} \Phi^\dagger(Z)]  \leq  \tr[Z^*Y^{-1}Z]$.
This proves {\it (1)} $\Rightarrow$ {\it (3)} and by Lemma~\ref{flip}, {\it (2)} $\iff${\it (3)}.  
Next, {\it (2)}  $\Rightarrow$ {\it (4)}, i.e., that 
the map $(X,Y,Z)\mapsto  \langle Z, G_f(X,Y)^{-1} Z\rangle$ is jointly convex,  by taking $\Phi^\dagger$ to be the partial trace as in the proof of Kiefer's inequality using Corollary~\ref{cl2}. 

We now show that {\it (4)} $\Rightarrow$ {\it (1)}, i.e., that $f$ is operator monotone increasing.  Let $u$ be any unit vector in $\C^n$ and then for any $v\in \C^n$ define $Z = |v\rangle \langle u|$.  Note that for $Y =I$, then $G_f(X,I) = f(R_X) = R_{f(X)}$. Hence the joint convexity of $(X,Y,Z) \mapsto \langle Z, G_f(X,Y)^{-1} Z\rangle$ implies the joint convexity of the map
$(X,v) \mapsto \langle v, f^{-1}(X) v\rangle$.   By a Theorem~\ref{AH1}, this means that $1/f$ is operator monotone decreasing, and then $f$ is operator monotone increasing. 
\end{proof}

\section{Trace  inequalities associated to operator convex functions }

The Data Processing Inequality
$$
D(\Phi^\dagger(X)||\Phi^\dagger(Y))  \leq D(X||Y)
$$
lies  outside the direct scope of Theorem~\ref{opmontI} because $D(X||Y)$ cannot be written in terms of $G_f(X,Y)$ for any operator monotone function $f$ from $(0,\infty)$ to $(0,\infty)$. However, one does have $D(X||Y)= \langle I, G_f(X,Y)  I \rangle$  for $f(x)= x\log x$ and also 
$D(Y||X)= \langle I, G_f(X,Y)  I \rangle$ for $f(x) = - \log x$.  Both of these functions are operator convex,  as may be seen from the integral representation
\begin{equation}\label{logrep}
\log (x) = \int_0^\infty \left(\frac{1}{\lambda+1} - \frac{1}{\lambda +x}\right){\rm d}\lambda\ .
\end{equation}

Alternatively, in 1986 Donald \cite{Do86}  gave an entirely elementary proof of the joint convexity of $D(X||Y)$ taking as his starting point the integral formula
\begin{equation}\label{eq4}
\int_0^1 \frac{(y-x)^2 t}{(1-t)x + ty}{\rm d} t =  x(\log x - \log y) + y -x
\end{equation}
valid for all $x,y>0$. 
Applying this with $R_X$ in place of $x$ and $L_Y$ in place of $y$, where $X$ and $Y$ are two non-degenerate density matrices, he obtained
\begin{equation}\label{eq5}
D(X||Y) = \int_0^1\left\langle \one, (L_Y-R_X) \frac{1}{(1-t)R_X + t L_Y}  (L_Y-R_X) \one \right\rangle  t{\rm d}t \ .
\end{equation}
Now the joint convexity is an immediate consequence of Kiefer's inequality. However, Donald did something {\em even more elementary}: He used this formula to deduce an expression
$$
D(X||Y)  =  \sup \{  \tr[XC  + YD ]\ : \ (X,Y) \in \Omega \}\ ,
$$
where $\Omega$ is specific convex set in $M_n(\C) \times M_n(\C)$ consisting of pairs of self-adjoint matrices.    This effectively displays the relative entropy as a Legendre transform, and specifies the Legendre transform of the relative entropy. His proof  built on ideas of Pusz and Woronowicz \cite{PW75,PW78} who also  gave a proof of the Lieb Concavity Theorem and the joint convexity of the relative entropy by giving explicit Legendre transform representations in \cite[Section 4]{PW78}.  The variational formula of  Pusz and Woronowicz was rediscovered by Kosaki in 1986 \cite{Ko86}.  (Kosaki wrote that his result is implicit in \cite{PW75}, but he does not cite \cite{PW78} in which his formula is explicitly proved.)

We return to the study of operators of the form 
\begin{equation}\label{gfdef}
G_f(X,Y) := f(R_X L_Y^{-1})L_Y \ ,
\end{equation}
where now we shall take $f$ to be operator convex.   Some  useful monotonicity theorems in this setting can be obtained from  integral representations and the Lieb-Ruskai Monotony Theorem by making somewhat use of the {\em perspective function} construction \cite{DM08} which associates the function 
$$g_f(x,y) := f(x/y)y$$
 to the function $f(x)$.   
Note that for $f(x) = x^2$, the associated perspective function is $x^2/y$. The integral representation of a general operator convex function 
allows it to be expressed in functions of this type, opening the way to the application of the  Lieb-Ruskai Monotony Theorem.

\begin{thm}\label{goodrep} Let $f$ be an operator convex function on $[0,\infty)$ with $f'_+(0)> -\infty$. Let $g_f(x,y)$ be the perspective function of $f$. 
Then for some finite Borel measure $\nu$ on $[0,\infty]$, and some $\alpha,\beta \in \R$,
\begin{equation}\label{low13}
g_f(x,y) 
 =  \alpha y + \beta x  +  \int_{(0,\infty]} \left(   \frac{x^2}{x + \lambda y}   \right)  (1+\lambda) {\rm d} \nu \ .
\end{equation}
\end{thm}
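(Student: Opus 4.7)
My plan is to reduce directly to the Bendat--Sherman integral representation \eqref{irep1} for $f$ itself, and then apply the definition of the perspective, $g_f(x,y) = y\, f(x/y)$, term by term. The hypothesis $f'_+(0) > -\infty$ in the statement is exactly the hypothesis under which \eqref{irep1} is available, so I would start by writing
\[
f(x) = \alpha + \beta x + \int_{(0,\infty]} (\lambda+1)\,\frac{x^2}{\lambda+x}\, {\rm d}\nu(\lambda),
\]
with $\alpha,\beta\in\R$ and $\nu$ a finite Borel measure on $(0,\infty]$ whose mass at the point $\infty$ encodes the second-order coefficient $\gamma$ from the form \eqref{irep0}.

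Next I would substitute $x \mapsto x/y$ in this formula and multiply through by $y$. The constant and linear pieces immediately give $\alpha y$ and $\beta x$. Inside the integral, a short algebraic simplification yields
\[
y\,(\lambda+1)\,\frac{(x/y)^2}{\lambda + x/y} \;=\; (\lambda+1)\,\frac{x^2}{x + \lambda y},
\]
which is precisely the integrand on the right of \eqref{low13}. The measure $\nu$ and the constants $\alpha,\beta$ appearing in \eqref{low13} are the same ones supplied by \eqref{irep1}; evaluating the displayed integrand at $\lambda = \infty$ by continuity yields $x^2/y$, the perspective of $x^2$, consistent with the translation of the $\gamma x^2$ term in \eqref{irep0} under the operation $f \mapsto g_f$.

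I do not anticipate a real obstacle here: the content of the theorem is the observation that the perspective of an operator convex function inherits a clean two-variable representation whose kernel is of the elementary form $x^2/(x+\lambda y)$, and this follows at once once \eqref{irep1} is granted. The only mild bookkeeping is the handling of the atom at $\lambda=\infty$, and if one wished to dispose of the hypothesis $f'_+(0) > -\infty$ one could apply the argument to $f(\,\cdot\,+\epsilon)$ (which is operator convex with finite right derivative at $0$) and pass to the limit $\epsilon \downarrow 0$; but since the hypothesis is already included in the statement, this subtlety does not arise.
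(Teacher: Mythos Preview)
Your proposal is correct and follows the same strategy as the paper: start from the integral representation \eqref{irep1} for $f$ and compute the perspective term by term. Your algebra is in fact more direct than the paper's, which first rewrites $\tfrac{x^2}{\lambda+x}=x-\tfrac{\lambda x}{\lambda+x}$ and then invokes the identity $(a^{-1}+b^{-1})^{-1}=a-a(a+b)^{-1}a$ to arrive at \eqref{low13}; that detour is unnecessary for scalar $x,y$ but is phrased so as to carry over verbatim to the non-commuting operator setting (with $R_X$ and $L_Y$) used immediately afterward in \eqref{low34}.
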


\begin{proof} From the integral representation \eqref{irep1} for $f$,  and the identity
$
{\displaystyle \frac{x^2}{\lambda+x} = x -  \frac{\lambda x}{\lambda +x}}
$,
\begin{eqnarray*}
g_f(x,y) &=&  \alpha y + \beta x  + \int_{(0,\infty]} \left(  x - \lambda \frac{x}{\lambda + x/y}\right)(1+\lambda) {\rm d}\nu\nonumber\\
&=& \alpha y  + \beta x   + \int_{(0,\infty]} \left(  x - \lambda \frac{1}{(\lambda^{-1}x)^{-1} + y^{-1}}\right)(1+\lambda) {\rm d}\nu\ .
\end{eqnarray*}
Now using the identity, valid for all $a,b>0$, $(a^{-1}+b^{-1})^{-1}  = a - a(a+b)^{-1}a$,
which follows easily from $a^{-1} + b^{-1} = a^{-1}(a+b) b^{-1}$,  \eqref{low13} is proved. 
\end{proof}

The following  lemma can be found in Effros' paper \cite{E09}.

\begin{lm}\label{GNSsub}  For operator convex functions $f$ on $(0,\infty)$, let $G_f$ be feined by \eqref{gfdef}. Then for all positive invertible $X_1,X_2,Y_1,Y_2$,
\begin{equation}\label{low35}
 G_f(X_1+X_2,Y_1+Y_2)  \leq G_f(X_1,Y_1) +  G_f(X_2,Y_2)\ .
\end{equation}
and consequently,
$(X,Y) \mapsto G_f(X,Y)$
is jointly convex. 
\end{lm}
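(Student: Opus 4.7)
My plan is to reduce the subadditivity \eqref{low35} to Kiefer's inequality \eqref{kief} via the integral representation of operator convex functions supplied by Theorem~\ref{goodrep}. Since $R_X$ and $L_Y$ are positive commuting operators on the Hilbert--Schmidt Hilbert space $M_n(\C)$, the joint functional calculus gives
\begin{equation*}
G_f(X,Y) \;=\; g_f(R_X, L_Y),
\end{equation*}
where $g_f(x,y) := f(x/y)y$ is the perspective function of $f$. Assuming first that $f'_+(0) > -\infty$, Theorem~\ref{goodrep} furnishes constants $\alpha,\beta \in \R$ and a finite positive Borel measure $\nu$ on $(0,\infty]$ such that
\begin{equation*}
G_f(X,Y) \;=\; \alpha L_Y + \beta R_X + \int_{(0,\infty]} R_X\bigl(R_X + \lambda L_Y\bigr)^{-1} R_X\,(1+\lambda)\,d\nu(\lambda).
\end{equation*}

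The maps $X \mapsto R_X$ and $Y \mapsto L_Y$ are linear, so the two ``affine'' terms $\alpha L_Y + \beta R_X$ are exactly additive in $(X,Y)$. It therefore suffices to prove, for each fixed $\lambda \in (0,\infty]$, the operator inequality
\begin{equation*}
R_{X_1+X_2}\bigl(R_{X_1+X_2} + \lambda L_{Y_1+Y_2}\bigr)^{-1} R_{X_1+X_2} \;\le\; \sum_{i=1}^{2} R_{X_i}\bigl(R_{X_i} + \lambda L_{Y_i}\bigr)^{-1} R_{X_i},
\end{equation*}
and then integrate against the positive measure $(1+\lambda)\,d\nu$. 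This inequality is precisely the two-term instance of Kiefer's inequality \eqref{kief}: set $A_i := R_{X_i}$ (self-adjoint) and $C_i := R_{X_i} + \lambda L_{Y_i}$ (positive definite on $M_n(\C)$, which may be identified with a $\C^{n^2}$ for the purposes of applying \eqref{kief}); by linearity of $R$ and $L$ one has $A_1 + A_2 = R_{X_1+X_2}$ and $C_1 + C_2 = R_{X_1+X_2} + \lambda L_{Y_1+Y_2}$.

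To remove the hypothesis $f'_+(0) > -\infty$, I would apply what has just been proved to $f_\epsilon(x) := f(x+\epsilon)$, which is still operator convex on $[0,\infty)$ and has $(f_\epsilon)'_+(0) = f'_+(\epsilon) > -\infty$. Since $R_X L_Y^{-1}$ has spectrum bounded away from zero, $G_{f_\epsilon}(X,Y) \to G_f(X,Y)$ in operator norm as $\epsilon \downarrow 0$ by continuity of the functional calculus, and the subadditivity passes to the limit.

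Finally, joint convexity follows from subadditivity together with the homogeneity $G_f(sX, sY) = s\,G_f(X,Y)$ for $s > 0$, which is immediate from $R_{sX}L_{sY}^{-1} = R_X L_Y^{-1}$ and $L_{sY} = s L_Y$: for $0 < \mu < 1$, splitting $\mu X_1 + (1-\mu)X_2$ as a sum and applying \eqref{low35} yields
\begin{equation*}
G_f\bigl(\mu X_1 + (1{-}\mu)X_2,\, \mu Y_1 + (1{-}\mu)Y_2\bigr) \;\le\; G_f(\mu X_1, \mu Y_1) + G_f((1{-}\mu)X_2, (1{-}\mu)Y_2) = \mu G_f(X_1,Y_1) + (1{-}\mu)G_f(X_2,Y_2).
\end{equation*}
The only delicate step is the bookkeeping for the integral representation; the core inequality reduces, term by term, to a direct corollary of the Lieb--Ruskai Monotonicity Theorem.
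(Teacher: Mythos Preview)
Your proof is correct and follows essentially the same approach as the paper: both invoke the integral representation from Theorem~\ref{goodrep} (after the $f(x+\epsilon)$ reduction), observe that the affine part is additive, and then apply Kiefer's inequality \eqref{kief}---equivalently, the subadditivity of $(A,Z)\mapsto Z^*A^{-1}Z$ coming from the Lieb--Ruskai Monotonicity Theorem---to each integrand, with joint convexity deduced from subadditivity plus degree-one homogeneity.
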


\begin{proof}  Replacing $f(x)$ by $f(x+\epsilon)$ for $\epsilon>0$ if need be, we have
the integral representation \eqref{low13}, which is equivalent to
\begin{equation}\label{low33}
g_f(x,y) =  \alpha y + \beta x +\int_{(0,\infty]} \left(  x \frac{1}{x + \lambda y} x\right)  (1+\lambda) {\rm d} \nu\ ,
\end{equation}
\begin{equation}\label{low34}
G_f(X,Y) = \alpha L_Y + \beta R_X + \int_{(0,\infty]} \left( R_X \frac{1}{R_X + \lambda L_Y}R_X\right) (1+\lambda) {\rm d} \nu\ .
\end{equation}
Then \eqref{low35} follows directly from the corollary of the Lieb-Ruskai Theorem asserting that $(A,Z) \mapsto Z^*A^{-1}Z$ is sub-additive  so that for all positive invertible $X_1,X_2,Y_1,Y_2$,
\begin{equation}\label{low35B}
G_f(X_1+X_2,Y_1+Y_2)  \leq G_f(X_1,Y_1)+ G_f(X_2,Y_2)\ ,
\end{equation}
and then since $(X,Y) \mapsto  G_f(X,Y)$ is homogeneous of degree one, it is also jointly convex. 
\end{proof}

\begin{cl} For all operator convex functions $f$ on $\R_+$, and all $K \in M_n(\C)$, the function 
\begin{equation}\label{low56}
(X,Y) \mapsto \tr[K^* f(R_XL_Y^{-1})YK]
\end{equation}
is jointly convex on $M_n^{++}(\C)$. 
\end{cl}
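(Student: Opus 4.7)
The plan is to observe that the trace expression is precisely the Hilbert--Schmidt quadratic form of the operator $G_f(X,Y)$ evaluated at the vector $K$, so that the desired joint convexity is just a scalar consequence of the operator-valued joint convexity already established in Lemma~\ref{GNSsub}.

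More concretely, the first step is the identification: since $L_Y$ is left multiplication by $Y$, we have $L_Y(K) = YK$, and therefore
\begin{equation*}
f(R_XL_Y^{-1})\, YK \;=\; f(R_XL_Y^{-1})\, L_Y(K) \;=\; G_f(X,Y)(K)\, .
\end{equation*}
Taking the trace against $K^*$ and recognizing this as the Hilbert--Schmidt inner product used throughout Section~10 then yields
\begin{equation*}
\tr[K^* f(R_XL_Y^{-1})YK] \;=\; \langle K,\, G_f(X,Y)\, K\rangle_{\rm HS}\, .
\end{equation*}

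The second step is to invoke Lemma~\ref{GNSsub}, which tells us that the operator-valued map $(X,Y)\mapsto G_f(X,Y)$ is jointly convex in the L\"owner order on $M_n^{++}(\C)\times M_n^{++}(\C)$. That is, for $0<\lambda<1$ and positive definite pairs $(X_i,Y_i)$, $i=1,2$,
\begin{equation*}
G_f\!\bigl(\lambda X_1+(1-\lambda)X_2,\, \lambda Y_1+(1-\lambda)Y_2\bigr) \;\leq\; \lambda\, G_f(X_1,Y_1) + (1-\lambda)\, G_f(X_2,Y_2)\, .
\end{equation*}
Pairing both sides with $K$ in the Hilbert--Schmidt inner product (which is a positive linear functional on self-adjoint operators on $M_n(\C)$) preserves the inequality and gives exactly the joint convexity of $(X,Y)\mapsto \tr[K^* f(R_XL_Y^{-1})YK]$.

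There is no real obstacle here beyond the bookkeeping of the GNS notation; the work has all been done in Lemma~\ref{GNSsub}, whose proof in turn reduces the problem (via the integral representation of Theorem~\ref{goodrep}) to the elementary sub-additivity $(A,Z)\mapsto Z^*A^{-1}Z$ from the Lieb--Ruskai Monotonicity Theorem. So the corollary is essentially a restatement of the lemma in scalar form, and no additional ideas are needed.
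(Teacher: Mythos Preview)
Your proposal is correct and follows exactly the paper's approach: the paper's proof consists of the single line ``Evidently $\tr[K^* f(R_XL_Y^{-1})YK] = \langle K, G_f(X,Y) K\rangle$,'' leaving the application of Lemma~\ref{GNSsub} implicit. You have simply spelled out the identification and the pairing step in more detail, which is fine.
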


\begin{proof} Evidently $ \tr[K^* f(R_XL_Y^{-1})YK] = \langle K, G_f(X,Y) K\rangle$.
\end{proof} 

\begin{exam}
Taking $f(x) = -\log x$ and $K = I$, this yields the joint convexity of $$(X,Y) \mapsto  \tr[Y( \log Y - \log X)]\ .$$
\end{exam}

\begin{exam}\label{lconc}
Likewise, taking $f(x) = x^q$, $q \in (1,2]$, we obtain Ando's Convexity Theorem: For all $K\in M_n(\C)$, and all $1 \leq q \leq 2$,
\begin{equation}\label{ando1}
(X,Y) \mapsto \tr[K^* Y^{1-q} K X^{q}]
\end{equation}
is jointly convex.  More generally, Ando proved that if $1 \leq q \leq 2$, and $0 \leq p \leq q-1$,
\begin{equation}\label{ando2}
(X,Y) \mapsto \tr[K^* Y^{-p} K X^{q}]
\end{equation}
is jointly convex. However, as with the Lieb Concavity Theorem, the general case \eqref{ando2} follows from the special case 
\eqref{ando1} by an entirely analogous argument. 
\end{exam}

The next results make a somewhat different use of the perspective function construction which does not involve the GNS representation.

\begin{defi}\label{oppers}  Let $f:[0,\infty) \to [0,\infty)$.  Then for all positive invertible operators $X,Y$ on a Hilbert space $\cH$,  the {\em operator perspective function} $g_f(X,Y)$ is defined by 
\begin{equation}\label{opper}
g_f(X,Y)  := Y^{1/2} f(Y^{-1/2}X Y^{-1/2}) Y^{1/2}\ ,
\end{equation}
where $f(Y^{-1/2}X Y^{-1/2})$ is defined using the spectral theorem.  
\end{defi}

\begin{thm}\label{joop}  Let  $f$ be an operator convex function on $[0,\infty)$.  For all positive 
 invertible operators $X,Y$ on a Hilbert space $\cH$, let $g_f(X,Y)$ be defined by \eqref{opper}.   Then for all $2$-positive maps $\Phi$, 
 \begin{equation}\label{low21}
\Phi(g_f(X,Y) ) \geq   g_f(\Phi(X),\Phi(Y)) \ .
\end{equation}
\end{thm}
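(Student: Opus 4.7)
The plan is to reduce the statement to the Lieb--Ruskai Monotonicity Theorem (in the form of Corollary~\ref{cl2}) via an integral representation of $g_f$. First I would replace $f$ by $f(\,\cdot\, + \epsilon)$ and let $\epsilon \downarrow 0$ at the end, so as to assume $f'_+(0) > -\infty$ and apply Theorem~\ref{goodrep}: this yields real constants $\alpha,\beta$ and a finite Borel measure $\nu$ on $(0,\infty]$ with
\begin{equation*}
g_f(x,y) \;=\; \alpha y + \beta x + \int_{(0,\infty]} \frac{x^2}{x + \lambda y}\,(1+\lambda)\,\mathrm{d}\nu(\lambda),
\end{equation*}
where the integrand at $\lambda = \infty$ is interpreted by continuity as $x^2/y$.

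The first real step would be to establish the corresponding operator identity
\begin{equation*}
g_f(X,Y) \;=\; \alpha Y + \beta X + \int_{(0,\infty]} X(X + \lambda Y)^{-1} X \,(1+\lambda)\,\mathrm{d}\nu(\lambda),
\end{equation*}
with the $\lambda = \infty$ contribution being $XY^{-1}X$. Setting $Z := Y^{-1/2} X Y^{-1/2}$, one verifies $Y^{1/2} Z Y^{1/2} = X$ and $Y^{1/2}(Z+\lambda)^{-1} Y^{1/2} = Y(X+\lambda Y)^{-1}Y$ by pulling $Y^{1/2}$ through the resolvent; combining these with the algebraic rearrangement $X(X+\lambda Y)^{-1}X = X - \lambda Y + \lambda^2 Y(X+\lambda Y)^{-1}Y$ identifies the operator perspective $Y^{1/2}\tfrac{Z^2}{Z+\lambda}Y^{1/2}$ of the scalar summand $x^2/(x+\lambda y)$ with $X(X+\lambda Y)^{-1}X$. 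Linearity of the functional calculus in $f$ then yields the displayed integral formula for $g_f(X,Y)$.

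The second step is termwise monotonicity. For each $\lambda \in (0,\infty)$, Corollary~\ref{cl2} applied with $C := X + \lambda Y$ gives
\begin{equation*}
\Phi\bigl(X(X+\lambda Y)^{-1}X\bigr) \;\geq\; \Phi(X)\,\bigl(\Phi(X)+\lambda\Phi(Y)\bigr)^{-1}\,\Phi(X),
\end{equation*}
since $\Phi(X+\lambda Y) = \Phi(X) + \lambda \Phi(Y)$; the $\lambda = \infty$ case is the same corollary with $C = Y$. The linear terms $\alpha Y$ and $\beta X$ pass through $\Phi$ unchanged. Integrating the termwise inequality against $(1+\lambda)\,\mathrm{d}\nu(\lambda)$ and invoking the operator identity of the previous step, now at $(\Phi(X),\Phi(Y))$ in place of $(X,Y)$, delivers $\Phi(g_f(X,Y)) \geq g_f(\Phi(X),\Phi(Y))$.

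The hardest part is the bookkeeping in the first step: while the scalar integral representation from Theorem~\ref{goodrep} is in hand, transferring it to the operator perspective summand by summand---including the limiting $\lambda = \infty$ contribution---requires some care. A secondary concern is that Corollary~\ref{cl2} presumes $\Phi(X+\lambda Y)$ to be invertible, which need not hold when $\Phi$ is not unital. This is handled exactly as in the proof of Theorem~\ref{L2M}: perturb $\Phi$ to the $2$-positive map $\Phi_\epsilon := \Phi + \epsilon\,\tfrac{1}{n}\tr[\,\cdot\,]\,I$, for which $\Phi_\epsilon(A)$ is invertible whenever $A \neq 0$ is positive, establish the inequality for $\Phi_\epsilon$, and take $\epsilon \downarrow 0$.
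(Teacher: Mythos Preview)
Your proposal is correct and follows essentially the same route as the paper's own proof: obtain the integral representation $g_f(X,Y)=\alpha Y+\beta X+\int_{(0,\infty]}X(X+\lambda Y)^{-1}X\,(1+\lambda)\,\mathrm{d}\nu$ and then apply Corollary~\ref{cl2} termwise. You are somewhat more careful than the paper on two technical points---the shift $f(\cdot+\epsilon)$ to secure $f'_+(0)>-\infty$, and the perturbation of $\Phi$ to force invertibility of $\Phi(X+\lambda Y)$---but these are refinements of the same argument, not a different one.
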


\begin{proof}  Suppose that  $f$ is an operator convex function on $[0,\infty)$. Then from the integral representation \eqref{irep1},
\begin{eqnarray*}
f(Y^{-1/2}X Y^{-1/2}) &=& \alpha + \beta Y^{-1/2}X Y^{-1/2}\nonumber\\
&+&  \int_{(0,\infty]} Y^{-1/2}X Y^{-1/2} \frac {1}{\lambda+ Y^{-1/2}X Y^{-1/2}} Y^{-1/2}X Y^{-1/2}{\rm d}(\lambda+1)\nu\ .
\end{eqnarray*}
It follows that
\begin{equation*}
g_f(X,Y) = \alpha Y + \beta X 
+  \int_{(0,\infty]}X\frac {1}{\lambda Y+ X } X(\lambda+1)  {\rm d}\nu \ ,
\end{equation*}
and hence
\begin{equation*}
\Phi(g_f(X,Y)) = \alpha  \Phi(Y) + \beta \Phi(X) 
+  \int_{(0,\infty]}\Phi\left(X\frac {1}{\lambda Y+ X } X\right) (\lambda+1) {\rm d} \nu \ .
\end{equation*}
By Corollary~\ref{cl2} to the Lieb-Ruskai Monotonicity Theorem, 
$$
\Phi\left(X\frac {1}{\lambda Y+ X } X\right)  \geq \Phi(X) \frac{1}{ \lambda \Phi(Y) + \Phi(X)} \Phi(X)\ .
$$
\end{proof}

\begin{remark}\label{joopR} One can dispense with the condition that $\Phi$ be $2$-positive, and instead assume only that $\Phi$ 
is a non-zero positive map. In the case $\Phi(I)$ is invertible, all one need do is to apply Theorem~\ref{ACM} in place of  
Corollary~\ref{cl2} in the proof of Theorem~\ref{joop}.   In the general case, one proceeds as in the proof of Corollary~\ref{ACMcl}:
One first defines the positive  map $\widehat{\Phi}$  in terms of $\Phi$ as in \eqref{ACM2}, and then since this map is positive with $\Phi(I)$ invertible, 
one obtains
$$
\widehat{\Phi}(g_f(X,Y) ) \geq   g_f(\widehat{\Phi}(X),\widehat{\Phi}(Y)) \ .
$$
Now one uses \eqref{ACM3} to recover $\Phi$ from $\widehat{\Phi}$, and crucially, the $*$-homomorphism property \eqref{ACM5}.   See \cite[Theorem 7.10]{HUW} for a proof of Theorem~\ref{joop} for positive maps $\Phi$ under the assumption that $\Phi(I)$ is invertible that is enatailed in the use of Ando's more restrictive definition \cite{A79} of positivity of $\Phi$; i.e., $\Phi:M_n^{++}(\C) \to M_m^{++}(\C)$, and not simply 
$\Phi:M_n^{+}(\C) \to M_m^{+}(\C)$.

\end{remark}

The following corollary of Theorem~\ref{joop} is  central to the Kubo-Ando theory of operator means \cite{KA79}.

\begin{cl}  Let  $f$ be an operator convex function on $[0,\infty)$. For all positive 
 invertible operators $X,Y$ on a Hilbert space $\cH$, let $g_f(X,Y)$ be defined by \eqref{opper}.
\begin{equation}\label{low22}
(X,Y) \mapsto g_f(X,Y)
\end{equation}
is jointly operator convex. 
\end{cl}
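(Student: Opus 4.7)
The plan is to deduce joint operator convexity of $g_f$ directly from Theorem~\ref{joop} by applying it to a suitable completely positive map on block matrices. Given positive invertible operators $X_1, X_2, Y_1, Y_2$ on $\cH$ and $\lambda \in (0,1)$, I form the block diagonal operators
$$
X := \left[\begin{array}{cc} X_1 & 0 \\ 0 & X_2 \end{array}\right], \qquad Y := \left[\begin{array}{cc} Y_1 & 0 \\ 0 & Y_2 \end{array}\right]
$$
acting on $\cH \oplus \cH$, and introduce the map $\Phi: \cB(\cH \oplus \cH) \to \cB(\cH)$ defined by
$$
\Phi\left(\left[\begin{array}{cc} A_{11} & A_{12} \\ A_{21} & A_{22} \end{array}\right]\right) := \lambda A_{11} + (1-\lambda) A_{22}.
$$
This $\Phi$ is completely positive, since it admits the Kraus representation $\Phi(A) = V_1^* A V_1 + V_2^* A V_2$ with $V_1 := \sqrt{\lambda}\left[\begin{array}{c} I \\ 0 \end{array}\right]$ and $V_2 := \sqrt{1-\lambda}\left[\begin{array}{c} 0 \\ I \end{array}\right]$, and in particular it is $2$-positive.

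Next I would verify the block-diagonal identity
$$
g_f(X,Y) = \left[\begin{array}{cc} g_f(X_1, Y_1) & 0 \\ 0 & g_f(X_2, Y_2) \end{array}\right].
$$
This follows because $Y^{-1/2}$ is itself block diagonal with entries $Y_i^{-1/2}$, so $Y^{-1/2} X Y^{-1/2}$ is block diagonal with entries $Y_i^{-1/2} X_i Y_i^{-1/2}$; applying the spectral theorem to a block diagonal positive operator yields $f$ block-wise; and conjugation by $Y^{1/2}$ again preserves the block-diagonal structure. Consequently
$$
\Phi(g_f(X,Y)) = \lambda\, g_f(X_1, Y_1) + (1-\lambda)\, g_f(X_2, Y_2),
$$
while $\Phi(X) = \lambda X_1 + (1-\lambda) X_2$ and $\Phi(Y) = \lambda Y_1 + (1-\lambda) Y_2$ are both positive invertible, so that all terms below are well defined.

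Applying Theorem~\ref{joop} to $\Phi$ with inputs $X$ and $Y$ then yields
\begin{eqnarray*}
\lambda\, g_f(X_1,Y_1) + (1-\lambda)\, g_f(X_2,Y_2) &=& \Phi(g_f(X,Y)) \\
&\geq& g_f(\Phi(X), \Phi(Y)) \\
&=& g_f\bigl(\lambda X_1 + (1-\lambda) X_2,\; \lambda Y_1 + (1-\lambda) Y_2\bigr),
\end{eqnarray*}
which is precisely the joint operator convexity of $g_f$. There is no substantive obstacle here—the entire content is carried by the already-established Theorem~\ref{joop}; the only step requiring any care is the block-diagonal identity for $g_f$, which reduces to the observation that both functional calculus and conjugation respect orthogonal direct sums.
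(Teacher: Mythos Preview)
Your proof is correct and follows essentially the same approach as the paper: both deduce the result from Theorem~\ref{joop} by applying it to block-diagonal inputs with a completely positive ``averaging'' map. The only cosmetic difference is that you build the convex weights $\lambda,1-\lambda$ into $\Phi$ directly, whereas the paper uses the unweighted partial trace to obtain subadditivity and then invokes the degree-one homogeneity of $g_f$ to pass from subadditivity to convexity; the two arguments are interchangeable.
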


\begin{proof} This follows from Theorem~\ref{joop} by taking $\Phi$ to be the partial trace, exactly as in the proof of Kiefer's inequality \eqref{kief}. (Note that for functions that are homogeneous of degree one, convexity is the same as subadditivity.)
\end{proof}

 \begin{cl}\label{joopcl}  Let $f$ be an operator convex function on $[0,\infty)$. For all positive 
 invertible operators $X,Y$ on a Hilbert space $\cH$ and all positive unital maps,
 $$
\tr[ Yf(Y^{-1/2}XY^{-1/2}) ] \geq \tr[ (\Phi^\dagger(Y))f((\Phi^\dagger(Y))^{-1/2}\Phi^\dagger(X)(\Phi^\dagger(Y))^{-1/2})]\ ,
$$
 \end{cl}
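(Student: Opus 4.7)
The plan is to deduce this trace inequality directly from the operator inequality already proved as Theorem~\ref{joop}, applied to the adjoint map $\Phi^\dagger$. First I would verify that the adjoint has the right properties: since $\Phi$ is positive, for $A\geq 0$ and any $B\geq 0$ one has $\tr[B\,\Phi^\dagger(A)] = \tr[\Phi(B)A] \geq 0$, whence $\Phi^\dagger(A)\geq 0$; and since $\Phi$ is unital, $\tr[\Phi^\dagger(A)] = \tr[A\,\Phi(I)] = \tr[A]$, so $\Phi^\dagger$ is trace preserving. Thus $\Phi^\dagger$ falls within the scope of Theorem~\ref{joop} in the form extended to positive (not necessarily $2$-positive) maps as described in Remark~\ref{joopR}.

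Next I would apply Theorem~\ref{joop} to $\Phi^\dagger$ with the operator convex function $f$, which yields the operator inequality
\begin{equation*}
\Phi^\dagger\bigl(g_f(X,Y)\bigr) \;\geq\; g_f\bigl(\Phi^\dagger(X),\,\Phi^\dagger(Y)\bigr).
\end{equation*}
Taking traces and using that $\Phi^\dagger$ is trace preserving, the left side becomes $\tr[g_f(X,Y)]$, while the right side is $\tr\bigl[g_f(\Phi^\dagger(X),\Phi^\dagger(Y))\bigr]$. Using Definition~\ref{oppers} and cyclicity of the trace, $\tr[g_f(A,B)] = \tr\bigl[B^{1/2}f(B^{-1/2}AB^{-1/2})B^{1/2}\bigr] = \tr\bigl[B\,f(B^{-1/2}AB^{-1/2})\bigr]$ for any positive invertible $A,B$, so rewriting both sides produces exactly the stated inequality.

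The main subtlety, which I expect to be the only real obstacle, is the invertibility of $\Phi^\dagger(X)$ and $\Phi^\dagger(Y)$: the operator perspective $g_f(\Phi^\dagger(X),\Phi^\dagger(Y))$ a priori requires these to be strictly positive, whereas a positive map can send invertible elements to non-invertible ones (compare the discussion around $\Xi^\dagger_{m,\ell}$ in Section~9). I would handle this in one of two standard ways. The cleanest is to mirror the argument of Corollary~\ref{ACMcl}: compress $\Phi^\dagger$ to the range of $\Phi^\dagger(I)$ to obtain a map $\widehat{\Phi^\dagger}$ with invertible image, apply the operator inequality there, and then use that the compression is realized by a $*$-homomorphism to pull the inequality back under the trace. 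Alternatively, one may approximate $\Phi^\dagger$ by $\Phi^\dagger_\epsilon(A) = (1-\epsilon)\Phi^\dagger(A) + \epsilon\, n^{-1}\tr[A]\,I$, which is positive, trace preserving, and sends positive matrices to positive definite ones, apply the inequality for each $\epsilon>0$, and pass to the limit $\epsilon\downarrow 0$ using continuity of the trace functional.
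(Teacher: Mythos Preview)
Your proposal is correct and follows essentially the same route as the paper: apply Theorem~\ref{joop} (in the positive-map form of Remark~\ref{joopR}) to $\Phi^\dagger$, take traces using that $\Phi^\dagger$ is trace preserving, and unwind the definition of $g_f$ via cyclicity. Your discussion of the invertibility issue for $\Phi^\dagger(X),\Phi^\dagger(Y)$ is exactly the content already absorbed into Remark~\ref{joopR}, so no additional work is needed beyond citing it.
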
 
 
 \begin{proof}  This follows directly from Theorem~\ref{joop}, together with Remark~\ref{joopR}, the fact that $\Phi^\dagger$ is trace preserving, and the definition of $g_f(X,Y)$.  
 \end{proof}
 
 \begin{exam}  The function $D_{BS}(Y||X) := \tr[ Y \log( Y^{1/2}X^{-1}Y^{1/2})]$ is known as {\em Belavkin-Stasewski relative entropy} \cite{BS82}. By Corollary~\ref{joopcl}, we have the corresponding version of the Data Processing Inequality:
 $$
 D_{BS}(\Phi^\dagger(Y)||\Phi^\dagger(X)) \leq D_{BS}(Y||X)\ 
 $$
 for all unital positive maps $\Phi$; see \cite{BS82} for the completely positive case. 
 \end{exam}
 
 The coda to this paper is a timeline. We have discussed many theorems that have been proven to be equivalent; in some sense 
 these theorems are the many faces of SSA. 
The discussion has not been chronological, but rather focused on the connections between the ideas that have emerged in the 60 years  since the 
1962 paper of Wigner and Yanase \cite{WY1}. The SSA conjecture of Lanford and Robinson  \cite{LR68} was published in 1968.  The 1970 paper of Araki and 
Lieb accelerated the progress. It identified the need for a generalization of the Golden-Thompson inequality to three matrices. At least Lieb and Uhlmann 
were working  on this problem, completely independently of one another. They  both realized that proving that 
$X\mapsto \tr[\exp(H+\log X)]$ is concave on $M_n^{++}(\C)$ would lead to a proof of SSA; as explained above, this is 
equivalent through the simple Lemma~\ref{lieblem} to Lieb's Triple Matrix Theorem, which, as explained in Section 4, can be used to close the gap between the partial result of Araki and Lieb and SSA. 

The parallel but partial insights of Uhlmann are recorded in the final section of
\cite{Uh73}, in which he proved a mild generalization of Theorem~1 in  \cite{AL}, and then he conjectured \cite[Vermutung III]{Uh73}  the concavity of
$X\mapsto \tr[\exp(H+\log X)]$. Presumably, by the time he made this conjecture, Lieb had already proved it, but in any case when Lieb proved the result, he was unaware of any such conjecture by anybody else, as he has told me. Uhlmann \cite{Uh73} did not explain how the concavity of $X\mapsto \tr[\exp(H+\log X)]$ would lead to the proof of SSA.  He also noted, again without explanation,  that his Conjecture III 
(Lieb's Theorem \ref{explog}) would follow from the generalization of the Wigner-Yanase result that became Epstein's Theorem. While he did not prove his 
conjecture, he was following the trail blazed by Araki and Lieb \cite{AL} and had realized the connection with the work of  Wigner and Yanase \cite{WY1,WY2}. However, 
Lieb \cite{L73} was  the first to seal the connection between the work of Wigner and Yanase and the SSA conjecture, and to prove the missing 
Triple Matrix Theorem, Theorem~\ref{TripleMatrix}.  All of the many deep results \cite{L73} are derived by simple arguments from the Lieb Concavity Theorem, Theorem~\ref{L1}, which had settled the Wigner Yanase Dyson conjecture.  In this sense, all of the deep theorems in \cite{L73} owe their  depth to the Lieb Concavity Theorem.

The 1973 paper of Lieb \cite{L73} opened the floodgates.  The 1973 proof of SSA by Lieb and Ruskai quickly followed. By what has been explained in the last paragraph, all of the results in their paper 
ultimately depended on the Lieb Concavity Theorem. The paper \cite{L73} was written while Elliott Lieb was visiting I.H.E.S., and after 
Henri Epstein, then a Professor at I.H.E.S., learned of the results, he provided not only a proof of Theorem~\ref{Ep73},  the one  conjecture in \cite{L73}, 
but also a new proof of  the concavity of
$X\mapsto \tr[\exp(H+\log X)]$, Theorem~\ref{explog}, and his proof was independent of the Lieb Concavity Theorem.  Epstein does not discuss SSA, 
but with his proof of Theorem~\ref{explog}, one has the first route to the proof of SSA, 
building on \cite{AL}, that does not depend upon the Lieb Concavity Theorem; Theorem~\ref{L1}.  

The next important breakthrough came  in 1977  and  is due to Uhlmann \cite{Uh77}. He proved, building on work of  Pusz and Woronowicz \cite{PW75},
that the DPI was valid not only for all completely positive trace preserving  maps, but  for all  adjoints of unital Schwarz maps. He also introduced the monotonicity version of the Lieb Concavity Theorem, and even proved it for the wider class of unital Schwarz maps \cite[Proposition 17]{Uh77}. 

The next progress came in 1978 with the work of Pusz and Woronowicz \cite{PW78} who gave a new proof of the Lieb Concavity Theorem 
and the joint convexity of the relative entropy by displaying the relevant  functionals as explicit Legendre transforms.  They did not discuss SSA directly, 
but their results provided an alternate route to SSA on account of the equivalence with joint convexity of the relative entropy which was well understood by then.  The approach of Pusz and Woronowicz
was greatly simplified in 1986  by Donald \cite{Do86} who provided the first truly elementary proof of the joint convexity of the relative entropy, 
using nothing but direct computation. Again, he didnot discuss SSA, but his results provided the first completely elementary   path to 
SSA.  Finally, in \cite{CL99}, a proof of  classical SSA  that does not refer 
to conditional probabilities was provided, and it was shown, using Epstein's Theorem, that this proof extended to the quantum case. 

As noted above, since the work of Uhlmann, it has been known that  complete positivity was not required for the DPI; it is valid for all trace preserving Schwarz maps. In \cite{P03}  Petz asked an important  question: Is the DPI
valid for all positive maps trace preserving maps?  It was recently proved by M\"uller-Hermes and Reeb \cite{MHR17} that this is the case. They drew on work of Beigi \cite{B13} who proved the monotonicity under quantum operations  of certain sandwiched R\'enyi entropies that are discussed in Section 8 of this paper. His method relied on interpolation for certain  non-commutative $L^p$  spaces. Although he invoked complete positivity in his proof, M\"uller-Hermes and Reeb noted that positivity alone would have sufficed, and carried this through, and also in an infinite dimensional setting. Shortly afterwards, Jencova \cite{J18} extended the proof to a general von Neumann algebra setting.


\begin{ack}I thank Elliott Lieb for many stimulating discussions of these topics and more over many years. 
I thank Alexander M\"uller-Hermes for helpful discussion on the extension of monotonicity inequalities beyond the case of $2$-positive maps 
to the case of Schwarz maps, or even merely positive maps.  I thank Frank Hansen for helpful discussions on operator monotonicity. I am very grateful to an anonymous referee who found many 
typographical errors in the first draft, and who also made substantive suggestions for improvement that heve benefited the paper. Finally, I thank Rupert Frank for another very careful reading, and helpful suggestions after that. 
\end{ack}

\begin{funding}
This work was partially supported by  U.S.
National Science Foundation grant  DMS 2055282
\end{funding}



\begin{thebibliography}{110}





\bibitem{A78} T.~Ando, \textit{Topics on Operator Inequalities, Lecture notes} (mimeographed),
Hokkaido Univ., Sapporo, 1978.

\bibitem{A79} T.~Ando, \textit{Concavity of certain maps on positive definite matrices and applications to
Hadamard products}. Linear Algebra Appl. {\bf 26} (1979), 203--241.

\bibitem{AH11} T.~Ando and F.~Hiai, \textit{Operator log-convex functions and operator means}, Math. Ann. {\bf 350}
(2011), 611--630.



\bibitem{A75} H.~Araki,  \textit{Inequalities in Von Neumann Algebras}, in
\textit{Les rencontres physiciens-math\'ematiciens de Strasbourg} RCP25, {\bf 22} (1975),  1--25.


\bibitem{A90} H.~Araki,  \textit{On an inequality of Lieb and Thirring}, Lett. Math. Phys. {\bf 19} (1990), 167--170.


\bibitem{AL} H.~Araki and  E.H. Lieb, \textit{Entropy Inequalities},
Commun. Math. Phys. {\bf 18}(  1970), 160--170.

\bibitem{AY60} H.~Araki and M.~Yanase, \textit{Measurement of Quantum Mechanical Operators}. Phys.Rev. {\bf 120} (1960) 622--626.

\bibitem{AD15} K.~M.~R. Audenaert and N.~Datta.\textit{ $\alpha-z$ R\'enyi relative entropies}. J. Math. Phys.
{\bf 56} (2015), 022202.

\bibitem{A11} {J. S.~Aujla},
\textit{ A simple proof of Lieb concavity theorem},
Jour. of Math. Phys. {\bf 52} (2011), 043505.


\bibitem{BCL}  K.~Ball, E.~A.~Carlen and E.~H.~Lieb, \textit{Sharp uniform convexity and smoothness 
inequalities for trace norms},  Invent. Mathematica  {\bf 115} (1994) 463--482.

\bibitem{B13} S.~Beigi, \textit{Sandwiched R\'enyi Divergence Satisfies Data Processing Inequality}. J. Math. Phys.
{\bf 54} (2013), 122202.

\bibitem{Bhatia} R.~Bhatia, \textit{Positive definite matrices} Princeton Universtiy Press, Princeton, 2007. 


\bibitem{BS82} V.~P.~Belavkin and P. Staszewski, \textit{$C^*$-algebraic 
generalization of relative entropy and
entropy}, Ann. Inst. Henri Poincar\'e Sect. A {\bf 37} (1982), 51--58.

\bibitem{BS75} J.~Bendat and S.~Sherman, \textit{Monotone and convex operator functions}, Trans.
Amer. Math. Soc. {\bf 79} (1955), 58--71.


\bibitem{Bog58} N.~N.~Bogoliubov, \textit{On a Variational Principle in the 
Many Body Problem},
Soviet Phys. Doklady {\bf 3} (1958), 292.


\bibitem{BCY} F. Brandao, M. Christandl, and J. Yard, \textit{Faithful
squashed entanglement}, Commun. Math. Phys. {\bf 306} (2011), 805--830

\bibitem{C10}
E.~A.~Carlen, \textit{Trace inequalities and quantum entropy: an introductory course} in \textit{Entropy and the Quantum}, pp. 73--140, Contemp. Math. {\bf 529}, Amer. Math. Soc., Providence, RI, 2010.

\bibitem{C22} E.~A.~Carlen, \textit{A monotonicity version of a concavity theorem of Lieb}, arXiv preprint  2203.02789.


\bibitem{CFL18} E.~A.~Carlen, R.~L.~Frank, and E.~H.~Lieb, \textit{ Inequalities for quantum divergences and
the Audenaeart-Datta conjecture}, Journal of Physics A: Mathematical and Theoretical,
{\bf 51} (2018) 483001.



\bibitem{CL93} E.~A.~Carlen and E.~H.~Lieb, \textit{  Optimal Hypercontractivity for Fermi Fields and Related 
Non-commutative Integration Inequalities}, Comm. Math. Phys.  {\bf 155}, 1993, 27--46.


	   
\bibitem{CL99} E.~A.~Carlen and E.~H.~Lieb, \textit{ A Minkowski Type Trace Inequality and Strong 
Subadditivity of Quantum Entropy}, Advances in the Mathematical Sciences, AMS Translations  Series 2,
{\bf 189}  (1999) 59--68.
	   
\bibitem{CL08} E.~A.~Carlen and E.~H.~Lieb, \textit{A Minkowski-type trace inequality and strong subadditivity of quantum entropy II: Convexity and concavity}, Lett. Math. Phys. {\bf 83} (2008), 107--126, 

\bibitem{CL12} E.~A.~Carlen and E.~H.~Lieb,  \textit{Bounds for entanglement via an extension of strong subadditivity of entropy}, Lett. Math. Phys. {\bf 101} (2012), 1--11.



\bibitem{CL19} E.~A.~Carlen and E.~H.~Lieb, \textit{Some trace inequalities for  exponential and logarithmic 
functions},  Bull. Math. Sciences,  {\bf 1},  (2019), 1950008.

\bibitem{CM17} E.~A.~Carlen and J.~Maas, 
	\textit{Gradient flow and entropy inequalities for quantum Markov semigroups with detailed balance},
	   Jour.  Func. Analysis {\bf 273},  (2017) 1810--1869.
	   
\bibitem{CMH} 	   E.~A.~Carlen and A.~M\"uller-Hermes, \textit{A Tracial Schwarz inequality and a Theorem of Hiai and Petz}, arXiv preprint 2203.03433.

\bibitem{CV20} E.~A.~Carlen and A.~Vershynina,  \textit{Recovery map stability for the Data Processing Inequality} Journal of Physics A: Mathematical and Theoretical {\bf 53}, (2020), 035204.



\bibitem{Cen82} N.~N.~Cencov,  \textit{Statistical decision rules and optimal inferences}, Translation of Math. Monog. {\bf 53}, Amer. Math. Society, Providence, 1982.

\bibitem{Choi72} M.~D.~Choi,  \textit{Positive linear maps on $C^*$ 
algebras}, Can. J. Math., {\bf 24} (1972) 520--529.

\bibitem{Choi74} M.~D.~Choi,  \textit{A Schwarz inequality for positive linear maps on $C^*$ algebras}, Illinois. J. Math. {\bf 18} (1974), 565--574.


\bibitem{Choi75} M.~D.~Choi,  \textit{Completely positive linear maps on complex matrices}, Lin. Alg. and Appl. {\bf 10} (1975),  285--290.

\bibitem{Choi80}  M.~D.~Choi,  \textit{Some assorted inequalities for positive linear maps on $C^*$ algebras}, Jour. Operator Theory. {\bf 4} (1980),  271--285.

\bibitem{CW1} M. Christandl and A. Winter, \textit{Squashed
entanglement --
An additive entanglement measure}. J. Math. Phys. {\bf 45} (2004),
829--840.





\bibitem{DM08} B.~Dacorogna and P.~Mar\'echal, \textit{The role of perspective functions in convexity, polyconvexity, rank-one convexity and separate convexity}, Journal of Convex Analysis {\bf 15} (2008) 271--284.

\bibitem{D76} E.~B.~Davies, \textit{Quantum theory of open systems}, Academic Press,  New york, 1976.

\bibitem{Do86}  M.~J.~Donald, \textit{On the relative entropy}, Commun. 
Math. Phys. {\bf 105} (1986), 13--34.


\bibitem{E09}E.~G.~Effros, \textit{A matrix convexity approach to some celebrated quantum inequalities}, Proc.
Nat. Acad. Sci. USA {\bf 106} (2009), 1006--1008.

\bibitem{E73} H.~Epstein, \textit{On a concavity theorem of Lieb}, Comm. Math. Phys. {\bf 31} (1973),
317--327.

 \bibitem{FL13} R.~L.~Frank and E.~H.~Lieb, \textit{Monotonicity of a relative R\'enyi entropy}, J. Math. Phys.,
{\bf 54} (2013), 122201. 


\bibitem{F25} R.~A.~Fisher, \textit{Theory of statistical estimation}, Proc. Cambridge Philos. Soc. {\bf 22} (1925), 700--725.




\bibitem{G65} S.~Golden, \textit{Lower bounds for the Helmholtz function}, Phys. Rev., Series II, {\bf 137} (1965), B1127--B1128

\bibitem{HJPW} P.~Hayden, R.~Jozsa, D.~Petz, and
A.~Winter, \textit{Structure of states which satisfy strong subadditivity
of quantum entropy with equality}.  Commun. Math. Phys. {\bf 246} (2004), 359--374.


\bibitem{H06} F.~Hansen, \textit{ Trace functions as Laplace transforms}, J. Math. Phys. {\bf 47} (2006),
043504.


\bibitem{H06b} F.~Hansen, \textit{ Extensions of Lieb's Concavity Theorem}, J. Stat. Phys. {\bf 124} (2006),  87-101.






\bibitem{HP12} F.~Hiai and D.~Petz, \textit{From quasi-entropy to various quantum information quantities},
Publ. Res. Inst. Math. Sci. {\bf 48} (2012), 525--542.

\bibitem{HMPB} F.~Hiai, M.~Mosonyi, D.~Petz and C.~B\'eny, \textit{Quantum $f$-divergences and error correction}, Rev. Math. Phys. {\bf 23} (2011), 691--747. 

\bibitem{HUW} F.~Hiai,  Y.~Ueda, and S.~Wada, \textit{Pusz--Woronowicz Functional Calculus and Extended Operator Convex Perspectives}, Integr. Equ. Oper. Theory {\bf 94}  (2022). 1--63.


\bibitem{HT07} T.~Hirita and T.~Takayanagi, \textit{AdS/CFT and strong subadditivity of entanglement entropy}, Jour. High enery Phys., JHEP02(2007)042.

\bibitem{H4} R.~Horodecki,  P.~Horodecki, M.~Horodecki and
K.~Horodecki, \textit{Quantum entanglement}.  Rev. Mod. Phys. {\bf 81} (2009),  865--942.

\bibitem{J18} A.~Jencova, \textit{R\'enyi relative entropies and noncommutative $L_p$-spaces}, Ann. Henri Poincar\'e {\bf 19} (2018), 2513--2542.

\bibitem{J70} R.~Jost, \textit{Ueber eine Ungleichung voin E.~P.~Wigner und M.~M.~Yanase}, pp. 13-19 in \textit{Quanta: Essays in Theoretical Physics Dedicated to
Gregor Wentzel}, edited by P. G. O. Freund, C. I. Goebel and Y.Nambu, University of Chicago Press, Chicago, 1970.

\bibitem{K52} R.~V.~Kadison, \textit{A generalized Schwarz inequality and algebraic invariants for
operator algebras}, Ann. of Math. {\bf 56} (1952), 494--503.

\bibitem{K59} J.~Kiefer, \textit{Optimum Experimental Designs}. Jour. of the Royal Statistical Soc. Ser. B,  {\bf 21}, (1959), 272--319.

\bibitem{K31} O.~Klein, \textit{Zur Quantenmechanischen Begr\"undung des 
zweiten Hauptsatzes der W\"armelehre}, Z. Physik {\bf 72} (1931), 767-775.

\bibitem{Ko86} H.~Kosaki, \textit{Relative entropy of states:  A variational expression}, Jour. Operator Theory, {\bf 16} (1986) 335-348.

\bibitem{K71} K.~Kraus, \textit{General state changes in quantum theory}, Ann. Phys. {\bf 64} (1971), 311--335.

\bibitem{KA79} F.~Kubo and T.~Ando, \textit{Means of positive linear operators}, Math. Ann. {\bf 246}
(1979/80), no. 3, 205--224.

\bibitem{LR68} O.~Lanford and D.~Robinson, \textit{Mean Entropy of States in Quantum Statistical Mechanics},  J. Math. Phys. {\bf 9} (1968), 1120.

\bibitem{LR99} A.~Lesniewski amd M.~B.~Ruskai, \textit{Monotone Riemannian metrics and relative entropy on noncommutative probability spaces},
J. Math. Phys. {\bf 40} (1999), 5702.

\bibitem{L73} E.~Lieb, \textit{Convex trace functions and the Wigner-Yanase-Dyson conjecture}, Advances
in Math. {\bf 11} (1973), 267--288.

\bibitem{L75} E.~Lieb, \textit{Some Convexity and Subadditivity Properties of Entropy},
Bull. Amer. Math. Soc. {\bf 81} (1975), 1--13.  


\bibitem{LR}  E.~H.~Lieb and M.~B.~Ruskai: \textit{Proof of the strong
subadditivity of quantum-mechanical entropy}, J. Math. Phys. {\bf 14} (1973),  1938--1941.


\bibitem{LR74} E.~H.~Lieb and M.~B.~Ruskai, \textit{Some operator inequalities of the Schwarz type}, Adv. Math.
{\bf 12} (1974), 269--273.



\bibitem{Lind74} G.~Lindblad, \textit{Expectations and entropy inequalities 
for finite quantum systems}, Comm.
Math. Phys. {\bf 39} (1974), 111--119.

\bibitem{Lo34} K.~L\"owner, \textit{\"Uber monotone Matrixfunktionen}, Math. Z. {\bf 38}, (1934), 177--216.

\bibitem{M20} E.~H.~Moore \textit{On the reciprocal of the general algebraic matrix}, Bull. Amer. Math. Sco. {\bf 26} (1920)  394--395.

\bibitem{MC90} E.~A.~Morozova and N.~N.~Cenvov, \textit{Markov invariant geometry on state manifolds}, Itogi Nauki i Tehniki {\bf 36} (1990), 69--102 (Russian).

\bibitem{MHR17} A.~M\"uller-Hermes and D.~Reeb. \textit{Monotonicity of the quantum relative entropy under positive
maps}, Annales Henri Poincar\'e, {\bf 18} (2017), 1777--1788.

\bibitem{MLDSFT}    M.~M\"uller-Lennert, F.~Dupuis, O.~Szehr, S.~Fehr and  M.~Tomamichel  \textit{On quantum Renyi entropies: a new generalization and some properties}, J. Math. Phys. {\bf 54} (2013), 122203.

\bibitem{NEE} I.~Nikoufar, A.~Ebadian, and G.~M.~Eshaghi, \textit{The simplest proof of Lieb concavity
theorem}. Adv. Math. {\bf 248} (2013), 531--533.

\bibitem{Pa03} V.~Paulsen, \textit{Completely Bounded Maps and Operator Algebras} Cambridge Studies in Advanced Mathematics {\bf 78}, Cambridge, 2003.

\bibitem{P55} R.~Penrose, \textit{A generalized inverse for matrices}, Proc. Cam. Phil Soc. {\bf 51} (1955) 406--413. 

\bibitem{P85} D.~Petz, \textit{Quasi-entropies for states of a von Neumann algebra}, Publ. RIMS Kyoto Univ. {\bf 21}
(1985), 787--800.

\bibitem{P86}  D.~Petz,  \textit{Quasi-entropies for finite quantum systems}, Rep. Math. Phys. {\bf 23} (1986), 57--65.

\bibitem{P96}  D.~Petz,  \textit{Monotone metrics on matrix spaces}, Linear Algebra Appl. {\bf 244} (1996), 81--96.

\bibitem{P03}  D.~Petz,  \textit{Monotonicity of the qunatum relative entropy revisited}, Rev. Math. Phys. {\bf 15} (2003), 79--91.

\bibitem{PW75} W.~Pusz and S.~L.~Woronowicz \textit{Functional calculus for sesquilinear forms and the purification
map}, Rep. Mathematical Phys. {\bf 8} (1975), no. 2, 159--170.

\bibitem{PW78} W.~Pusz and S.L.~Woronowicz, \textit{Form convex functions 
and the WYDL and other inequalities }, Lett. Math. Phys. {\bf 2} (1978), 505--512.

\bibitem{RR} D.W.~Robinson and D.~Ruelle:  \textit{Mean entropy of states in classical statistical mechanics}
Commun. Math. Phys {\bf 5} (1967), 288--300

\bibitem{R70} R.~T.~Rockafellar, \textit{Convex Analysis}, Princeton 
University Press, Princeton NJ, 1970.

\bibitem{R74} R.~T.~Rockafellar, \textit{Conjugate duality and optimization}, Vol. 16, Regional conference series in applied mathematics, SIAM, Philadelphia, 1974.

\bibitem{Rus02} M.~B.~Ruskai, \textit{Inequalities for Quantum Entropy: A Review with Conditions for Equality}, J. Math. Phys. {\bf 43}  (2002) 4358--4375; erratum {\bf 46} (2005), 019901


\bibitem{Se60} I.~E.~Segal, \textit{ A Note on the Concept of Entropy}, Journal of Math. and Mech. {\bf 9} (1960), 623--629


\bibitem{S05} B.~Simon, \textit{Trace Ideals and Their Applications: Second Edition}, Mathematical Surveys and Monographs {\bf 120}, AMS, Providence RI 2005.

\bibitem{S19} B.~Simon, \textit{Loewner's theorem on monotone matrix functions}, Grundlehren der mathematischen Wissenschaften {\bf 354}, Springer Nature Switzerland, 2019.

\bibitem{S59} Ju.~Smuljan, \textit{An operator Hellinger integral} (Russian) Mat. Sb. {\bf 91} (1959) 381--430.


\bibitem{S55} W.~F.~Stinespring, \textit{Positive functions on C$^*$-algebras}, Proc. Amer. Math. Soc.
{\bf 6} (1955), 211-216.

\bibitem{Sto10} E.~St\o rmer, \textit{Positive linear maps of operator algebras}, Springer, Heidelberg, (2010).

\bibitem{SBT} D.~Sutter, M.~Berta and M.~Tomamichel, \textit{Multivariate Trace Inequalities}, Comm. Math. Physics {\bf 352} (2017), 37--58.

\bibitem{T65}  C.~J.~Thompson, \textit{Inequality with applications in statistical mechanics}, Jour. of Math. Physics {\bf 6}  (1965), 181--1813.

\bibitem{T85} J.~Tomiyama, \textit{On the geometry of positive maps in matrix algebras. II}, Lin. Alg. and Appl. {\bf 69} (1985), 169--177.

\bibitem{Tr12} J.~Tropp, \textit{From joint convexity of quantum relative 
entropy to a concavity theorem of Lieb}, 
Proceedings of the American Mathematical Society {\bf 140} (2012), 1757-1760.

\bibitem{T59} H.~F.~Trotter, \textit{On the product of semi-groups of operators}, Proc. of the A.M.S, {\bf 10} (1959), 545--551.



\bibitem{Tu}  R.~Tucci,   \textit{Entanglement of distillation and conditional
mutual information}, eprint quant-ph/0202144, 2002.



\bibitem{U71} A.~Uhlmann, \textit{S\"atze \"uber Dichtematrizen},
Wiss. Z. Karl-Marx Univ. Leipzig {\bf 20} (1971), 633--53.

\bibitem{Uh73}A.~Uhlmann, \textit{Endlich-dimensionale Dichtematrizen II.}, Wiss. Z. Karl-Marx-Univ.
Leipzig Math.-Natur. Reihe, {\bf 22} (1973) 139--177.

\bibitem{Uh77} A.~Uhlmann, \textit{Relative entropy and the Wigner Yanase 
Dyson Lieb concavity in an
interpolation theory},  Commun. Math. Phys. {\bf 54} (1977), 21--32.

\bibitem{U62} H.~Umegaki, \textit{Conditional expectation in an operator 
algebra, IV (entropy and information)}.
Kodai Math. Sem. Rep. {\bf 14}  (1962), 59--85.

\bibitem{W78} A.~Wehrl, \textit{General properties of entropy},  Rev. Mod. Phys. {\bf 50} (1978), 221--260.

\bibitem{WY1}  E.~P.~Wigner and M.~M.~Yanase, \textit{On the positive
semidefinite nature of certain matrix expressions}, Proc. Nat. Acad. Sci. {\bf 49} (1963), 910--918.

\bibitem{WY2}  E.~P.~Wigner and M.~M.~Yanase, \textit{Information Contents of Distributions} Canad. J. Math.,
{\bf 16} (1964), 397--406.

\bibitem{WWY} M.~M.~Wilde, A.~Winter, and D.~Yang. \textit{Strong converse for the classical capacity
of entanglement-breaking and Hadamard channels via a sandwiched R\'enyi relative
entropy}, Comm. Math. Phys. {\bf 33} (2014) 593--622.

\bibitem{Z} H.~Zhang, \textit{From Wigner-Yanase-Dyson conjecture to Carlen-Frank-Lieb conjecture}, Adv. in Math. {\bf 365} (2020), 107053


\end{thebibliography}
\end{document}